\documentclass[12pt,reqno]{amsart}
\usepackage[letterpaper,margin=1.1in]{geometry}

\usepackage{amsmath,amssymb,amsthm,verbatim,color,hyperref}

\newtheorem{theorem}[subsection]{Theorem}
\newtheorem{lemma}[subsection]{Lemma}
\newtheorem{corollary}[subsection]{Corollary}
\newtheorem{prop}[subsection]{Proposition}

\theoremstyle{definition}
\newtheorem{definition}[subsubsection]{Definition}
\newtheorem{remark}[subsection]{Remark}
\newtheorem{example}[subsection]{Example}

\newcommand{\haus}{\mathcal{H}}
\newcommand{\leb}{\mathcal{L}}
\newcommand{\spt}{\mathrm{spt}}
\newcommand{\diam}{\mathrm{diam}}

\newcommand{\reg}{\mathrm{reg}}

\newcommand{\setint}{\mathrm{int}}

\newcommand{\graph}{\mathrm{graph}}

\newcommand{\eps}{\epsilon}
\newcommand{\proj}{\mathrm{proj}}

\newcommand{\sing}{\mathrm{sing}}
\newcommand{\cF}{\mathcal{F}}

\newcommand{\R}{\mathbb{R}}
\newcommand{\Z}{\mathbb{Z}}

\newcommand{\del}{\partial}

\newcommand{\cN}{\mathcal{N}}

\newcommand{\cP}{\mathcal{P}}
\newcommand{\cI}{\mathcal{I}}

\newcommand{\cT}{\mathcal{T}}
\newcommand{\cIH}{\mathcal{IVT}}
\newcommand{\cD}{\mathcal{D}}
\newcommand{\cIV}{\mathcal{IV}}

\newcommand{\cIVT}{\mathcal{IVT}}
\newcommand{\OmegaRef}{{\Omega^{(0)}}}
\newcommand{\OmegaRefN}{{\Omega^{(0)}_0}}
\newcommand{\bY}{\mathbf{Y}}
\newcommand{\sint}{\mathrm{int}}

\title[Regularity in locally polyhedral domains]{Regularity of free boundary minimal surfaces in locally polyhedral domains}
\author{Nick Edelen and Chao Li}
\address{ \noindent Nick Edelen: 
	\newline University of Notre Dame, 255 Hurley Bldg, Notre Dame, IN 46556, USA
	\newline
	\textit{E-mail address: nedelen@nd.edu} 
	\newline \newline \indent Chao Li: 
	\newline Princeton University, Fine Hall, 304 Washington Road, Princeton, NJ 08540, USA
	\newline \textit{E-mail address: chaoli@math.princeton.edu} }

\numberwithin{equation}{section}

\begin{document}

\begin{abstract}
We prove an Allard-type regularity theorem for free-boundary minimal surfaces in Lipschitz domains locally modelled on convex polyhedra.  We show that if such a minimal surface is sufficiently close to an appropriate free-boundary plane, then the surface is $C^{1,\alpha}$ graphical over this plane.  We apply our theorem to prove partial regularity results for free-boundary minimizing hypersurfaces, and relative isoperimetric regions.
\end{abstract}
\maketitle

\tableofcontents

\section{Introduction}

We are interested in the regularity of free-boundary minimal hypersurfaces $M$ inside piecewise-smooth domains $\Omega$.  These surfaces $M$ arise variationally as critical points of area or capillary type functions among surfaces in $\Omega$ whose boundaries lie in $\del\Omega$ but are otherwise free to vary.  The existence and regularity of free-boundary minimal surfaces has been extensively studied by Courant \cite{Courant1977minimal}, Lewy \cite{Lewy1951boundarybehavior,Lewy1951partiallyfree}, Hildebrandt \cite{Hildebrandt1969boundary}, Nitsche \cite{HildebrandtNische1979minimal,Nitsche1985stationary}, Gruter \cite{Gr} and Jost \cite{GrJo}, Taylor \cite{Taylor1976structure,Taylor:cap}, Struwe \cite{struwe1984freeboundary}, among others.  

When $\partial \Omega$ is at least $C^2$, Gruter-Jost \cite{GrJo} proved an Allard-type regularity theorem which says that if $M$ is sufficiently varifold close to a free-boundary plane (or, equivalently in this case, when the density ratio of $M$ near a boundary point is sufficiently close to $1/2$), then nearby $M$ is a $C^{1,\alpha}$ graph over this plane.  Gruter \cite{Gr} used this regularity theorem to prove that the (optimal) dimension of the singular set at the free-boundary for area-minimizing hypersurfaces satisfies the same codimension $7$ bound as in the interior (contrast this with Hardt-Simon's \cite{HaSi:boundary} result that showed every area-minimizing hypersurface is entirely regular near its Dirichlet boundary).

In this paper we prove an Allard-type regularity for $M$ when $\Omega$ is locally modeled on any convex polyhedral cone, and hence is only piecewise $C^2$.  The archtype of such domain is a $C^2$ perturbation of a convex wedge (i.e. intersection of two half-spaces).  Existence and regularity of free-boundary minimal $M$ in non-smooth $\Omega$ has a history dating back to the 19th century, when Gergonne \cite{gergonne1816questions} and Schwartz \cite{schwarz1872fortgesetzte} formulated and solved the question of determining minimal surfaces with partially free boundary in a standard cube in $\R^3$.  Since then, there has been a rich history of investigation on free boundary minimal surfaces in various geometric and physical scenarios, where polyhedral domains naturally arise. For instance, when $\Omega$ is a wedge region in $\R^3$, with opening angle $\theta_0$, a range of geometric and regularity properties of free boundary minimal surfaces were studied in \cite{HildebrandtSauvigny1997minimalI,HildebrandtSauvigny1997minimalII,HildebrandtSauvignyminimalIII,HildebrandtSauvignyminimalIV}. See also \cite{HildebrandtNische1979minimal}, where $\theta_0=2\pi$. 

In these past results, as well as in \cite{Jost1987continuity,GruterHildebrandtNitsche1981boundary}, it was observed that boundary regularity of minimal surfaces depends on the local structure of $\Omega$. For instance, if $\Omega$ is a wedge in $\R^3$ as above, and $\theta_0=2\pi$, then an area minimizing surface $M$ may have branch points at its free boundary (see \cite{HildebrandtNische1979minimal}). More generally, as pointed in \cite{HildebrandtSauvigny1997minimalII}, when $\theta_0\in (\pi, 2\pi)$, an area-minimizing surface in $\Omega$ may contain an interval on the edge $\{0\}\times \R$, and thus fail to be a regular surface meeting $\partial \Omega$ orthogonally (called the edge-creeping phenomenon). For a beautuful local description of two-dimensional minimal surfaces in wedges using the Weierstrass representation, see \cite{Brakke1992minimal}.

In polyhedral domains, similar regularity questions also naturally appear in geometric problems with other type of boundary conditions, including capillary surfaces \cite{Si:capillary} \cite{Taylor:cap} as well as general soap bubbles \cite{Gromov2014dirac}. These surfaces are crucial in Gromov's geometric comparison principle for scalar curvature proved by the second named author \cite{Li:3,Li:n}.

Simon \cite{Si:capillary} implemented tools of geometric measure theory in the investigation of capillary surfaces in domains with corners. For a free boundary minimal surface, he noted that \textit{local convexity} of the model domain is a sufficient condition for the existence of a nontrivial tangent cone at the corner. On the other hand, cusp type singularities may occur at the corner if local convexity is violated (see \cite{Si:capillary}\cite{ConcusFinn1974capillary}\cite{ConcusFinn1996capillary} and the references therein).
\vspace{5mm}

Our main result is a local Allard-type regularity theorem, which says loosely that if $\Omega \cap B_1$ is a sufficiently small $C^2$ perturbation of a convex polyhedral cone $\OmegaRef$, and if $M \llcorner B_1$ is sufficiently varifold close to an appropriate free-boundary plane in $\OmegaRef$, then $\spt M \cap B_{1/2}$ is a $C^{1,\alpha}$ perturbation of this plane.
\begin{theorem}\label{thm:teaser}
Let $\Omega^{n+1} = \Omega_0 \times \R$ be a polyhedral cone domain: a dilation-invariant intersection of finitely-many closed half-spaces, with non-empty interior.  Let $F_i : B_1 \to B_1$ be a sequence of $C^2$ diffeomorphisms which limit to $\mathrm{id}$ in $C^2(B_1)$.  Let $M_i$ be a sequence of integral varifolds in $B_1$ which are stationary free-boundary in $F_i(\Omega)$ (or have bounded mean curvature tending to $0$), such that $M_i \to [\Omega_0 \times \{0\}]$ as varifolds in $B_1$.

%Let $M_i$ be a sequence of minimal surfaces in $int F_i(\Omega) \cap B_1$ (or surfaces with bounded mean curvature, tending to $0$), which meet the smooth boundary components of $\del F_i(\Omega)$ orthogonally, and such that $M_i$ limit to $[\Omega_0 \times \{0\}]$ as varifolds in $B_1$.  %

Then for $i \gg 1$, we can write $F_i^{-1}(\spt M) \cap B_{1/2}$ as $C^{1,\alpha}$ graphs over $\Omega_0 \times \{0\}$, with $C^{1,\alpha}$ norm tending to zero.

%The same theorem holds true in higher codimension $\R^{n+k}$, with $\Omega_0 \times \R^{k+1}$, $\R^n \times \{0^{k+1}\}$ in place of $\Omega_0 \times \R$, $\R^n \times \{0\}$ (resp.), if the $\spt M_i \subset F_i(\Omega_0 \times \R\times \{0^k\})$.
\end{theorem}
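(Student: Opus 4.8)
The plan is to establish an Allard-type $\epsilon$-regularity theorem whose only essentially new ingredient — beyond the smooth free-boundary theory of Gr\"uter--Jost and the interior theory of Allard — is $C^{1,\alpha}$ regularity for the \emph{linearized} problem: the Neumann Laplacian on the convex cone $\Omega_0$. Once this $\epsilon$-regularity is in hand, the theorem follows by a compactness-plus-iteration argument. First I would normalize the ambient geometry: pulling back by $F_i^{-1}$, the varifolds $\tilde M_i := F_i^{-1}(M_i)$ are stationary free-boundary in the \emph{fixed} polyhedral domain $\Omega$ with respect to metrics $g_i = (F_i^{-1})^*\delta \to \delta$ in $C^1_{\mathrm{loc}}$ (equivalently: varifolds in $\Omega$ whose Euclidean generalized mean curvature tends to $0$ and whose first variation along fields tangent to $\del\Omega$ is suitably controlled), and still $\tilde M_i \to [\Omega_0\times\{0\}]$. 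So it suffices to prove the graphical conclusion for such $\tilde M_i$ over $\Omega_0\times\{0\}$.

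Second, the monotonicity formula. Since $\Omega = \Omega_0\times\R$ is convex, for $x\in\overline\Omega$ the radial field $X(y)=\zeta(|y-x|)(y-x)$ points into $\Omega$ along $\del\Omega$, so the free-boundary term in the first-variation identity has a favorable sign; this yields an almost-monotonicity formula for $\rho\mapsto \rho^{-n}\|\tilde M_i\|(B_\rho(x))$ with remainder controlled by the small mean curvature (the quantitative form of Simon's observation that local convexity produces tangent cones). Consequences: uniform density bounds; the limit $[\Omega_0\times\{0\}]$ has, at each $x$, density a fixed number $\Theta(x)\in(0,1]$ depending only on the stratum of $\Omega_0\times\{0\}$ containing $x$; and, via upper semicontinuity of density and the constancy theorem, $\tilde M_i$ is multiplicity one and weakly close to the appropriate free-boundary plane through every point of its support, down to an $i$-dependent scale $\to 0$. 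This is the uniform "single sheet" picture that feeds the iteration.

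Third — the heart — the $\epsilon$-regularity statement: there are $\epsilon(\Omega,n)>0$, $\theta\in(0,1)$ so that if $\tilde M$ is free-boundary (mean curvature $\le\epsilon$) in $\Omega$, $x\in\spt\tilde M$, the density ratio of $\tilde M$ in $B_1(x)$ is $\le\Theta(x)+\epsilon$, and the height-excess $E=E(\tilde M,x,1)$ over the nearest free-boundary plane through $x$ is $\le\epsilon$, then $E(\tilde M,x,\theta)\le\theta^{2\alpha}\,E(\tilde M,x,1)+C\cdot(\text{ambient error})$. This is proved by blow-up: one produces a Lipschitz (indeed small-$C^1$-norm) approximation of $\tilde M$ by a single sheet, graph of $u_i$, over $\Omega_0\times\{0\}$ off a superlinearly small bad set, and shows $u_i/E_i^{1/2}$ converges to a function $w$ which is harmonic in $\setint\Omega_0$ and satisfies the \emph{Neumann condition} $\del_\nu w=0$ on each open face of $\del\Omega_0$ — exactly the first-order linearization of the orthogonality (free-boundary) condition. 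Decay for $E$ then follows from decay for $w$. The crucial analytic input is that solutions of this linear Neumann problem on the \emph{convex} cone $\Omega_0$ are $C^{1,\alpha}$ up to the boundary, including at the cone point and all lower strata: separating variables, a homogeneity-$\lambda$ solution corresponds to a Neumann eigenfunction of the convex spherical polytope $\Sigma=\Omega_0\cap S^{n-1}$ with eigenvalue $\lambda(\lambda+n-2)$, and a Reilly/Lichnerowicz-type estimate for convex domains in $S^{n-1}$ shows the first nonzero Neumann eigenvalue is $\ge n-1$, excluding every $\lambda\in(0,1)$ and hence forcing $C^{1,\alpha}$. (For a wedge of angle $\theta_0$: the admissible homogeneities are $k\pi/\theta_0$, $k\ge0$, which lie in $\{0\}\cup[1,\infty)$ iff $\theta_0\le\pi$; the failure for $\theta_0>\pi$ is precisely the edge-creeping phenomenon.) The general stratum case reduces to the cone-point case by splitting off Euclidean factors.

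Finally, iterating this decay over dyadic scales at every $x\in\spt\tilde M_i\cap B_{1/2}$ — propagating the density and excess hypotheses via Step 2 — yields, for $i\gg1$, a uniform Campanato-type bound: $\tilde M_i$ is the $C^{1,\alpha}$ graph of a single $u_i$ over $\Omega_0\times\{0\}$ with $\|u_i\|_{C^{1,\alpha}}\le C\,E(\tilde M_i,0,1)^{1/2}\to0$, and pushing forward by $F_i$ gives the claim. I expect the principal difficulties to be, first, setting up the Lipschitz/$C^1$ approximation and the uniform multiplicity-one "single sheet" structure up to and including the cone point, where the target $\Omega_0\times\{0\}$ is itself singular; and second, the linear regularity lemma — the spectral gap $\lambda_1^{\mathrm{Neu}}(\Sigma)\ge n-1$ for convex spherical polytopes and the resulting up-to-the-vertex $C^{1,\alpha}$ estimate — which is the precise place where local convexity of $\Omega$ is indispensable.
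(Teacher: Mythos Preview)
Your proposal follows the paper's approach: excess decay via blow-up to the Neumann Laplacian on $\Omega_0$, the Lichnerowicz bound $\mu_1\ge n-1$ on convex spherical domains to rule out homogeneities in $(0,1)$, and iteration. Two technical points where the paper's implementation differs from your sketch deserve mention. First, in place of an Allard-style Lipschitz approximation off a small bad set, the paper establishes a priori control on the \emph{full} first variation $\|\delta M\|$ (not merely the tangential part) by an induction on the dimension $l$ of $\Omega_0$; the resulting trace-type inequality yields a Sobolev inequality on $M$, and then Moser iteration gives a sharp $L^\infty$--$L^2$ height bound valid all the way up to the corners. This is what guarantees strong $L^2$ convergence of the blow-ups (no mass accumulating near $\del\Omega$ at the scale of the excess) and, separately, lower Ahlfors regularity. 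You correctly flag this as the principal difficulty, and the paper's resolution is exactly here rather than through Lipschitz approximation. Second, your ``general stratum reduces to the cone point by splitting off Euclidean factors'' plus ``iterate at every $x$'' is more delicate than it reads: at a point $x$ on a high stratum, as $r$ decreases the ball $B_r(x)\cap\Omega$ traverses \emph{several} distinct polyhedral-cone models (with increasing symmetry), so the single-model excess decay must be chained across these finitely many transition scales with controlled loss; the paper carries this out by an explicit downward induction on strata (Section~\ref{sec:reg}, Theorem~\ref{thm:total-decay}).
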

Theorem \ref{thm:teaser} is stated for codimension-one varifolds in Euclidean space, but versions hold for higher-codimension varifolds and other ambient manifolds; see Section \ref{sec:codim}.

We note that there are two classes of free-boundary planes: if $\Omega = W^2 \times \R^{n-1}$ is a wedge, then one has ``horizontal'' planes containing $W^2 \times \{0\}$, and ``vertical'' planes containing $\{0\}\times \R^{n-1}$.   Our Theorem holds only for horizontal planes, and in general an Allard-type regularity fails for vertical planes (Example \ref{ex:vertical-plane}).  Likewise, regularity as in Theorem \ref{thm:main} can fail when $\Omega$ is non-convex (Example \ref{ex:non-convex}).

We also remark that, by considering the corresponding linear Neumann problem, we expect $C^{1,\alpha}$ regularity to be sharp for general $\Omega$.  For example, when $\Omega$ is a wedge with opening angle $\theta_0$, then we would expect no better regularity than $C^{1,\alpha}$ for $\alpha = \frac{\pi}{\theta_0}-1$.  If the opening angle is $\leq \pi/2$, then one might expect $C^{2,\alpha}$ regularity.  In the special case when $\Omega = W^2\times\R$ is a $3$-dimensional wedge, then Theorem \ref{thm:teaser} was proven for minimal graphs over the free-boundary plane $W^2 \times \{0\}$ by \cite{HildebrandtSauvignyminimalIV}.

%Lastly, we point out that we do not assume anything in terms of the density of $M$.  In other words, to show that $M$ is a graph it suffices to know that $M$ is varifold close at scale $1$ to a free-boundary horizontal plane with multiplicity-one.  %This is in the spirit of Allard's CITE, CITE and Gruter-Jost's CITE regularity theorems for planes (with boundary condition), but in our setting we cannot exploit any globally monotone quantity.   %in the spirit of Allard's and Gruter-Jost's regularity theorems, we do not assume anything 

\vspace{5mm}
Our regularity theorem implies a partial regularity for codimension-one minimizing currents in domains which are locally modeled on polyedral cone domains.  The interior dimension bound of course follows from classical interior regularity -- our contributions are the estimates on the boundary.  See Section \ref{sec:minz} for exact definitions and the proof.
%Our regularity theorem allows one to implement a dimension-reducing argument for free-boundary minimizers in locally-polyhedral domains.  By classifying low-dimensional free-boundary minimizing cones in polyhedral cone domains, we can prove the following partial regularity theorem.  See Section REF for more details and proof.
\begin{theorem}[Partial regularity]\label{thm:partial-reg}
	Let $\Omega$ be a locally polyhedral $C^2$ domain in a complete $C^3$ Riemannian manifold $N^{n+1}$.  Suppose $T = T \llcorner \sint \Omega$ is a free-boundary area-minimizing integral current in $\Omega$, or that $T = \del[U] \llcorner \sint\Omega$ is a solution to the relative isoperimetric problem, i.e. $T$ is area-minimizing in the class of $T' = \del[U'] \llcorner \sint\Omega$ where $U'$ is relatively open in $\Omega$ and $|U'| = |U|$.

	 %Suppose $M = \del [U]$ is either an area-minimizing current, or an isoperimetric current in the sense that $M=\partial [U]$, and $M$ is area-minimizing in the class of $M'=\partial[U']$ for $U'$ a set of finite perimeter satisfying $|U'|=|U|$.
	
	Then $\dim (\sing T\cap \sint \Omega)\le n-7$, and:
	\begin{enumerate}
		\item $\dim(\sing T\cap \partial \Omega)\le n-2$, for general $\Omega$;
		\item $\dim(\sing T\cap \partial \Omega)\le n-3$, if the dihedral angles of $\Omega$ are $\leq \pi/2$ (in which case $\Omega$ is actually a domain-with-corners);
		\item $\dim(\sing T\cap \partial \Omega)\le n-7$, if the dihedral angles of $\Omega$ are $= \pi/2$. 
	\end{enumerate}
\end{theorem}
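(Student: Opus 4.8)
The plan is to run the standard dimension-reduction machinery of Federer, but applied at boundary points using Theorem \ref{thm:teaser} as the "$\epsilon$-regularity" input that replaces Allard's boundary regularity theorem. First I would establish the relevant monotonicity and compactness: since $\Omega$ is locally polyhedral and $C^2$, at each point $p \in \partial\Omega$ the domain $\Omega$ has a well-defined tangent cone $\Omega_p = \Omega_{0,p} \times \R^{m}$ which is a polyhedral cone domain (here local convexity is automatic because the dihedral angles are $< \pi$), and by Simon's observation cited above, free-boundary minimizers have monotone density ratio and admit tangent cones at $p$. Blowing up $T$ at $p$ produces an area-minimizing free-boundary cone $\bC$ in $\Omega_p$, which splits off the edge directions $\R^m$. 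After removing these splitting factors, we reduce to classifying area-minimizing free-boundary cones $\bC \subset \Omega_0'$ where $\Omega_0'$ is a polyhedral cone domain whose cross-section is "genuinely lower-dimensional" (no further edge splitting). The key dichotomy: either $\bC$ is a half-plane orthogonal to a face (the regular case), in which case Theorem \ref{thm:teaser} applies at nearby points and gives a neighborhood of boundary regularity; or $\bC$ is genuinely singular, and we feed it back into the iteration.

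Second, I would set up the quantitative stratification / Federer dimension-reduction bookkeeping. Define $\sing T \cap \partial\Omega$ and stratify it by the dimension of the maximal Euclidean factor that the tangent cone at a point splits off — but one must be careful to track \emph{both} the splitting coming from translation-invariance of $T$ and the splitting coming from the edge structure of $\Omega$ itself. The crucial point is that if the tangent cone at $p$ splits off $\R^{n-1}$ (one less than the dimension of $\partial\Omega$), then after blowup we are looking at a free-boundary minimizing $1$-dimensional cone in a $2$-dimensional planar sector $W^2$, i.e. a union of rays meeting $\del W$; minimality forces this to be a single ray orthogonal to an edge of $W$, which is regular. Hence the top stratum of $\sing T \cap \partial\Omega$ has dimension $\le n-2$ in general, giving (1). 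For (2), when all dihedral angles are $\le \pi/2$, I claim the edge-creeping phenomenon is ruled out and in fact any free-boundary minimizing cone in a $2$-dimensional sector $W^2$ of angle $\le\pi/2$ that is \emph{not} a single orthogonal ray does not exist as a minimizer — so one gains one more dimension, $\le n-3$. For (3), when all dihedral angles equal $\pi/2$, the domain is locally a quadrant $[0,\infty)^k \times \R^{n+1-k}$, and by reflecting iteratively across the faces one converts a free-boundary minimizing current into an honest interior area-minimizing current in $\R^{n+1}$ (the reflections are isometries, this is where the right angle is essential); the classical codimension-$7$ interior bound then applies directly, giving $\le n-7$.

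The main obstacle will be the classification of area-minimizing free-boundary cones in the intermediate strata — specifically, proving that no low-dimensional singular minimizing cone exists until one reaches the expected codimension. In case (1) this is easy ($1$-dimensional cones in a sector), but to \emph{sharpen} to (2) one needs to understand $2$- or $3$-dimensional free-boundary minimizing cones in $3$- or $4$-dimensional wedges and show the only minimizers with small opening angle are half-planes; this requires ruling out, e.g., a free-boundary analogue of a cone over $S^1 \times S^1$, and is where the convexity/angle hypothesis does real work. The reflection trick in case (3) is clean but one must check that the reflected current is genuinely minimizing across the (now interior) reflected faces — this follows because a competitor in $\R^{n+1}$ can be symmetrized and cut, but the symmetrization argument needs the domain to be exactly a product of half-lines, which is precisely the $\pi/2$ hypothesis. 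The remaining steps — upper semicontinuity of density, the Federer reduction lemma, and patching the $\epsilon$-regularity neighborhoods from Theorem \ref{thm:teaser} — are routine once the cone classification is in hand.
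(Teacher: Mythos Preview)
Your overall architecture is right and matches the paper: reduce to classifying low-dimensional free-boundary minimizing cones, then feed into Federer dimension reduction with Theorem~\ref{thm:teaser} as the $\eps$-regularity input. Case~(1) is essentially correct.

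The real gap is in case~(2). To pass from $n-2$ to $n-3$ you must classify $2$-dimensional minimizing cones $\bC^2$ in $3$-dimensional non-obtuse polyhedral cone domains $\Omega^3$, not $1$-dimensional cones in sectors (which is what your paragraph actually argues). You acknowledge this later as the ``main obstacle'' but do not solve it. The paper's route (Lemma~\ref{lem:low-dim-cones}, $n=2$) is: first use induction and the $n=1$ case to show $\spt\bC$ avoids the edge $\del_1\Omega\setminus\{0\}$, so $\bC$ meets $\del\Omega$ only in its planar faces; then reflect and run Simons' argument to conclude $\spt\bC$ is a union of planes. The remaining step---showing a \emph{planar} minimizer in a $0$-symmetric non-obtuse $\Omega^3$ must be horizontal (hence regular) or zero---is Lemma~\ref{lem:minz-planes}, and it genuinely uses Coxeter's theorem that non-obtuse polyhedral cones are simplicial. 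Without this ``no vertical minimizing plane'' step, even a planar tangent cone need not trigger Theorem~\ref{thm:teaser}, since that theorem only applies to horizontal planes (cf.\ Example~\ref{ex:vertical-plane}).

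For case~(3) your reflection idea is in the right spirit but differs from the paper. You claim the iterated reflection of $T$ across the coordinate hyperplanes of $[0,\infty)^l$ is \emph{area-minimizing} in $\R^{n+1}$; the symmetrize-and-cut argument you sketch handles a single reflection but for the full $(\Z/2)^l$ action it needs more care (the competitor restricted to one orthant need not have admissible boundary). The paper sidesteps this: by induction $\sing T\subset\{0\}$, so the reflected cone is a priori $C^{1,\alpha}$ away from $0$ and smooth, \emph{stable} away from the reflected $(n-2)$-skeleton; a cutoff argument then gives stability on $\R^{n+1}\setminus\{0\}$, and Simons' theorem (which only needs stability, not minimality) finishes.
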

We emphasize that $\Omega$ need only be locally a $C^2$ perturbation of a convex polyhedral domain, and does not need not be (locally) convex itself.

We think it should be true that $\dim (\sing T \cap \del \Omega) \leq n-7$ when the dihedral angles are $\leq \pi/2$.  The stumbling block is proving an appropriate Neumann eigenvalue bound for domains contained in an octant of $S^2$.  In this spirit, when the dihedral angles of $\Omega$ are $\leq \pi/2$ our proof in fact gives $\dim(\sing M \setminus \del_{n-2}\Omega) \leq n-7$, where $\del_{n-2}\Omega$ consists of points where $\Omega$ is locally modelled on domains with at least $n-2$ dimensions of symmetry.  On the other hand, it is plausible that the $n-2$ bound for general $\Omega$ is sharp, as there are free-boundary planes in $3$-dimensional polyhedral cone domains which we think may be minimizing (Example \ref{ex:possible-minz-plane}).

%We remark that another (minor) generalization of Theorem \ref{thm:partial-reg} holds: if every tangent domain $T_x\Omega$ is either $\R^{n+1}$, $[0, \infty) \times \R^n$, or $W^2 \times [0, \infty)^{l-2} \times \R^m$ for some wedge with angle $\leq \pi/2$, then the sharp dimension bound $\dim(\sing M \cap \del\Omega) \leq n-7$ holds also.

\vspace{5mm}

Our strategy is to prove a certain ``excess decay'' inequality \eqref{eqn:outline-1}.  The two key difficulties we encounter are a lack of reflection principle, and a lack of a single boundary model.  This means that, unlike regularity with smooth boundaries (such as Allard \cite{All:boundary}, Gruter \cite{Gr}), we do not have any easy characterization of low-density tangent cones, nor do we have a nice monotone quantity defined at all scales and all points along the boundary.  Our proof therefore relies comparatively little on monotonicity, in contrast to \cite{All:boundary}, \cite{Gr} who used it as a key tool to obtain among other things good effective graphical approximations.

Instead, we prove a trace-like inequality for the first variation, and use Moser iteration to prove good lower ahlfors regularity and non-concentration estimates.  The key to make our excess decay argument work is a sharp $L^\infty$-$L^2$ bound, which implies that, even at the scale of the excess, the $L^2$ norm cannot ``accumulate'' near the boundary.  The main technical challenge is establishing a first-variation control, and the corresponding trace-like inequality.  To do this we induct on the dimension of the cone, basing our argument on work of Simon \cite{Si:capillary} who considered $2$-dimensional capillary surfaces in a $3$-dimensional wedge.

While $\Omega$ resembles a fixed cone-type, we can prove excess decay by comparison with the linear problem, in the spirit of DeGiorgi's original proof of interior regularity.  However, inevitably at certain finite scales $\Omega$ will cease to look like a given cone, and here this argument breaks down (e.g. in a wedge, every big ball looks like the wedge, but as the radius shrinks you may look like a half-space, or an interior point).  By an inductive argument on the strata, we show that $\Omega$ looks like one of finitely-many cone types, away from finitely many scales.  By giving up some constant, we can chain together our excess decays at each cone model to obtain a global excess decay down to $0$.  This last step is very general, and would apply to any regularity problem in a ``stratified'' model.

It would be interesting to see if any of our techniques carry over to regularity problems with other boundary conditions, for example the capillary problem.  De Philippis-Maggi \cite{DephilippisMaggi2015regularity} proved a (partial-)regularity theorem for energy-minimizing capillary surfaces in $C^{1,1}$ domains, and the second author \cite{Li:3} proved regularity of energy-minimizing $2$-dimensional capillary surfaces in locally polyhedral Lipschitz domains.  However very little is known for capillary surfaces which are only stationary for area, nor is the sharp regularity known for energy-minimizing capillary surfaces in higher dimensions.  We mention here that recently Kagaya-Tonegawa \cite{KagayaTonegawa2017contact} proved a monotonicity formula for stationary capillary surfaces.

\textbf{Acknowledgements: } The authors thank Rick Schoen, Luca Spolaor, Brian White, and Neshan Wickramasekera for helpful conversations. The authors also thank the anonymous referee for suggestions on improvements of the paper. C.L. wants to thank the Institute for Advanced Study, where part of this work was carried out. N.E. was supported in part by NSF grant DMS-1606492.  C.L. was supported in part by NSF grant DMS-2005287.

\section{Notation and preliminaries}

\addtocontents{toc}{\setcounter{tocdepth}{1}}

\begin{comment} \begin{definition}
A \emph{polyhedral cone domain} in $\R^{n+1}$ is a closed domain of the form $\Omega^{n+1} = \Omega_0^l \times \R^{m+1} \subset \R^{n+1}$, for some $0 \leq l \leq n$, $m = n - l$, where $\Omega_0 \subset \R^l$ satisfies the following properties:
\begin{enumerate}
\item $\Omega_0$ is closed, dilation-invariant, and $l$-dimensional, in the sense that $\setint \Omega_0 \neq \emptyset$;
\item $\Omega_0$ is strictly convex: $\Omega_0$ is convex but contains no lines;
\item $\Omega_0$ is piecewise $C^2$: for every $x \in \Omega_0$, there is a radius $r_x > 0$, an integer $i_x \in \{0, 1, \ldots, l\}$, and a diffeomorphism $\phi_x : B_{r_x}(0^l) \to B_{r_x}(x)$ such that
\[
\phi_x(0) = x, \quad \phi((\R^{i_x} \times [0, \infty)^{l-i_x}) \cap B_{r_x}(0)) = \Omega_0 \cap B_{r_x}(x).
\]
\end{enumerate}

We say a polyhedral cone is \emph{piecewise-flat} if each diffeomorphism in EQREF can be taken to be linear.  Equivalently, $\Omega$ is piecewise-flat if it can be written as the intersection of half-spaces.
\end{definition}

\begin{example}
Any intersection of finitely-many half-spaces with non-empty interior gives a piecewise-flat polyhedral cone domain.  In fact this is the archtype polyhedral cone domain.
\end{example}

\begin{lemma}

\end{lemma}

\end{comment}

\subsection{Notation}

We work in $\R^{n+1}$.  Given a subset $A \subset \R^{n+1}$, we define $d_A(x) = \inf_{a \in A} |x - a|$ to be the usual Euclidean distance to $A$.  If $A = \emptyset$, we define $d_A(x) = \infty$.  Write $B_r(A) = \{ x : d_A(x) < r \}$ for the open $r$-tubular neighborhood of $A$, and write $\overline{A}$ for the closure of $A$.  $B_r(x)$ is the open $r$-ball centered at $x$.  We write $B_r^p(x) \equiv B_r(x) \cap \R^p \times \{0^{n+1-p}\}$, and $S^{p-1} \equiv \del B_1^p$ for the unit sphere in $\R^p \times \{0^{n+p-1}\}$.  Define the dilation/translation operator $\eta_{y, r}(x) = (x - y)/r$.  Given a subset $\Omega \subset \R^{n+1}$, write $\Omega_{y, r} = \eta_{y, r}(\Omega)$.

Given vectors $v, w \in \R^{n+1}$, then $v \cdot w$ denotes the usual Euclidean inner product, and $|v| = \sqrt{v\cdot v}$ is the usual Euclidean length.  Given linear maps $A, B : \R^{n+1} \to \R^{n+1}$, we define the inner product $A\cdot B = \sum_i A(e_i) \cdot B(e_i)$, summed over any choice of orthonormal basis $e_i$ of $\R^{n+1}$, and correspondingly set $|A|^2 = A \cdot A$.  Unless otherwise stated, $e_i$ will denote the standard basis of $\R^{n+1}$.

Given a subspace $V \subset \R^{n+1}$, we let $\pi_V$, $\pi^\perp_V$ be the orthogonal projections to $V$, $V^\perp$ respectively.  Given subpsaces $V$, $W$ (not necessarily of the same dimension), then we set $V \cdot W = \pi_V \cdot \pi_W$.  We remark that if $V$ and $W$ do have the same dimension, then
\[
\frac{1}{2} |\pi_V - \pi_W|^2 = V^\perp \cdot W = V \cdot W^\perp.
\]
If $U \subset p + V$ and $f : U \to V^\perp$, we write $\graph_{p+V}(f) = \{ x + f(x) : x \in U \subset p + V \}$.  For ease of notation, if $p + V = \R^n \times \{0\}$, and $g : U \subset \R^n \times \{0\} \to \R$, then we interpret $\graph_{\R^n\times\{0\}}(g) \equiv \graph_{\R^n\times\{0\}}(g e_{n+1})$.

Every constant written as $c$ or $c_i$ will be $\geq 1$, and every constant $\delta_i$ or $\eps_i$ will be $\leq 1$.  Unindexed constants may change line-to-line.

\subsection{Polyhedral cone domains}

Here we define our model domains.

\begin{definition}\label{def:omegaref}
A \emph{polyhedral cone domain} $\Omega$ in $\R^{n+1}$ is a closed, dilation-invariant domain with non-empty interior, that can be written as the intersection of finitely-many closed half-spaces.
\end{definition}

Take a polyhedral cone domain $\Omega$ as defined above, and write $\Omega = \cap_{j=1}^N H_j$ for $H_j = \{ x : x\cdot \nu_j \leq 0\}$ being closed half-spaces, and $\nu_j$ unit vectors.  Here $\cap_\emptyset$ is understood to be all of $\R^{n+1}$.  If $\Omega$ contains a line $L$, then $L \perp \nu_j$ for each $j$, and hence after a rotation we can write $\Omega = \Omega' \times \R$, where $\Omega'$ is a polyhedral cone domain in $\R^n$.  We can therefore repeat this process and find a maximal integer $m \in \{0, \ldots, n+1\}$ so that we can write
\begin{equation}\label{eqn:omega-decomp}
\Omega = O(\Omega_0^l \times \R^m)
\end{equation}
for some $O \in SO(n+1)$, and $l+m = n+1$.  Equivalently, we have $O(\{0\}\times \R^m) = \cap_j \del H_j$ and $O(\Omega_0 \times \{0\}) = (\cap_j \del H_j)^\perp$.  Under this decomposition $\Omega$ is said to be \emph{$m$-symmetric}.  Note that $\Omega_0$ is $0$-symmetric, in the sense that it contains no lines.

We say $\Omega$ has dihedral angles $\leq \pi/2$ (resp. $=\pi/2$) if: given any pair of half-spaces $H_i, H_j$ such that $\del H_i \cap \del H_j \cap \Omega$ is a relatively open subset of $\del H_i \cap \del H_j$, then we have $\nu_i \cdot \nu_j \leq 0$ (resp. $ = 0$).  We may refer to $\Omega$ having dihedral angles $\leq \pi/2$ as \emph{non-obtuse}.

\begin{example}
If $L : \R^{n+1} \to \R^{n+1}$ is a linear isomorphism, then $L([0, \infty)^l \times \R^{m})$ is a polyhedral cone domain.  In fact \cite[Theorem 1.1]{Coxter1934discrete} implies that any non-obtuse polyhedral cone domain is a simplicial prism, and hence takes the form $L([0, \infty)^l \times\R^m)$ for some $l, m$ and $L$ as above.
\end{example}

\begin{example}
When $l = 0$, $\Omega = \R^{n+1}$.  When $l = 1$, $\Omega$ is a half-space.  When $l = 2$, $\Omega$ is a wedge $W^2 \times \R^{n-1}$, with interior angle $< \pi$.  The convex wedge $W^2 \times \R^{n-1}$ is the archtype of polyhedral cone domain, and $W^2 \times \R^{n-2}$ is the archtype of free-boundary minimal surface in this domain.
\end{example}

Given $x \in \Omega = \cap_{j=1}^N H_j$, there are indices $I \subset \{1, \ldots, N\}$ so that we can write
\begin{equation}\label{eqn:poly-x-decomp}
x \in \bigcap_{j \in I} \del H_j \cap \bigcap_{j \not\in I} \sint H_j.
\end{equation}
If we let $r < d(x, \cup_{i \not\in I} \del H_i) \equiv \inf \{ |x \cdot \nu_i| : i \not \in I\}$, then
\begin{equation}\label{eqn:poly-tangent}
\Omega \cap B_{r}(x) = \cap_{i \in I} H_i \cap B_{r}(x) = \left( x+ \cap_{i \in I} H_i \right) \cap B_{r}(x).
\end{equation}
We therefore define the polyhedral cone domain $T_x\Omega := \cap_{i \in I} H_i$ to be the \emph{tangent domain} of $\Omega$ at $x$.  Alternatively, one could realize $T_x\Omega$ as the limit (in the local Hausdorff sense) of dilations $\lim_{r \to 0} \frac{1}{r} (\Omega - x)$.  Trivially $T_0\Omega = \Omega$.

We define the \emph{$i$-th stratum} $\del_i\Omega$ to be the set of points $x \in \Omega$, such that the tangent domain $T_x\Omega$ is \emph{at most} $i$-symmetric.  Equivalently, if $x$ and $I$ are as in \eqref{eqn:poly-x-decomp}, then $x \in \del_i\Omega \iff \cap_{j \in I} \del H_j$ is at most $i$-dimensional. We have the trivial fibration
\[
\del_0\Omega \subset \ldots \subset \del_n\Omega = \del\Omega \subset \del_{n+1}\Omega = \Omega.
\]
For notational convenience define $\del_{-1}\Omega = \emptyset$.  If $\Omega$ is $m$-symmetric, in which case $\Omega = O(\Omega_0 \times \R^m)$ for some $0$-symmetric $\Omega_0$, then we have $\del_m \Omega = O(\{0\}\times \R^m)$, $\del_i \Omega = \emptyset$ for all $i < m$, and $T_x\Omega = \Omega$ for all $x \in \del_m\Omega$.

We shall use the following quantification of \eqref{eqn:poly-tangent}:
\begin{lemma}\label{lem:Bref}
Given $\Omega$ a polyhedral cone domain, there is a number $B(\Omega) \in (0, 1/4)$ so that given $x \in \del_i\Omega$, we have
\[
B_{2Bd}(x) \cap \Omega = B_{2Bd}(x) \cap (x + T_x\Omega), \quad d = d(x, \del_{i-1}\Omega).
\]
\end{lemma}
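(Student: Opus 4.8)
The plan is to reduce the Lemma to an elementary Euclidean estimate on the bounding half-spaces of $\Omega$, fed into \eqref{eqn:poly-tangent}. Write $\Omega=\cap_{j=1}^N H_j$ with $H_j=\{x\cdot\nu_j\le 0\}$, $\nu_j$ unit; let $m$ be the symmetry of $\Omega$; and for $x\in\Omega$ let $I=I(x):=\{j:x\cdot\nu_j=0\}$ be the active set, so that $T_x\Omega=\cap_{j\in I}H_j$ has lineality space $L_I:=\cap_{j\in I}\del H_j$ and symmetry $k:=\dim L_I$. By \eqref{eqn:poly-tangent}, writing $B_\rho$ as an increasing union of open balls to absorb the endpoint radius, we have $\Omega\cap B_r(x)=(x+T_x\Omega)\cap B_r(x)$ for every $r\le\rho(x):=\min_{j\notin I}|x\cdot\nu_j|=d(x,\cup_{j\notin I}\del H_j)$. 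So it suffices to produce $c(\Omega)>0$ with
\[
d(x,\del_{k-1}\Omega)\ \le\ \rho(x)/c(\Omega)\qquad\text{whenever }k\ge m+1,
\]
and then take $B(\Omega):=\tfrac12\min\{1/4,\,c(\Omega)\}\in(0,1/4)$: given $x\in\del_i\Omega$ we have $k\le i$, so $d(x,\del_{i-1}\Omega)\le d(x,\del_{k-1}\Omega)$, and the remaining cases are immediate — if $i<m$ then $\del_i\Omega=\emptyset$; if $i=m$ then $x$ lies on the spine $\del_m\Omega$, so $T_x\Omega=\Omega$, $x+T_x\Omega=\Omega$ and $d=d(x,\del_{m-1}\Omega)=\infty$; and if $k\le i-1$ then $x\in\del_k\Omega\subseteq\del_{i-1}\Omega$, so $d=0$ and both sides vanish.

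To prove the displayed inequality, I first note that for any active set $I$ that occurs at a point of $\Omega$ and any $j\notin I$, the projection $\tilde\nu_j:=\pi_{L_I}\nu_j$ is nonzero — choosing $x$ with $I(x)=I$ gives $x\in L_I$ and $0\ne x\cdot\nu_j=x\cdot\tilde\nu_j$ — so $c(\Omega):=\min|\tilde\nu_j|$, over the finitely many such pairs $(I,j)$, is positive. Now fix $x$ with $k\ge m+1$; then some constraint is inactive, so I may choose $j_*\notin I$ with $|x\cdot\nu_{j_*}|=\rho(x)$ and set
\[
y_0 := x-\frac{x\cdot\nu_{j_*}}{|\tilde\nu_{j_*}|^2}\,\tilde\nu_{j_*}\ \in\ L_I,
\]
the foot of the perpendicular from $x$ to $\del H_{j_*}$ taken inside $L_I$. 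Using $\tilde\nu_{j_*}\cdot\nu_{j_*}=|\tilde\nu_{j_*}|^2$ one gets $y_0\cdot\nu_{j_*}=0$ and $|x-y_0|=|x\cdot\nu_{j_*}|/|\tilde\nu_{j_*}|\le\rho(x)/c(\Omega)$. If $y_0\in\Omega$, then its active set contains $I\cup\{j_*\}$, so the lineality of $T_{y_0}\Omega$ sits inside $L_I\cap\nu_{j_*}^\perp$, which has dimension $k-1$ because $\tilde\nu_{j_*}\ne0$; hence $y_0\in\del_{k-1}\Omega$ and $d(x,\del_{k-1}\Omega)\le|x-y_0|$. If $y_0\notin\Omega$, then since the segment $[x,y_0]$ lies in $L_I$ it never leaves any $H_j$ with $j\in I$, so it exits the convex set $\Omega$ by first crossing some $\del H_{j'}$ with $j'\notin I$; the exit point $z$ then lies in $\Omega$ with active set containing $I\cup\{j'\}$, and the same computation gives $z\in\del_{k-1}\Omega$ and $d(x,\del_{k-1}\Omega)\le|x-z|\le|x-y_0|$. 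Either way the inequality holds.

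The one step requiring care is the last: one must confirm that the exit point $z$ genuinely acquires an active constraint outside $I$ — otherwise it would have the same symmetry $k$ as $x$ and would not belong to $\del_{k-1}\Omega$. This is forced by $y_0\in L_I$: a glance at the affine functions $t\mapsto(x+t(y_0-x))\cdot\nu_j$ shows those with $j\in I$ vanish identically, so the segment can only leave $\Omega$ by crossing some $\del H_{j'}$ with $j'\notin I$. Everything else is elementary linear algebra together with the finiteness of the family of half-spaces.
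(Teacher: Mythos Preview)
Your proof is correct and follows essentially the same route as the paper's: both arguments hinge on the constant $\min_{(I,j)}|\pi_{L_I}(\nu_j)|$ (your $c(\Omega)$, the paper's $\alpha_I$) and use it to compare $\rho(x)=\min_{j\notin I}|x\cdot\nu_j|$ with the distance from $x$ to the next stratum down. The paper bounds $\rho(x)\ge\alpha_I\cdot d(x,V_I\cap\del H_k)$ and then asserts $V_I\cap\del H_k\subset\del_{i-1}\Omega$; you instead build an explicit witness $y_0\in L_I\cap\del H_{j_*}$ and, when $y_0\notin\Omega$, replace it by the exit point $z$ of the segment $[x,y_0]$. This last step is a genuine (if minor) refinement: the paper's containment $V_I\cap\del H_k\subset\del_{i-1}\Omega$ tacitly assumes the nearest point lies in $\Omega$, which need not hold, whereas your exit-point argument closes this gap cleanly.
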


\begin{proof}
Write $\Omega = \cap_{j=1}^N H_j$ as before, with $H_j = \{ x : x \cdot \nu_j \leq 0\}$.  Fix an $i$.  Let $\cI_i$ be the set of subcollections $I \subset \{1, \ldots, N\}$ with the property that $V_I := \cap_{j \in I} \del H_j$ is an $i$-dimensional plane, and such that $I$ is ``maximal'' in the sense that $\del H_k \cap V_I$ is $(i-1)$-dimensional for every $k \not\in I$.  Take $I \in \cI_i$.  Let
\[
\alpha_I = \min \{ |\pi_{V_I}(\nu_j)| : j \not\in I\}.
\]
We have $\alpha_I > 0$, as otherwise $\nu_k \perp V_I$ for some $k \not\in I$, contradicting maximality of $I$.

Given $x \in V_I$ such that $d(x, V_I \cap \del H_k) \geq 1$ for every $k \not \in I$, we have
\begin{align*}
\min_{k \not \in I} d(x, \del H_k)
&= \min_{k \not\in I} |x \cdot \nu_k| \\
&\geq \alpha_I \min_{k \not\in I} \frac{|x \cdot \pi_{V_I}(\nu_k)|}{|\pi_{V_I}(\nu_k)|} \\
&= \alpha_I \min_k d(x, V_i\cap \del H_k) \\
&\geq \alpha_I.
\end{align*}
Let $\alpha_i = \min_{I \in \cI_i} \alpha_I$, which is positive since $\cI_i$ is finite.

When $i = 0$ there is nothing to show, likewise if $d(x, \del_{i-1}\Omega) = 0$.  Take $x \in \del_i\Omega \setminus \del_{i-1}\Omega$, and after scaling we can WLOG suppose that $d(x, \del_{i-1}\Omega) \geq 1$.  Write
\begin{equation}\label{eqn:poly-x-decomp2}
x = \bigcap_{x \in I} \del H_j \cap \bigcap_{x \not\in I} \sint H_j
\end{equation}
as in \eqref{eqn:poly-x-decomp}.  By our choice of $x$ we have that $\cap_{j \in I} \del H_j$ is $i$-dimensional.  If $k \not\in I$, then necessarily $\del H_k \cap (\cap_{j \in I} \del H_j)$ is $(i-1)$-dimensional, as otherwise we would have $\del H_k \cap (\cap_{j \in I} \del H_j) = \cap_{j \in I} \del H_j \ni x$, contradicting our decomposition \eqref{eqn:poly-x-decomp2}.  This implies that if $y \in \del H_k \cap (\cap_{j \in I} \del H_j)$, then $y \in \del_{i-1}\Omega$, and hence
\[
\min_{k \not \in I} d(x, V_I \cap \del H_k) \geq d(x, \del_{i-1}\Omega) \geq 1.
\]

Therefore $I \in \cI_i$, and $x \in V_I$ satisfies $d(x, V_I \cap \del H_k) \geq 1$ for every $k\not\in I$.  We deduce by our earlier computations
\[
\min_{k \not\in I} d(x, \del H_k) \geq \alpha_i,
\]
and hence 
\[
B_{\alpha_i}(x) \cap \Omega = B_{\alpha_i}(x) \cap ( \cap_{j \in I} H_j) = B_{\alpha_i}(x) \cap (x + T_x\Omega)
\]
as in \eqref{eqn:poly-tangent}.  Taking $B = \frac{1}{2} \min_{i = 1, \ldots, n+1} \alpha_i$ proves the Lemma.
\end{proof}

Let us define the density of $\Omega$ as
\begin{equation}\label{eqn:theta-omega}
\Theta_\Omega = \omega_{n+1}^{-1} \haus^{n+1}(\Omega \cap B_1) \equiv \omega_l^{-1} \haus^l(\Omega_0 \cap B_1),
\end{equation}
where $\omega_n = \haus^n(B_1(0^n))$ is the $n$-dimensional volume of the unit $n$-ball.  By convexity, we have the monotonicity
\begin{equation}\label{eqn:theta-mono}
\Theta_{T_x\Omega} \geq \frac{\haus^{n+1}(\Omega \cap B_r(x))}{\omega_{n+1} r^{n+1}} \quad \forall r > 0,
\end{equation}
and hence we have the following lower-semi-continuity: if $x_i \to x$, then 
\begin{equation}\label{eqn:theta-lsc}
\liminf_i \Theta_{T_{x_i}\Omega} \geq \Theta_{T_x\Omega}.
\end{equation}

Let $\cN_\Omega$ be the cone consisting of \emph{outer normals} for $\Omega$:
\[
\cN_\Omega = \cup \{ \nu \in \R^{n+1} : \Omega \subset \{ y : y \cdot \nu \leq 0 \} \}.
\]
By convexity, $\cN_\Omega \neq \emptyset$.

If we decompose $\Omega = O(\Omega_0^l \times \R^m)$ as in \eqref{eqn:omega-decomp}, then define $\cP_\Omega$ to be the collection of \emph{horizontal $n$-planes} of the form $O(\R^l \times W^{m-1})$ for $W^{m-1}$ an $(m-1)$-plane in $\R^m$.  We observe the trivial inclusion
\[
\cP_{\Omega} \subset \cP_{T_x\Omega} \quad \forall x \in \Omega.
\]
In particular, if $x \in \del_i\Omega$ and $y \in \Omega \cap B_{2B d(x, \del_{i-1}\Omega)}(x)$, then
\begin{equation}\label{eqn:pv-inclusion}
\cP_{T_x\Omega} \subset \cP_{T_y\Omega}.
\end{equation}

\subsection{Curved polyhedral cones}

Since a general polyhedral domain will at finite scales only look like the perturbation of a polyhedral cone domain, in our local regularity we must allow for domains with a little bit of curvature (captured by the map $\Phi$).  Moreover, since the model polyhedral cone will in general change as one moves along any given stratum, we allow for small changes in the model domain itself (captured by $\Psi$).
\begin{definition}\label{def:omega}
Let $\OmegaRef$ be a polyhedral cone domain in $\R^{n+1}$, as per Definition \ref{def:omegaref}.  Given $\eps \in (0, 1)$, we define $\cD_\eps(\OmegaRef)$ as the set of domains $\Omega$ satisfying
\[
\Omega \cap B_1 = \Phi(\Psi(\OmegaRef)) \cap B_1,
\]
where $\Psi$ is a linear isomorphism satisfying $|\Psi - \mathrm{Id}| \leq \eps$, and $\Phi : B_2 \to \R^{n+1}$ is a $C^2$ diffeomorphism satisfying
\begin{equation}\label{eqn:def-phi}
\Phi(0) = 0, \quad D\Phi|_0 = \mathrm{Id}, \quad |\Phi - \mathrm{id}|_{C^2(B_2)} \leq \eps.
\end{equation}
\end{definition}

\begin{remark}
Since $\eps < 1$, we have $\Phi(B_2) \supset B_1$.  In fact we could equivalently have asked for $\Phi$ to be a diffeomorphism $B_1 \to B_1$, but we shall see our definition is slightly more convenient to work with.
\end{remark}

Take $\Omega = \Phi(\Psi(\OmegaRef)) \cap B_1$ as in Definition \ref{def:omega}, and given $x \in \Omega$ let us write $x = \Phi(\Psi(z))$.  We define $\del_i\Omega = \Phi(\Psi(\del_i\OmegaRef)) \cap B_1$, so that $\del_i\Omega$ consists of the points near which $\Omega$ is diffeomorphic to some polyhedral cone that is at most $i$-symmetric.

There is a well-defined polyhedral cone domain $T_x\Omega = \lim_{r \to 0} \frac{1}{r} (\Omega - x)$ which we will call the tangent domain, and in fact we can write $T_x\Omega = (D\Phi|_{\Psi(z)} \circ \Psi)T_z\OmegaRef$.  It follows by scaling that if $\Omega \in \cD_\eps(\OmegaRef)$, then $\frac{1}{R} \Omega \cap B_1 \in \cD_\eps(\OmegaRef)$ for every $R \geq 1$, and hence $T_0\Omega \in \cD_\eps(\OmegaRef)$ also.

Since $T_x\Omega$ is a polyhedral cone domain, we can define density $\Theta_{T_x\Omega}$ and cone of outer normals $\cN_{T_x\Omega}$ as before.   We say a vector field $X$ is \emph{tangential to $\Omega$} if $X(x) \cdot V = 0$ for all $V \in \cN_{T_x\Omega}$, and for all $x \in \Omega$.  Similar to \eqref{eqn:theta-lsc}, $\Theta$ obeys the following lower-semi-continuity:
\begin{lemma}
Suppose $x_i \to x \in B_1$, and $\Omega_i \in \cD_{\delta_i}(\OmegaRef)$ for some $\delta_i \to 0$.  Then we have
\begin{equation}\label{eqn:theta-lsc-omega}
\liminf_i \Theta_{T_{x_i}\Omega_i} \geq \Theta_{T_x\OmegaRef}.
\end{equation}
\end{lemma}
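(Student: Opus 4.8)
The plan is to reduce the assertion to the lower-semicontinuity \eqref{eqn:theta-lsc} for the \emph{fixed} cone $\OmegaRef$, by tracking how the density of a polyhedral cone transforms under linear maps that are close to the identity.

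First I would unwind the definitions. Write $\Omega_i = \Phi_i(\Psi_i(\OmegaRef)) \cap B_1$ with $\Phi_i,\Psi_i$ the data of $\Omega_i \in \cD_{\delta_i}(\OmegaRef)$ from Definition \ref{def:omega} (with $\eps = \delta_i$), and set $z_i := \Psi_i^{-1}(\Phi_i^{-1}(x_i)) \in \OmegaRef$, so that $x_i = \Phi_i(\Psi_i(z_i))$; this is legitimate since $x_i \in B_1 \subset \Phi_i(B_2)$ and $\Phi_i^{-1}(B_1) \subset B_2$. From $|\Phi_i - \mathrm{id}|_{C^0(B_2)} \le \delta_i$ we get $|\Phi_i^{-1}(x_i) - x_i| \le \delta_i$, and since $\Psi_i^{-1} \to \mathrm{Id}$ and $x_i \to x$, this forces $z_i \to x$; as $\OmegaRef$ is closed, $x \in \OmegaRef$, so $\Theta_{T_x\OmegaRef}$ is defined. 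By the formula $T_x\Omega = (D\Phi|_{\Psi(z)} \circ \Psi)\, T_z\OmegaRef$ recorded above, we have $T_{x_i}\Omega_i = A_i(T_{z_i}\OmegaRef)$ with $A_i := D\Phi_i|_{\Psi_i(z_i)} \circ \Psi_i$; since $|D\Phi_i - \mathrm{Id}|_{C^0(B_2)} \le |\Phi_i - \mathrm{id}|_{C^2(B_2)} \le \delta_i$ and $|\Psi_i - \mathrm{Id}| \le \delta_i$, the linear isomorphisms $A_i$ converge to $\mathrm{Id}$.

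Next I would isolate the elementary estimate that, for every linear isomorphism $A$ of $\R^{n+1}$ and every closed cone $K \subset \R^{n+1}$,
\[
\Theta_{AK} \ \ge\ \frac{|\det A|}{\|A\|^{n+1}}\,\Theta_K,
\]
where $\|A\|$ is the operator norm. Indeed $AK$ is again dilation-invariant, so by \eqref{eqn:theta-omega} and scaling $\haus^{n+1}(AK \cap B_{\|A\|}) = \|A\|^{n+1}\,\omega_{n+1}\,\Theta_{AK}$; on the other hand $A(K \cap B_1) \subset AK \cap B_{\|A\|}$ and the change-of-variables formula gives $\haus^{n+1}(A(K\cap B_1)) = |\det A|\,\omega_{n+1}\,\Theta_K$, whence the claim. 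The point of stating it this way is that the constant $|\det A|/\|A\|^{n+1}$ depends only on $A$, hence is uniform over the (possibly varying) cones $T_{z_i}\OmegaRef$.

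Combining everything, and using that $|\det A_i|/\|A_i\|^{n+1} \to 1$,
\begin{align*}
\liminf_i \Theta_{T_{x_i}\Omega_i} &= \liminf_i \Theta_{A_i(T_{z_i}\OmegaRef)} \ \ge\ \liminf_i \frac{|\det A_i|}{\|A_i\|^{n+1}}\,\Theta_{T_{z_i}\OmegaRef} \\
&= \liminf_i \Theta_{T_{z_i}\OmegaRef} \ \ge\ \Theta_{T_x\OmegaRef},
\end{align*}
the last inequality being exactly \eqref{eqn:theta-lsc} applied to $z_i \to x$ in the fixed polyhedral cone domain $\OmegaRef$. The only mildly delicate point is the uniformity of the linear-distortion estimate over the changing cone types, which is why I would package it as a standalone inequality with a $K$-independent constant; the rest is bookkeeping with the convergences $\Phi_i \to \mathrm{id}$ in $C^2$ and $\Psi_i \to \mathrm{Id}$.
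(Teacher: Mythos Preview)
Your proof is correct and follows essentially the same route as the paper: both pull back $x_i$ to $z_i \in \OmegaRef$ via $\Psi_i^{-1}\Phi_i^{-1}$, write $T_{x_i}\Omega_i = A_i(T_{z_i}\OmegaRef)$ with $A_i \to \mathrm{Id}$, estimate the density distortion under the near-identity linear map, and finish with \eqref{eqn:theta-lsc}. Your packaging of the distortion as the clean inequality $\Theta_{AK} \ge |\det A|\,\|A\|^{-(n+1)}\,\Theta_K$ is a bit more explicit than the paper's, which writes the same bound directly in terms of $\delta_i$, but the argument is the same.
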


\begin{proof}
Write $\Omega_i = \Phi_i(\Psi_i(\OmegaRef)) \cap B_1$ as per Definition \ref{def:omega}, and let $z_i = \Psi_i^{-1}(\Phi_i^{-1}(x_i))$.  We have $T_{x_i}\Omega_i = (D\Phi_i|_{\Psi_i(z_i)} \circ \Psi_i)(T_{z_i}\OmegaRef)$, and therefore
\begin{align*}
\Theta_{T_{x_i}\Omega}
&\geq \omega_{n+1}^{-1}(1-c(n)\delta_i) \haus^{n+1}(T_{z_i}\OmegaRef \cap B_{1-c(n)\delta_i}) \\
&\geq (1-c(n)\delta_i) \Theta_{T_{z_i}\OmegaRef}.
\end{align*}
\eqref{eqn:theta-lsc-omega} then follows from the lower-semi-continuity \eqref{eqn:theta-lsc}.
\end{proof}

\subsection{Varifolds}

Our notion of weak surface will be a varifold.  For a more detailed background see \cite{simon:gmt} or \cite{All}.  Recall that an integral $n$-varifold in an open set $U$ is a Radon measure $M$ on $U \times Gr(n, n+1)$ of the form 
\[
M(\phi(x, S)) = \int_{\tilde M} \phi(x, T_xM) \theta(x) d\haus^n
\]
for some countably $n$-rectifiable set $\tilde M$, and some non-negative Borel measurable function $\theta : \tilde M \to \Z$.  Here $Gr(n, n+1)$ denotes the set of unoriented $n$-planes in $\R^{n+1}$.  We write $\cIV_n(U)$ for the space of integral $n$-varifolds in $U$.  If $S$ is an $n$-dimensional, $C^1$ submanifold of $\R^{n+1}$, we write $[S]$ for the obvious varifold induced by $S$.

Given $M \in \cIV_n(U)$, the mass measure $\mu_M = \pi_\sharp M$ is the pushforward under the projection $\pi : U \times Gr \to U$, so that $\mu_M = \haus^n \llcorner \theta \llcorner \tilde M$.  Given a $C^1_c(U)$ vector field $X$, generating a $1$-parameter family of diffeomorphisms $\phi_t : U \to \R^{n+1}$, the first-variation of $M$ along $X$ is the derivative
\[
\delta M(X) := \frac{d}{dt}|_{t = 0} (\phi_t)_\sharp M = \int \mathrm{div}_M(X) d\mu_M,
\]
where $div_M(X) = \sum_i e_i \cdot D_{e_i} X$, for any choice of ON basis $\{e_i\}_i$ of $T_x\tilde M$.  Relatedly, given a function $h \in C^1_c(U)$, we write $\nabla h = \pi_{T_x \tilde M}(Dh)$ for the tangential derivative of a function $h$ along $M$.

$M$ is said to have locally finite first variation if $\delta M$ is a bounded operator on every $W \subset \subset U$.  In this case we may decompose
\[
\delta M(X) = -\int X \cdot H d\mu_M + \int X \cdot \eta d\sigma
\]
where $H$ is the generalized mean curvature of $M$, $\sigma$ the generalized boundary measure, and $\eta$ the generalized boundary conormal.

We define the density ratio of $M$ in a ball $B_r(x) \subset U$ as
\[
\theta_M(x, r) := \frac{\mu_M(B_r(x))}{\omega_n r^n}.
\]
If $M$ has locally finite first variation, has zero generalized boundary, and $||H_M||_{L^\infty(U; \mu_M)} \leq \Lambda$, then $e^{\Lambda r}\theta_M(x, r)$ is increasing in $0 < r < d(x, \del U)$ \cite{All}.  In particular, the density at a point
\[
\theta_M(x) := \lim_{r \to 0} \theta_M(x, r)
\]
is a well-defined upper-semi-continuous function, satisfying $\theta_M(x) \geq 1$.% \textcolor{blue}{this paragraph may not be necessary}

Given a $C^1$ domain-with-corners $\Omega \subset \R^{n+1}$, we define the set of \emph{integral varifolds in $B_1$ with free-boundary in $\Omega$}, denoted $\cIH_n(\Omega, B_1)$, to be the set $M \in \cIV_n(B_1)$ satisfying the conditions that $M = M \llcorner \pi^{-1}(\mathrm{int}\Omega)$, and 
\[
\delta M(X) = -\int X \cdot H^{tan}_M d\mu_M
\]
for all $X \in C^1_c(B_1)$ tangential to $\Omega$, for some $H_M^{tan} \in L^1_{loc}(B_1, \R^{n+1}; \mu_M)$.  In other words, $M$ has mean curvature but no boundary ``tangential'' to $\del\Omega$ (made precise in Theorem \ref{thm:first-var}).

\subsection{Excess}

Our mechanism to establish regularity is the decay of an appropriate excess quantity.  Given $\Omega = \Phi(\Psi(\OmegaRef)) \cap B_1$ as in Definition \ref{def:omega}, and an $n$-plane $V$, the full $L^2$ excess is:
\begin{align*}
E_\delta(\Phi, M, p + V, x, r) 
&= \max \left\{ r^{-n-2} \int_{B_r(x)} d_{p + V}^2 d\mu_M , \right. \\
&\quad \left. \delta^{-1} r^2 ||H_M||^2_{L^\infty(B_1; \mu_M)} + \delta^{-1} r^2 |D^2 \Phi|_{C^0(B_{2r}(x))} \right\}.
\end{align*}
We will often abbreviate
\[
E(\Phi, M, p + V, x, r) = E_1(\Phi, M, p + V, x, r),
\]
and may also omit the $\Phi$ or $M$ when there is no ambiguity.  At scales for which $\Omega$ resembles a fixed model cone $\OmegaRef$, we will prove decay on $E_\delta$ by reducing the problem to a decay estimate of the linearized problem.

When traversing cone types, we will find it convenient to work with the ``total'' excess:
\begin{align*}
E^\infty(M, p + V, x, r) &= r^{-2} \sup_{z \in \spt M \cap B_r(x)} d_{p + V}(z)^2 \\
&\equiv r^{-2} \sup_{z \in \spt M \cap B_r(x)} |\pi_V^\perp(z - p)|^2 \\
E^W(M, V, x, r) &= r^{-n} \int_{B_r(x)} |\pi_{T_z M} - \pi_V|^2 d\mu_M(z) \\
E^{tot}(\Phi, M, p + V, x, r) &= \max\{ E^\infty(M, p + V, x, r),  E^W(M, V, x, r), E(\Phi, M, p + V, x, r)\}.
\end{align*}

Implicit in the definition of excess is the requirement that $\Phi(0) = 0$, $D\Phi|_0 = \mathrm{Id}$, so even though $E$ is formally scale-invariant one must be a little careful: in a general ball $B_r(x) \cap \Omega$ will not look like a cone, and even when it does, if $x \neq 0$ then $\Phi$ will no longer be the right map.  This is made precise in the following section.

%We point out that $E$ is scale-invariant, however one must be a little careful, since rescaling at points other than zero will result in a $\Phi$ that is not anymore a perturbation of the identity.  The ``correct'' way to rescale is to change the model cone $\OmegaRef$ depending on the point.  This is made precise in the following subsection.

\subsection{Changing cone type} \label{sec:changing-type}

The key fact that we will use in our regularity theorem is that at any point, and at any appropriately small scale, $\Omega$ looks like one of finitely-many polyhedral cone domains.
\begin{lemma}\label{lem:B}
Let $\OmegaRef$ be a polyhedral cone in $\R^{n+1}$.  There is a finite set of polyhedral cone domains $\cT(\OmegaRef) := \{ T_z\OmegaRef : z \in \OmegaRef \}$, and constants $B(\OmegaRef) \in (0, 1/4)$, $\eps_B(n)$, $c_B(n)$, so that the following holds.

Given any $\Omega = \Phi(\Psi(\OmegaRef)) \cap B_1 \in \cD_\eps(\OmegaRef)$, for $\eps \leq \eps_B$, and given $x = \Phi(\Psi(z)) \in \del_i\Omega$, take $r \leq \min\{ B d(x, \del_{i-1}\Omega), 1-|x| \}$, and define $\Omega_{x, r} = \frac{1}{r}(\Omega - x)$.  Then the following holds:
\begin{enumerate}
\item \label{item:poly1} We have
\begin{equation}\label{eqn:poly1}
(1-c_B\eps) d(x, \del_{i-1}\Omega) \leq d(z, \del_{i-1}\OmegaRef) \leq (1+c_B\eps) d(x, \del_{i-1}\Omega),
\end{equation}

\item \label{item:poly2} Given $y \in \Omega \cap B_r(x)$, then for every $V \in \cP_{T_x\Omega}$ there is a $W \in \cP_{T_y\Omega}$ such that $|\pi_V - \pi_W| \leq c(n) |D^2 \Phi|_{C^0(B_2)} |x - y|$.

\item \label{item:poly3} There is a $T_z\OmegaRef \in \cT$, a linear isomorphism $\beta : \R^{n+1} \to \R^{n+1}$, a $C^2$ diffeomorphism $\alpha : B_2 \to \R^{n+1}$, satisfying
\begin{equation}\label{eqn:poly3-1}
|\beta - \mathrm{Id}| \leq c_B \eps,
\end{equation}
and $\alpha(0) = 0$, $D\alpha|_0 = \mathrm{Id}$, and 
\begin{equation}\label{eqn:poly3-2}
(1-c_B\eps)|D^2\alpha|_{C^0(B_{2})} \leq r |D^2\Phi|_{C^0(B_{2r}(x))} \leq (1+c_B\eps)|D^2 \alpha|_{C^0(B_{2})}, 
\end{equation}
so that
\begin{equation}\label{eqn:poly3-3}
\Omega_{x,r} \cap B_1 = \alpha(\beta(T_z\OmegaRef)) \cap B_1,
\end{equation}
In particular, $\Omega_{x, r} \in \cD_{c_B\eps}(T_z\OmegaRef)$.

\item \label{item:poly4} In the notation of part \ref{item:poly3},
\begin{align}
\frac{1}{2} E(\Phi, M, p + V, x, r) 
&\leq E(\alpha, (\eta_{x, r})_\sharp M, \eta_{x,r}(p) + V, 0, 1) \label{eqn:poly4} \\
&\leq 2 E(\Phi, M, p + V, x, r), \nonumber
\end{align}
and the same with $E^{tot}$ in place of $E$.
\end{enumerate}
\end{lemma}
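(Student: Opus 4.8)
The plan is to prove the four items in order, since each uses the previous ones, and to track everything through the defining equation $\Omega \cap B_1 = \Phi(\Psi(\OmegaRef)) \cap B_1$.

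\textbf{Item \ref{item:poly1}.} I would write $x = \Phi(\Psi(z))$ and observe that $\del_{i-1}\Omega = \Phi(\Psi(\del_{i-1}\OmegaRef)) \cap B_1$ by definition. Since $\Psi$ is linear with $|\Psi - \mathrm{Id}| \le \eps$, it distorts Euclidean distances by a factor in $[1-c\eps, 1+c\eps]$; since $\Phi$ has $\Phi(0)=0$, $D\Phi|_0 = \mathrm{Id}$, and $|\Phi - \mathrm{id}|_{C^2(B_2)} \le \eps$, it is also bi-Lipschitz with constant $1 + c\eps$ on $B_2$ (mean value theorem applied to $D\Phi - \mathrm{Id}$). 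Composing gives \eqref{eqn:poly1}, modulo checking that the nearest point of $\del_{i-1}\OmegaRef$ to $z$ maps (approximately) to the nearest point of $\del_{i-1}\Omega$ to $x$ and vice versa — a routine bi-Lipschitz argument. One subtlety: $\del_{i-1}\Omega$ is only defined inside $B_1$, so I need $B d(x, \del_{i-1}\Omega)$-scale balls to stay inside $B_2$ where $\Phi$ is controlled; this is fine since $B < 1/4$ and $d(x,\del_{i-1}\Omega) \le 2$.

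\textbf{Item \ref{item:poly2}.} Recall $\cP_{T_x\Omega}$ consists of horizontal $n$-planes of $T_x\Omega = (D\Phi|_{\Psi(z)}\circ\Psi)(T_z\OmegaRef)$, and similarly for $y = \Phi(\Psi(w))$. By \eqref{eqn:pv-inclusion}, applied at the level of $\OmegaRef$ (where $y$ lies in a $2Bd(z,\del_{i-1}\OmegaRef)$-ball about $z$ by Lemma \ref{lem:Bref} and part \ref{item:poly1}), we have $\cP_{T_z\OmegaRef}\subset \cP_{T_w\OmegaRef}$, so the horizontal plane direction is the \emph{same} plane $P$ in $\OmegaRef$-coordinates. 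Thus $V = (D\Phi|_{\Psi(z)}\circ\Psi)(P)$ and I may take $W = (D\Phi|_{\Psi(w)}\circ\Psi)(P)$. The difference $|\pi_V - \pi_W|$ is then controlled by $|D\Phi|_{\Psi(z)} - D\Phi|_{\Psi(w)}| \le |D^2\Phi|_{C^0(B_2)}|\Psi(z) - \Psi(w)| \le (1+\eps)|D^2\Phi|_{C^0}|x-y|$, again using bi-Lipschitz bounds for $\Phi$. (The $\Psi$-factors cancel in the projection comparison up to the same $D^2\Phi$ error since $|\Psi - \mathrm{Id}|$ is a \emph{constant}, contributing nothing to a $z$-vs-$w$ difference.)

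\textbf{Item \ref{item:poly3} — the main obstacle.} Here I must exhibit the rescaled domain $\Omega_{x,r} \cap B_1$ as $\alpha(\beta(T_z\OmegaRef)) \cap B_1$. The idea: $\Omega \cap B_{2Bd}(x) = \Phi(\Psi(\OmegaRef \cap B_{\approx 2Bd}(z))) = \Phi(\Psi(z + T_z\OmegaRef)) \cap B_{2Bd}(x)$ using Lemma \ref{lem:Bref} for $\OmegaRef$ and part \ref{item:poly1} to compare scales. Now apply $\eta_{x,r}$: setting $\zeta = \Psi(z)$, one writes $\frac1r(\Phi(\Psi(z) + \Psi(T_z\OmegaRef)) - x) = \frac1r(\Phi(\zeta + \Psi(T_z\OmegaRef)) - \Phi(\zeta))$. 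Taylor-expanding $\Phi$ about $\zeta$: $\Phi(\zeta + v) - \Phi(\zeta) = D\Phi|_\zeta(v) + R(v)$ with $|R(v)| \le |D^2\Phi|_{C^0}|v|^2/2$. So $\Omega_{x,r}\cap B_1$ equals $\{\frac1r D\Phi|_\zeta(\Psi(u)) + \frac1r R(\Psi u) : u \in T_z\OmegaRef\} \cap B_1$. Since $T_z\OmegaRef$ is a cone, $\frac1r D\Phi|_\zeta\Psi(u) = D\Phi|_\zeta\Psi(u/r)$ ranges over $D\Phi|_\zeta\Psi(T_z\OmegaRef)$; I define $\beta$ to be the linear map $D\Phi|_\zeta \circ \Psi$ composed with a normalization, but I must massage it so that its linear part satisfies $|\beta - \mathrm{Id}|\le c_B\eps$: this works because $D\Phi|_\zeta = \mathrm{Id} + O(|D^2\Phi|_{C^0}|\zeta|) = \mathrm{Id} + O(\eps)$ and $|\Psi - \mathrm{Id}|\le\eps$. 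The curvature map $\alpha$ absorbs the remainder: $\alpha(y) = \beta(y) + \frac1r \beta\big(R(r\beta^{-1}y)\big)$ roughly, arranged so $\alpha(0)=0$, $D\alpha|_0 = \mathrm{Id}$ (the remainder is quadratic so its differential at $0$ vanishes), and $|D^2\alpha|_{C^0(B_2)} = r|D^2\Phi|_{C^0(B_{2r}(x))}\cdot(1 + O(\eps))$ by the chain rule — this last identity is exactly \eqref{eqn:poly3-2}, and it is the crux: the $C^2$-norm of the curving map scales \emph{linearly} in $r$. The bookkeeping here — keeping $\alpha$ a genuine diffeomorphism on $B_2$ (needs $r|D^2\Phi|$ small, guaranteed by $\eps_B$ small and $r \le B\cdot 2 < 1$), getting all the normalizations and the two-sided bound \eqref{eqn:poly3-2} simultaneously — is where the real work lies; everything else is soft.

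\textbf{Item \ref{item:poly4}.} This is a direct computation from the definitions. The pushforward $(\eta_{x,r})_\sharp M$ has $\mu_{(\eta_{x,r})_\sharp M}(B_\rho(\eta_{x,r}(q))) = r^{-n}\mu_M(B_{r\rho}(q))$, and $d_{\eta_{x,r}(p)+V}(\eta_{x,r}(z)) = r^{-1}d_{p+V}(z)$ since $V$ is a linear subspace and $\eta_{x,r}$ is a dilation/translation, so the $L^2$-height term $\rho^{-n-2}\int_{B_\rho} d^2\,d\mu$ is \emph{exactly} scale-invariant: the first term of $E$ at $(x,r)$ equals that of $E$ at $(0,1)$ for the rescaled data. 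For the second term, $\rho^2\|H_{(\eta_{x,r})_\sharp M}\|_{L^\infty}^2 = r^2\|H_M\|_{L^\infty}^2$ exactly (mean curvature scales like $1/r$), and $|D^2\alpha|_{C^0(B_2)}$ at scale $1$ equals $r|D^2\Phi|_{C^0(B_{2r}(x))}(1+O(\eps))$ by \eqref{eqn:poly3-2}. So $E(\alpha, (\eta_{x,r})_\sharp M, \ldots, 0, 1)$ agrees with $E(\Phi, M, \ldots, x, r)$ up to a factor $1 + c\eps \le 2$ (choosing $\eps_B$ small); the tangent-plane excess $E^W$ and the sup-excess $E^\infty$ appearing in $E^{tot}$ are likewise exactly scale-invariant by the same change-of-variables (tangent planes and $\pi_V$ are unchanged by dilation), so \eqref{eqn:poly4} and its $E^{tot}$ analogue follow. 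I would remark that the only non-exact comparison is the $|D^2\Phi|$-versus-$|D^2\alpha|$ term, and that is exactly what part \ref{item:poly3} was set up to control.
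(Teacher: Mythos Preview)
Your proposal is correct and follows essentially the same approach as the paper. For item~\ref{item:poly3} the paper gives the clean explicit formulas $\beta = D\Phi|_{\Psi(z)}\circ\Psi$ and $\alpha(y) = \frac{1}{r}\big(\Phi(\Phi^{-1}(x) + r\,D\Phi^{-1}|_x\, y) - x\big)$, which packages your Taylor-expansion description and makes the verifications $\alpha(0)=0$, $D\alpha|_0=\mathrm{Id}$, and the two-sided Hessian bound \eqref{eqn:poly3-2} immediate; your sketched formula for $\alpha$ is slightly garbled (as written it has $D\alpha|_0=\beta$ rather than $\mathrm{Id}$), but the underlying construction is the same.
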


\begin{remark}\label{rem:scaling-E}
If $x = 0$, then for any $r \leq 1$ we have $r^{-1}\Omega \cap B_1 = \Phi_{0, r}(\Psi(\OmegaRef)) \cap B_1$, where $\Phi_{0, r}(y) = r^{-1}\Phi(r y)$, and therefore we have the exact scaling
\[
E(\Phi, M, p + V, 0, r) = E(\Phi_{0,r}, (\eta_{0, r})_\sharp M, r^{-1}p + V, 0, 1).
\]
\end{remark}

\begin{proof}
For $\eps_B(n)$ sufficiently small, $\Phi \circ \Psi$ is a $(1+c(n)\eps)$-bi-Lipschitz equivalence:
\[
|\Phi(\Psi(y)) - \Phi(\Psi(y')) - (y - y')| \leq c(n) \eps|y - y'|,
\]
from which \ref{item:poly1} follows directly.  In particular note that $\Psi^{-1}\Phi^{-1} B_r(x) \subset B_{2B d(z, \del_{i-1}\OmegaRef)}(z)$.

We prove \ref{item:poly2}.  Observe that for a fixed subspace $U^n$, and linear isomorphisms $A, B : \R^{n+1} \to \R^{n+1}$, the map $A \mapsto \pi_{A(U)}$ is well-defined and analytic in $A$, and satisfies
\begin{equation}\label{eqn:B-map}
|\pi_{A(U)} - \pi_{B(U)}| \leq c(n) |A - B|
\end{equation}
for $A, B$ satisfying $|A - Id| + |B - Id| < \eps(n)$.  Write $y = \Phi(\Psi(w))$.  From \ref{item:poly1} and the inclusion \eqref{eqn:pv-inclusion}, we have
\[
\cP_{T_z\OmegaRef} \subset \cP_{T_w\OmegaRef},
\]
and therefore we can take $W = D\Phi|_w \circ D\Phi^{-1}|_x V$.  We then estimate, provided $\eps_B(n)$ is sufficiently small, 
\[
|\pi_V - \pi_W| \leq c(n)\left| D\Phi|_w \circ D\Phi^{-1}|_x - Id\right| \leq c(n) \left|D\Phi^{-1}|_x - D\Phi^{-1}|_y\right| \leq c(n) |x - y| |D^2\Phi|_{C^0(B_1)}.
\]
This proves \ref{item:poly2}.

We prove \ref{item:poly3}, \ref{item:poly4}.  Let $\cT = \{T_z \OmegaRef\}_{z \in \OmegaRef}$, and $B$ be as in Lemma \ref{lem:Bref}.  Let us define
\[
\beta = D\Phi|_{\Psi(z)} \circ \Psi, \quad \alpha(y) = \frac{1}{Br}( \Phi(x + r D\Phi^{-1}|_x y) - x).
\]
The bound \eqref{eqn:poly3-1} and $\alpha(0) = 0$, $D\alpha|_0 = \mathrm{Id}$ follow trivially.  The Hessian bound in \eqref{eqn:poly3-2} follows from our definition of $\alpha$ and the estimate $(1-c(n)\eps) |v| \leq |D\Phi|_y v| \leq (1 + c(n)\eps) |v|$.  Ensuring $\eps_B(n)$ is small, we have 
\[
r \leq 2 B d(z, \del_{i-1}\OmegaRef),
\]
and therefore
\begin{align*}
(\alpha \circ \beta)( B_2 \cap T_x\OmegaRef) \cap B_1 
&= (\alpha \circ \beta)( B_2 \cap \OmegaRef_{x, r}) \cap B_1 \\
&= \Omega_{x, r} \cap B_1.
\end{align*}
This proves \ref{item:poly3}.  Lastly, \ref{item:poly4} follows by \eqref{eqn:poly3-2} and scaling, ensuring $\eps_B(n)$ is sufficiently small.
\end{proof}

\section{Main theorem}\label{sec:main}

Our main Theorem \ref{thm:main} is the following Allard-type regularity result, which says loosely that whenever an integral varifold $M$ has free-boundary in $\Omega = \Phi(\OmegaRef = \OmegaRefN \times \R)$, and: $\Phi$ is sufficiently close to the identity, $M$ has sufficiently small mean-curvature, and $M$ is sufficiently varifold close to the ``horizontal'' plane $\R^n \times \{0\}$, then $\spt M$ is a $C^{1,\alpha}$ perturbation of $\OmegaRefN$.

In general $\Omega$ is curved, and so $\spt M$ will not be graphical over a particularly ``nice'' subdomain of $\R^n$.  Instead, it is more convenient and precise to look at $\Phi^{-1}(\spt M)$, which will be a graph over $\R^n \cap \OmegaRef \cap B_{1/32}$.  We do not lose anything in our estimates by doing this, as even before this transformation we must use $|D^2\Phi|_{C^0}$ to control the tilting of tangent planes of $\spt M$.

For various reasons we in fact want to allow not only the diffeomorphism $\Phi$ to change, but also the reference domain $\OmegaRef$ (mainly because different points in $\Phi(\OmegaRef)$ will be modeled on different polyhedral cone domains, even when staying in the same stratum; see Section \ref{sec:changing-type}).  For this reason we in fact consider domains of the form $\Phi(\Psi(\OmegaRef))$, where $\Phi$ is a diffeomorphism close to $Id$, and $\Psi$ is a linear map close to $Id$.  Our constants $\delta, c, \alpha$ will be uniform in $\Phi, \Psi$, but in each particular case our ``reference'' polyhedral domain will be $\Psi(\OmegaRef)$.  It may be easier to parse Theorem \ref{thm:main} by considering the case when $\Psi = Id$, $p = 0$, $V = \R^n$, in which case $q = 0$, $W = \R^n$, $T_0\Omega = \OmegaRef$.

\begin{theorem}[Allard-type regularity]\label{thm:main}
Let $\OmegaRef$ be a polyhedral cone domain.  There are constants $\delta(\OmegaRef)$, $c(\OmegaRef)$, $\alpha(\OmegaRef)$ so that the following holds.  Let $\Omega = \Phi(\Psi(\OmegaRef)) \cap B_1 \in \cD_\delta(\OmegaRef)$, and take $M \in \cIH(\Omega, B_1)$.  Assume there is a $V \in \cP_{T_0\Omega}$, $p \in V^\perp$, so that
\begin{equation}\label{eqn:main-hyp1}
E := \int_{B_1} d_{p + V}^2 d\mu_M + ||H||_{L^\infty(B_1; \mu_M)}^2 + |D^2\Phi|_{C^0(B_2)}^2 \leq \delta^2
\end{equation}
and
\begin{equation}\label{eqn:main-hyp2}
\theta_M(0, 1) \leq (3/2) \Theta_{T_0\Omega}, \quad \spt M \cap B_{1/512} \neq \emptyset.
\end{equation}
Then if we set $q = \Phi^{-1}(p)$, $W = D\Phi^{-1}|_p V$, we can find a function $f : (q + W) \cap\Psi(\OmegaRef) \cap B_{1/128}(q) \to W^\perp$ satisfying
\begin{equation}\label{eqn:main-concl1}
|f|_{C^{1,\alpha}} \leq c E^{1/2}, 
\end{equation}
so that
\begin{equation}\label{eqn:main-concl2}
\Phi^{-1}(\spt M) \cap B_{1/256} \subset \Phi^{-1}(\spt M) \cap B_{1/128}(q) = \graph_{q + W}(f) \cap B_{1/128}(q).
\end{equation}
\end{theorem}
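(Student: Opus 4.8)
The plan is to prove Theorem \ref{thm:main} by establishing an \emph{excess decay} estimate of the form
\begin{equation}\label{eqn:outline-1}
E^{tot}(\Phi, M, p + V, x, \theta r) \leq \tfrac12 \theta^{2\alpha} E^{tot}(\Phi, M, p + V, x, r)
\end{equation}
for a fixed small $\theta = \theta(\OmegaRef)$ and all appropriate $x \in \spt M$ and $r$, together with the usual iteration that turns such a decay into $C^{1,\alpha}$ graphicality. Before any of this can run, one needs the basic density/Ahlfors-regularity package near $\del\Omega$: using the free-boundary first variation and the tangential test vector fields, prove a trace-type inequality controlling $\int \mathrm{div}_M X\, d\mu_M$ for $X$ \emph{not} necessarily tangential (this is the ``first-variation control'' alluded to in the introduction, building on Simon's 2D-in-3D-wedge analysis), then feed it into Moser iteration to get a sharp $L^\infty$--$L^2$ bound on $d_{p+V}$ over $\spt M$ and the non-concentration estimate stating that $\mu_M$ cannot accumulate near the low-dimensional strata of $\del\Omega$. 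The monotonicity hypothesis \eqref{eqn:main-hyp2} combined with \eqref{eqn:theta-lsc-omega} is what forces low-density blowups to be (multiplicity-one) horizontal planes rather than vertical ones, which is essential and is exactly why the theorem fails for vertical planes.

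The core of \eqref{eqn:outline-1} splits into two regimes according to whether $\Omega$ at scale $r$ around $x$ still ``resembles a fixed cone type.'' In the \emph{fixed-cone regime}, I would argue by compactness/blow-up in the style of De Giorgi's interior regularity proof: suppose \eqref{eqn:outline-1} fails along a sequence $M_i$ with excess $\to 0$; rescale by the excess; use the first-variation control and non-concentration to show the rescaled height functions converge (strongly in $L^2$) to a limit $u$; show $u$ solves the linearized problem, which here is a \emph{Neumann problem} for the Laplacian on the cone $\OmegaRef_0 \times W$ with Neumann condition on $\del\OmegaRef_0 \times W$ (the linearization of the free-boundary condition, using convexity of $\OmegaRef_0$); invoke decay estimates for harmonic functions with Neumann data on a convex cone (these give $C^{1,\alpha}$ with $\alpha$ governed by the first nonzero Neumann eigenvalue on the link, hence the expected $\alpha = \pi/\theta_0 - 1$ for a wedge) to contradict the failure of decay at the limit. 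The sharp $L^\infty$--$L^2$ bound is what guarantees the convergence is in the right topology so the limit genuinely sees the boundary condition.

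The \emph{main obstacle} is the transition across cone types: as $r \downarrow 0$ around a fixed $x$, the tangent domain $T_x\Omega$ is fixed, but at intermediate scales $\Omega_{x,r}$ may look like many different cones in $\cT(\OmegaRef)$ (a wedge looks like a wedge at large scales, a half-plane at smaller scales near an interior-of-face point, an interior point at the smallest scales). The strategy here is an \emph{induction on the strata dimension}: by Lemma \ref{lem:B}, at scales $r \leq B\, d(x, \del_{i-1}\Omega)$ around $x \in \del_i\Omega$, $\Omega_{x,r} \in \cD_{c_B\eps}(T_z\OmegaRef)$ for one of finitely many models, and part \eqref{item:poly4} of that lemma says the excess is comparable (within a factor $2$) before and after the rescaling $\eta_{x,r}$. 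So I would run the fixed-cone excess decay on each ``dyadic block'' of scales where a single model applies, accept the loss of a bounded multiplicative constant each time one crosses from one model to the next (finitely many crossings, because the strata filtration has length $\leq n+1$), and chain the decays together to obtain a single excess decay valid down to scale $0$. This chaining argument — essentially ``giving up a constant finitely many times in a stratified model'' — is soft and general; the technical weight is all in (a) the first-variation/trace inequality and (b) the sharp $L^\infty$--$L^2$ estimate that makes the blow-up limit well-behaved, and these are precisely where an induction on the dimension of the cone (bottoming out at Simon's wedge case) is needed.
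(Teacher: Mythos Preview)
Your proposal is correct and follows essentially the same architecture as the paper's proof: first-variation control proved by induction on $l$ (Theorem \ref{thm:first-var}) feeding Moser iteration to get the sharp $L^\infty$--$L^2$ and lower Ahlfors bounds (Corollaries \ref{cor:height-w12-bound}, \ref{cor:lower-reg}); a blow-up argument at a fixed cone model producing a $W^{1,2}$ Neumann-harmonic Jacobi field whose decay comes from the Lichnerowicz eigenvalue bound (Propositions \ref{prop:blow-up}, \ref{prop:expansion}, \ref{prop:decay}); and then an induction on the strata using Lemma \ref{lem:B} to chain the single-model decay across the finitely many cone types (Theorem \ref{thm:total-decay}). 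One small correction: your displayed decay \eqref{eqn:outline-1} keeps the same plane $p+V$ on both sides, but the actual one-step decay (Proposition \ref{prop:decay}) requires replacing $p+V$ by a nearby $p'+V' \in \cP_{T_0\Omega}$ at each iteration---this plane-adjustment is what the linear expansion $v = b + A\cdot x + O(r^e)$ is for, and without it the iteration does not close.
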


Some comments are in order.
\begin{remark}
Even though $\OmegaRef$ is convex, $\Omega$ need not be.
\end{remark}

\begin{remark}
$\alpha$ can in fact be chosen to be anything in some interval $(0, e - 1)$ (where $e = e(\OmegaRef) > 1$ as in Proposition \ref{prop:expansion} is determined by the Neumann eigenvalue expansion of $\Omega_0^l \cap \del B_1$), provided $\delta$ and $c$ are taken to depend on $\alpha$.  For example, when $\OmegaRef$ is $\R^{n+1}$ or a half-plane, then any $\alpha \in (0, 1)$ is admissible.  When $\OmegaRef = W^2 \times \R^{n-1}$ is a wedge with angle $\gamma < \pi$, then we can take $\alpha \in (0, \min\{1, \pi/\gamma - 1\})$.  See Remark \ref{rem:expansion} for more details.
\end{remark}

\begin{remark}
We state and prove Theorem \ref{thm:main} in codimension-one Euclidean space.  However when $l = 1$ our proof carries over verbatim to higher codimension and ambient manifolds, giving an alternate proof of \cite{GrJo}.  When $l \geq 2$, the proof carries over except for two estimates in Section \ref{sec:first-var}, which continue to hold if one knows a priori that $\spt M$ is contained in some $(n+1)$-dimensional submanifold.  See Section \ref{sec:codim} for details.
\end{remark}

\begin{remark}
If one assumes $\theta_M(0, 1) \leq (1+\delta) \Theta_{T_0\Omega}$ then Theorem \ref{thm:main} holds for varifolds which are only rectifiable, but have a lower density bound $\theta_M \geq 1$ $\mu_M$-a.e.  This requires only minor modifications of the proof. (Specifically, in the contradiction arguments of Proposition \ref{prop:graph} and Lemma \ref{lem:sharp-mass-bound}, the choice of constant $\delta_1$ in Corollary \ref{cor:decay}, and the choice of constants in the induction argument of of Theorem \ref{thm:total-decay}.).  In a similar vein, Theorem \ref{thm:main} also holds if we assume $H_M \in L^p(\mu_M)$, for $p > n$ instead of $p = \infty$.  In this case our constants would depend on $p$ also.
\end{remark}

Our regularity Theorem \ref{thm:main} requires $V$ to be ``horizontal,'' in the sense that if $\Psi(\OmegaRef) = \OmegaRefN \times \R$, then $V \supset \OmegaRefN$.  When $V$ is instead ``vertical,'' in the sense that $V \supset \{0\} \times \R$, then regularity as in Theorem \ref{thm:main} can fail.  Below is an example illustrating this.
\begin{example}\label{ex:vertical-plane}
Let $W \subset \R^2$ be the wedge $\{ r e^{i\theta} : r \in [0, \infty), -\pi/6 \leq \theta \leq \pi/6 \}$, and let $\bY \subset \R^2$ be the cone over $\{ 1, e^{i 2\pi/3}, e^{-i 2\pi/3}\}$, consisting of three rays meeting at the origin at $120^0$.  Then for every $\eps > 0$, the integral varifold $M_\eps$ given by integrating over $(\eps + \bY ) \cap W$ is stationary with free-boundary in $W$.  As $\eps \to 0$, then $M_\eps$ converges as varifolds to the ``vertical'' plane $P = \{ x \geq 0, y = 0 \}$, but none of the $M_\eps$ are $C^1$ perturbations of $P$.

One can construct a similar example by restricted the tetrahedal cone to a $0$-symmetric domain in $\R^3$ consisting of the intersection of $4$ half-spces.
\end{example}
Example \ref{ex:vertical-plane} is a little contrived, but we expect one should be able to construct smooth counterexamples.  However, we would not expect these examples to be minimizing (in the sense of integral currents).  Relatedly, when $\OmegaRef$ is a wedge, or has dihedral angles $\leq \pi/2$, then the vertical planes are not minimizing (Lemma \ref{lem:minz-planes}).  For more general convex $\OmegaRef$, or when $\OmegaRef$ is a non-convex wedge, this may fail (see Examples \ref{ex:non-convex}, \ref{ex:possible-minz-plane}).

When $\Omega$ is non-convex, Theorem \ref{thm:main} can fail also, even when $M$ is in some sense minimizing.
\begin{example}\label{ex:non-convex}
\cite[Theorem 1]{HildebrandtSauvignyminimalIII} implies the following: Suppose $\Omega = W^2 \times \R$, where $W^2$ is a wedge with angle $> \pi$, and $\Gamma$ is any smooth curve in $\sint\Omega \cap \{ x^2 + y^2 = 1\}$, such that $\Gamma$ meets $\del\Omega$ only at its endpoints $\{p_1, p_2\}$, and the height function $x_3|_\Gamma$ has no maxima away from the endpoints.  Let $B_+ = \{ (x, y) \in \R^2 : x^2 + y^2 < 1, y > 0 \}$.  Then there is a map $F \in C^0(\overline{B_+}, \Omega) \cap W^{1,2}(\overline{B_+}, \Omega)$ such that
\begin{enumerate}
	\item $F$ minimizes the Dirichlet energy, and $F(B_+)$ is a smooth minimal surface;
	\item $F$ maps the semi-circle $\del B_+ \cap \{y > 0\}$ monotonically to $\Gamma$, $\pi_{\R^2 \times \{0\}} \circ F$ maps the interval $[-1, 1]$ monotonically into to $\del W$, and $\pi_{\R^2} \circ F$ maps $B_+$ diffeomorphically to $\sint\Omega \cap B_1$;
	\item writing $F(\overline{B_+}) \cap \{0\}\times \R = \{q\}$, then $F^{-1}(q) = [a_1, a_2]$ is an interval of positive length, and $F$ extends smoothly to $\overline{B_+} \setminus \{ -1, 1, a_1, a_2\}$;
	\item on $(-1, 1)\setminus [a_1, a_2]$, $F$ meets $\del\Omega$ orthogonally;
	\item on $(a_1, a_2)$, the unit normal of $F$ is horizontal.
\end{enumerate}

These items imply that $M = [F(\overline{B_+})] \in \cIVT_2(\Omega, \R^3\setminus \Gamma)$, and has zero mean curvature.  By choosing $\Gamma$ to be contained in a very thin slab $\R^2\times [-\eps, \eps]$, the maximum principle implies $F(\overline{B_+})$ is contained in this slab also.  Therefore we can arrange so that $M \llcorner B_{1/2}$ is arbitrarily varifold close to the multiplicity-one horiztonal plane $[\R^2] \llcorner B_{1/2}$, but $\spt M$ will never be graphical over $\R^2$ at $0$.
\end{example}

\subsection{Outline of proof}

Our strategy to prove Theorem \ref{thm:main} is to show the following excess decay estimate: for all $x \in \spt M \cap B_{1/16}$ there is a plane $V_x \in \cP_{T_x\Omega}$ so that
\begin{equation}\label{eqn:outline-1}
\sup_{z \in B_r(x) \cap \spt M} r^{-2} d(z, x + V_x)^2 \leq c(\OmegaRef) r^{2\alpha} E \quad \forall 0 < r < 1/4,
\end{equation}
where $\alpha(\OmegaRef) \in (0, 1)$.  From \eqref{eqn:outline-1} it follows easily that $\spt M \cap B_{1/32}$, and hence $\Phi^{-1}(\spt M) \cap B_{1/64}$, is a $C^{1,\alpha}$ graph with norm controlled by $c E^{1/2}$.

We prove \eqref{eqn:outline-1} in two steps.  In step one (Section \ref{sec:decay}), given any fixed model cone $\OmegaRef$, we prove a decay like \eqref{eqn:outline-1} with $x = 0$.  Loosely speaking we show that $\spt M \cap B_1$ resembles a $W^{1,2}$ harmonic function in $\OmegaRef$ with Neumann boundary conditions.  By understanding these linear solutions (by a Fourier expansion and an eigenvalue estimate), we can prove $C^{1,\alpha}$ decay.  This basic idea goes back to DeGiorgi, who proved interior regularity by a similar ``excess decay'' strategy, and is implemented in a fashion closer to our style in \cite{All}, \cite{Simon1}, \cite{EdSp}.

In step two (Section \ref{sec:reg}), we exploit the polyhedral structure of $\OmegaRef$ to show that for every $x \in B_{1/4}$, and $0 < r < 1/4$, we can find radii $r = r_0^+ \geq r_0^- \geq r_1^+ \geq r_1^- \ldots \geq r_{n+1}^+ \geq r_{n+1}^- = 0$, such that when $s \in [r_i^-, r_i^+]$ then $\Omega \cap B_s(x)$ is modelled on some $T_{z_i}\OmegaRef$, and $r_i^- / r_{i-1}^+ \geq 1/c(\OmegaRef)$.  In other words, outside of finitely many scales (controlled only by $\OmegaRef)$, $\Omega \cap B_s(x)$ is modelled on some polyhedral cone of the form $T_z\OmegaRef$.  Since there are only finitely many tangents $T_z\OmegaRef$, we can therefore inductively prove decay by our first step in each interval $[r_i^-, r_i^+]$, and extend decay from one interval to the next by enlarging our constant $c$ in \eqref{eqn:outline-1} by a controlled amount.

The key technique hurdle in proving both steps is to show that $M$ has controlled first variation $\delta M$.  Our hypotheses imply $\delta M$ is only bounded in directions tangential to $\Omega$, but we need to establish both that $\delta M$ is bounded in all directions, and an a priori trace-like estimate for $||\delta M||$ in terms of $||M||$ (Theorem\ref{thm:first-var}).  This is the main point where we use convexity of $\OmegaRef$.

A priori control on $\delta M$ gives good compactness for sequences of such $M$, and allows us to prove a sharp $L^\infty-L^2$ estimate, and a uniform lower density bound.  The $L^\infty$ estimate is crucial in Step 1, to show that $M$ doesn't ``concentrate'' near $\del\Omega$ at the scale of excess (and therefore can be well approximated by the interior ``graphical'' region).  The lower density bound means that in various (blow-up) limits the varifolds do not disappear, as they might in the non-convex case.

We elaborate on these steps below.  For simplicity, in our outline we will assume $\Omega = \OmegaRef$, and $H = 0$.

\textbf{Step 1: Decay towards a single cone model.}  We wish to prove an excess decay of the following type: if $M$ is sufficiently varifold close in $B_1$ to a horizontal plane $(p + V) \cap \Omega$, then there is a new horizontal plane $p' + V'$ and a $\theta(\Omega)$ so that
\begin{equation}\label{eqn:outline-2}
E(M, p' + V', 0, \theta) \leq \frac{1}{2} E(M, p + V, 0, 1).
\end{equation}
Here $E(M, p + V, x, r) = r^{-n-2} \int_{B_r(x)} d_{p + V}(z)^2 d\mu_M(z)$.  (Proposition \ref{prop:decay} is phrased in terms of the $L^2$ excess, which is a little more convenient, but in this setting the $L^\infty$ and $L^2$ excesses are the same; see \ref{cor:height-w12-bound}).  By iterating \eqref{eqn:outline-2}, we obtain an estimate like
\[
E(M, p'' + V'', 0, r) \leq c(\Omega) r^{2\alpha} E^\infty(M, p + V, 0, 1) \quad \forall d(\spt M, 0) \leq r \leq 1.
\]
This is our main decay estimate in Step 1.

We prove \eqref{eqn:outline-2} by contradiction.  We assume there is a sequence of $M_i$ such that $M_i \to [(p + V) \cap \Omega]$ in $B_1$ as varifolds, such that
\[
\inf_{p' + V'} E(M_i, p' + V', 0, \theta) \geq \frac{1}{2} E(M_i, p + V, 0, 1/2) =: E_i \quad ( \to 0 \text{ as } i \to \infty).
\]
On larger and larger sets $U_i \subset\subset \sint\Omega \cap (p + V)$ (for $U_i \to \sint\Omega \cap (p + V)$) we can write $\spt M_i \cap U_i = \graph_{p + V}(u_i)$, where $u_i$ satisfy the minimal surface equation.  By setting $v_i = E_i^{-1/2} u_i$, then the $v_i$ are locally bounded in $L^\infty$, and after passing to a subsequence we get convergence
\[
v_i \to v,
\]
where the \emph{Jacobi field} $v$ satisfies
\[
\Delta v  = 0, \quad \del_n v = 0, \quad \int_{B_1(0) \cap (p + V) \cap \Omega} v^2 \leq 1.
\]

We now prove two key technical facts: First, by the sharp $L^\infty$ bound \eqref{eqn:sharp-height}, we have strong convergence
\[
E_i^{-1} E(M_i, p + V, 0, r) \to r^{-n-2}\int_{B_r(0)\cap (p + V) \cap \Omega} v^2 \quad \forall 0 < r < 1/4.
\]
Second, the $W^{1,2}$ estimate of \eqref{eqn:W12-bound}, a sharp eigenvalue estimate for convex domains in the sphere (Theorem \ref{thm:evalue}), and a standard Fourier-type decomposition, we can expand $v$ as above like
\[
v = a + b \cdot x + O(r^{1+\alpha}),
\]
where $b$ lies in some direction of translational symmetry of $\Omega$.  In other words, the eigenvalue bound of Theorem \ref{thm:evalue} implies that any free-boundary plane is integrable through rotations.

We can now repeat the blow-up argument with $p_i + V_i = \graph_{p + V}( E_i a + E_i b\cdot x)$ in place of $p + V$, and obtain the Jacobi field 
\[
v' = v - a - b\cdot x = O(r^{1+\alpha}).
\]
Hence for $i >> 1$ we have
\[
E_i^{-1} E(M_i, p_i + V_i, 0, r) \leq 2 r^{-n-2} \int_{B_r(0) \cap (p + V) \cap\Omega} v^2 \leq c(\Omega) r^{2\alpha},
\]
which is a contradiction for $r(\Omega)$ small.

This general strategy is very robust, and has been implemented in many other contexts.  However in any given situation there are typically two key technical issues to address: strong convergence in norm of the non-linear problem to the linear problem; and decay of the linear problem.  We handle these in our situation by our sharp $L^\infty$ bound and our sharp eigenvalue estimate.

\textbf{Step 2: Decay across cone models.} We prove the general decay \eqref{eqn:outline-1} by using Step 1 and an inductive argument on the strata.  We first observe (Lemma \ref{lem:B}) that every point in $\Omega$ is locally modelled on some (other) polyhedral cone.  Precisely, there is a $B(\Omega)$ so that if $x \in \del_i\Omega$, and $r \leq d(x,\del_{i-1} \Omega)$, then
\[
B_{B r}(x) \cap \Omega = B_{B r}(x) \cap (x + T_x\Omega).
\]
(When $\Omega \in \cD_\delta(\OmegaRef)$ is only a perturbation of a polyhedral cone a similar statement holds.)

Now given $x \in \del_i\Omega$, we choose points $\tilde x_j \in \del_{i_j} \Omega$ and radii
\[
1 = r^-_{J+1} \geq r^+_J \geq r^-_J \geq \ldots \geq r_0^+ \geq r_0^- = 0,
\]
so that $\Omega \cap B_r(\tilde x_j)$ is modelled on some fixed polyhedral cone $T_{z_j}\OmegaRef$, when $r \in [r_j^-, r_j^+]$, and $r_j^-/r_{j-1}^+ \geq 1/c(\OmegaRef)$.  The degree of symmetry $i_j$ is strictly decreasing as $j$ increases, so $J \leq n$.  We will apply Step 1 to get decay $r_j^+ \to r_j^-$, and then give up a controlled number of scales to ``decay'' $r_j^- \to r_{j-1}^+$.

Some care must be taken in construting these points/radii, since for each $j$ we need that: 
\begin{enumerate}
\item $r_j^+ \leq B d(\tilde x_j, \del_{i_j - 1}\Omega)$, so that $\Omega \cap B_{r_j^+}(\tilde x_j)$ looks close to a polyhedral cone; 
\item $|x - \tilde x_j| \leq r_j^-$, so that we can apply Step 1 in $B_{r_j^+}(x_j)$ to get decay down to $r_j^-$;
\item $B_{r_j^+}(\tilde x_j) \subset B_{r_{j+1}^{-}/2}(\tilde x_{j+1})$, but $r_j^+ \geq c r_{j+1}^-$, so that we can control the mass and excess in $B_{r_j^+}(\tilde x_j)$ in terms of the mass/excess in $B_{r_{j+1}^{-}}(\tilde x_{j+1})$.
\end{enumerate}

Once we construct these $\tilde x_j$, $r_j^\pm$, we inductively prove a statement of the form: for each $j$, there is a $V_j \in \cP_{T_{\tilde x_j}\Omega}$ so that
\[
E(M, p_j + V_j, \tilde x_j, r) \leq \Lambda_j E r^{2\alpha} \quad \forall r_j^- \leq r \leq 1/4,
\]
and
\[
\theta_M(\tilde x_j, r) \leq (7/4) \Theta_{T_{\tilde x_j}\Omega} \quad \forall r_j^- \leq r \leq r_j^+.
\]
The excess control ensures $M \llcorner B_{r}(\tilde x_j)$ looks like a plane with some multiplicity, the mass control ensures this multiplicity is $\leq 1$, and the fact that $x \in B_{r_j^-/2}(\tilde x_j)$ ensures this multiplicity is $\geq 1$.  The proof of this statement follows in a fairly straightforward way from Lemma \ref{lem:B} and Step 1.  An extra argument by contradiction (Lemma \ref{lem:sharp-mass-bound}) is required to ensure the mass control at scale $B_{r_j^-}(\tilde x_j)$ carries over to scale $B_{r_{j-1}^+}(\tilde x_{j-1})$.

\textbf{Minimizers} Here we apply our regularity theorem to minimizing currents/isoperimetric sets of finite perimeter.  We classify low-dimensional minimizing cones, and prove a compactness theorem, which together with our regularity result implies a partial regularity theorem by standard dimension reducing techniques.

The compactness is fairly straightforward -- we just need to adapt an argument of Gruter \cite{Gr} to ensure mass cannot accumulate near the boundary.  The main step is classifying low-dimensional minimizing cones, which we prove by induction on the number of symmetries of $\Omega$.  The idea is as follows: Assume that any minimizing $T$ in $\Omega_0 \times \R^{m-1}$ is a horizontal plane.  Then if $T$ is minimizing in $\Omega_0\times \R^m$, by induction and our regularity theorem $T$ is a $C^{1,\alpha}$ surface away from $0$.  Under certain circumstances, then we can boost this to $C^{2,\alpha}$ regularity, and thereby adapt Simons' classical argument to prove $T$ must be planar.  Then, again in certain circumstances, a cut-and-paste argument implies this plane must be horizontal.

Unfortunately, even in low dimensions we start running into issues.  The barrier to adapting Simons' argument when $\Omega_0$ is more than $3$ dimensional is the lack of $C^{2,\alpha}$ regularity in $3$-dimensional cones.  We suspect this holds if the dihedral angles are at most $\pi/2$, but this requires a Neumann eigenvalue estimate for spherical domains which is not known.  On the flip side, for general dihedral angles it is not clear that every minimizing plane need be horizontal -- when $\Omega_0$ is only an intersection of $3$ half-spaces in $\R^3$, it is plausible there are non-horizontal minimizers.  Taken together, we get a codimension $2$ bound for the singular set in general domains, a codimension $3$ bound in domains with dihedral angles $\leq \pi/2$, and the (sharp) codimension $7$ bound in certain special cases (e.g when all dihedral angles are $=\pi/2$).

\section{First variation, mass control}\label{sec:first-var}

We prove in this section that the mass and total first variation of an $M \in \cIH_n(\Omega, B_1)$ are controlled by the mass, tangential mean curvature of $M$, and geometry of $\Omega$.  We must first prove monotonicity and the mass control, by cooking up an appropriate tangential vector field, and then we use this to prove control on the first variation.  Two important consequences are lower Ahlfors-regularity of $M$ (Corollary \ref{cor:lower-reg}), and $L^\infty-L^2$, $W^{1,2}-L^2$ bounds on excess (Corollary \ref{cor:height-w12-bound}).

In this section we fix $\OmegaRef$ a polyhedral cone domain (as per Definition \ref{def:omegaref}), and after a rotation there is no loss in assuming $\OmegaRef = \OmegaRefN^l \times \R^{m}$.  Note for the reader: in this section (and occasionally in Section \ref{sec:minz}) we will allow $m = 0$, but in all other sections we will assume $m \geq 1$.  See also Example \ref{ex:vertical-plane} and Lemma \ref{lem:minz-planes}.

%\begin{remark}
%We mention that this section holds verbatim if only assume $M$ is rectifiable, with the lower density bound $\theta_M \geq 1$ $\mu_M$-a.e.
%\end{remark}

%%%%%mass control
\begin{lemma}\label{lem:monotonicity}
There are constants $\eps_{mn}(n)$, $c_{mn}(n)$, so that if
\[
\Omega \in \cD_{\eps}(\OmegaRef), \quad M \in \cIH_n(\Omega, B_1), \quad ||H^{tan}_M||_{L^\infty(B_1; \mu_M)} \leq \eps,
\]
for $\eps \leq \eps_{mn}$, then
\begin{equation}\label{eqn:monotonicity}
(1+c_{mn}(n)\eps \rho)^{n+1} \theta_M(0, \rho)
\end{equation}
is increasing in $\rho < 1$.  If $\eps = 0$, and $\sigma < \rho$, then we have the sharp monotonicity
\begin{equation}\label{eqn:sharp-mono}
\theta_M(0, \rho) - \theta_M(0, \sigma) = \int_{B_\rho(0)\setminus B_\sigma(0)} \frac{ |\pi_M^\perp(z)|^2}{|z|^{n+2}} d\mu_M(z).
\end{equation}
\end{lemma}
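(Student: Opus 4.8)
This is the classical Allard monotonicity computation, run with a vector field adapted to $\Omega$. Write $\Omega \cap B_1 = \Phi(\Psi(\OmegaRef)) \cap B_1$ as in Definition \ref{def:omega}, and for $x = \Phi(\Psi(z)) \in \Omega$ define the \emph{bent radial field}
\[
\tilde X(x) := D\Phi|_{\Psi(z)}\big( \Psi(z) \big) = D(\Phi \circ \Psi)|_z (z),
\]
i.e.\ the pushforward under $\Phi \circ \Psi$ of the position field $z \mapsto z$ on $\OmegaRef$. Since $\OmegaRef = \cap_j H_j$ is a convex cone, the position field is tangential to $\OmegaRef$: if $\nu \in \cN_{T_z\OmegaRef}$ then $\nu$ is a nonnegative combination of those $\nu_j$ with $z \cdot \nu_j = 0$, so $z \cdot \nu = 0$. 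As $T_x\Omega = (D\Phi|_{\Psi(z)} \circ \Psi)(T_z\OmegaRef)$ and tangentiality is preserved by this identification, $\tilde X$ is tangential to $\Omega$. From \eqref{eqn:def-phi} and $|\Psi - \mathrm{Id}| \leq \eps$ one gets, for $\eps \leq \eps_{mn}(n)$ small,
\[
|\tilde X(x) - x| \leq c(n)\eps|x|^2, \qquad \big|D\tilde X(x) - \mathrm{Id}\big| \leq c(n)\eps|x| \qquad (x \in B_1),
\]
and when $\eps = 0$ (so $\Phi = \mathrm{id}$, $\Psi = \mathrm{Id}$, $\Omega = \OmegaRef$) one has $\tilde X(x) = x$ exactly.

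\textbf{Applying the first variation.} Fix $\rho < 1$ and a smooth cutoff $\gamma : [0,\infty) \to [0,1]$ with $\gamma' \leq 0$, $\gamma \equiv 1$ near $0$, $\gamma \equiv 0$ near $1$; put $X_\rho(x) = \gamma(|x|/\rho)\tilde X(x)$, still a $C^1_c(B_1)$ field tangential to $\Omega$. Because $M \in \cIH_n(\Omega, B_1)$, Theorem \ref{thm:first-var} (equivalently, the defining property of $\cIH_n$) gives, with \emph{no} boundary term,
\[
\int \mathrm{div}_M X_\rho \, d\mu_M = - \int X_\rho \cdot H^{tan}_M \, d\mu_M, \qquad \Big| \int X_\rho \cdot H^{tan}_M \, d\mu_M \Big| \leq c(n)\eps\,\rho\,\mu_M(B_\rho(0)).
\]
A direct computation, using $\nabla|x| = \pi_{T_x M}(x/|x|)$ and $\pi_{T_xM}(x)\cdot x = |x|^2 - |\pi_M^\perp(x)|^2$, gives
\[
\mathrm{div}_M X_\rho = n\,\gamma(|x|/\rho) + \frac{\gamma'(|x|/\rho)}{\rho}\cdot \frac{|x|^2 - |\pi_M^\perp(x)|^2}{|x|} + \mathcal{E}, \qquad |\mathcal{E}| \leq c(n)\eps|x|\Big(\gamma(|x|/\rho) + |\gamma'(|x|/\rho)|\Big),
\]
with $\mathcal{E} \equiv 0$ when $\eps = 0$.

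\textbf{Conclusion.} Substituting into the first-variation identity and letting $\gamma$ increase to $\mathbf{1}_{[0,1)}$ in the standard way, set $m(\rho) = \mu_M(B_\rho(0))$. One obtains the distributional identity
\[
\frac{d}{d\rho}\!\left( \frac{m(\rho)}{\rho^n} \right) = \frac{d}{d\rho} \int_{B_\rho(0)} \frac{|\pi_M^\perp(z)|^2}{|z|^{n+2}}\, d\mu_M(z) + R(\rho),
\]
where $R \equiv 0$ when $\eps = 0$ — integrating this from $\sigma$ to $\rho$ and dividing by $\omega_n$ is exactly \eqref{eqn:sharp-mono}. In general $R$ collects the $H^{tan}_M$ term and the error $\mathcal{E}$, both of size $O(\eps)$ relative to $\theta_M(0,\rho)$, so that $\frac{d}{d\rho}\log\theta_M(0,\rho) \geq -c(n)\eps$ for $\rho < 1$ after discarding the nonnegative first term. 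Choosing $c_{mn}(n)$ and $\eps_{mn}(n)$ appropriately (so that $(n+1)c_{mn}\eps/(1+c_{mn}\eps\rho) \geq c(n)\eps$ for $\rho<1$), this gives $\frac{d}{d\rho}\log\big[(1+c_{mn}(n)\eps\rho)^{n+1}\theta_M(0,\rho)\big] \geq 0$, which is the claim.

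\textbf{Main difficulty.} The only genuinely new point is the first step: verifying that the bent radial field $X_\rho$ is a legitimate tangential test field for $\cIH_n(\Omega, B_1)$, so that Theorem \ref{thm:first-var} applies and the boundary term drops, and carefully tracking the two independent $O(\eps)$ errors (curvature of $\Phi$, tilt of $\Psi$) so that together with the mean-curvature bound they combine into the single factor $(1 + c_{mn}\eps\rho)^{n+1}$. Everything after that is Allard's computation, which in the flat case $\eps = 0$ reproduces the sharp identity \eqref{eqn:sharp-mono} verbatim.
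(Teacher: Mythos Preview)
Your proof is correct and essentially identical to the paper's. Your bent radial field $\tilde X(x) = D\Phi|_{\Phi^{-1}(x)}(\Phi^{-1}(x))$ is exactly the paper's $Y$, and the subsequent computation is Allard's standard argument with the same error bookkeeping. Two small remarks: first, you only need the \emph{defining} property of $\cIH_n(\Omega,B_1)$ here, not Theorem~\ref{thm:first-var} (which is proved later using this lemma via Corollary~\ref{cor:upper-reg}); your parenthetical already says this, but drop the cross-reference to avoid any appearance of circularity. Second, the linear map $\Psi$ contributes nothing to the error estimates for $\tilde X - x$ and $D\tilde X - \mathrm{Id}$, since $\Psi(\OmegaRef)$ is still an exact cone and the position field on it is exactly radial with derivative $\mathrm{Id}$; all the $O(\eps)$ error comes from $D^2\Phi$, so your remark about ``two independent errors'' is a slight overcount.
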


\begin{proof}
Define $Y(x) = D\Phi|_{\Phi^{-1}(x)}(\Phi^{-1}(x))$.  Then $Y$ is a $C^1$ vector field tangential to $\Omega$.  An easy computation, using \eqref{eqn:def-phi} and taking $\eps_{mn}(n)$ small, shows that 
\begin{equation}\label{eqn:Y-bounds}
|Y(x) - x| \leq c(n)\eps |x|^2, \quad |DY|_x - \mathrm{Id}| \leq c(n) \eps |x|.
\end{equation}
Let $\zeta$ be a smooth, decreasing approximation to $1_{(-\infty, 1)}$, such that $\spt \zeta \subset (-\infty, 1)$.  Take $\rho \in (0, 1)$, plug in the vector field $X(x) = \zeta(|x|/\rho) Y(x)$ into the first variation, and use \eqref{eqn:Y-bounds}, to deduce
\begin{align}\label{eqn:first-var-Y}
&\rho \frac{d}{d\rho} \int \zeta d\mu_M - n \int \zeta d\mu_M  - \rho \frac{d}{d\rho} \int \zeta |x^\perp|^2/|x|^2 d\mu_M \\
&\geq - c(n)\eps \int \zeta |x| d\mu_M - c(n) \eps \rho^2 \frac{d}{d\rho} \int \zeta d\mu_M.
\end{align}
Notice that the last terms on both the RHS and LHS are non-positive.  Setting $I(\rho) = \int \zeta d\mu_M$, and discarding the last term on the left, we get
\[
(1+c\eps\rho)\rho I' - (n - c\eps\rho) I \geq 0,
\]
where WLOG both constants $c = c(n)$ are the same.  This implies
\[
\frac{d}{d\rho} ( (1+c\eps)^{n+1} \rho^{-n} I(\rho)) \geq 0.
\]
Integrate in $\rho$, then take $\zeta \to 1_{(-\infty, 1]}$, to obtain the required conclusion \eqref{eqn:monotonicity}.

If $\eps = 0$, then in fact \eqref{eqn:first-var-Y} is an equality (with no errors on the right hand side).  Integrating up as before, but without discarding terms, gives \eqref{eqn:sharp-mono}.
\end{proof}

%%%%%upper ahlfors regularity
\begin{corollary}\label{cor:upper-reg}
Given any $\theta \in (0, 1)$, there are constants $\eps_m(\OmegaRef)$, $c_m(\OmegaRef, \theta)$ so that if
\[
\Omega \in \cD_{\eps_m}(\OmegaRef), \quad M \in \cIH_n(\Omega, B_1), \quad ||H^{tan}_M||_{L^\infty(B_1; \mu_M)} \leq \eps_m,
\]
then for every $x \in B_\theta$, and $0 < r < 1-|x|$, we have
\begin{equation}\label{eqn:upper-reg}
\theta_M(x, r) \leq c_m \theta_M(0, 1).
\end{equation}
\end{corollary}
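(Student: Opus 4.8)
The plan is to deduce the upper density bound at any base point $x \in B_\theta$ from the monotonicity formula at the \emph{origin} established in Lemma \ref{lem:monotonicity}, by a covering/comparison argument that handles two regimes of radii separately. The key point is that the monotonicity in Lemma \ref{lem:monotonicity} is only centered at $0$ (there is no single monotone quantity at all points along the stratified boundary), so for a general center $x$ one cannot directly invoke monotonicity and must instead bound $\mu_M(B_r(x))$ by $\mu_M$ of a concentric-at-origin ball.

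First I would treat the regime of \emph{large} radii, say $r \geq \frac{1}{4}(1-|x|)$ or more simply $|x| + r$ bounded away from $1$ by a definite amount depending on $\theta$. In this case $B_r(x) \subset B_{|x|+r}(0) \subset B_{R_0}(0)$ for some $R_0 = R_0(\theta) < 1$, so
\[
\theta_M(x,r) = \frac{\mu_M(B_r(x))}{\omega_n r^n} \leq \frac{\mu_M(B_{R_0}(0))}{\omega_n r^n} = \left(\frac{R_0}{r}\right)^n \theta_M(0, R_0) \leq c(\theta)\, \theta_M(0, R_0).
\]
By the monotonicity \eqref{eqn:monotonicity}, $(1+c_{mn}\eps_m R_0)^{n+1}\theta_M(0,R_0) \leq (1+c_{mn}\eps_m)^{n+1}\theta_M(0,1)$, so $\theta_M(0,R_0) \leq c(n)\theta_M(0,1)$ (using $\eps_m \leq 1$), giving the claim in this regime. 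Here one also needs $r$ bounded below, so this argument really applies when $r$ is comparable to $1-|x|$; the genuinely small radii are handled next.

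For the regime of \emph{small} radii, i.e. $r$ much smaller than $1 - |x|$, the idea is to translate the monotonicity formula to be centered at $x$ up to controlled error. Concretely, I would apply the first-variation computation of Lemma \ref{lem:monotonicity} but with the vector field built from $\Phi^{-1}$ composed with translation by $x$ — equivalently, work in the rescaled domain $\Omega_{x,\rho_0} = \tfrac1{\rho_0}(\Omega - x)$ for an appropriate fixed $\rho_0 \sim 1-|x|$, using Lemma \ref{lem:B} (part \ref{item:poly3}) to see $\Omega_{x,\rho_0} \in \cD_{c_B\eps_m}(T_z\OmegaRef)$ for some $T_z\OmegaRef \in \cT(\OmegaRef)$, and the scaling of mean curvature. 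Then Lemma \ref{lem:monotonicity} applied in this rescaled picture gives that $(1 + c\eps\, s)^{n+1}\theta_M(x, s\rho_0)$ is monotone in $s \in (0,1)$, where the constants are now uniform over the finitely many cone types in $\cT(\OmegaRef)$. Hence for $0 < r < \rho_0$,
\[
\theta_M(x, r) \leq (1 + c\eps)^{n+1}\,\theta_M(x, \rho_0) \leq c(n)\,\theta_M(x,\rho_0),
\]
and $\theta_M(x,\rho_0)$ is controlled by $\theta_M(0,1)$ exactly as in the large-radius regime since $B_{\rho_0}(x) \subset B_{R_0}(0)$ for a suitable $R_0(\theta) < 1$. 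Combining the two regimes yields \eqref{eqn:upper-reg} with $c_m = c_m(\OmegaRef,\theta)$.

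The main obstacle is the small-radius regime: one must be careful that the monotonicity from Lemma \ref{lem:monotonicity} really can be re-centered at $x$ with constants independent of $x$. This is where the stratified structure (Lemma \ref{lem:B}) is essential — it guarantees that after rescaling by $\rho_0 \sim d(x,\del_{i-1}\Omega)$ (or more crudely $\rho_0 \sim 1-|x|$ suffices for an upper bound, since we only need a \emph{containment} $\Omega_{x,\rho_0}\cap B_1 \in \cD_{c_B\eps}(T_z\OmegaRef)$, not a lower density statement), the domain is a small perturbation of one of finitely many cone models, so the monotonicity constant $c_{mn}$ and threshold $\eps_{mn}$ can be taken uniform over $\cT(\OmegaRef)$. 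One should also double-check the bookkeeping of the $(1+c\eps s)^{n+1}$ factors and the $\eps_m$ threshold so that, after shrinking $\eps_m = \eps_m(\OmegaRef)$ if necessary, all invocations of Lemma \ref{lem:monotonicity} (at the various recentered/rescaled configurations) are licit; none of this requires a new idea beyond Lemmas \ref{lem:monotonicity} and \ref{lem:B}.
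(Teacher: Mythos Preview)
There is a genuine gap in the small-radius regime. Your parenthetical claim that ``more crudely $\rho_0 \sim 1-|x|$ suffices'' is incorrect: Lemma~\ref{lem:B} only yields $\Omega_{x,\rho_0} \in \cD_{c_B\eps}(T_z\OmegaRef)$ when $\rho_0 \leq B\,d(x,\del_{i-1}\Omega)$, and this is precisely what is needed to build the tangential vector field $Y$ in the proof of Lemma~\ref{lem:monotonicity} (one needs $\Phi(0)=0$, $D\Phi|_0=\mathrm{Id}$ after recentering). If $x$ sits very close to a lower stratum, say $d(x,\del_{i-1}\Omega)\ll 1-|x|$, then at scale $\rho_0\sim 1-|x|$ the recentered domain $\Omega_{x,\rho_0}$ is \emph{not} a small perturbation of any polyhedral cone, and there is no tangential radial field centered at $x$ with which to run the monotonicity computation.

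Taking the correct scale $\rho_0 = B\,d(x,\del_{i-1}\Omega)$ does not save the argument as written: your ``large-radius'' step bounds $\theta_M(x,\rho_0)$ by $(R_0/\rho_0)^n\,\theta_M(0,R_0)$, and this factor can be arbitrarily large since $\rho_0$ has no lower bound in terms of $\theta$ alone. So there is an uncontrolled range of radii $r\in(\rho_0,\,c(1-|x|))$ where neither of your two regimes applies. The paper closes this gap by an induction on the stratum index: for $x_i\in\del_i\Omega$ with $d_i=d(x_i,\del_{i-1}\Omega)$ small, one passes to the nearest point $x_{i-1}\in\del_{i-1}\Omega$, uses $B_r(x_i)\subset B_{r+d_i}(x_{i-1})$, and invokes the inductive bound already established at $x_{i-1}$ (where monotonicity is now valid up to the larger scale $B\,d(x_{i-1},\del_{i-2}\Omega)$). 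Iterating down to the bottom stratum $\del_m\Omega$ (where $d=\infty$ and monotonicity holds all the way to $1-|x|$) picks up at most $n$ multiplicative factors, giving $c_m = c_{m0}^{n+1}$.
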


\begin{proof}
Let $\sigma_n = \theta$, and set $\sigma_{i-1} = 1/4 + (3/4) \sigma_i \in (\sigma_i, 1)$ for $i = 1, \ldots, n$.  We prove by induction on $i$ that there is a $c_{m0}(\theta, \OmegaRef)$ so that given $x_i \in \del_i\Omega \cap B_{\sigma_i}$, then we have
\begin{equation}\label{eqn:ind-upper-reg}
\theta_M(x_i, r) \leq c_{m0}^{i+1}\mu_M(B_1) \quad \forall 0 < r < 1-|x_i|.
\end{equation}
This will establish \eqref{eqn:upper-reg} with $c_m = c_{m0}^{n+1}$.

Take $B(\OmegaRef)$ as in Lemma \ref{lem:B}, and let us ensure that $\eps_m(\OmegaRef)$ is sufficiently small so that $\eps_m \leq \eps_B$, and $c_B \eps_m \leq \eps_{mn}$.  Take $x_i \in \del_i\Omega \cap B_\theta$, and set $d_i = d(x_i, \del_{i-1}\Omega)$, $R_i = \min\{ Bd_i, 1-|x_i|\}$.  For $0 < r \leq R_i$, we have by Lemma \ref{lem:B} and monotonicity \eqref{eqn:monotonicity} that
\[
\theta_M(x_i, r) \leq c(n)\theta_M(x_i, R_i).
\]

First assume $R_i = 1-|x_i|$.  Then we have
\[
\theta_M(x_i, R_i) \leq (1-\sigma_i)^{-n} \mu_M(B_1) \leq c(n, \theta) \mu_M(B_1),
\]
and we are done.  Notice that if $i = m$, then $d_i = \infty$, and so this proves the inductive base case.

Assume now $R_i = Bd_i$.  If $d_i \geq (1/4)(1-|x_i|)$, then we can similarly estimate for $R_i \leq r \leq 1-|x_i|$:
\[
\theta_M(x_i, r) \leq \left( \frac{4}{B (1-\sigma_i)}\right)^{n}  \mu_M(B_1) \leq c(\OmegaRef, \theta) \mu_M(B_1),
\]
and we are done.  Let us assume now $d_i \leq (1/4)(1-|x_i|)$.  Since
\[
|x_i| + d_i \leq 1/4 + (3/4)|x_i| \leq \sigma_{i-1},
\]
we can find an $x_{i-1} \in \del_{i-1}\Omega \cap B_{\sigma_{i-1}}$ realizing $d_i \equiv d(x_i, \del_{i-1}\Omega)$.  Now by inductive hypothesis we have for $R_i \leq r \leq 1-|x_{i-1}| - d_i$:
\[
\theta_M(x_i, r) \leq ( 1 + 1/B)^{n} \theta_M(x_{i-1}, r + d_i) \leq c(\OmegaRef) c_{m0}^i \mu_M(B_1).
\]
On the other hand, since
\[
|x_{i-1}| + d_i \leq |x_i| + 2d_i \leq 1/2 + (1-1/2)\sigma_i \leq 1 - 1/c(n, \theta),
\]
if $r \geq 1-|x_{i-1}| - d_i$, then we have
\[
\theta_M(x_i, r) \leq c(n,\theta) \mu_M(B_1).
\]
This proves the inductive claim \eqref{eqn:ind-upper-reg}, and finishes the proof of Corollary \ref{cor:upper-reg}.
\end{proof}

\begin{theorem}\label{thm:first-var}
There is an $\eps(\OmegaRef)$ so that if
\[
\Omega \in \cD_{\eps}(\OmegaRef), \quad M \in \cIH_n(\Omega, B_1), \quad ||H^{tan}_M||_{L^\infty(B_1; \mu_M)} \leq \eps, \quad \mu_M(B_1) < \infty, 
\]
then $||\delta M||$ is a Radon measure on $B_1$, and for any $X \in C^1_c(B_1, \R^{n+1})$ we can write
\begin{gather}\label{eqn:deltaM-decomp}
\delta M(X) = -\int H^{tan}_M \cdot X d\mu_M + \int \eta \cdot X d\sigma,
\end{gather}
where $\sigma \perp \mu_M$ is a non-negative Radon measure supported in $\del\Omega$; $\eta(x) \in \cN_{T_x\Omega}$ for $\sigma$-a.e. $x$; and $||\delta M||(\del_{i}\Omega) = 0$ for all $i \leq n-2$.

Moreover, we have
\begin{equation}\label{eqn:deltaM-bounds}
\int \phi d||\delta M|| \leq c(\OmegaRef) \int |H^{tan}_M| \phi + |\nabla \phi| d\mu_M
\end{equation}
for all $\phi \in C^1_c(B_1)$ non-negative.
\end{theorem}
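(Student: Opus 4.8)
The plan is to isolate a single a priori bound that implies everything. Namely, I will show that there is $c=c(\OmegaRef)$ such that for every non-negative $\phi\in C^1_c(B_1)$ and every $X\in C^1_c(B_1,\R^{n+1})$ with $|X|\le\phi$,
\begin{equation}\label{eqn:star}
|\delta M(X)|\le c\left(\int|H^{tan}_M|\,\phi\,d\mu_M+\int|\nabla\phi|\,d\mu_M\right).
\end{equation}
Granting \eqref{eqn:star}: taking the supremum over admissible $X$ shows $\delta M$ is locally bounded, so $\|\delta M\|$ is a Radon measure, and \eqref{eqn:star} is then exactly \eqref{eqn:deltaM-bounds}. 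The splitting \eqref{eqn:deltaM-decomp} is the Radon--Nikodym decomposition of the vector measure $\delta M$ with respect to $\mu_M$: every $X\in C^1_c(\sint\Omega)$ is tangential, so $\delta M=-H^{tan}_M\mu_M$ on $\sint\Omega$, whence the absolutely continuous part equals $-H^{tan}_M$ and the singular part $\eta\,\sigma$ is carried by $\del\Omega$. That $\eta(x)\in\cN_{T_x\Omega}$ for $\sigma$-a.e.\ $x$ is where convexity enters a second time: testing $\delta M$ against fields pointing into $\Omega$ and using that $\spt\mu_M\subset\Omega$ lies on one side of every supporting hyperplane of the convex cone $T_x\Omega$ forces $\eta\cdot\nu\ge0$ for every outer normal $\nu$ of $T_x\Omega$; this is false for non-convex models and is exactly what allows cusps there. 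Finally $\|\delta M\|(\del_i\Omega)=0$ for $i\le n-2$ drops out of the induction below.

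To prove \eqref{eqn:star} I induct on the integer $l$ in the canonical decomposition $\OmegaRef=\OmegaRefN^l\times\R^m$. Throughout, the diffeomorphism $\Phi$ and linear map $\Psi$ enter only through errors of size $\eps$ and are handled exactly as in Lemma \ref{lem:monotonicity}, where the tangential field $Y(x)=D\Phi|_{\Phi^{-1}(x)}(\Phi^{-1}(x))$ serves as a ``curved'' position field; I will suppress this and argue as though $\Omega$ were the model cone $\OmegaRef$. The base cases are classical: when $l=0$, $\Omega=\R^{n+1}$ has no boundary and \eqref{eqn:star} is immediate from $\delta M(X)=-\int H_M\cdot X\,d\mu_M$; when $l=1$, $\Omega$ is a half-space, the free-boundary condition says $M$ meets $\del\Omega$ orthogonally, and testing $\delta M$ with the unit normal field to $\del\Omega$ cut off at scale $r$ (together with tangential fields) gives $\sigma(B_r(x_0))\le c\,r^{-1}\mu_M(B_{2r}(x_0))$ and hence \eqref{eqn:star} --- this recovers the Gr\"uter--Jost trace estimate, and convexity is used here only in its trivial form, that $M$ lies on one side of $\del\Omega$ so the conormal points outward.

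The inductive step for $l\ge3$ combines localisation with a tube estimate. Given $X$ with $|X|\le\phi$, cover $\spt\phi$ by finitely many balls, each either (i) contained in $\sint\Omega$, or (ii) centred at a point $x\in\del\Omega\setminus\del_m\Omega$ about which, by Lemma \ref{lem:Bref}, $\Omega$ coincides with $x+T_x\Omega$ for a tangent cone of strictly smaller $l$, or (iii) centred on the spine $\del_m\Omega$; take a subordinate partition of unity. On the balls of type (i) and (ii), the corresponding piece of $\delta M(X)$ is bounded by the base case, respectively the inductive hypothesis (recall $\delta M$ and $\delta(M\llcorner B)$ agree on fields compactly supported in $B$). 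This already makes $\|\delta M\|$ a Radon measure on $B_1\setminus\del_m\Omega$ satisfying \eqref{eqn:star} there. Since $\del_m\Omega$ is an $m$-plane with $m\le n-2$, Corollary \ref{cor:upper-reg} gives $\mu_M(B_\eps(\del_m\Omega)\cap W)\le c\,\eps^{n-m}$; splitting a field supported in $B_\eps(\del_m\Omega)$ as $\chi_{\eps'}X+(1-\chi_{\eps'})X$ with $\chi_{\eps'}$ a cutoff vanishing near the spine, bounding $\delta M((1-\chi_{\eps'})X)$ crudely by $\|X\|_{C^1}\mu_M(B_{\eps'}(\del_m\Omega))$ and $\delta M(\chi_{\eps'}X)$ by \eqref{eqn:star} off the spine, and letting $\eps'\to0$, yields $\|\delta M\|(B_\eps(\del_m\Omega)\cap W)\le c\,\eps^{n-m-1}\to0$. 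Hence $\|\delta M\|(\del_m\Omega)=0$, $\|\delta M\|$ is Radon on $B_1$, \eqref{eqn:star} holds globally, and the same tube argument applied on the type (ii) balls, together with an induction on $i$, gives $\|\delta M\|(\del_i\Omega)=0$ for all $i\le n-2$.

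The remaining, and genuinely hardest, case is $l=2$: $\OmegaRef=W^2\times\R^{n-1}$ a convex wedge, where the reduction above stalls because the spine $\del_{n-1}\Omega$ has codimension $2$, so the tube estimate degenerates to $\|\delta M\|(B_\eps)\lesssim\eps^{0}$ with no gain. Here I follow Simon \cite{Si:capillary}. Away from the edge, $M$ meets each of the two faces orthogonally and the $l=1$ argument applies on each. Near the edge, one controls $\delta M$ by testing with cutoffs times fixed interior directions $v\in(\sint W^2)\times\{0\}$ --- which are constant fields, so $\mathrm{div}_M v\equiv0$ and $\delta M(\rho v)=\int v\cdot\nabla\rho\,d\mu_M$ --- which pins down $\delta M$ in the directions transverse to the edge up to the size of $\mu_M$ on thin tubes about the edge, while tangentiality handles the $\R^{n-1}$-directions. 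The input this leaves open is an Ahlfors lower-regularity and non-concentration estimate for $\mu_M$ near the edge, proved by Moser iteration fed by the trace inequality already available on the two faces; this is where convexity ($W^2$ convex, i.e.\ opening angle $<\pi$) is indispensable, and it fails for reflex wedges (edge-creeping). I expect establishing this non-concentration estimate, and hence the $l=2$ case on which the whole induction rests, to be the main obstacle.
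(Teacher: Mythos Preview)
Your overall architecture---induction on $l$, the Gr\"uter--Jost base case for $l\le1$, Simon's interior-pointing vector for $l=2$, and a cutoff/tube argument for $l\ge3$---is exactly the paper's. The $l\ge3$ step is essentially right: once $\|\delta M\|$ is Radon on $B_1\setminus\del_m\Omega$, the bound $\mu_M(B_\rho(\del_m\Omega)\cap B_\theta)\le c\rho^{l-1}$ with $l-1\ge2$ (from Corollary~\ref{cor:upper-reg}) lets the cutoff argument go through, though note your ``crudely by $\|X\|_{C^1}\mu_M$'' slightly understates the cost: the derivative of $(1-\chi_{\eps'})X$ is of size $1/\eps'$, so the crude bound is $c\eps'^{\,l-2}$, still $\to0$.

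The genuine gap is in the $l=2$ case, and specifically in what you call ``the main obstacle.'' You say the missing input is a \emph{lower} Ahlfors-regularity / non-concentration estimate, to be proved by Moser iteration fed by a trace inequality. That is circular: Moser iteration (Theorem~\ref{thm:moser}) requires the Sobolev inequality, which requires \eqref{eqn:deltaM-bounds}, which is the conclusion you are trying to prove. What is actually needed is the opposite estimate---\emph{upper} regularity near the edge, $\mu_M(B_\rho(F_1\cap F_2)\cap B_\theta)\le c(\OmegaRef,\theta)\,\mu_M(B_1)\,\rho$---and this follows from monotonicity alone (Corollary~\ref{cor:upper-reg}), proved \emph{before} any first-variation control. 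With that in hand, testing against $\min(d_0/\rho,1)\,h\,\tau$ for a fixed $\tau\in\sint W^2$ (so $\tau\cdot\nu_i\le-\eps_0$ on both faces, which is where convexity enters) bounds $\int\min(d_0/\rho,1)\,h\,d\sigma$ uniformly in $\rho$ and shows $\|\delta M\|$ is Radon on $B_1\setminus(F_1\cap F_2)$; then Allard's boundary first-variation theorem (applied with the $(n-1)$-dimensional edge as $Q$) extends $\|\delta M\|$ to a Radon measure on $B_1$ and gives the structure $\eta\perp T_x(F_1\cap F_2)$ at edge points. The sign condition $\eta\cdot\nu_i\ge0$ on the edge is then read off by testing with the signed distance functions to each face. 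Your sketch of the sign argument (``$\spt\mu_M$ lies on one side of every supporting hyperplane'') is too vague to carry this out at the singular edge; the paper's mechanism goes through the $\Gamma$-operators of the appendix theorems.
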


\begin{remark}
Notice that \eqref{eqn:deltaM-bounds} is scale-invariant.
\end{remark}

We prove Theorem \ref{thm:first-var} inductively on $l$.  The cases $l = 1$, $l = 2$, and $l \geq 3$ are handled separately.  We shall use several times the following relation.

Let $Q$ be a $C^2$ closed $p$-manifold, with $p \leq n$, and suppose the distance function $d$ to $Q$ is smooth on $B_1 \setminus Q$.  Suppose $\delta M$ is a Radon measure in $B_1 \setminus Q$, and we decompose $\delta M$ as in \eqref{eqn:deltaM-decomp} for $X$ supported away from $Q$.  Let $\tau$ be any constant vector, and $h \in C^1_c(B_1)$ a compactly supported, non-negative function.  Then we have for any $\rho > 0$:
\begin{equation}\label{eqn:delta-tau}
\frac{1}{\rho} \int_{B_\rho(Q)} (\nabla d \cdot \tau) h - \int \min(d/\rho, 1) h \eta \cdot \tau d\sigma = - \int (H^{tan}_M h + \nabla h) \cdot \tau \min(d/\rho, 1) d\mu_M.
\end{equation}

\vspace{5mm}

\begin{proof}[Proof when $l = 0, 1$]
When $l = 0$ there is nothing to show.  Take $l = 1$.  Assume $\eps(n)$ is sufficiently small, so that the distance function to $\del\Omega \equiv \Phi(\Psi(\{0\}\times \R^{m})) \cap B_1$ is smooth in $\Omega$.  From Theorem \ref{thm:first-var-free-boundary}, we get that $\delta M$ is a Radon measure on $B_1$, and we can write $\delta M$ as in \eqref{eqn:deltaM-decomp}, where $\sigma$ satisfies our conclusions, and $\eta = \pm \nu_\Omega$.  It suffices to verify that $\eta = + \nu_\Omega$.

Let $d$ be the distance function to $\del\Omega$, and $\tilde d$ the signed distance function, so that $\tilde d = d$ in $\Omega$.  Let $X = \phi D \tilde d$, for $\phi \in C^1_c(B_1)$ non-negative.  From Theorem \ref{thm:first-var-free-boundary}, we have
\[
\int \eta \cdot (-\nu_\Omega) \phi d\sigma = \int \eta \cdot D \tilde d \phi d\sigma = -\Gamma_1(\phi |Dd|^2) \leq 0.
\]
Since $\phi$ is arbitrary, we deduce $\eta \cdot \nu_\Omega \geq 0$ for $\sigma$-a.e. $x$.

Therefore $\eta = \nu_\Omega$, and (using Theorem \ref{thm:first-var-free-boundary}) we in fact have
\[
\int \phi d\sigma = \Gamma_1(\phi |Dd|^2) \leq \int |H^{tan}_M| \phi + |\nabla \phi| d\mu_M.
\]
Since $||\delta M|| = |H^{tan}_M| \mu_M + \sigma$, we obtain \eqref{eqn:deltaM-bounds}.
\end{proof}

\begin{proof}[Proof when $l = 2$]
In this case $\OmegaRefN \subset \R^2$ is simply a wedge, with angle $< \pi$.  For ease of notation let us write $\del\OmegaRefN = L_1 \cup L_2$, for $L_1, L_2$ rays extending from $0$, and corresponding let us decompose
\[
\del\Omega  = F_1 \cup F_2,
\]
where $F_i = \Phi(\Psi(L_i \times \R^{m})) \cap B_1$, so that $\del_{m}\Omega = F_1 \cap F_2$.

Away from $F_1 \cap F_2$, the outer normal $\nu_\Omega$ is well-defined on $\del\Omega$.  Write $\nu_i$ for the outward normal on $F_i$.  Observe that each $x \in F_1 \cap F_2$, we have
\[
\cN_{T_x\Omega} = \{ a_1 \nu_1(x) + a_2 \nu_2(x) : a_1, a_2 \in (0, \infty) \}.
\]

Write $d_0, d_1, d_2$ for the distance functions to $F_1\cap F_2, F_1, F_2$ respectively.  Convexity of $\OmegaRefN$ (and $\sint\OmegaRefN \neq\emptyset$) implies that by taking $\eps(\OmegaRefN)$ sufficiently small, we can ensure the $d_i$ are smooth in $\mathrm{int} \Omega \cap B_1$, and that
\begin{equation}\label{eqn:l2-case-DdDd}
D d_i \cdot Dd_j \geq 0 \text{ on } \mathrm{int} \Omega \cap B_1.
\end{equation}
Also by convexity, taking $\eps(\OmegaRefN)$ small, we can choose a vector $\tau \in \R^2\times\{0^{m}\}$ and an $\eps_0(\OmegaRefN) > 0$ so that
\[
\tau \cdot D d_i \geq \eps_0 > 0 \quad \text{ on } \mathrm{int} \Omega \cap B_1.
\]
This of course implies that
\begin{equation}\label{eqn:l2-case-tau-nu}
\tau \cdot \nu_\Omega \leq -\eps_0 \text{ on } \del\Omega \cap B_1 \setminus F_1 \cap F_2.
\end{equation}

The upper bounds \eqref{eqn:upper-reg} imply that for every $\theta < 1$, and any $x \in F_1 \cap F_2 \cap B_\theta$, we have
\[
\mu_M(B_\rho(x)) \leq c(\OmegaRef, \theta)\mu_M(B_1) \rho^n.
\]
In particular, we get
\begin{equation}\label{eqn:l2-case-mass}
\mu_M(B_\rho(F_1 \cap F_2) \cap B_\theta) \leq c(\OmegaRef, \theta)\mu_M(B_1) \rho.
\end{equation}

From the $l = 1$ case, $||\delta M||$ is a Radon measure on $B_1 \setminus (F_1 \cap F_2)$.  Take $\theta < 1$, and $h \in C^1_c(B_\theta)$ non-negative.  Let us apply formula \eqref{eqn:delta-tau} with our choice of $\tau$, and $d_0$ in place of $d$, and make use of \eqref{eqn:l2-case-DdDd}, \eqref{eqn:l2-case-tau-nu}, \eqref{eqn:l2-case-mass} to obtain:
\begin{align}
\eps_0 \int \min(d_0/\rho,1) h d\sigma  
&\leq \eps \int h + |\nabla h| d\mu_M + \frac{1}{\rho} \int_{B_\rho(F_1 \cap F_2)}h d\mu_M \\
&\leq c |h|_{C^1} \mu_M(B_\theta) + |h|_{C^0} \frac{1}{\rho} \mu_M(B_\rho(F_1 \cap F_2) \cap B_\theta) \\
&\leq c |h|_{C^1}.
\end{align}
for $c = c(\OmegaRef, \theta, \mu_M(B_1))$.  Taking $\rho \to 0$, we get that
\[
||\delta M||(B_\theta \setminus (F_1 \cap F_2)) < \infty,
\]
for all $\theta < 1$.

We can now apply Theorem \ref{thm:first-var-boundary} to deduce that $||\delta M||$ is a Radon measure on $B_1$, and we can decompose $\delta M$ as in \eqref{eqn:deltaM-decomp}, where $\sigma \perp \mu_M$ is non-negative, supported in $\del\Omega$; and for $\sigma$-a.e. $x$, we have $|\eta(x)| = 1$, and
\begin{enumerate}
	\item $\eta(x) = \nu_i(x)$ if $x \in F_i \setminus (F_1 \cap F_2)$);
	\item $\eta(x) \perp T_x (F_1 \cap F_2)$ if $x \in F_1 \cap F_2$.
\end{enumerate}
To verify the first half of our theorem, it will suffice to show that $\eta(x) \cdot \nu_i(x) \geq 0$ when $x \in F_1 \cap F_2$.

Let $\tilde d_i$ be the signed distance function to $F_i$, which coincides with $d_i$ on $\mathrm{int} \Omega \cap B_1$.  Take $\phi \in C^1_c(B_1)$, and let $X = \phi D\tilde d_i$.  By Theorem \ref{thm:first-var-boundary} and \eqref{eqn:l2-case-DdDd} we get
\[
\int_{F_1 \cap F_2} \eta \cdot (-\nu_i) \phi d\sigma = \int_{F_1 \cap F_2} \eta \cdot D\tilde d_i \phi d\sigma = -\Gamma_2(\phi Dd_0 \cdot Dd_i) \leq 0.
\]
Since $\phi$ is arbitrary, we deduce $\eta \cdot \nu_i \geq 0$.

To prove the second part, we observe that by our choice of $\tau$ and characterisation above, we have $\eta(x) \cdot \tau \leq -\eps_0$ for $\sigma$-a.e. $x$.  Therefore by the first variation,
\begin{align}
\eps_0 \int \phi d\sigma \leq - \int \eta \cdot \tau \phi d\sigma 
&= \int -H^{tan}_M \cdot \tau \phi - \nabla \phi \cdot \tau d\mu_M \\
&\leq \int |H^{tan}_M|\phi + |\nabla \phi| d\mu_M.
\end{align}
This completes the proof of case $l = 2$.
\end{proof}

\begin{proof}[Proof when $l \geq 3$]
By convexity of $\OmegaRefN$ and since $\sint\OmegaRefN \neq\emptyset$, ensuring $\eps(\OmegaRefN)$ is sufficiently small, we can find a vector $\tau \in \R^{n+1}$, and $\eps_0(\OmegaRefN) > 0$, so that
\[
\tau \cdot \nu \leq - \eps_0 \quad \forall \nu \in \cN_{T_x\Omega}, \quad \forall x \in \del\Omega \cap B_1.
\]

Suppose by inductive hypothesis Theorem \ref{thm:first-var} holds for $1, 2, \ldots, l - 1$ in place of $l$.  Since (using Lemma \ref{lem:B}) any point of $\del\Omega \setminus \del_m\Omega$ is locally modelled on some $\OmegaRefN' \times \R^{m+1}$, for $\OmegaRefN' \subset \R^{l-1}$, we have by induction that $||\delta M||$ is a Radon measure in $B_1 \setminus \del_m\Omega$, and a have a decomposition of $\delta M$ as in \eqref{eqn:deltaM-decomp} for any $X$ supported away from $\del_m\Omega$.

Ensuring $\eps(\OmegaRefN)$ is sufficiently small, we can assume the distance function $d$ to $\del_m \Omega$ is smooth in $\mathrm{int} \Omega \cap B_1$.  Analogous to the proof when $l = 2$, the mass bounds \eqref{eqn:upper-reg} imply that
\begin{equation}\label{eqn:l3-case-mass}
\mu_M(B_\rho(\del_m\Omega) \cap B_\theta) \leq c(\OmegaRef, \theta)\mu_M(B_1) \rho^2
\end{equation}
for all $\theta < 1$.  Therefore, as before, if $h \in C^1_c(B_\theta)$ is non-negative, we can apply \eqref{eqn:delta-tau} with our choice of $\tau$, $d$ to deduce
\[
\eps_0 \int \min(d/\rho, 1) h d\sigma \leq c |h|_{C^1}
\]
independently of $\rho$, which implies $||\delta M||(B_\theta \setminus \del_m\Omega) < \infty$.

Let $X \in C^1_c(B_\theta, \R^{n+1})$, for any $\theta < 1$.  Let $\phi(x) = \zeta(d/\rho)$, for $\zeta$ a smooth, compactly supported approximation to $1_{(-\infty, 1]}$.  We compute
\begin{align}
\left| \int div(\phi X) d\mu_M \right| 
&\leq \int |\phi'| |X| + |\phi| |DX| d\mu_M \\
&\leq c |X|_{C^1} \frac{1}{\rho} \mu_M(B_\rho(\del_m \Omega) \cap B_\theta) \\
&\leq c \rho  \to 0 \quad \text{ as } \rho \to 0 .
\end{align}
Therefore, since $||\delta M|| \llcorner (\del_m \Omega)^C$ is a Radon measure on $B_1$, we deduce that
\begin{align}
\delta M(X) 
&= \lim_{\rho \to 0} \left( \delta M(\phi X) + \delta M( (1-\phi) X) \right) \\
&= -\int_{B_1 \setminus \del_m\Omega} H^{tan}_M \cdot X + \int_{B_1 \setminus \del_m \Omega} \eta \cdot X d\sigma.
\end{align}
So in fact $||\delta M||$ is a Radon measure on $B_1$, and $||\delta M||(\del_m\Omega) = 0$, so that same decomposition \eqref{eqn:deltaM-decomp} holds for all $X$.  The inequality \eqref{eqn:deltaM-bounds} follows by the same computation as in the $l = 2$ case.
\end{proof}

%%%%%%%%%%sobolev inequality
Inequality \eqref{eqn:deltaM-bounds} is important for ensuring good compactness, but even more importantly it implies a Sobolev inequality, which allows us to prove a mean value inequality for subharmonic functions.
\begin{theorem}\label{thm:moser}
Let $M \in \cI_n(B_1)$ satisfy \eqref{eqn:deltaM-bounds} for all non-negative $\phi \in C^1_c(B_1)$, and additionally assume that
\begin{equation}\label{eqn:moser-hyp1}
\mu_M(B_1) \leq A < \infty, \quad ||H^{tan}_M||_{L^\infty(B_1; \mu_M)} \leq 1.
\end{equation}
Then the following holds: if $u \in C^1(B_1)$ is a non-negative function, and $a$ is a constant, such that
\begin{equation}\label{eqn:moser-hyp2}
\int \nabla \zeta \cdot \nabla u d\mu_M \leq a \int \zeta u + |\nabla \zeta| u + \zeta |\nabla u | d\mu_M \quad \forall \zeta \in C^1_c(B_1) \text{ non-negative},
\end{equation}
then for every $\sigma < 1$ we have the inequality
\begin{equation}\label{eqn:moser-concl}
\sup_{\spt \mu_M \cap B_\sigma} u \leq c(\OmegaRef, A, a, \sigma) \int_{B_1} u d\mu_M.
\end{equation}
\end{theorem}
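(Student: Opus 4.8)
The plan is to run a De~Giorgi--Nash--Moser iteration on $M$, the only non-routine ingredient being a Sobolev inequality whose constant does not see the boundary; this is exactly what the trace-type bound \eqref{eqn:deltaM-bounds} buys. Combining \eqref{eqn:deltaM-bounds} with the Michael--Simon--Allard inequality for integral varifolds of locally bounded first variation (\cite{simon:gmt}, \cite{All}), one gets, for non-negative Lipschitz $f$ with compact support in $B_1$,
\[
\Big(\int f^{\frac{n}{n-1}}\,d\mu_M\Big)^{\frac{n-1}{n}}\le c(n)\Big(\int|\nabla f|\,d\mu_M+\int f\,d\|\delta M\|\Big)\le c(n,\OmegaRef)\int\big(|\nabla f|+f\big)\,d\mu_M,
\]
the last inequality using $\|H^{tan}_M\|_{L^\infty}\le1$. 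Inserting $f=g^2$ and applying Cauchy--Schwarz and Young yields the $W^{1,2}$-type inequality $\big(\int g^{2\chi}\,d\mu_M\big)^{1/\chi}\le c(n,\OmegaRef)\int\big(|\nabla g|^2+g^2\big)\,d\mu_M$, with $\chi:=\tfrac{n}{n-1}>1$, for Lipschitz $g$ compactly supported in $B_1$.

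\textbf{Step 2: Caccioppoli estimate and Moser iteration.} We may assume $\int_{B_1}u\,d\mu_M$ is finite and positive: if it vanishes then $u\ge0$ and continuity give $u\equiv0$ on $\spt\mu_M\cap B_1$, and if it is infinite the inequality is trivial. Put $k:=\int_{B_1}u\,d\mu_M$, $\bar u:=u+k\ge k>0$; since $\nabla\bar u=\nabla u$ and $u\le\bar u$, $\bar u$ again satisfies \eqref{eqn:moser-hyp2} with the same $a$. Fix $\rho_0:=(1+\sigma)/2<1$, so $\bar u$ is bounded on $\overline{B_{\rho_0}}\subset B_1$ and all integrals below are finite. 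For $\beta\ge1$ and $\eta\in C^1_c(B_{\rho_0})$, test \eqref{eqn:moser-hyp2} with $\zeta=\eta^2\bar u^{\,2\beta-1}\in C^1_c(B_1)$; expanding $\nabla\zeta$, retaining the good term $(2\beta-1)\int\eta^2\bar u^{\,2\beta-2}|\nabla u|^2$ on the left, and absorbing every $|\nabla u|$ on the right by Young's inequality (via $\bar u^{\,2\beta-1}=\bar u^{\,\beta-1}\bar u^{\,\beta}$), one obtains $\int\eta^2|\nabla\bar u^{\,\beta}|^2\,d\mu_M\le c(a)\beta^2\int(|\nabla\eta|^2+\eta^2)\bar u^{\,2\beta}\,d\mu_M$. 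Feeding $g=\eta\bar u^{\,\beta}$ into the $W^{1,2}$-Sobolev inequality of Step~1, with $\eta$ a standard cutoff between concentric balls, gives the reverse-H\"older estimate
\[
\Big(\int_{B_{r'}}\bar u^{\,2\beta\chi}\,d\mu_M\Big)^{1/\chi}\le\frac{c(n,\OmegaRef,a)\,\beta^2}{(r-r')^2}\int_{B_{r}}\bar u^{\,2\beta}\,d\mu_M,\qquad\sigma\le r'<r\le\rho_0,
\]
the lower-order terms being absorbed since $r-r'\le1$ and $\beta\ge1$. Iterating with $\beta_j=\chi^{j}$ and geometrically shrinking radii --- the accumulated constant converging because $\sum_j\chi^{-j}<\infty$ and $\sum_j j\chi^{-j}<\infty$ --- produces
\[
\sup_{\spt\mu_M\cap B_{r'}}\bar u\le\frac{c(n,\OmegaRef,a)}{(r-r')^{\gamma}}\Big(\int_{B_{r}}\bar u^2\,d\mu_M\Big)^{1/2},\qquad\sigma\le r'<r\le\rho_0,
\]
for some $\gamma=\gamma(n)$.

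\textbf{Step 3: passing from $L^2$ to $L^1$, and conclusion.} Let $\Phi(\rho):=\sup_{\spt\mu_M\cap B_\rho}\bar u$, which is finite on $[\sigma,\rho_0]$ since $u\in C^1(B_1)$ is bounded on $\overline{B_{\rho_0}}$. Using $\int_{B_r}\bar u^2\le\Phi(r)\int_{B_1}\bar u$ and Young's inequality, the previous display becomes $\Phi(r')\le\tfrac12\Phi(r)+\tfrac{c}{(r-r')^{2\gamma}}\int_{B_1}\bar u$ for $\sigma\le r'<r\le\rho_0$; a standard iteration lemma then gives $\Phi(\sigma)\le c(n,\OmegaRef,a,\sigma)\int_{B_1}\bar u\,d\mu_M$. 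Finally $\int_{B_1}\bar u\,d\mu_M=\int_{B_1}u\,d\mu_M+k\,\mu_M(B_1)\le(1+A)\int_{B_1}u\,d\mu_M$ by \eqref{eqn:moser-hyp1}, and $u\le\bar u$; absorbing the factor $1+A$ into the constant yields \eqref{eqn:moser-concl}.

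The main obstacle is Step~1. Everything after it is a textbook Moser iteration; what makes the problem non-trivial is that there is no reflection principle or boundary monotonicity available near $\del\Omega$, so the only route to a Sobolev inequality with a boundary-independent constant is through the quantitative first-variation bound \eqref{eqn:deltaM-bounds} --- which in the applications is supplied by Theorem~\ref{thm:first-var} and is where convexity of $\OmegaRef$ enters.
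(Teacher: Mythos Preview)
Your proof is correct and follows essentially the same strategy as the paper: derive a Sobolev inequality on $M$ from the first-variation bound \eqref{eqn:deltaM-bounds} (the paper invokes \cite{MiSi} and spells this out in Appendix~\ref{sec:sobolev}), then run a standard Moser iteration. Your write-up is in fact more detailed than the paper's, which simply says ``standard iteration methods''; in particular your Step~3 (passing from the $L^2$ to the $L^1$ bound via the $\bar u = u + k$ trick and the usual absorption lemma) is a genuine point that the paper leaves to the reader. The one small omission is the case $n=1$, where $\chi = n/(n-1)$ is undefined; the paper handles this separately by noting that \eqref{eqn:deltaM-bounds} already gives a direct $L^\infty$ bound $\sup h \le c(\OmegaRef)\int(|H^{tan}_M|h + |\nabla h|)\,d\mu_M$, making the iteration unnecessary.
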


\begin{proof}
By the same argument as in \cite{MiSi} (see Appendix \ref{sec:sobolev} for more details), inequality \eqref{eqn:deltaM-bounds} implies the Sobolev inequality: for all $h \in C^1_c(B_1)$ non-negative, then
\begin{equation}\label{eqn:sobolev-1}
\sup h \leq c(\OmegaRef) \int |H^{tan}_M|h + |\nabla h| d\mu_M
\end{equation}
if $n = 1$, and 
\begin{equation}\label{eqn:sobolev}
\left( \int h^{n/(n-1)} d\mu_M \right)^{(n-1)/n} \leq c(\OmegaRef) \int |H^{tan}_M| h + |\nabla h| d\mu_M
\end{equation}
for $n \geq 2$.  The bound \eqref{eqn:moser-concl} then follows from \eqref{eqn:moser-hyp1}, \eqref{eqn:sobolev} (or \eqref{eqn:sobolev-1} if $n =1$), and \eqref{eqn:moser-hyp2} by standard iteration methods.
\end{proof}

%%%%%lower ahlfors regularity
An immediate consequence is lower Ahlfors regularity in every ball centered on the support of $M$.
\begin{corollary}\label{cor:lower-reg}
There is an $\eps(\OmegaRef)$ so that given
\[
\Omega \in \cD_{\eps}(\Omega), \quad M \in \cIH_n(\Omega, B_1), \quad ||H^{tan}_M||_{L^\infty(B_1; \mu_M)} \leq \eps, \quad \theta_M(0, 1) \leq A,
\]
then for any $\theta < 1$, $x \in \spt M \cap B_\theta$ and $0 < r < 1-|x|$, we have
\begin{equation}\label{eqn:lower-reg}
\mu_M(B_r(x)) \geq r^n/c(\OmegaRef, A, \theta)
\end{equation}
\end{corollary}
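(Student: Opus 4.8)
The plan is to obtain \eqref{eqn:lower-reg} from the Sobolev inequality furnished by Theorem~\ref{thm:first-var} together with the upper mass bound of Corollary~\ref{cor:upper-reg}, by the classical differential-inequality argument for lower density bounds. First I would fix $\eps(\OmegaRef)$ small enough that both Corollary~\ref{cor:upper-reg} and Theorem~\ref{thm:first-var} apply (the hypothesis $\theta_M(0,1)\le A$ gives $\mu_M(B_1)\le\omega_n A<\infty$, as needed for Theorem~\ref{thm:first-var}). Theorem~\ref{thm:first-var} then yields that $M$ satisfies \eqref{eqn:deltaM-bounds} for all non-negative $\phi\in C^1_c(B_1)$, and hence --- exactly as in the proof of Theorem~\ref{thm:moser}, following \cite{MiSi} --- the Sobolev inequality \eqref{eqn:sobolev} holds (or \eqref{eqn:sobolev-1} when $n=1$). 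The essential feature to exploit is that the right-hand side of \eqref{eqn:sobolev} involves only the \emph{tangential} mean curvature $H^{tan}_M$, and never the boundary term $\sigma$ of the decomposition \eqref{eqn:deltaM-decomp}; this is precisely where convexity of $\OmegaRef$ has been used.

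Next I would fix $x\in\spt M\cap B_\theta$ and study $f(r):=\mu_M(B_r(x))$, which is nondecreasing with $f(0^+)=0$ since $\mu_M$ carries no atoms. Testing \eqref{eqn:sobolev} against a radial cutoff $h(y)=\chi\big(\tfrac{|y-x|-r}{\delta}\big)$, with $\chi$ a fixed decreasing smooth approximation of $1_{(-\infty,0]}$, and letting $\delta\to0$ along a sequence of radii where $f$ is differentiable, one gets (using $|H^{tan}_M|\le\eps\le1$)
\[
f(r)^{(n-1)/n}\le c(\OmegaRef)\big(f(r)+f'(r)\big)\qquad\text{for a.e. }0<r<1-|x|,
\]
with the convention $f^0\equiv1$ when $n=1$. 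By Corollary~\ref{cor:upper-reg}, $f(r)\le c_m(\OmegaRef,\theta)\,\theta_M(0,1)\,\omega_n r^n\le C_0(\OmegaRef,A,\theta)\,r^n$, so there is a threshold $r_1=r_1(\OmegaRef,A,\theta)\in(0,1-\theta)$ --- depending only on the stated data --- for which $c(\OmegaRef)f(r)\le\tfrac12 f(r)^{(n-1)/n}$ whenever $r\le r_1$. Absorbing the zeroth-order term then gives $\tfrac{d}{dr}f(r)^{1/n}\ge\tfrac{1}{2nc(\OmegaRef)}$ for a.e. $r\le r_1$; since $r\mapsto f(r)^{1/n}$ is monotone, its increment dominates the integral of its a.e.\ derivative, so integrating from $0$ yields $f(r)\ge\big(r/(2nc(\OmegaRef))\big)^n$ for $r\le r_1$. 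For $r_1\le r<1-|x|$ one simply uses $f(r)\ge f(r_1)\ge(r_1/(2nc))^n\ge(r_1/(2nc))^n r^n$ (as $r<1$). Combining the two ranges gives \eqref{eqn:lower-reg} with $c(\OmegaRef,A,\theta)=(2nc(\OmegaRef)/r_1)^n$.

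I do not expect a serious obstacle here: the heavy lifting --- the first-variation estimate \eqref{eqn:deltaM-bounds} and hence the Sobolev inequality \eqref{eqn:sobolev} --- is already in hand, and what remains is the routine density-bound ODE. The only points requiring a little care are measure-theoretic: approximating $1_{B_r(x)}$, extracting a radius at which the monotone function $f$ is differentiable with $f'$ controlling the difference quotient, and integrating the differential inequality. Conceptually, the one thing worth flagging is that the \emph{uniformity} of the threshold radius $r_1$ genuinely requires the upper bound of Corollary~\ref{cor:upper-reg}; without it one would still find some $r_{**}>0$ with $f(r_{**})$ small (using only $f(0^+)=0$), but $r_{**}$ could depend on $M$ and the resulting constant would degenerate.
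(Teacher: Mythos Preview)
Your argument is correct. It is, however, a genuinely different route from the paper's. The paper proves Corollary~\ref{cor:lower-reg} in two lines: rescale $M' = (\eta_{x,r})_\sharp M$, note that $M'$ inherits the scale-invariant bound \eqref{eqn:deltaM-bounds} and (via Corollary~\ref{cor:upper-reg}) has $\mu_{M'}(B_1) \le c(\OmegaRef,A,\theta)$, and then apply Theorem~\ref{thm:moser} with $u \equiv 1$ to get $1 \le c\,\mu_{M'}(B_1)$, which is exactly \eqref{eqn:lower-reg}. In other words, the paper treats the lower density bound as a trivial corollary of the already-packaged Moser iteration.

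Your approach bypasses Theorem~\ref{thm:moser} entirely and instead runs the classical Michael--Simon ODE argument directly from the Sobolev inequality \eqref{eqn:sobolev}: test against a radial cutoff, extract $f^{(n-1)/n} \le c(f + f')$, use the upper mass bound to absorb the zeroth-order term on a uniform scale $r_1$, and integrate. This is more elementary in the sense that it uses only \eqref{eqn:sobolev} and not the full mean-value inequality, and it makes transparent exactly where the upper Ahlfors regularity enters (to make $r_1$ uniform). The paper's route is shorter because the iteration machinery is already in place; yours is more self-contained. Both rest on the same essential input, namely the trace-type first-variation estimate \eqref{eqn:deltaM-bounds}.
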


%when $n \geq 3$ then the lower bound is independent of $\theta$

\begin{proof}
By Theorem \ref{thm:first-var} $M$ satisfies the scale-invariant inequality \eqref{eqn:deltaM-bounds}, and therefore if $M' = (\eta_{x, r})_\sharp M$ then $M'$ satisfies \eqref{eqn:deltaM-bounds} also.  From \eqref{eqn:upper-reg} and scaling we have the bounds
\[
\mu_{M'}(B_1) \leq c(\Omega, A, \theta), \quad ||H^{tan}_{M'}||_{L^\infty(B_1; \mu_{M'})} \leq r\eps \leq 1.
\]
Therefore we can apply Theorem \ref{thm:moser} to $M'$ with $u = 1$, $\sigma = 1/2$, to get
\[
1 \leq c(\Omega, A, \theta) \mu_{M'}(B_1),
\]
which implies the required estimate \eqref{eqn:lower-reg}.
\end{proof}

%%%%%%%W^{1,2} and L^\infty bounds
Another important consequence of first variation control and Theorem \ref{thm:moser} are the following $L^\infty$ and $W^{1,2}$ estimates, which will be important in the blow-up argument to ensure good convergence to the linear problem.
\begin{corollary}\label{cor:height-w12-bound}
Given any $\theta \in (0, 1)$, there are constants $\eps_e(\OmegaRef)$, $c_{e}(\OmegaRef, A, \theta)$, so that if
\[
\Omega \in \cD_{\eps_e}(\OmegaRef), \quad M \in \cIH_n(\Omega, B_1), \quad ||H^{tan}_M||_{L^\infty(B_1; \mu_M)} \leq \eps_e, \quad \theta_M(0,1) \leq A,
\]
then given any $V \in \cP_{T_0\Omega}$, and $p \in V^\perp$, we have the height bound
\begin{equation}\label{eqn:sharp-height}
\sup_{z \in B_{\theta} \cap \spt M} d(z, p + V)^2 \leq c_{e} E(\Phi, M, p+V, 0, 1),
\end{equation}
and $W^{1,2}$ bound
\begin{equation}\label{eqn:W12-bound}
\int_{B_{\theta}} |\pi_{T_zM} - \pi_V|^2 d\mu_M(z) \leq c_{e} E(\Phi, M, p + V, 0, 1).
\end{equation}

\end{corollary}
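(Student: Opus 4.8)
Both inequalities will be derived from the first-variation decomposition of Theorem \ref{thm:first-var} applied to vector fields of the form $X = \zeta\,\pi^\perp_V(z-p)$, together with the algebraic identity $\mathrm{div}_M(\pi^\perp_V(z-p)) = \pi_{T_zM}\cdot\pi^\perp_V = \tfrac12|\pi_{T_zM}-\pi_V|^2$ (immediate from the conventions of Section~2.1, since $T_zM$ and $V$ are both $n$-planes), and the crucial geometric input that the boundary conormal is almost orthogonal to $V^\perp$. For the latter: given $z\in\partial\Omega\cap B_1$, the conormal $\eta(z)\in\cN_{T_z\Omega}$, and since any horizontal plane of $T_z\Omega$ contains the normal cone $\cN_{T_z\Omega}$, Lemma~\ref{lem:B}\eqref{item:poly2} applied with $x=0$, $y=z$ gives a $W\in\cP_{T_z\Omega}$ with $\pi^\perp_W\eta(z)=0$ and $|\pi_V-\pi_W|\le c(n)|D^2\Phi|_{C^0(B_2)}|z|$, whence
\[
|\pi^\perp_V(\eta(z))| = |(\pi^\perp_V-\pi^\perp_W)\eta(z)| \le c(n)|D^2\Phi|_{C^0(B_2)}|z|.
\]
We may assume $\spt M\cap B_\theta\neq\emptyset$ (otherwise both claims are vacuous), so by monotonicity and Corollary~\ref{cor:lower-reg} we have $1/c(\OmegaRef,A,\theta)\le\mu_M(B_1)\le c(\OmegaRef,A,\theta)$; and by Theorem~\ref{thm:first-var} (shrinking $\eps_e$) the inequality \eqref{eqn:deltaM-bounds} holds.

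\textbf{Height bound \eqref{eqn:sharp-height}.} Set $\lambda^2 = \max\{\,\|H^{tan}_M\|_{L^\infty(B_1;\mu_M)}^2,\ |D^2\Phi|_{C^0(B_2)}^2,\ \mu_M(B_1)^{-1}\!\int_{B_1}d_{p+V}^2\,d\mu_M\,\}$ (if $\lambda=0$ then $d_{p+V}\equiv 0$ on $\spt M$ and there is nothing to prove), and let $u = d_{p+V}^2+\lambda^2$, a smooth positive function on $B_1$ since $d_{p+V}^2$ is a quadratic polynomial. I will verify the Moser hypothesis \eqref{eqn:moser-hyp2} with $a=c(\OmegaRef)$: inserting $X=\zeta\,\pi^\perp_V(z-p)$ into \eqref{eqn:deltaM-decomp} and using $\nabla u = 2\pi_{T_zM}\pi^\perp_V(z-p)$ gives
\[
\int\nabla\zeta\cdot\nabla u\,d\mu_M = -2\!\int\zeta\,H^{tan}_M\cdot\pi^\perp_V(z-p)\,d\mu_M + 2\!\int\zeta\,\eta\cdot\pi^\perp_V(z-p)\,d\sigma - \int\zeta|\pi_{T_zM}-\pi_V|^2\,d\mu_M.
\]
The last term is $\le 0$. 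For the mean-curvature term, $|H^{tan}_M|\le\lambda$ and $d_{p+V}\le u^{1/2}\le u/\lambda$ give $|{-}2\zeta H^{tan}_M\cdot\pi^\perp_V(z-p)|\le 2\zeta u$. For the boundary term, $|2\zeta\,\eta\cdot\pi^\perp_V(z-p)|\le 2\zeta|\pi^\perp_V\eta|\,d_{p+V}\le \psi := 2c(n)|D^2\Phi|_{C^0(B_2)}\,\zeta\,u^{1/2}$, which is non-negative and $C^1$; applying \eqref{eqn:deltaM-bounds} to $\psi$ and using $|D^2\Phi|_{C^0}\le\lambda\le u^{1/2}$, $|\nabla u|\le 2d_{p+V}\le 2u^{1/2}$, $|H^{tan}_M|\le 1$, and $u^{1/2}\le u/\lambda$ bounds $2\!\int\zeta\,\eta\cdot\pi^\perp_V(z-p)\,d\sigma\le\int\psi\,d\sigma\le c(\OmegaRef)\int(\zeta u + |\nabla\zeta|u + \zeta|\nabla u|)\,d\mu_M$. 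Thus \eqref{eqn:moser-hyp2} holds, and Theorem~\ref{thm:moser} yields
\[
\sup_{\spt M\cap B_\theta}d_{p+V}^2 \le \sup_{\spt M\cap B_\theta}u \le c(\OmegaRef,A,\theta)\!\int_{B_1}u\,d\mu_M = c\Big(\!\int_{B_1}d_{p+V}^2\,d\mu_M + \lambda^2\mu_M(B_1)\Big)\le c(\OmegaRef,A,\theta)\,E,
\]
the last step because $\lambda^2\mu_M(B_1)\le\max\{\|H^{tan}_M\|^2\mu_M(B_1),\,|D^2\Phi|_{C^0}^2\mu_M(B_1),\,\int_{B_1}d_{p+V}^2\}\le c(\OmegaRef,A,\theta)E$.

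\textbf{Tilt bound \eqref{eqn:W12-bound}.} Fix $\phi\in C^1_c(B_{(1+\theta)/2})$ with $\phi\equiv1$ on $B_\theta$ and $|\nabla\phi|\le c(\theta)$, and insert $X=\phi^2\pi^\perp_V(z-p)$ into \eqref{eqn:deltaM-decomp}:
\[
\tfrac12\!\int\phi^2|\pi_{T_zM}-\pi_V|^2\,d\mu_M = -\!\int\nabla(\phi^2)\cdot\pi^\perp_V(z-p)\,d\mu_M - \!\int\phi^2 H^{tan}_M\cdot\pi^\perp_V(z-p)\,d\mu_M + \!\int\phi^2\,\eta\cdot\pi^\perp_V(z-p)\,d\sigma.
\]
In the first term write $\nabla\phi\cdot\pi^\perp_V(z-p) = \nabla\phi\cdot(\pi_{T_zM}-\pi_V)\pi^\perp_V(z-p)$ and use Cauchy--Schwarz to absorb $\tfrac14\int\phi^2|\pi_{T_zM}-\pi_V|^2$ into the left side, leaving $c(\theta)\int_{B_1}d_{p+V}^2\le c(\theta)E$; the mean-curvature term is $\le\tfrac12\|H^{tan}_M\|^2\mu_M(B_1)+\tfrac12\int_{B_1}d_{p+V}^2\le cE$. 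For the boundary term I now invoke the height bound at radius $(1+\theta)/2$: for $z\in\spt\sigma\cap B_{(1+\theta)/2}\subset\spt M$ we get $d_{p+V}(z)\le c(\OmegaRef,A,\theta)E^{1/2}$, so $|\phi^2\eta\cdot\pi^\perp_V(z-p)|\le\phi^2|\pi^\perp_V\eta|\,d_{p+V}\le c\,|D^2\Phi|_{C^0}E^{1/2}\phi^2$; integrating against $\sigma$ and bounding $\int\phi^2\,d\sigma\le\int\phi^2\,d\|\delta M\|\le c(\OmegaRef,A,\theta)$ via \eqref{eqn:deltaM-bounds} gives $\le c|D^2\Phi|_{C^0}E^{1/2}\le cE$ since $|D^2\Phi|_{C^0}^2\le E$. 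Hence $\int_{B_\theta}|\pi_{T_zM}-\pi_V|^2\,d\mu_M\le\int\phi^2|\pi_{T_zM}-\pi_V|^2\,d\mu_M\le c(\OmegaRef,A,\theta)E$.

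\textbf{Main obstacle.} The hypotheses provide no control on the conormal $\eta$ beyond $\eta\in\cN_{T_z\Omega}$; the argument works only because $V$ is \emph{horizontal}, which forces $\eta$ to be $V^\perp$-orthogonal up to a curvature error $O(|D^2\Phi|_{C^0})$. Converting this into an estimate, and then bounding the resulting boundary measure by $\mu_M$ through the trace-type inequality \eqref{eqn:deltaM-bounds}, all at the scale of the excess $E$ (which may be far smaller than $\|H^{tan}_M\|_\infty$ or $|D^2\Phi|_{C^0}$), is the delicate point; this is what dictates the choice of the shift $\lambda$ in the Moser test function and the ordering of the two estimates, with the $L^\infty$ bound proved first and fed into the $L^2$ tilt bound.
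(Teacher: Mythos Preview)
Your proof is correct but takes a genuinely different route from the paper's. The paper never touches the boundary measure $\sigma$ at all: after rotating so that $V=\R^n\times\{0\}$, it replaces the constant vector $e_{n+1}$ by its pushforward $\tilde e_{n+1}=D\Phi|_{\Phi^{-1}(x)}(e_{n+1})$, which is \emph{tangential} to $\Omega$. Plugging $X=\phi\tilde e_{n+1}$ (respectively $X=\phi^2(x_{n+1}-p_{n+1})\tilde e_{n+1}$) into the first variation then produces no $\sigma$-term whatsoever, and the only errors come from $|\tilde e_{n+1}-e_{n+1}|+|D\tilde e_{n+1}|\le c|D^2\Phi|_{C^0}$, which are absorbed exactly as you absorb your boundary terms. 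In effect the paper builds the geometric fact ``$\eta$ is almost in $V$'' into the test field rather than into a boundary estimate.

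Your approach instead keeps the clean test field $\pi^\perp_V(z-p)$, accepts the boundary term, and then kills it using (i) the observation $\cN_{T_z\Omega}\subset W$ for every $W\in\cP_{T_z\Omega}$ together with Lemma~\ref{lem:B}\eqref{item:poly2} to get $|\pi^\perp_V\eta|\le c|D^2\Phi|_{C^0}$, and (ii) the trace inequality \eqref{eqn:deltaM-bounds} to convert the $\sigma$-integral back to a $\mu_M$-integral. This is more work but has the virtue of making the role of horizontality completely transparent, and of exercising \eqref{eqn:deltaM-bounds} directly rather than only through the Sobolev/Moser machinery. The paper's route is shorter and avoids any appeal to \eqref{eqn:deltaM-bounds} beyond what is already hidden in Theorem~\ref{thm:moser}.
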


%in fact the second constant is independent of $\theta$

\begin{proof}
Since $V \in \cP_{T_0\Omega}$, we can assume after rotation that $V = \R^n\times \{0\}$.  Write $\Omega = \Phi(\Psi(\OmegaRef)) \cap B_1$, and set $\tilde e_{n+1} = D\Phi_{\Phi^{-1}(x)}(e_{n+1})$.  Then $\tilde e_{n+1}$ is tangential to $\Omega$, and an easy computation gives that, for $x \in B_1$ and $\eps_{e}(n)$ sufficiently small, 
\begin{equation}\label{eqn:tilde-e-bounds}
|\tilde e_{n+1} - Dx_{n+1}| \leq c(n) |D^2\Phi|_{C^0(B_2)}, \quad |D\tilde e_{n+1}| \leq c(n) |D^2\Phi|_{C^0(B_2)}.
\end{equation}
Ensure also that $\eps_e \leq \eps$, the constant from Theorem \ref{thm:first-var}.

Given $\phi \in C^1(B_1)$, plug the vector field $X = \phi \tilde e_{n+1}$ into the first variation, to obtain
\[
\int \nabla (x_{n+1} - p_{n+1}) \cdot \nabla \phi \leq c(n) (||H^{tan}_M||_{L^\infty(B_1; \mu_M)} + |D^2\Phi|_{C^0(B_2)}) \int |\phi| + |\nabla \phi| d\mu_M.
\]
Now if we assume $\phi \geq 0$, and let
\[
u = (x_{n+1} - p_{n+1})^2 + ||H^{tan}_M||_{L^\infty(B_1;\mu_M)}^2 + |D^2 \Phi|_{C^0(B_2)}^2,
\]
then we can compute
\begin{align*}
&\int \nabla u \cdot \nabla \phi  d\mu_M \\
&= \int 2 \nabla (x_{n+1} - p_{n+1}) \cdot \nabla ((x_{n+1} - p_{n+1}) \phi) - 2 \phi |\nabla x_{n+1}|^2 d\mu_M \\
&\leq \int c(n) (||H^{tan}_M||_{L^\infty} + |D^2\Phi|_{C^0(B_2)}) ( |(x_{n+1} - p_{n+1}) \phi| + |\nabla ((x_{n+1} - p_{n+1}) \phi)|) - 2 |\nabla x_{n+1}|^2 \phi d\mu_M \\
&\leq c(n) \int u \phi + u |\nabla \phi| d\mu_M.
\end{align*}
We can then use Theorem \ref{thm:moser} to get the required \eqref{eqn:sharp-height}.

To prove \eqref{eqn:W12-bound}, we plug in the field $X = \phi^2 (x_{n+1} - p_{n+1}) \tilde e_{n+1}$ into the first variation, and use \eqref{eqn:tilde-e-bounds}, to obtain:
\begin{align*}
&\int \phi^2 |\nabla x_{n+1}|^2 d\mu_M \\
&\leq \int c(n) (\phi^2 + \phi |\nabla \phi|) ( |\nabla x_{n+1}| + |x_{n+1} - p_{n+1}|)( ||H^{tan}_M||_{L^\infty(B_1)} + |D^2 \Phi|_{C^0(B_1)}) d\mu_M \\
&\quad + 2 \phi \nabla \phi \cdot \nabla x_{n+1}|x_{n+1} - p_{n+1}| d\mu_M .
\end{align*}

Rearranging, we get
\[
\int \phi^2 |\nabla x_{n+1}|^2 d\mu_M \leq \int c(n) (\phi^2 + |\nabla \phi|^2) (||H^{tan}_M||^2_{L^\infty} + |D^2\Phi|^2_{C^0(B_2)} + |x_{n+1} - p_{n+1}|^2) d\mu_M .
\]
Choosing any fixed $\phi$ satisfying $\phi \equiv 1$ on $B_{\theta}$, and using the relation 
\[|\nabla x_{n+1}|^2|_z = (T_z M) : V^\perp = \frac{1}{2} |\pi_{T_zM} - \pi_V|^2,\]
we deduce \eqref{eqn:W12-bound}.
\end{proof}

%%%%%non-concentration of mass
The height bound \eqref{eqn:sharp-height} with the upper-Ahlfors regularity \eqref{eqn:upper-reg} imply that mass cannot accumulate near the boundary of our domain.
\begin{corollary}\label{cor:mass-non-conc}
Given $\tau, \theta \in (0, 1)$, there is an $\eps(\OmegaRef, \theta, A, \tau) \leq \eps_{e}(\OmegaRef)$ so that under the assumptions of Corollary \ref{cor:height-w12-bound}, if $M$ additionally satisfies
\[
E(\Phi, M, \Omega, p + V, 0, 1) \leq \eps^2
\]
for some $V \in \cP_{T_0\Omega}$, $p \in V^\perp$, then we have the non-concentration estimate
\begin{equation}\label{eqn:mass-non-conc}
\mu_M(B_\theta \cap B_\tau(\del\Omega)) \leq c(\OmegaRef, \theta, A) \tau.
\end{equation}
\end{corollary}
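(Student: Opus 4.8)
The plan is to combine the sharp height bound \eqref{eqn:sharp-height} with the upper Ahlfors regularity \eqref{eqn:upper-reg}: the height bound confines $\spt M\cap B_\theta$ to a thin slab around the horizontal plane $p+V$, intersecting this slab with a $\tau$-neighbourhood of $\partial\Omega$ traps $\spt M$ inside an $O(\tau)$-neighbourhood of a set $\widehat K$ which is a bounded union of affine $(n-1)$-planes, and a covering argument against \eqref{eqn:upper-reg} caps the mass there at $c\tau$. Some reductions first. Since $\mu_M\big(B_\theta\cap B_\tau(\partial\Omega)\big)\le\mu_M(B_1)=\omega_n\theta_M(0,1)\le\omega_n A$, the estimate \eqref{eqn:mass-non-conc} is automatic once $\tau$ exceeds a constant $\tau_0(\OmegaRef,\theta)$ to be fixed below, so we may assume $\tau\le\tau_0$; and we take $\eps=\eps(\OmegaRef,\theta,A,\tau)\le\min\{\eps_e(\OmegaRef),\tau\}$, placing us under the hypotheses of Corollary \ref{cor:height-w12-bound}. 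Then \eqref{eqn:sharp-height} together with $E(\Phi,M,p+V,0,1)\le\eps^2$ gives
\[
d(z,p+V)\le c_e^{1/2}\eps\qquad\text{for all }z\in\spt M\cap B_\theta .
\]

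The key geometric step is the confinement estimate. After a rotation write $\OmegaRef=\OmegaRefN^l\times\R^m$, so the flat boundary is $\partial\OmegaRefN\times\R^m$, a finite union of hyperplanes (of cardinality $\le c(\OmegaRef)$). Since $\Psi$ is $\eps$-close to the identity, $V\in\cP_{T_0\Omega}=\cP_{\Psi(\OmegaRef)}$ is $c(n)\eps$-close to some horizontal plane $V'=\R^l\times W^{m-1}\in\cP_{\OmegaRef}$, and there is $p'\in(V')^\perp\subset\{0^l\}\times\R^m$ with $|p-p'|\le c(n)\eps$. Because $\Phi\circ\Psi$ is $(1+c(n)\eps)$-bi-Lipschitz to the identity (as in the proof of Lemma \ref{lem:B}), $\partial\Omega$ lies within $c(n)\eps$ of $\partial\OmegaRefN\times\R^m$ inside $B_1$. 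In the flat model the orthogonal projection onto $p'+V'=\R^l\times(p'+W^{m-1})$ fixes the $\R^l$-coordinate, hence carries $\partial\OmegaRefN\times\R^m$ into $\partial\OmegaRefN\times(p'+W^{m-1})=(p'+V')\cap(\partial\OmegaRefN\times\R^m)=:\widehat K$; note $\widehat K$ is contained in a union of at most $c(\OmegaRef)$ affine subspaces of dimension $\le n-1$. Projecting a given $z$, and then a nearest point of $\partial\Omega$, onto $p'+V'$, one obtains: any $z\in B_\theta$ with $d(z,p+V)\le\rho_1$ and $d(z,\partial\Omega)\le\rho_2$ satisfies $d(z,\widehat K)\le c(\OmegaRef)(\rho_1+\rho_2+\eps)$. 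Taking $\rho_1=c_e^{1/2}\eps$ and $\rho_2=\tau$, and using $\eps\le\tau$, we conclude
\[
\spt M\cap B_\theta\cap B_\tau(\partial\Omega)\ \subset\ B_{c(\OmegaRef,A,\theta)\,\tau}\big(\widehat K\big) .
\]

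It remains to bound $\mu_M$ on this neighbourhood of $\widehat K$. Since $\widehat K$ is a bounded union of affine $(n-1)$-dimensional pieces, for $\tau$ small the set $\widehat K\cap B_{(1+\theta)/2}$ admits a $c\tau$-net $\{x_i\}_{i=1}^N$ with $N\le c(\OmegaRef,\theta)\,\tau^{-(n-1)}$ such that the set in the previous display is covered by $\bigcup_{i=1}^N B_{2c\tau}(x_i)$. For each $i$, if $B_{2c\tau}(x_i)\cap\spt M=\emptyset$ there is nothing to estimate; otherwise pick $y_i\in B_{2c\tau}(x_i)\cap\spt M$, note $|y_i|<(3+\theta)/4$ and $4c\tau<1-|y_i|$ once $\tau\le\tau_0(\OmegaRef,\theta)$ is small enough, and apply \eqref{eqn:upper-reg}:
\[
\mu_M\big(B_{2c\tau}(x_i)\big)\le\mu_M\big(B_{4c\tau}(y_i)\big)\le c_m\,\theta_M(0,1)\,\omega_n(4c\tau)^n\le c(\OmegaRef,A,\theta)\,\tau^n .
\]
Summing over $i$ and using the confinement inclusion,
\[
\mu_M\big(B_\theta\cap B_\tau(\partial\Omega)\big)\le\sum_{i=1}^N\mu_M\big(B_{2c\tau}(x_i)\big)\le c(\OmegaRef,\theta)\,\tau^{-(n-1)}\cdot c(\OmegaRef,A,\theta)\,\tau^n=c(\OmegaRef,\theta,A)\,\tau ,
\]
which is \eqref{eqn:mass-non-conc}.

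The only non-routine ingredient is the confinement step: that being close both to $\partial\Omega$ and to the \emph{horizontal} plane $p+V$ forces closeness to the $(n-1)$-dimensional set $\widehat K$. This is exactly where the horizontal structure of $V\in\cP_{T_0\Omega}$ is essential — the vertical analogue is false, cf.\ Example \ref{ex:vertical-plane} — and it is what converts a unit of ambient mass into a factor of $\tau$; the remaining steps are the height bound of Corollary \ref{cor:height-w12-bound} and a Vitali-type covering against the density bound \eqref{eqn:upper-reg}.
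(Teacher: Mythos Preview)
Your proof is correct and follows essentially the same approach as the paper: height bound confines $\spt M$ near the plane, intersection with a $\tau$-neighbourhood of $\partial\Omega$ traps it near an $(n-1)$-dimensional set, and a covering argument against the upper Ahlfors bound finishes. The paper phrases the confinement step more directly by writing $\spt M\cap B_\theta\cap B_\tau(\partial\Omega)\subset B_{2\tau}\big(\partial[\Omega\cap(p+V)]\big)$, working with the curved domain rather than passing to the flat model $\widehat K$; this saves the bi-Lipschitz bookkeeping, but your version makes the role of the horizontal plane $V\in\cP_{T_0\Omega}$ (and why the $(n-1)$-dimensionality of the trap set holds) more explicit.
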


\begin{proof}
There is no loss in assuming $\tau \leq (1-\theta)/4$.  From the height bound \eqref{eqn:sharp-height}, ensuring $\eps(\OmegaRef, \theta, A, \tau)$ is sufficiently small, we have
\[
\spt M \cap B_\theta \cap B_\tau(\del\Omega) \subset B_{2\tau}( \del [ \Omega \cap (p + V)]).
\]
We can cover $B_\theta \cap B_{2\tau}(\del [\Omega \cap (p + V)])$ with balls $\{B_\tau(x_i)\}_{i=1}^Q$ such that $Q \leq c(\OmegaRef) \tau^{1-n}$, and each $B_\tau(x_i) \subset B_{(1+\theta)/2}$.  Therefore by the mass bounds \eqref{eqn:upper-reg} we get
\[
\mu_M(B_\theta \cap B_\tau(\del\Omega)) \leq \sum_{i=1}^Q \mu_M(B_\tau(x_i)) \leq c(\OmegaRef, A, \theta) \tau. \qedhere
\]
\end{proof}

\section{Eigenvalues and the linearized operator}

Let us fix here a polyhedral cone domain $\OmegaRef = \OmegaRefN^l \times \R^m \subset \R^{n+1}$, with $m \geq 1$.  We prove in this section appropriate decay of the linearized problem on the $n$-dimensional planar wedge $\OmegaRefN \times \R^{m-1} \subset \R^n$.  Recall that if $D$ is a manifold with possible Lipschitz boundary $\del D$, then $\phi$ is said to be a Neumann eigenvalue of $D$ with eigenvalue $\mu$ if it solves
\begin{equation}\label{eqn:neumann}
\int_D \nabla \phi \cdot \nabla \zeta + \mu \phi \zeta = 0 \quad \forall \zeta \in W^{1,2}(D).
\end{equation}
Provided $D$ is compact and $\del D$ Lipschitz, then standard elliptic theory guarantees the existence of a countable sequence of Neumann eigenvalues $0 = \mu_0 < \mu_1 \leq \ldots \to \infty$, and corresponding eigenfunctions $\phi_i \in W^{1,2}(D)$, so that the $\phi_i$ form an $L^2(D)$-ON basis.

We require the following Lichnerowicz eigenvalue bound for piecewise-convex domains in the sphere.  For smooth domains this is classical, while the result piecewise smooth domains follows from a straightforward approximation.
\begin{theorem}[Lichnerowicz eigenvalue bound]\label{thm:evalue}
Let $D$ be a piecewise-smooth, convex domain of $S^{n-1} \subset \R^n$ ($n > 1$), and let $\mu_1$ be the first (non-zero) Neumann eigenvaue, $\phi$ the first Neumann eigenfunction.  Then $\mu_1 \geq n-1$, and $\mu_1 = n-1$ if and only if the $1$-homogenous extension of $\phi$ is linear.
\end{theorem}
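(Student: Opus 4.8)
The plan is to prove the estimate (and the rigidity) first for smooth domains $D$ with convex boundary via a Reilly-type integration of the Bochner formula, and then deduce the piecewise-smooth case by exhausting $D$ from the inside by smooth convex domains in a sufficiently controlled way. Throughout, $\nu$ denotes the outward unit normal and $\sff(X,Y)=\langle\nabla_X\nu,Y\rangle\ge 0$ the second fundamental form of $\del D$, non-negative by convexity; recall $\Ric_{S^{n-1}}=(n-2)g$.

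\textbf{The smooth case.} Let $\phi$ be a first Neumann eigenfunction, $\Delta\phi=-\mu\phi$ on $D$, $\del_\nu\phi=0$ on $\del D$, $\mu=\mu_1>0$. Integrate the Bochner identity $\tfrac12\Delta|\nabla\phi|^2=|\nabla^2\phi|^2+\langle\nabla\phi,\nabla\Delta\phi\rangle+\Ric(\nabla\phi,\nabla\phi)$ over $D$, using $\int_D\Delta(\cdot)=\int_{\del D}\del_\nu(\cdot)$. The Neumann condition forces $\nabla\phi$ to be tangent to $\del D$ along $\del D$, so that $\del_\nu(\tfrac12|\nabla\phi|^2)=\nabla^2\phi(\nabla\phi,\nu)=-\sff(\nabla\phi,\nabla\phi)\le 0$. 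Writing $E=\nabla^2\phi-\tfrac{\Delta\phi}{n-1}g$ for the trace-free Hessian, one has $|\nabla^2\phi|^2=|E|^2+\tfrac{\mu^2}{n-1}\phi^2$, and $\int_D|\nabla\phi|^2=\mu\int_D\phi^2$. Collecting terms yields the identity
\[
\int_D|E|^2 + \int_{\del D}\sff(\nabla\phi,\nabla\phi) \;=\; \frac{\mu(n-2)}{n-1}\big(\mu-(n-1)\big)\int_D\phi^2 .
\]
For $n>2$ the left side is $\ge 0$ and the coefficient $\tfrac{\mu(n-2)}{n-1}$ is positive, so $\mu\ge n-1$. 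The case $n=2$ (where Bochner is vacuous) is elementary: a geodesically convex $D\subset S^1$ is an arc of length $L\le\pi$, and $\mu_1=(\pi/L)^2\ge 1=n-1$, with equality iff $L=\pi$ and $\phi=\cos(\mathrm{arclength})$, whose $1$-homogeneous extension is linear on $\R^2$.

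\textbf{Rigidity, smooth case.} If $\mu=n-1$ (and $n>2$) the identity forces $E\equiv 0$, i.e. $\nabla^2_{S^{n-1}}\phi=-\phi\,g$, and $\sff(\nabla\phi,\nabla\phi)\equiv 0$ on $\del D$. A standard computation in polar coordinates shows that the $1$-homogeneous extension $\tilde\phi(r\omega)=r\phi(\omega)$ has $\R^n$-Hessian $D^2\tilde\phi$ equal, up to scaling, to $\nabla^2_{S^{n-1}}\phi+\phi\,g$; hence $E\equiv 0$ is equivalent to $D^2\tilde\phi\equiv 0$, i.e. $\tilde\phi$ affine, i.e. (by $1$-homogeneity) linear. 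Conversely, if $\tilde\phi$ is linear then $\Delta_{S^{n-1}}\phi=-(n-1)\phi$, so $\mu_1=n-1$.

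\textbf{Reduction to the smooth case and rigidity in general.} Let $D$ be piecewise-smooth and convex. Discarding the trivial case $D=$ hemisphere, we may assume $D$ lies in an open hemisphere, so under gnomonic projection $D$ corresponds to a bounded convex body in $\R^{n-1}$, which we approximate from the inside by smooth strictly convex bodies; pulling back, we obtain smooth convex domains $D_k\subset D$ and bi-Lipschitz homeomorphisms $\Phi_k:\overline D\to\overline{D_k}$ with bi-Lipschitz constants tending to $1$. Transplanting test functions and eigenfunctions through $\Phi_k$ gives $\mu_1(D_k)\to\mu_1(D)$; since $\mu_1(D_k)\ge n-1$ by the smooth case, $\mu_1(D)\ge n-1$. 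If moreover $\mu_1(D)=n-1$, then $\mu_1(D_k)\to n-1$, so (normalizing $\int_{D_k}\phi_k^2=1$) the identity for $D_k$ forces $\int_{D_k}|E_k|^2\to 0$; interior elliptic estimates give $\phi_k\to\phi$ in $C^\infty_{loc}(\mathrm{int}\,D)$ along a subsequence, hence $\nabla^2_{S^{n-1}}\phi=-\phi\,g$ on $\mathrm{int}\,D$, and as above $\tilde\phi$ is linear; the converse is immediate. \emph{The main obstacle} is precisely this approximation step: Neumann eigenvalues are neither monotone nor continuous under Hausdorff convergence of domains, so one must produce the exhausting family $D_k$ together with the identifications $\Phi_k$ whose distortions tend to $1$ (and verify the eigenfunctions converge strongly enough on the interior for the rigidity). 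The Bochner/Reilly computation itself is routine.
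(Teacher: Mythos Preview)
Your proof is correct and follows essentially the same route as the paper: integrate the Bochner formula over a smooth convex domain using the Neumann condition and convexity to control the boundary term, obtain the trace-free Hessian condition in the equality case, and then pass to piecewise-smooth domains by an interior smooth-convex exhaustion combined with convergence of eigenvalues and interior elliptic regularity for the eigenfunctions.

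Two minor points of comparison. First, you separate out the case $n=2$ explicitly (an arc of length $\le\pi$), which is prudent since the Bochner coefficient $\tfrac{n-2}{n-1}$ vanishes there; the paper's write-up glosses over this. Second, your approximation goes through gnomonic projection and bi-Lipschitz identifications $\Phi_k$ with distortion tending to $1$, whereas the paper simply takes smooth convex $D_i\subset D$ agreeing with $D$ away from a shrinking neighborhood of the singular boundary and invokes the Rayleigh quotient directly to get $\mu_1(D_i)\to\mu_1(D)$; both schemes achieve the same end, and your caveat that this step is the only nontrivial one is well placed.
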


\begin{proof}
First assume $D$ is smooth and weakly-convex (in the sense that the second fundamental form of the boundary is $\geq 0$).  Let $\mu$ be a Neumann eigenvalue for $D$ in the sense of \eqref{eqn:neumann} and $u$ be the corresponding eigenfunction.  By standard elliptic regularity $u \in C^\infty(\overline{D})$, and satisfies $\del_n u=0$ along $\del D$.  Therefore we can integrate the Bochner formula
\[
\frac{1}{2} \Delta |\nabla u|^2 \geq (n-2) |\nabla u|^2 + \frac{1}{n-1} (\Delta u)^2 - \mu|\nabla u|^2, 
\]
using the convexity of $D$ and boundary condition on $u$, to obtain 
\[
(\mu - (n-1)) \frac{n-2}{n-1} \int_D |\nabla u|^2 \geq \int_D |\nabla^2 u|^2 - \frac{1}{n-1} (\Delta u)^2 .
\]
This proves $\mu \geq n-1$.  If $\mu = n-1$, then we have $\nabla_{ij}^2 u + u g_{ij} = 0$ ($g_{ij}$ being the spherical metric), which implies that if we set $\tilde u(x) = |x| u(x/|x|)$ to be the $1$-homogenous extension of $u$, then $D^2 \tilde u = 0$.  This proves Theorem \ref{thm:evalue} in the case when $D$ is smooth.

Now take $D$ to be convex and piecewise-smooth.  Let $\mu$ be the first non-zero Neumann eigenvalue of $D$, and $u$ the corresponding eigenfunction.  Write $\del D = \del^r D \cup \del^s D$, where $\del^r D$ is smooth.  Take any sequence $\eps_i \to 0$.  We can find smooth, weakly convex domains $D_i \subset D_{i+1} \subset \ldots \subset D$ such that $D_i = D$ outside $B_{\eps_i}(\del^s D)$.  Let $\mu_i$ be the first Neumman non-zero eigenvalues of $D_i$, and $u_i$ the eigenfunctions.

Normalize $u$, $u_i$ such that $||u||_{L^2(D)} = ||u_i||_{L^2(D_i)} = 1$.  It's easy to check, using the Raliegh quotient and standard Sobolev theory, that $\mu_i \to \mu$ and $u_i \to u$ in $L^2(D')$ for every $D' \subset D$ such that $D' \cap \del^s D = \emptyset$.  In fact, by standard elliptic regularity $u_i \to u$ in $C^k(D')$.  We deduce that
\[
(\mu - (n-1)) \frac{n-2}{n-1} \mu \geq \int_{D \setminus \del^s D} |\nabla^2 u|^2 - \frac{1}{n-1} (\Delta u)^2,
\]
and the rest of the proof follows as in the smooth case.
\end{proof}

For general Lipschitz cones, we have the following standard Fourier-type expansion.
\begin{lemma}\label{lem:expansion}
Let $D$ be a Lipschitz subdomain of $S^{n-1} \subset \R^n$ ($n \geq 2$), and let $CD$ be the cone over $D$.  Let $\mu_i, \phi_i$ be the Neumann eigenvalues, eigenfunctions of $D \subset S^{n-1}$.  Suppose $u \in W^{1,2}(CD \cap B_1)$ solves
\begin{equation}\label{eqn:lem-exp-hyp}
\int Du \cdot D\zeta = 0 \quad \forall \zeta \in C^1(CD) \cap C^1_c(B_1^n).
\end{equation}
Then we have the expansion in $W^{1,2}(CD \cap B_1)$:
\[
u(x = r\omega) = \sum_{i=0}^\infty a_i r^{\gamma_i} \phi_i(\omega), \quad \gamma_i = -(n-2)/2 + \sqrt{((n-2)/2)^2 + \mu_i}.
\]
Here we write $r = |x|$, and $\omega = x/|x| \in D$.

If $n = 1$, and $u$ satisfies \eqref{eqn:lem-exp-hyp} for $CD = \R$ or $[0, \infty)$ (resp.), then $u$ is linear or constant (resp.)
\end{lemma}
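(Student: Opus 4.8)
The plan is to separate variables. Slicing $u$ into the Neumann eigenfunctions of $D$ decouples the equation into a family of Euler-type ODEs in the radial variable, whose solutions are explicit powers of $r$, and the $W^{1,2}$ hypothesis selects the non-singular power. First I would upgrade the weak equation \eqref{eqn:lem-exp-hyp}: since $D$ is Lipschitz, $C^\infty(\overline D)$ is dense in $W^{1,2}(D)$ (this is exactly what underlies the Neumann spectral theory quoted above), and an approximation argument then shows $\int Du\cdot D\zeta = 0$ in fact holds for every $\zeta \in W^{1,2}(CD\cap B_1)$ with $\spt\zeta\Subset B_1^n$, with no condition imposed on the lateral boundary of $CD$. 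In particular it applies to $\zeta(x) = \psi(|x|)\phi_i(x/|x|)$ for any $\psi\in C^1_c((0,1))$, since such $\zeta$ lies in $W^{1,2}$ and is supported in $B_1^n\setminus\{0\}$.

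Working in polar coordinates $x = r\omega$, $r = |x|$, $\omega\in D$, Fubini gives $u(r,\cdot)\in W^{1,2}(D)$ for a.e.\ $r\in(0,1)$, so I would set $a_i(r) = \int_D u(r\omega)\phi_i(\omega)\,d\haus^{n-1}(\omega)$; since $\{\phi_i\}$ is an $L^2(D)$-orthonormal basis, $u(r,\cdot) = \sum_i a_i(r)\phi_i$ in $L^2(D)$ with $a_i'(r) = \int_D\del_r u(r\omega)\phi_i(\omega)\,d\haus^{n-1}$ for a.e.\ $r$, and $a_i\in W^{1,2}_{loc}((0,1))$. Plugging $\zeta = \psi(r)\phi_i(\omega)$ into the extended weak equation, rewriting $\int Du\cdot D\zeta$ in polar coordinates and using the eigenfunction relation $\int_D\nabla_\omega u(r,\cdot)\cdot\nabla_\omega\phi_i = \mu_i a_i(r)$ (valid as $u(r,\cdot),\phi_i\in W^{1,2}(D)$), the angular integrals collapse and one finds that $a_i$ solves, weakly on $(0,1)$, the Euler equation $(r^{n-1}a_i')' = \mu_i r^{n-3}a_i$. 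By standard ODE regularity $a_i$ is smooth on $(0,1)$ and is a linear combination of $r^{\gamma_i}$ and $r^{\gamma_i^-}$, where $\gamma_i^{\pm} = -\tfrac{n-2}{2}\pm\sqrt{(\tfrac{n-2}{2})^2+\mu_i}$ are the roots of $\gamma^2 + (n-2)\gamma = \mu_i$ (with $\log r$ replacing $r^{\gamma_0^-}$ in the degenerate case $n=2$, $\mu_0 = 0$).

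Next I would invoke the $W^{1,2}$ hypothesis to discard the singular branch: by Bessel's inequality on each sphere $\{|x| = r\}$, $\int_0^1(|a_i'|^2 + \mu_i r^{-2}|a_i|^2 + |a_i|^2)r^{n-1}\,dr \le \|u\|_{W^{1,2}(CD\cap B_1)}^2 < \infty$ for every $i$. Since $\gamma_i^- \le -\tfrac{n-2}{2}$ (with equality only when $\mu_i = 0$), a direct power count shows the contribution of $r^{\gamma_i^-}$ — or of $\log r$, or of $r^{-(n-2)}$ when $\mu_i = 0$ — makes the radial-derivative integral $\int_0^1|a_i'|^2 r^{n-1}\,dr$ diverge at the origin in every dimension $n\ge 2$, so $a_i(r) = a_i r^{\gamma_i}$ for a constant $a_i$. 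To promote $u(r,\cdot) = \sum_i a_i r^{\gamma_i}\phi_i$ from slicewise $L^2(D)$ convergence to convergence in $W^{1,2}(CD\cap B_1)$, observe that the partial sum $u_N := \sum_{i\le N}a_i r^{\gamma_i}\phi_i$ is exactly the slicewise $L^2(D)$-orthogonal projection of $u$ onto $\vecspan\{\phi_0,\ldots,\phi_N\}$; as this projection commutes with $\del_r$ and contracts both $\|\cdot\|_{L^2(D)}$ and $\|\nabla_\omega\cdot\|_{L^2(D)}$, we get $\|u_N\|_{W^{1,2}(CD\cap B_1)}\le\|u\|_{W^{1,2}(CD\cap B_1)}$. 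A direct computation gives $\|u_N\|_{W^{1,2}(CD\cap B_1)}^2 = \sum_{i\le N}|a_i|^2\big(\tfrac{1}{2\gamma_i+n} + \tfrac{\gamma_i^2+\mu_i}{2\gamma_i+n-2}\big)$, all radial integrals converging since $\gamma_i\ge 0$ for $n\ge 2$ (the $i=0$ summand being harmless because its only potentially borderline term carries the factor $\gamma_0^2+\mu_0 = 0$); hence $\sum_i|a_i|^2(\cdots) < \infty$, the tails $\|u-u_N\|_{W^{1,2}}^2 = \sum_{i>N}|a_i|^2(\cdots)$ tend to $0$, and the expansion converges in $W^{1,2}(CD\cap B_1)$.

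Finally, the case $n = 1$ is elementary: $CD$ is either $\R$ (when $D = S^0$) or $[0,\infty)$ (when $D$ is a single point), and \eqref{eqn:lem-exp-hyp} reads $\int u'\zeta' = 0$. Letting $\zeta$ range over $C^1_c$ of the interior forces $u'' = 0$ weakly, so $u$ is affine; on $[0,\infty)$ the admissible $\zeta$ need not vanish at $0$, and integrating by parts then also forces $u'(0) = 0$, so $u$ is constant. The only point needing genuine care is the interface between the weak formulation and the pointwise ODE analysis — extracting enough regularity of the coefficients $a_i$ to run the Euler-equation argument, and confirming the singular root is genuinely excluded in the borderline low dimensions $n = 2, 3$ — but this presents no essential difficulty.
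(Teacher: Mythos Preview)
Your proposal is correct and follows essentially the same approach as the paper: expand $u(r,\cdot)$ in the Neumann eigenbasis of $D$, derive the Euler ODE for each coefficient, and use the $W^{1,2}$ bound to kill the singular branch. The only cosmetic differences are that the paper passes to the constant-coefficient ODE via the substitution $f(t)=c_i(e^t)e^{(n-2)t}$ rather than working directly with the Euler form, and its argument for $W^{1,2}$ convergence of the series is slightly less explicit than your projection argument.
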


\begin{proof}
This is standard, however for the reader's convenience, and to emphasize that we require no further regularity of $u$ beyond $W^{1,2}$, we give a proof in Appendix \ref{sec:fourier}.
\end{proof}

Combining Theorem \ref{thm:evalue} with Lemma \ref{lem:expansion} gives the following characterization of the linear problem of our model.
\begin{prop}\label{prop:expansion}
Let $D_0 = \OmegaRefN \cap S^{l-1}$, and $D = (\OmegaRef \times \R^{m-1}) \cap S^{n-1}$.  If $l \geq 2$ let $\mu_1(D_0)$ be the first Neumann eigenvalue of $D_0$, otherwise let us define $\mu_1(D_0) = 2$.

Suppose $u \in W^{1,2}( (\OmegaRefN \times \R^{m-1}) \cap B_1)$ solves
\begin{equation}\label{eqn:expansion-hyp}
\int_{\OmegaRefN \times \R^{m-1}} Du \cdot D\zeta = 0 \quad \forall \zeta \in C^1_c(B_1^n).
\end{equation}
Then there is an increasing sequence $\{\alpha_i\}_{i=1}^\infty$, and constants $b \in \R$, $A \in \{0^l\}\times \R^{m-1}$, $a_i \in \R$, and $L^2(D)$-orthonormal Neumann eigenfunctions $\psi_i$, so that we have the expansion in $W^{1,2}( (\OmegaRefN \times \R^{m-1}) \cap B_1)$:
\[
u(x = r\omega) = b + A \cdot x + \sum_{\alpha_i \geq e}^\infty a_i r^{\alpha_i} \psi_i(\omega),
\]
which for every fixed $r$ is $L^2(D)$-orthogonal, and where
\[
e \geq \min\{-(l-2)/2 + \sqrt{ ((l-2)/2)^2 + \mu_1(D_0)} , 2\} > 1.
\]
\end{prop}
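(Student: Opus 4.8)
The plan is to combine the Fourier-type expansion of Lemma~\ref{lem:expansion} applied to the cone $CD$ over $D = (\OmegaRefN \times \R^{m-1}) \cap S^{n-1}$ with a careful identification of the low eigenvalues of $D$, using the product structure $D = (\OmegaRefN \cap S^{l-1}) * S^{m-1}$ (spherical join) and the Lichnerowicz bound of Theorem~\ref{thm:evalue}. First I would apply Lemma~\ref{lem:expansion} directly: since $u \in W^{1,2}((\OmegaRefN \times \R^{m-1}) \cap B_1)$ and $\OmegaRefN \times \R^{m-1}$ is a Lipschitz cone, the hypothesis \eqref{eqn:expansion-hyp} says exactly that $u$ is a weak solution vanishing against all $C^1_c(B_1^n)$ test functions, which restrict to admissible test functions on the cone (one has to check that $C^1_c(B_1^n)$ restricts onto a dense enough class, but the Neumann condition here is automatic because no boundary term is imposed). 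So $u = \sum_i a_i r^{\gamma_i}\psi_i(\omega)$ in $W^{1,2}$, with $\gamma_i = -(n-2)/2 + \sqrt{((n-2)/2)^2 + \mu_i}$, where $\mu_i,\psi_i$ are the Neumann eigendata of $D \subset S^{n-1}$.

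The crux is then to show that the exponents $\gamma_i < e$ correspond precisely to the constants and the linear functions $A\cdot x$ with $A \in \{0^l\}\times\R^{m-1}$, and that all other exponents are $\geq e$. The eigenvalue $\mu=0$ gives $\gamma = 0$ (the constant $b$). The eigenvalue $\mu = n-1$ gives $\gamma = 1$, and its eigenspace is, by the equality case of Theorem~\ref{thm:evalue}, exactly the linear functions whose $1$-homogeneous extension is linear and restricts to a $\del$-Neumann eigenfunction on $D$; concretely these are the coordinate functions $x_j$ that are tangential to $\OmegaRefN \times \R^{m-1}$, i.e.\ $x_j$ for $j$ in the $\R^{m-1}$ directions (and $j$ in the $\R^l$ directions are excluded precisely when $l \geq 2$ and $\OmegaRefN$ is a proper wedge, since then $\del_n x_j \neq 0$ on part of $\del(\OmegaRefN)\times\R^{m-1}$ — here is where convexity/properness of $\OmegaRefN$ enters). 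So the ``low'' part of the expansion is $b + A\cdot x$ with $A \in \{0^l\}\times\R^{m-1}$, and the remaining terms have $\mu_i > n-1$, strictly. To pin down the gap to $e$, I would use the join decomposition: eigenfunctions of $D = D_0 * S^{m-1}$ (with $D_0 = \OmegaRefN \cap S^{l-1}$) are built from products of eigenfunctions of $D_0$ and spherical harmonics on $S^{m-1}$, via the classical formula relating join eigenvalues to the eigenvalues $\mu_1(D_0)$ and $k(k+m-2)$ of $S^{m-1}$. The next eigenvalue above $n-1$ is therefore governed either by the second spherical harmonic on the $S^{m-1}$ factor (giving exponent $2$) or by the first nonzero eigenfunction on $D_0$ (giving exponent $-(l-2)/2 + \sqrt{((l-2)/2)^2 + \mu_1(D_0)}$ when $l \geq 2$, and $2$ when $l \leq 1$, matching the convention $\mu_1(D_0)=2$ there); taking the minimum gives the stated $e$, and $e > 1$ follows since $\mu_1(D_0) \geq l-1 > 0$ by Theorem~\ref{thm:evalue} when $l\geq 2$, with $e = 2 > 1$ trivially when $l \leq 1$. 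The $L^2(D)$-orthogonality at fixed $r$ is automatic from $L^2$-orthogonality of distinct eigenfunctions; when an eigenspace is multi-dimensional one picks an orthonormal basis $\psi_i$ within it.

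The main obstacle I expect is the spectral bookkeeping on the join $D_0 * S^{m-1}$: one must verify that the full Neumann spectrum of $D$ is exhausted by the join construction (so that no ``hidden'' eigenvalue in $(n-1, \text{threshold})$ is missed), that the equality case $\mu = n-1$ really forces the eigenfunction to be an $\R^{m-1}$-coordinate function and nothing else, and that when $\mu_1(D_0) < 3$ the corresponding exponent $-(l-2)/2 + \sqrt{((l-2)/2)^2+\mu_1(D_0)}$ is computed and compared to $2$ correctly. A clean way to organize this is to not decompose the spectrum explicitly at all, but instead argue: (i) Lemma~\ref{lem:expansion} gives the expansion with exponents $\gamma_i$; (ii) separate out all terms with $\gamma_i < e$; (iii) each such term $r^{\gamma_i}\psi_i$ is then itself a $W^{1,2}$ solution of \eqref{eqn:expansion-hyp} that is homogeneous of degree $< e$, so by a separate lemma (restriction of the problem, or directly: any homogeneous degree-$<e$ solution must be affine in the $\R^{m-1}$-variables and constant in the $\OmegaRefN$-variables) it has the claimed form. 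Step (iii) is where one invokes Theorem~\ref{thm:evalue} applied to $D_0$ (for the $\OmegaRefN$ directions) together with the elementary fact that homogeneous harmonic functions of degree $<2$ on $\R^m$ are affine. With the low-order terms identified as $b + A\cdot x$ and everything else pushed to exponent $\geq e$, the expansion and orthogonality statements follow, completing the proof.
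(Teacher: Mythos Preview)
Your alternative approach (i)--(iii) is essentially the paper's proof, and the paper implements step (iii) by the concrete device of differentiating along the symmetry directions: for each homogeneous term $u_i = a_i r^{\gamma_i}\phi_i$ and each $v \in \{0^l\}\times\R^{m-1}$, the function $v\cdot Du_i$ is a $(\gamma_i-1)$-homogeneous solution of \eqref{eqn:expansion-hyp}, so by downward induction on degree it is zero (if $\gamma_i \in (1,2)$) or constant (if $\gamma_i = 1$). This reduces $u_i$ to an affine-in-$\R^{m-1}$ term plus a function of the $\OmegaRefN$-variables only, to which one applies Lemma~\ref{lem:expansion} on the $l$-dimensional cone $\OmegaRefN$; the $0$-symmetry of $\OmegaRefN$ (via the equality case of Theorem~\ref{thm:evalue}) then kills any $1$-homogeneous piece and forces the remaining exponent to be at least $-(l-2)/2 + \sqrt{((l-2)/2)^2 + \mu_1(D_0)}$.

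Your primary join-spectrum approach is a valid alternative route, but note two corrections. First, the sphere factor in the join is $S^{m-2}$ (the unit sphere in $\R^{m-1}$), not $S^{m-1}$. Second, your argument for $e>1$ invokes only $\mu_1(D_0) \geq l-1$, which yields merely $e \geq 1$; you need the \emph{strict} inequality $\mu_1(D_0) > l-1$, and this is exactly where $0$-symmetry of $\OmegaRefN$ enters (a linear Neumann eigenfunction on $D_0$ would have gradient orthogonal to every outer normal $\nu_j$, hence vanish). The join approach requires the full Neumann spectrum of a spherical join, which is standard but is more machinery than the paper's directional-derivative trick; the latter is more elementary and sidesteps precisely the spectral bookkeeping you identified as the main obstacle.
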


\begin{remark}\label{rem:expansion}
When $l = 0,1$, so $\OmegaRef$ is $\R^n$ or a half-space, then $|\alpha_i| \geq 2$.  When $l=2$, in which case $\OmegaRef$ is a wedge formed by two hyperplanes, then $|\alpha_i| \geq \min\{2, \pi/\gamma\}$, where $\gamma$ is the angle of the wedge.
\end{remark}

\begin{proof}
If $l = 0$ then $u$ is harmonic in the entire ball $B_1$, and then Proposition \ref{prop:expansion} follows by the usual Fourier expansion.  Consider now $l \geq 1$.  Let
\[
u(x = r\omega) = \sum_{i=0}^\infty a_i r^{\gamma_i} \phi_i(\omega) =: \sum_{i=0}^\infty u_i(x)
\]
be the expansion of Lemma \ref{lem:expansion}.  It suffices to show that if $\gamma_i < e$, then $u_i$ is either constant, or of the form $A \cdot x$ for some $A \in \{0^l\}\times \R^{m-1}$.

We have $\mu_0 = \gamma_0 = 0$, and $\phi_0 = \mathrm{const}$, so the first term $u_0$ is a constant.  Since $D$ is convex, $\gamma_1 \geq 1$.

Suppose $\gamma_1 = 1$, so that $u_1$ is $1$-homogenous.  Given $v \in \{0^l\}\times \R^{m-1}$, then $v \cdot D u_1$ is a $0$-homogenous solution to \eqref{eqn:expansion-hyp}, and hence by the previous paragraph must be constant.  We deduce that $u_1 = A \cdot x + \tilde u_1(x)$, for some $A \in \{0^l\}\times \R^{m-1}$, and some $1$-homogenous $\tilde u_1(x) : \OmegaRefN \cap B_1 \to \R$ solving
\begin{equation}\label{eqn:expansion-2}
\int_{\OmegaRefN \cap B_1} Du_i \cdot D \zeta = 0 \quad \forall \zeta \in C^1_c(B_1^l) .
\end{equation}
However, since $D_0$ is $0$-symmetric, by Lemma \ref{lem:expansion} there are no $1$-homogenous solutions to \eqref{eqn:expansion-2}, and so $\tilde u_1 = 0$.

Suppose $\gamma_i \in (1, 2)$.  Then for each $v$ as above, $v \cdot D u_i$ is a $\gamma_i - 1 \in (0, 1)$-homogenous solution to \eqref{eqn:expansion-hyp}, and hence by the previous two paragraphs must be zero.  We deduce $u_i(x) = u_i( \pi_{\R^l \times\{0^{m-1}\}}(x))$ is a function of $\R^l$ only, and hence solves \eqref{eqn:expansion-2}.  If $l = 1$ then $u_i$ must be constant, which is impossible since $\gamma_i > 1$.  If $l \geq 2$, then applying Lemma \ref{lem:expansion} to $u_i$, we deduce
\[
\gamma_i \geq -(l-2)/2 + \sqrt{ ((l-2)/2)^2 + \mu_1(D_0)}. \qedhere
\]
\end{proof}

\section{$L^2$ excess decay}\label{sec:decay}

In this section we work towards an excess decay theorem (Proposition \ref{prop:decay}), which gives a decay estimate of the $L^2$ excess $E$ when $M$ looks close to planar, and $\Omega$ looks close to a cone.  At a general point $x$, for most scales $\Omega$ will look like one of only finitely many cones, and so in the next Section we will be able to use Corollary \ref{cor:decay} on each model cone to prove decay on all scales.  As before, in this Section we fix $\OmegaRef$ a polyhedral cone domain of the form $\OmegaRefN^l \times \R^{m}$, where we assume $m \geq 1$.

\begin{prop}\label{prop:decay}
Let $e(\OmegaRefN)$ be the exponent bound of Proposition \ref{prop:expansion}.  For any $\theta \leq 1/10$, there are constants $\delta_0(\OmegaRef, \theta)$, $c_0(\OmegaRef)$, so that the following holds.  Let $\Omega = \Phi(\Psi(\OmegaRef))\cap B_1 \in \cD_{\delta_0}(\OmegaRef)$, and $M \in \cIH_n(\Omega, B_1)$, satisfy
\begin{equation}\label{eqn:decay-hyp}
E_{\delta_0}(\Phi, M, p + V, 0, 1) \leq E \leq \delta_0^2, \quad \theta_M(0, 1) \leq (7/4) \Theta_{T_0\Omega}, \quad d_{\spt M}(0) \leq \delta_0, 
\end{equation}
for some $p \in V^\perp$, $V \in \cP_{T_0\Omega}$.

Then there are $V' \in \cP_{T_0\Omega}$, $p' \in {V'}^\perp$, satsifying
\begin{equation}\label{eqn:decay-concl1}
|p - p'| + |\pi_V - \pi_{V'}| \leq c_0 E^{1/2},
\end{equation}
so that
\begin{equation}\label{eqn:decay-concl2}
E_{\delta_0}(\Phi, M, p' + V', 0, \theta) \leq c_0 \theta^{2(e-1)} E .
\end{equation}
\end{prop}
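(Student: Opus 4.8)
The plan is to argue by contradiction in the spirit of De Giorgi's tilt-excess decay, using the $L^\infty$ and $W^{1,2}$ bounds of Corollary \ref{cor:height-w12-bound} to pass to the linearized problem and Proposition \ref{prop:expansion} to decay it. Suppose the statement fails for some fixed $\theta \leq 1/10$: then there are $\Omega_i = \Phi_i(\Psi_i(\OmegaRef))\cap B_1 \in \cD_{\delta_i}(\OmegaRef)$ with $\delta_i \to 0$, varifolds $M_i \in \cIH_n(\Omega_i,B_1)$, planes $V_i \in \cP_{T_0\Omega_i}$ and points $p_i \in V_i^\perp$, with $E_i := E_{\delta_i}(\Phi_i,M_i,p_i+V_i,0,1) \to 0$, density bound $\theta_{M_i}(0,1)\le (7/4)\Theta_{T_0\Omega_i}$, $d_{\spt M_i}(0)\le\delta_i$, but for which no admissible $(p',V')$ satisfies \eqref{eqn:decay-concl1}--\eqref{eqn:decay-concl2}. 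After rotating we may assume $V_i = \R^n\times\{0\}$ and, passing to a subsequence, $\Psi_i \to \mathrm{Id}$, $\Phi_i \to \mathrm{id}$ in $C^2$, $p_i \to 0$, so $T_0\Omega_i \to \OmegaRef$. By the height bound \eqref{eqn:sharp-height}, $\spt M_i \cap B_\theta$ lies in an $(c_e E_i^{1/2})$-neighborhood of $p_i + V_i$; combined with the density bound $\theta_{M_i}(0,1)\le 7/4\,\Theta_{T_0\Omega_i}$, standard Allard/Gruter interior and boundary regularity (applied on the smooth part of $\del\Omega_i$, away from $\del_{n-1}\Omega_i$) lets me write, on an exhausting sequence of compact subsets $U_i \nearrow \sint(\OmegaRef) \cap (\R^n\times\{0\})$, $\spt M_i \cap (U_i \times \R) = \graph_{\R^n\times\{0\}}(u_i)$ with $u_i$ solving the (prescribed, small) mean curvature equation with tangential Neumann condition, and $\sup_{U_i}|u_i| + |Du_i| \le c E_i^{1/2}$.

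The core step is the blow-up. Set $v_i = E_i^{-1/2}(u_i - (p_i)_{n+1})$; these are bounded in $L^\infty_{loc}$ by the above, and the $W^{1,2}$ bound \eqref{eqn:W12-bound} gives $\int_{B_\theta}|\pi_{T_zM_i}-\pi_{V_i}|^2\,d\mu_{M_i} \le c_e E_i$, which after rescaling yields a uniform $W^{1,2}_{loc}$ bound on the $v_i$. So along a subsequence $v_i \to v$ in $W^{1,2}_{loc}$ and $C^0_{loc}$ on $\sint(\OmegaRef)\cap(\R^n\times\{0\})$, with $v \in W^{1,2}((\OmegaRefN\times\R^{m-1})\cap B_\theta)$ solving \eqref{eqn:expansion-hyp} — the Neumann condition passes to the limit because the first-variation error terms ($H_{M_i}$, $|D^2\Phi_i|$) are $O(\delta_i E_i^{1/2})$ relative to the excess scale. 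The crucial point preventing mass from escaping to $\del\Omega$ is the non-concentration estimate, Corollary \ref{cor:mass-non-conc}: $\mu_{M_i}(B_\theta \cap B_\tau(\del\Omega_i)) \le c\tau$ uniformly, so the $L^2$ excess of $M_i$ genuinely converges to $\int v^2$, i.e. for each fixed $0<r\le\theta$,
\[
E_i^{-1} r^{-n-2}\int_{B_r}d_{p_i+V_i}^2\,d\mu_{M_i} \;\longrightarrow\; r^{-n-2}\int_{B_r(0)\cap(\OmegaRefN\times\R^{m-1})} v^2,
\]
with $\int_{B_1}v^2 \le 1$. Now apply Proposition \ref{prop:expansion}: $v = b + A\cdot x + w$ with $A \in \{0^l\}\times\R^{m-1}$ a direction of translational symmetry of $\OmegaRef$, and $\|w\|_{L^2(B_r)} \le C r^{n/2 + e}\|v\|_{W^{1,2}(B_1)}$ for $r\le\theta$ by the eigenvalue gap $e>1$ (Theorem \ref{thm:evalue}). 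The linear-and-constant part $b + A\cdot x$ corresponds exactly to a nearby horizontal plane: take $p_i' + V_i' = \graph_{p_i+V_i}(E_i^{1/2}(b + A\cdot x))$, which is an admissible element of $\cP_{T_0\Omega_i}$ (since $A$ lies in the translational symmetry of $\OmegaRef$ hence, for $i$ large, of $T_0\Omega_i$, using the inclusion $\cP_{T_0\Omega_i}\supset\cP_{\OmegaRef}$ up to the $O(\delta_i)$ distortion by $\Psi_i,\Phi_i$), and satisfies $|p_i - p_i'| + |\pi_{V_i}-\pi_{V_i'}| \le c_0 E_i^{1/2}$. Then for $i\gg1$,
\[
E_{\delta_i}(\Phi_i,M_i,p_i'+V_i',0,\theta) \le 2\,\theta^{-n-2}\int_{B_\theta\cap(\OmegaRefN\times\R^{m-1})} w^2 \le C\theta^{2(e-1)} E_i,
\]
using that $\theta^{-n-2}\int_{B_\theta} w^2 \le C\theta^{2e}\cdot\theta^{-2} = C\theta^{2(e-1)}$ and that the $\delta_i^{-1}r^2(\|H\|^2 + |D^2\Phi|^2)$ part of $E_{\delta_i}$ is already $\le E_i$ times a harmless constant after rescaling. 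This contradicts the failure of \eqref{eqn:decay-concl2} once $c_0(\OmegaRef)$ is chosen larger than this $C$.

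The main obstacle is the graphical approximation together with the strong $L^2$ convergence of excess: one must know that $\spt M_i$ is a genuine single-valued $C^1$ graph over an essentially full subdomain of $\OmegaRefN\times\R^{m-1}$ (not merely close in varifold sense with possibly hidden multiplicity or boundary pieces creeping along $\del\Omega$), and that the $L^2$ mass near $\del\Omega$ is negligible at the \emph{scale of the excess}, not just at unit scale. The first is handled by the interior and smooth-boundary Allard/Gruter theory applied away from $\del_{n-1}\Omega$, combined with the lower Ahlfors regularity of Corollary \ref{cor:lower-reg} (which forces multiplicity one given the density bound) and the vanishing of $\|\delta M_i\|$ on $\del_{n-2}\Omega_i$ from Theorem \ref{thm:first-var}. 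The second — the "no accumulation near the boundary even at excess scale" — is precisely what the sharp height bound \eqref{eqn:sharp-height} buys us: it confines $\spt M_i$ into a $c_e E_i^{1/2}$-slab, so the boundary contribution to $E_i^{-1}\int d_{p+V}^2$ is controlled by $\mu_{M_i}(B_\theta\cap B_\tau(\del\Omega))$ with $\tau = c E_i^{1/2}$, which by Corollary \ref{cor:mass-non-conc} is $O(E_i^{1/2}) = o(E_i^{1/2}\cdot E_i^{-1/2}\cdot E_i^{1/2})$... more carefully, the boundary $L^2$-integrand is $\le c_e E_i$ on a set of mass $O(\tau) = O(E_i^{1/2})$, contributing $O(E_i^{3/2}) = o(E_i)$ to the excess, hence negligible after dividing by $E_i$. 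Verifying these convergences rigorously, and checking that the limiting $v$ really satisfies the free Neumann condition across the full boundary of $\OmegaRefN\times\R^{m-1}$ (including the corner), is where the real work lies.
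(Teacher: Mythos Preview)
Your proposal is correct and follows essentially the same contradiction/blow-up strategy as the paper. The paper packages the graphical approximation and the passage to the linear problem into two auxiliary results (Propositions \ref{prop:graph} and \ref{prop:blow-up}), and the one technical device it uses that you only sketch is a \emph{second} application of the blow-up proposition: after defining $p_i'+V_i'$ from the affine part $b+A\cdot x$, the paper observes that $(\OmegaRef,\Omega_i,M_i,p_i'+V_i',\beta_i,\delta_i)$ is again a blow-up sequence, obtains a new Jacobi field $v'$, and then identifies $v'=v-b-A\cdot x$ via the two graph parametrizations \eqref{eqn:decay-M-graph}--\eqref{eqn:decay-u-u'}. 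This is what makes rigorous your asserted convergence $E_i^{-1}E_{\delta_i}(\Phi_i,M_i,p_i'+V_i',0,\theta)\to\theta^{-n-2}\int_{B_\theta}w^2$ (note your displayed inequality is missing a factor of $E_i$ in the middle term). Two small slips: you cannot rotate all $V_i$ to $\R^n\times\{0\}$ simultaneously, only arrange $V_i\to\R^n\times\{0\}$ after a subsequence; and the membership $V_i'\in\cP_{T_0\Omega_i}$ (rather than $\cP_{\OmegaRef}$) deserves a word, though the paper is equally brief here.
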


By iterating Proposition \ref{prop:decay}, we obtain the direct Corollary.  As a technical aside, instead of using the monotonicity formula \eqref{eqn:monotonicity} to iterate Proposition \ref{prop:decay}, one could use Lemma \ref{lem:sharp-mass-bound}.
\begin{corollary}\label{cor:decay}
There are constants $\delta_1(\OmegaRef)$, $c_1(\OmegaRef)$, $\beta(\OmegaRef)$ so that the following holds.  Let $\Omega = \Phi(\Psi(\OmegaRef)) \cap B_1 \in \cD_{\delta_1}(\OmegaRef)$, $M \in \cIH_n(\Omega, B_1)$, and suppose that
\begin{equation}\label{eqn:con-decay-hyp}
E(\Phi, M, p + V, 0, 1) \leq E \leq \delta_1^2, \quad \theta_M(0, 1) \leq (3/2) \Theta_{T_0\Omega}, \quad d_{\spt M}(0) \leq \rho 
\end{equation}
for some $p \in V^\perp$, $V \in \cP_{T_0\Omega}$, and some $0 \leq \rho \leq 1/2$.

Then there are $V' \in \cP_{T_0\Omega}$,  $p' \in {V'}^\perp \cap B_1$, satisfying
\begin{equation}\label{eqn:con-decay-concl1}
|p - p'| + |\pi_V - \pi_{V'}| \leq (c_1 E)^{1/2},
\end{equation}
such that
\begin{gather}\label{eqn:con-decay-concl2}
E^{tot}(\Phi, M, p' + V', 0, r) \leq  c_1 E r^{2\beta}   \quad \forall \rho \leq r \leq 1/2.
\end{gather}
\end{corollary}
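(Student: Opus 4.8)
The plan is to iterate Proposition \ref{prop:decay} at the geometrically-decreasing scales $\theta^k$, $k=0,1,2,\ldots$. First fix the constants: let $e=e(\OmegaRef)>1$ be the exponent of Proposition \ref{prop:expansion}, pick $\beta=\beta(\OmegaRef)\in(0,\min\{1,e-1\})$, and then choose $\theta=\theta(\OmegaRef)\le 1/10$ small enough that $c_0\theta^{2(e-1)}\le \theta^{2\beta}$; finally take $\delta_1=\delta_1(\OmegaRef)$ small enough that $\delta_1\le\delta_0(\OmegaRef,\theta)$, that $\delta_0^{-1}\delta_1^2\le\delta_0^2$, and that the monotonicity error $c_{mn}\delta_1$ is small. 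The last choice guarantees $E_{\delta_0}(\Phi,M,p+V,0,1)\le \delta_0^{-1}E\le\delta_0^2$ and, via \eqref{eqn:monotonicity}, $\theta_M(0,\rho)\le(1+c_{mn}\delta_1)^{n+1}(3/2)\Theta_{T_0\Omega}\le(7/4)\Theta_{T_0\Omega}$ for all $\rho<1$.

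Next I would run the induction. Produce planes $V_k\in\cP_{T_0\Omega}$, $p_k\in V_k^\perp$ with $V_0=V$, $p_0=p$, satisfying $\mathcal E_k:=E_{\delta_0}(\Phi,M,p_k+V_k,0,\theta^k)\le \mathcal E_0\,\theta^{2\beta k}$, together with the increment bounds $|p_k-p_{k+1}|\le c_0\theta^k\mathcal E_k^{1/2}$ and $|\pi_{V_k}-\pi_{V_{k+1}}|\le c_0\mathcal E_k^{1/2}$. For the inductive step, rescale by $\eta_{0,\theta^k}$: by Remark \ref{rem:scaling-E} this turns the problem at scale $\theta^k$ into one at unit scale with $\Phi$ replaced by $\Phi_{0,\theta^k}$ and $\Omega$ by $\Omega_{0,\theta^k}=\Phi_{0,\theta^k}(\Psi(\OmegaRef))\cap B_1\in\cD_{\delta_0}(\OmegaRef)$ (scaling only decreases $|D^2\Phi|_{C^0}$). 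The hypotheses \eqref{eqn:decay-hyp} hold: the $E_{\delta_0}$-bound from the inductive bound $\mathcal E_k\le\mathcal E_0\le\delta_0^2$; the density bound from \eqref{eqn:monotonicity} as above; and $d_{\spt M}(0)\le\theta^{-k}\rho\le\delta_0$ as long as $\theta^k\ge\rho/\delta_0$. Proposition \ref{prop:decay} then supplies the new planes, and un-rescaling yields $\mathcal E_{k+1}\le c_0\theta^{2(e-1)}\mathcal E_k\le\mathcal E_0\theta^{2\beta(k+1)}$ along with the increment bounds coming from \eqref{eqn:decay-concl1}. Summing the geometric series of increments shows $p_k\to p'$ and $\pi_{V_k}\to\pi_{V'}$ converge, with $|p-p'|+|\pi_V-\pi_{V'}|\le c\sum_k\mathcal E_k^{1/2}\le c\,\mathcal E_0^{1/2}\le(c_1E)^{1/2}$ using $\mathcal E_0\le\delta_0^{-1}E$; in particular $p'\in V'^\perp\cap B_1$ for $\delta_1$ small, giving \eqref{eqn:con-decay-concl1}.

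For the decay estimate \eqref{eqn:con-decay-concl2}, take $r$ with $\theta^{k+1}\le r\le\theta^k$ within the iteration range. Since $E=E_1\le E_{\delta_0}$, comparing the height integrals over $B_r\subset B_{\theta^k}$ gives $E(\Phi,M,p_k+V_k,0,r)\le c(\OmegaRef)\,\mathcal E_k\le c(\OmegaRef)\,Er^{2\beta}$ (the mean-curvature and $|D^2\Phi|_{C^0}$ contributions are handled by $r^2\le r^{2\beta}$); the $E^\infty$- and $E^W$-parts of $E^{tot}$ then follow from Corollary \ref{cor:height-w12-bound} applied after rescaling, which bounds $E^\infty$ and $E^W$ at scale $r$ by $c_e$ times $E_{\delta_0}$ at scale $\sim 2r$, again controlled by the iterated bound. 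Replacing $(p_k,V_k)$ by $(p',V')$ costs only $r^{-2}|p_k-p'|^2+|\pi_{V_k}-\pi_{V'}|^2\le c(\OmegaRef)\,Er^{2\beta}$ by the tail estimate on the increments. This proves \eqref{eqn:con-decay-concl2} for all $r$ down to the scale $\sim\rho/\delta_0$; since $\delta_0$ and $\theta$ depend only on $\OmegaRef$, the finitely-many remaining scales $r\in[\rho,\ \sim\rho/\delta_0]$ (empty when $\rho=0$) are dispatched by crudely comparing with the bound already established at scale $\sim\rho/\delta_0$, absorbing the resulting bounded factor into $c_1=c_1(\OmegaRef)$.

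I expect the main obstacle to be bookkeeping rather than a new idea: tracking how the base-point shift $p$, the mean-curvature and $|D^2\Phi|$ contributions, and the three excesses ($E$, $E^\infty$, $E^W$) transform under the dilations $\eta_{0,\theta^k}$, and checking that the hypotheses of Proposition \ref{prop:decay} survive each rescaling — in particular the density bound (via monotonicity) and the constraint $d_{\spt M}(0)\le\delta_0$, which is what forces the iteration to terminate near scale $\rho$ and hence produces the cutoff at $r=\rho$ in the statement. No analytic input beyond Proposition \ref{prop:decay} and Corollary \ref{cor:height-w12-bound} is needed.
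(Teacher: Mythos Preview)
Your proposal is correct and follows essentially the same route as the paper: iterate Proposition~\ref{prop:decay} at the scales $\theta^k$, use monotonicity \eqref{eqn:monotonicity} to maintain the density hypothesis, sum the geometric series of plane increments to obtain the limiting $(p',V')$, and upgrade the $L^2$ decay to $E^{tot}$ decay via Corollary~\ref{cor:height-w12-bound} at each scale. The only cosmetic differences are that the paper fixes $\theta$ first (so that $c_0\theta^{2(e-1)}\le 1/2$) and then sets $\beta=\log(1/2)/\log\theta$, and it disposes of the case $\rho>\delta_1$ by a separate trivial estimate at the outset rather than folding it into the tail-scale argument at the end.
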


\begin{remark}
The exact form of the upper bound $\Theta_{T_0\Omega}$ is flexible.  One could equally well as use $\Theta_{\OmegaRef}$ or $\omega_{n+1}^{-1}\haus^{n+1}(\Omega)$, provided we take $\delta_1$ sufficiently small.  However in the following section we will find it convenient to use $T_0\Omega$.
\end{remark}

\begin{proof}[Proof of Corollary \ref{cor:decay}]
Choose $\theta(\OmegaRef)$ sufficiently small so that $c_0 \theta^{2(e-1)} \leq 1/2$.  Ensure $\delta_1(\OmegaRef)$ is sufficiently small so that
\[
\delta_1 \leq \min\{ \eps_{mn}, \eps_e \}, \quad (3/2)(1+c_{mn}\delta_1)^{n+1} \leq 7/4 \quad c_e \delta_1 \leq 1/4,
\]
and $\delta_1 \leq \delta_0^2$ with our choice of $\theta$.  Here $\eps_{mn}(n), c_{mn}(n)$ are the constants from Lemma \ref{lem:monotonicity}, $c_e(\OmegaRef, \theta = 1/2, A = 2), \eps_e(\OmegaRef)$ are from Corollary \ref{cor:height-w12-bound}, and $\delta_0(\OmegaRef, \theta), c_0(\OmegaRef)$ are from Proposition \ref{prop:decay}.

First suppose $\rho > \delta_1$.  Then take $p' = p$, $V' = V$, and we have trivially
\[
E(\Phi, M, p' + V', 0, r) \leq \delta_1^{-n-4} r^{2\alpha} E \quad \forall \rho \leq r \leq 1.
\]
Provided we take $c_1 \geq \delta_1^{-n-4}$, this proves the Theorem.

Let us therefore assume $\rho \leq \delta_1$.  Define $r_i = \theta^i$, and let $I$ be the maximal non-negative integer for which $\rho \leq r_I \delta_1$.  If $\rho = 0$, then we set $I = \infty \equiv I+1$.  For $i = 0, 1, \ldots, I+1$, we define inductively a sequence $p_i \in B_1$, $V_i \in \cP_{T_0\Omega}$, so that
\begin{equation}\label{eqn:con-decay-1}
r_i^{-1} |p_{i+1} - p_i| + |\pi_{V_{i+1}} - \pi_{V_i}| \leq c_0 2^{-i/2} \delta_0^{-1} E^{1/2},
\end{equation}
and
\begin{equation}\label{eqn:con-decay-2}
E_{\delta_0}(\Phi, M, p_i + V_i, 0, r_i) \leq 2^{-i} \delta_0^{-2} E.
\end{equation}

We take $p_0 = p$, $V_0 = V$.  Suppose, by inductive hypothesis, we have constructed $p_i$, $V_i$ satisfying \eqref{eqn:con-decay-2} and (if $i \geq 1$) \eqref{eqn:con-decay-1}.  By our choice of $I$ and $\delta_1$, and Remark \ref{rem:scaling-E}, and monotonicity \eqref{eqn:monotonicity} we can apply Proposition \ref{prop:decay} to the rescaled varifold $(\eta_{0, r_i})_\sharp M$ in $\Omega_{0, r_i}$ to obtain a $\tilde p_{i+1}$, $\tilde V_{i+1}$.  Setting $p_{i+1} = r_i \tilde p_{i+1}$ and $V_{i+1} = \tilde V_{i+1}$, the required estimates \eqref{eqn:con-decay-1}, \eqref{eqn:con-decay-2} hold by scaling.  This proves the inductive step, and therefore the existence of the required $p_i$, $V_i$.

If $I < \infty$, then set $p' = p_I$, $V' = V_I$.  Otherwise, if $I = \infty$, then observe that \eqref{eqn:con-decay-1} implies the $p_i$, $V_i$ form Cauchy sequences, and so we can take $p' = \lim_{i\to \infty} p_i$, $V' = \lim_{i \to \infty} V_i$.  For each (finite) $i \leq I$, we have 
\begin{equation}\label{eqn:con-decay-3}
r_i^{-1} |p' - p_i| + |\pi_{V'} - \pi_{V_i}| \leq c(\OmegaRef) 2^{-i/2} E^{1/2} .
\end{equation}
In particular taking $i = 0$ gives \eqref{eqn:con-decay-concl1}.

Given $\rho < r \leq 1$, either $r \geq \delta_1$, in which case set $i = 0$, or we have a maximal $i \leq I$ for which $r \leq \delta_1 r_i$.  Using \eqref{eqn:con-decay-3} and \eqref{eqn:upper-reg}, we compute
\begin{align*}
E(\Phi, M, p' + V', 0, r)
&\leq (\theta \delta_1)^{-n-2} E(\Phi, M, p' + V', 0, r_i) \\
&\leq c E(\Phi, M, p_i + V_i, 0, r_i) + c |\pi_{V'} - \pi_{V_i}|^2 + c r_i^{-2} |p' - p_i|^2 \\
&\leq c E 2^{-i} \\
&= c E r_i^{2\beta} \\
&\leq c(\OmegaRef) E r^{2\alpha},
\end{align*}
where $\beta = \log(1/2)/\log(\theta)$.  This proves the required $L^2$ decay of \eqref{eqn:con-decay-concl2}, for $\rho \leq r \leq 1$.  To deduce the decay of $E^{tot}$ we use Remark \ref{rem:scaling-E} to apply Corollary \ref{cor:height-w12-bound} at each scale $\rho \leq r \leq 1/2$.
\end{proof}

\vspace{5mm}

The rest of the section is devoted to proving Proposition \ref{prop:decay}.  We first require a definition and some helper theorems.

\begin{prop}[Fine graphical approximation]\label{prop:graph}
Given any $\tau, \beta$, there is a $\delta(\tau, \beta, \OmegaRef)$ and $c(\OmegaRef)$ so that the following holds.  Let $\Omega = \Phi(\Psi(\OmegaRef)) \cap B_1 \in \cD_\delta(\OmegaRef)$, $M \in \cIH_n(\Omega, B_1)$, satisfy
\begin{equation}\label{eqn:graph-hyp}
E(\Phi, M, p + V, 0, 1) \leq E \leq \delta^2, \quad \theta_M(0, 1) \leq (7/4) \Theta_{T_0\Omega}, \quad d_{\spt M}(0) \leq \delta,
\end{equation}
for some $p \in V^\perp$ and $V \in \cP_{T_0\Omega}$.  Then the following holds:
\begin{enumerate}
\item there is a domain $U \subset p+V$, and $C^{1}$ function $u : U \to V^\perp$, satisfying
\begin{equation}\label{eqn:graph-concl1}
(p+V) \cap\Omega \cap B_{1/4} \setminus B_{2\tau}(\del\Omega) \subset U, \quad r^{-1}|u| + |Du| \leq \beta,
\end{equation}
and
\begin{equation}\label{eqn:graph-concl1.5}
M \llcorner B_{1/2} \setminus B_\tau(\del\Omega) = [\graph(u)] ;
\end{equation}

\item we have the estimates
\begin{equation}
\sup_U |u|^2 + \int_U |Du|^2 \leq c E,
\end{equation}
and
\begin{equation}\label{eqn:graph-concl2.5}
\sup_{z \in \spt \mu_M \cap B_{1/2}} d(z, p + V)^2 \leq c E;
\end{equation}

\item for any $\zeta \in C^1_c(p + V, V^\perp) \cap C^1_c(B_{1/4})$, we have
\[
\left| \int_U Du \cdot D\zeta  \right| \leq c |\zeta|_{C^1} \left( ||H||_{L^\infty(B_1; \mu_M)} + |D^2\Phi|_{C^0(B_2)} + \tau^{1/2} E^{1/2} \right).
\]

\end{enumerate}
\end{prop}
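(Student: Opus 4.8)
The plan is to prove part (1) by a compactness/contradiction argument resting on the first-variation control (Theorem \ref{thm:first-var}), the lower Ahlfors regularity (Corollary \ref{cor:lower-reg}), and the sharp height bound (Corollary \ref{cor:height-w12-bound}), and then to deduce parts (2), (3) from part (1) by direct estimates. Throughout, constants allowed to depend on $A:=(7/2)\Theta_\OmegaRef$ may be absorbed into constants depending only on $\OmegaRef$, since $\theta_M(0,1)\le (7/4)\Theta_{T_0\Omega}\le A$ once $\delta$ is small; in particular $\mu_M(B_{1/2})\le c(\OmegaRef)$ by \eqref{eqn:upper-reg}.

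\emph{Part (1).} Suppose it fails for fixed $\tau,\beta$: there are $\Omega_i=\Phi_i(\Psi_i(\OmegaRef))\cap B_1\in\cD_{\delta_i}(\OmegaRef)$ with $\delta_i\to 0$, and $M_i\in\cIH_n(\Omega_i,B_1)$, $p_i\in V_i^\perp$, $V_i\in\cP_{T_0\Omega_i}$ obeying \eqref{eqn:graph-hyp} with $E_i\le\delta_i^2$, for which no admissible $u$ exists. By \eqref{eqn:sharp-height}, $\sup_{z\in B_{1/2}\cap\spt M_i}d(z,p_i+V_i)^2\le cE_i\to 0$, and with $d_{\spt M_i}(0)\le\delta_i$ this forces $|p_i|\to 0$; after rotating we may assume $\Psi_i\to\mathrm{Id}$, $\Phi_i\to\mathrm{id}$ in $C^2(B_2)$, $p_i\to 0$, $V_i\to V_\infty\in\cP_\OmegaRef$, so $\Omega_i\to\OmegaRef$ locally in Hausdorff distance. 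By the uniform mass bound \eqref{eqn:upper-reg} and the uniform first-variation bound \eqref{eqn:deltaM-bounds}, a subsequence of the $M_i$ converges as integral varifolds to some $M_\infty\in\cIV_n(B_1)$; passing \eqref{eqn:deltaM-decomp} and $\|H^{tan}_{M_i}\|_{L^\infty}\to 0$ to the limit shows $M_\infty$ is free-boundary stationary in $\OmegaRef$ with vanishing mean curvature. The height bound gives $\spt M_\infty\cap B_{1/2}\subset V_\infty\cap\OmegaRef$, a flat convex $n$-dimensional region, so by the constancy theorem $M_\infty=q\,[V_\infty\cap\OmegaRef]$ for an integer $q$; here $q\ge 1$ since $0\in\spt M_\infty$ (from $d_{\spt M_i}(0)\to 0$ and \eqref{eqn:lower-reg}), and $q\le 1$ since $\mu_{M_\infty}(B_1)=\lim_i\mu_{M_i}(B_1)\le\tfrac74\omega_n\Theta_{T_0\Omega_i}\to\tfrac74\omega_n\Theta_\OmegaRef$ while $\mu_{q[V_\infty\cap\OmegaRef]}(B_1)=q\,\omega_n\Theta_\OmegaRef$. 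Thus $M_\infty=[V_\infty\cap\OmegaRef]$ is a multiplicity-one horizontal plane. On an interior region containing $B_{1/2}\setminus B_\tau(\del\Omega_i)$ for $i\gg1$ (using $\Omega_i\to\OmegaRef$), Allard's interior regularity theorem \cite{All} gives that $M_i$ is there a $C^{1,\alpha}$ graph over $V_i$ with norm tending to $0$; choosing $i$ so large that this norm is $<\beta$ produces the function $u$ required by the conclusion, a contradiction.

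\emph{Parts (2), (3).} The estimate $\sup_{z\in\spt\mu_M\cap B_{1/2}}d(z,p+V)^2\le cE$ is precisely \eqref{eqn:sharp-height}; since $\graph(u)\subset\spt M$ this also gives $\sup_U|u|^2\le cE$, and since $|Du|\le\beta$ along the graph we have $|Du|^2\le c|\pi_{T_zM}-\pi_V|^2$ and $d\mu_M\ge dx$, whence $\int_U|Du|^2\le c\int_{B_{1/2}}|\pi_{T_zM}-\pi_V|^2\,d\mu_M\le cE$ by \eqref{eqn:W12-bound}. For part (3), given $\zeta\in C^1_c(p+V,V^\perp)\cap C^1_c(B_{1/4})$ put $X(x)=\chi(|\pi^\perp_V(x-p)|)\,\zeta(\pi_{p+V}(x))$, with $\chi$ a cutoff equal to $1$ on $[0,c\delta]$; since heights along $\spt M\cap B_{1/2}$ are $\le cE^{1/2}\le c\delta$, $X\in C^1_c(B_{1/2})$ and $X\equiv\zeta\circ\pi_{p+V}$ on $\spt M\cap B_{1/2}$. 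Because $\zeta$ is $V^\perp$-valued and constant along $V$, $\mathrm{div}_V X\equiv 0$, hence $|\mathrm{div}_M X|\le c|\zeta|_{C^1}|\pi_{T_zM}-\pi_V|$ pointwise, while on the graphical part $\mathrm{div}_M X=Du\cdot D\zeta+O(|Du|^2|D\zeta|)$. Plugging $X$ into \eqref{eqn:deltaM-decomp} and splitting $M\cap B_{1/2}$ into its graphical part and the part inside $B_\tau(\del\Omega)$,
\begin{align*}
\Big|\int_U Du\cdot D\zeta\Big|
&\le \|H^{tan}_M\|_{L^\infty}|\zeta|_{C^0}\mu_M(B_{1/2})+\Big|\int\eta\cdot X\,d\sigma\Big| \\
&\quad +c|\zeta|_{C^1}\int_U|Du|^2+c|\zeta|_{C^1}\int_{M\cap B_{1/4}\cap B_\tau(\del\Omega)}|\pi_{T_zM}-\pi_V|\,d\mu_M .
\end{align*}
The first term is $\le c|\zeta|_{C^0}\|H^{tan}_M\|_{L^\infty}$; the second is $\le c|\zeta|_{C^0}|D^2\Phi|_{C^0(B_2)}$, since $\eta(x)\in\cN_{T_x\Omega}$ is $c|D^2\Phi|_{C^0}$-close to orthogonal to $V^\perp$ (Lemma \ref{lem:B}(\ref{item:poly2}) together with the structure of horizontal planes and outer normals of $\Psi(\OmegaRef)$) and $\sigma(B_{1/4})\le c$ by \eqref{eqn:deltaM-bounds}; the third is $\le c\beta E|\zeta|_{C^1}\le c\tau^{1/2}E^{1/2}|\zeta|_{C^1}$ once $\delta\le\tau^{1/2}/\beta$; and the last, by Cauchy--Schwarz, \eqref{eqn:W12-bound} and the non-concentration estimate \eqref{eqn:mass-non-conc} (valid once $\delta\le\eps(\OmegaRef,1/4,A,\tau)$), is $\le c|\zeta|_{C^1}E^{1/2}\tau^{1/2}$. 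Summing gives the claim.

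\emph{Main obstacle.} The heart of the matter is the compactness step of part (1): pinning the varifold limit down to the multiplicity-one horizontal plane. This needs both that mass neither concentrates nor vanishes --- supplied by the upper bound \eqref{eqn:upper-reg} and the lower Ahlfors regularity \eqref{eqn:lower-reg} --- and that free-boundary stationarity survives the limit so the constancy theorem applies; absent any reflection principle, this is exactly where the first-variation control of Section \ref{sec:first-var}, and the convexity of $\OmegaRef$ behind it, are indispensable. The only delicate point in part (3) is bookkeeping: obtaining the sharp error $\tau^{1/2}E^{1/2}$, which works precisely because $\mathrm{div}_V X$ vanishes for the test fields used, so the non-graphical collar contributes at most $\big(\int|\pi_{T_zM}-\pi_V|^2\,d\mu_M\big)^{1/2}\cdot\mu_M(B_\tau(\del\Omega))^{1/2}\le cE^{1/2}\tau^{1/2}$.
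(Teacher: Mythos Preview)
Your argument for parts (1) and (2) matches the paper's almost verbatim: compactness via the first-variation bound and upper Ahlfors regularity, identification of the limit as a multiplicity-one plane via constancy plus the lower Ahlfors bound, and then Allard's interior theorem on compact subsets of $\sint\OmegaRef$. Two small cosmetic points: you write $\mu_{M_\infty}(B_1)=\lim_i\mu_{M_i}(B_1)$, but varifold convergence only gives $\le\liminf$, which is all you need; and the paper explicitly invokes the non-concentration estimate \eqref{eqn:mass-non-conc} to rule out mass on $\del\OmegaRef$ before applying constancy, whereas you absorb this into the constancy step (which is fine, since $V_\infty\cap\del\OmegaRef$ is $(n-1)$-dimensional and $M_\infty$ is integral).

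Your part (3) is genuinely different from the paper's and is a nice alternative. The paper rotates so that $V=\R^n\times\{0\}$ and then tests with the \emph{tangential} field $X=\tilde\zeta\,\phi\,\tilde e_{n+1}$, where $\tilde e_{n+1}(x)=D\Phi|_{\Phi^{-1}(x)}(e_{n+1})$; since $e_{n+1}$ lies in the $\R^m$-symmetry direction of $\Psi(\OmegaRef)$, this $X$ is tangential to $\Omega$ and the boundary term never appears---the $|D^2\Phi|$ error instead enters through $|\tilde e_{n+1}-e_{n+1}|\le c|D^2\Phi|$. You keep $X$ $V^\perp$-valued and non-tangential, and then bound the boundary integral $\int\eta\cdot X\,d\sigma$ directly: because $V\in\cP_{T_0\Omega}$ means every outer normal of $T_0\Omega$ lies in $V$, Lemma~\ref{lem:B}(\ref{item:poly2}) gives $|\pi_V^\perp\eta(x)|\le c|D^2\Phi|$ for $x\in\del\Omega$, and $\sigma(B_{1/2})\le c$ from \eqref{eqn:deltaM-bounds}. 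Both routes isolate the same geometric fact---that the ``vertical'' direction $V^\perp$ is, up to $O(|D^2\Phi|)$, a symmetry direction of $\Omega$---but package it differently: the paper encodes it in the tangentiality of $\tilde e_{n+1}$, you encode it in the smallness of $\eta\cdot X$. The paper's version is slightly more self-contained (it reuses exactly the computation from Corollary~\ref{cor:height-w12-bound}); yours makes the role of horizontality of $V$ more transparent, and would adapt more readily if one wanted to test with fields not aligned to a single coordinate direction.
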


\begin{proof}
Suppose, towards a contradiction, there is a sequence $\delta_i \to 0$, $p_i \in V_i^\perp$, $V_i \in \cP_{T_0\Omega_i}$, $\Omega_i = \Phi_i(\Psi_i(\OmegaRef)) \cap B_1 \in \cD_{\delta_i}(\OmegaRef)$, and $M_i \in \cIH_n(\Omega_i, B_1)$, satisfying \eqref{eqn:graph-hyp}, but failing conclusion 1.  By the height bound \eqref{eqn:sharp-height} and our assumption $\spt M_i \cap B_{\delta_i} \neq \emptyset$, we have $d(p_i + V_i, 0) \to 0$, and by Theorem \ref{thm:first-var} we have that $||\delta M_i||$ is uniformly bounded on compact subsets of $B_1$.  We can therefore find a $V \in \cP_{\OmegaRef}$, and an $M \in \cIV_n(B_1)$, so that after passing to a subsequence, we get $p_i \to 0$, $V_i \to V$, and $M_i \to M$ on compact subsets of $B_1$.  Since $|\Phi_i - \mathrm{Id}|_{C^2(B_2)} \to 0$, $|\Psi_i - \mathrm{Id}| \to 0$, we have $\Theta_{T_0\Omega_i} \to \Theta_{T_0\OmegaRef}$.

Since $E(\Phi_i, M, p_i + V_i, 0, 1) \to 0$, we have $\spt M \subset p + V$ and $||\delta M||(\setint \OmegaRef \cap B_1) = 0$.  Therefore by the constancy theorem $M \llcorner \setint\OmegaRef = k[p + V] \llcorner (\setint\OmegaRef \cap B_1)$ for some constant $k$.  By the non-concentration estimate \eqref{eqn:mass-non-conc} we have $\mu_M(\del\OmegaRef) = 0$, and trivially $\spt \mu_M \subset \OmegaRef$, and so in fact $M = k[p+V] \llcorner (\setint\OmegaRef\cap B_1)$.  From lower-regularity \eqref{eqn:lower-reg}, $\mu_{M_i}(B_{1/2}) \geq 1/c(\OmegaRef)$, and hence $\mu_M(B_1) \geq 1/c(\OmegaRef)$.  This implies $k \geq 1$.  On the other hand,
\[
\mu_M(B_1) \leq \liminf_i \mu_{M_i}(B_1) \leq (3/2) \lim_i \Theta_{T_0\Omega_i} = (3/2) \Theta_{T_0\OmegaRef},
\]
and so $k \leq 1$.  We deduce $k = 1$.  Since for every $W \subset\subset \setint\OmegaRef$ and $i >> 1$ the $M_i \llcorner W$ have bounded mean curvature (tending to zero) and zero boundary, Allard's theorem \cite{All} implies the convergence is $C^1$ on compact subsets of $\setint\OmegaRef \cap B_1$.  This proves conclusion 1.

Let us now fix a $\tau, U$, and prove conclusions 2 and 3.  Ensuring $\delta(\OmegaRef)$ is sufficiently small, There is no loss in assuming $p + V = \R^{n}\times\{0\}$.  Given a function $f : D \subset (p + V) \to V^\perp$, we can extend $f$ to be defined in $D \times V^\perp$ by setting $\tilde f(x) = f(p + \pi_V(x - p))$.  Given $f, g : D \to V^\perp$, then at $\mu_M$-a.e. $x \in \spt M \cap (D \times V^\perp)$ we have the bound
\begin{equation}\label{eqn:graph-1}
|\nabla \tilde f \cdot \nabla \tilde g - D\tilde f \cdot D \tilde g| \leq |\pi_{T_xM} - \pi_V|^2 |D\tilde f ||D\tilde g|.
\end{equation}
In the special case when $f = u$ as defined on $U$, then 
\begin{equation}\label{eqn:graph-2}
|\nabla \tilde u|^2 = |\nabla x_{n+1}|^2 \leq |\pi_{T_xM} - \pi_V|^2.
\end{equation}
Combining \eqref{eqn:graph-1}, \eqref{eqn:graph-2} with \eqref{eqn:W12-bound}, and (without loss of generality) ensuring $\beta \leq \beta(n)$, we get
\[
\int_U |Du|^2 \leq (1+c(n) \beta^2) \int_{B_{1/2}} |D \tilde u|^2 d\mu_M \leq c(n) \int_{B_{1/2}} |\pi_{T_zM} - \pi_V|^2 d\mu_M(z) \leq c(\OmegaRef) E.
\]
This completes the $W^{1,2}$ estimate of conclusion 2; the $L^\infty$ estimates both follow from the sharp height bound \eqref{eqn:sharp-height}.

We prove conclusion 3.  Take $\zeta \in C^1_c(p+V, V^\perp) \cap C^1_c(B_{1/4})$.  Let $\phi$ be any function which is $1$ on $B_{1/2}$, and is supported in $B_1$.  By the height bound \eqref{eqn:sharp-height} we can assume that $\spt M \cap \spt \tilde \zeta \subset B_{1/2}$.  Plugging the field $X = \tilde \zeta \phi \tilde e_{n+1}$ into the first variation (where $\tilde e_{n+1}$ as in the proof of Corollary \ref{cor:height-w12-bound}), and noting that $X$ is tangential to $\Omega$, we obtain
\begin{equation}\label{eqn:graph-3}
\left| \int_{B_{1/2}} \nabla \tilde \zeta \cdot \nabla x_{n+1} d\mu_M \right| \leq c(n)|\zeta|_{C^1}( ||H^{tan}_M||_{L^\infty(B_1; \mu_M)} + |D^2 \Phi|_{C^0(B_2)}).
%-\int_{B_{1/2}} H^{tan}_M \cdot e_{n+1} \tilde \zeta d\mu_M = \int_{B_{1/2}} \nabla \tilde \zeta \cdot e_{n+1} d\mu_M = \int_{B_{1/2}} \nabla \tilde \zeta \cdot \nabla x_{n+1} d\mu_M.
\end{equation}

Let us make some remarks.  First, provided $\beta(n)$ is sufficiently small, the Jacobian $Ju$ of $u$ satisfies the bounds
\begin{equation}\label{eqn:graph-4}
1 \leq Ju \leq 1 + c(n) |Du|^2.
\end{equation}
Second, ensuring $\delta(\tau, \OmegaRef)$ is sufficiently small, by the non-concentration estimate \eqref{eqn:mass-non-conc} we have
\[
\mu_M(B_\tau(\del\Omega) \cap B_{1/2}) \leq c(\OmegaRef) \tau.
\]
Last, using \eqref{eqn:W12-bound}, this implies that
\begin{align}
\left| \int_{B_{1/2} \cap B_{\tau}(\del\Omega)} \nabla \tilde\zeta \cdot e_{n+1} d\mu_M \right|
&\leq c(\OmegaRef) |\zeta|_{C^1} \tau^{1/2} \left( \int_{B_{1/2}} |\pi_{T_zM} - \pi_V|^2 d\mu_M(z) \right)^{1/2} \nonumber \\
&\leq c(\OmegaRef) |\zeta|_{C^1} \tau^{1/2} E^{1/2} \label{eqn:graph-5}
\end{align}

Using conclusions 1 and 2, equations \eqref{eqn:graph-3}, \eqref{eqn:graph-4}, \eqref{eqn:graph-5} we obtain
\begin{align*}
\left| \int_U D \zeta \cdot Du \right| 
&\leq c \beta |\zeta|_{C^1} \int_U |Du|^2 + \left| \int_{B_{1/2} \setminus B_\tau(\del\Omega)} D\tilde \zeta \cdot D\tilde u d\mu_M \right| \\
&\leq c |\zeta|_{C^1} \int_U |Du|^2 + c|\zeta|_{C^1} \int_{B_{1/2}} |\pi_{T_zM} - \pi_V|^2 d\mu_M(z) + \left| \int_{B_{1/2}\setminus B_\tau(\del\Omega)} \nabla \tilde \zeta \cdot e_{n+1} d\mu_M \right| \\
&\leq c |\zeta|_{C^1} E+ c |\zeta|_{C^1} \tau^{1/2} E^{1/2} + c |\zeta|_{C^1} (||H||_{L^\infty(B_1; \mu_M)} + |D^2\Phi|_{C^0(B_2)}) \\
&\leq c|\zeta|_{C^1} \tau^{1/2} E^{1/2} + c|\zeta|_{C^1} (||H||_{L^\infty(B_1; \mu_M)} + |D^2\Phi|_{C^0(B_2)}),
\end{align*}
for $c = c(\OmegaRef)$.  The last inequality follows because we can of course assume $\delta^2 \leq \tau$.  This completes conclusion 3, and the proof of Proposition \ref{prop:graph}.
\end{proof}

\begin{definition}
Consider the sequences $\beta_i, \delta_i \in \R$, $\Omega_i = \Phi_i(\Psi_i(\OmegaRef)) \cap B_1 \in \cD_{\delta_i}(\OmegaRef)$, $V_i \in \cP_{T_0\Omega_i}$, $p_i \in V_i^\perp$, $M_i \in \cIH_n(\Omega, B_1)$.  We say $(\OmegaRef, \Omega_i, M_i, p_i + V_i, \beta_i, \delta_i)$ is a blow-up sequence if:
\begin{enumerate}
\item $p_i \to 0$, $V_i \to \R^n \times \{0\}$, $\beta_i \to 0$, $\delta_i \to 0$,
\item $\mu_{M_i}(B_1) \leq (7/4) \Theta_{T_0\Omega_i}$, and $d_{\spt M_i}(0) \to 0$, 
\item $\limsup_i \beta_i^{-2} E_{\delta_i}(\Phi_i, M_i, p_i + V_i, 0, 1) < \infty$.
\end{enumerate}
\end{definition}

\begin{prop}\label{prop:blow-up}
Let $(\OmegaRef,  \Omega_i, M_i, p_i + V_i, \beta_i, \delta_i)$ be a blow-up sequence.  Let us write $p_i + V_i = \graph_{\R^n \times \{0\}}(q_i + \phi_i \cdot x)$, for $q_i \in \R$ and $\phi_i \in \{0^l\}\times \R^{m-1}$.  Then there is a $W^{1,2}$ function $v : (\OmegaRefN \times \R^{m-1}) \cap B_{1/4} \to \R$, such that:
\begin{enumerate}
\item $v$ is weakly harmonic with Neumann boundary:
\begin{equation}\label{eqn:blow-up-concl1}
\int_{(\OmegaRefN \times \R^{m-1}) \cap B_{1/4}} Dv \cdot D\zeta = 0 \quad \forall \zeta \in C^1_c(B_{1/4} \cap (\R^n \times \{0\})) ;
\end{equation}

\item $v$ has the $W^{1,2}$ bound
\begin{equation}\label{eqn:blow-up-concl2}
\int_{(\OmegaRefN \times \R^{m-1}) \cap B_{1/4}} v^2 + |Dv|^2 \leq c(\Omega) \left( \limsup_i \beta_i^{-2} E_{\delta_i}(\Phi_i, M_i, p_i + V_i, 0, 1) \right) ;
\end{equation}

\item on any compact set $U \subset\subset B_{1/4} \cap (\OmegaRefN \times \R^{m-1}) \setminus \del\OmegaRef$, we have $L^2$ convergence
\begin{equation}\label{eqn:blow-up-concl3}
\beta_i^{-1} u_i( x + q_i + \phi_i \cdot x) \to v(x);
\end{equation}

\item for any $\rho \leq 1/4$, we have the $L^2$ convergence
\begin{equation}\label{eqn:blow-up-concl4}
\beta_i^{-2} \int_{B_\rho} d_{p_i + V_i}^2 d\mu_M \to \int_{B_\rho \cap (\OmegaRefN \times \R^{m-1})} v^2 .
\end{equation}
\end{enumerate}
\end{prop}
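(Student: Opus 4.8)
The plan is to run the standard blow-up argument (in the spirit of De~Giorgi and \cite{All}), using the two technical inputs of Section~\ref{sec:first-var} --- the sharp height bound \eqref{eqn:sharp-height} and the mass non-concentration estimate \eqref{eqn:mass-non-conc} --- to prevent any loss of $L^2$-mass near $\del\OmegaRef$. First I would apply the fine graphical approximation Proposition~\ref{prop:graph} to each $M_i$. Since $\beta_i,\delta_i\to 0$ and $\limsup_i\beta_i^{-2}E_{\delta_i}(\Phi_i,M_i,p_i+V_i,0,1)<\infty$, the number $E_i:=E_{\delta_i}(\Phi_i,M_i,p_i+V_i,0,1)\to 0$, so for each fixed $\tau>0$ the hypotheses \eqref{eqn:graph-hyp} hold for all $i$ large; Proposition~\ref{prop:graph} then produces domains $U_i^\tau\subset p_i+V_i$ and functions $u_i^\tau\in C^1(U_i^\tau,V_i^\perp)$ with $M_i\llcorner B_{1/2}\setminus B_\tau(\del\Omega_i)=[\graph(u_i^\tau)]$, with $\sup|u_i^\tau|^2+\int_{U_i^\tau}|Du_i^\tau|^2\le cE_i$, and with the almost-harmonicity estimate of conclusion~(3). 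Parametrising $p_i+V_i$ over $\R^n\times\{0\}$ by $x\mapsto x+q_i+\phi_i\cdot x$, I set $v_i:=\beta_i^{-1}\,u_i^\tau(\,\cdot\,+q_i+\phi_i\cdot(\cdot)\,)$; by the displayed estimate and $\beta_i^{-2}cE_i\le C$ these are bounded in $L^\infty$ and in $W^{1,2}$ on domains $\hat U_i^\tau$ which exhaust $(\OmegaRefN\times\R^{m-1})\cap B_{1/4}$ as $i\to\infty$ and $\tau\to 0$.

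Next, by Rellich compactness together with lower semicontinuity of the $W^{1,2}$-norm (to pass from local to global bounds), I would, along a subsequence, obtain $v_i\to v$ weakly in $W^{1,2}$ and strongly in $L^2$ on $(\OmegaRefN\times\R^{m-1})\cap B_{1/4}$. This gives a function $v\in W^{1,2}$ satisfying \eqref{eqn:blow-up-concl2} (lower semicontinuity together with $\sup|u_i^\tau|^2+\int|Du_i^\tau|^2\le cE_i$) and \eqref{eqn:blow-up-concl3} (strong $L^2$ convergence away from $\del\OmegaRef$). For the Neumann equation \eqref{eqn:blow-up-concl1}, fix $\zeta\in C^1_c(B_{1/4}\cap(\R^n\times\{0\}))$: conclusion~(3) of Proposition~\ref{prop:graph} gives $\big|\int_{U_i^\tau}Du_i^\tau\cdot D\zeta\big|\le c|\zeta|_{C^1}\big(\|H_{M_i}\|_{L^\infty(B_1;\mu_{M_i})}+|D^2\Phi_i|_{C^0(B_2)}+\tau^{1/2}E_i^{1/2}\big)$, and since $\int_{U_i^\tau}|Du_i^\tau|^2\le cE_i<\infty$ I let $\tau\to 0$ for fixed $i$ --- this removes the $\tau^{1/2}E_i^{1/2}$ term and enlarges $U_i^\tau$ to the full interior slice $\sint\Omega_i\cap(p_i+V_i)\cap B_{1/4}$. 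Dividing by $\beta_i$ and letting $i\to\infty$ kills the remaining error, because $\beta_i^{-1}\|H_{M_i}\|_{L^\infty}$ and $\beta_i^{-1}|D^2\Phi_i|_{C^0}$ tend to $0$ by the definition of $E_{\delta_i}$ and the bound $\beta_i^{-2}E_i\le C$, while weak convergence identifies the limit of the left-hand side with $\int_{(\OmegaRefN\times\R^{m-1})\cap B_{1/4}}Dv\cdot D\zeta$; note the test function is \emph{not} required to vanish on $\del\OmegaRef$, so this really is the free-boundary Neumann condition.

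Finally, for the strong excess convergence \eqref{eqn:blow-up-concl4} I would split $B_\rho$ into the graphical part $B_\rho\setminus B_\tau(\del\Omega_i)$ and the collar $B_\rho\cap B_\tau(\del\Omega_i)$. On the graphical part $\int_{B_\rho}d_{p_i+V_i}^2\,d\mu_{M_i}=\int_{U_i^\tau\cap B_\rho}|u_i^\tau|^2\,Ju_i^\tau$ with $1\le Ju_i^\tau\le 1+c|Du_i^\tau|^2$, so after scaling the Jacobian error is $O(\beta_i^{-2}\sup|u_i^\tau|^2\int|Du_i^\tau|^2)=O(E_i)\to 0$, and the main term $\beta_i^{-2}\int_{U_i^\tau\cap B_\rho}|u_i^\tau|^2$ converges by strong $L^2$ convergence to $\int_{(\OmegaRefN\times\R^{m-1})\cap B_\rho\setminus B_{2\tau}(\del\OmegaRef)}v^2$. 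On the collar, the sharp height bound \eqref{eqn:graph-concl2.5} gives $d_{p_i+V_i}^2\le cE_i$ on $\spt M_i\cap B_{1/2}$, while the non-concentration estimate \eqref{eqn:mass-non-conc} (applicable since $E_i\to 0$) gives $\mu_{M_i}(B_\tau(\del\Omega_i)\cap B_{1/2})\le c\tau$, so the collar contributes at most $\beta_i^{-2}cE_i\cdot c\tau\le Cc\tau$, which is matched by the tail $\int_{B_\rho\cap B_{2\tau}(\del\OmegaRef)}v^2\le\sup|v|^2\,|B_{2\tau}(\del\OmegaRef)\cap B_\rho|\le Cc\tau$. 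Letting $i\to\infty$ and then $\tau\to 0$ gives \eqref{eqn:blow-up-concl4}. The main obstacle is precisely this last step: obtaining \emph{strong} (not merely lower-semicontinuous) convergence of the rescaled excess is what makes the excess-decay iteration of Proposition~\ref{prop:decay} close up, and it is available only because the sharp $L^\infty$--$L^2$ bound forbids the $L^2$-mass of $M_i$ from accumulating on $\del\Omega_i$ at the scale of the excess. A secondary point requiring care is the order of limits --- taking $\tau\to 0$ before $i\to\infty$ in the derivation of \eqref{eqn:blow-up-concl1}, but $i\to\infty$ before $\tau\to 0$ in the derivation of \eqref{eqn:blow-up-concl4}.
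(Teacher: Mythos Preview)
Your overall architecture is exactly the paper's: apply Proposition~\ref{prop:graph}, rescale by $\beta_i^{-1}$, extract a $W^{1,2}$ limit, and use the height bound \eqref{eqn:sharp-height} plus non-concentration \eqref{eqn:mass-non-conc} to upgrade to strong $L^2$ excess convergence. Conclusions~(2), (3), (4) are handled correctly.

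There is however a genuine gap in your derivation of the Neumann equation \eqref{eqn:blow-up-concl1}. You write ``I let $\tau\to 0$ for fixed $i$,'' but this is not permitted by Proposition~\ref{prop:graph}: the threshold $\delta(\tau,\beta,\OmegaRef)$ there depends on $\tau$ (the smaller $\tau$, the closer to $\del\Omega_i$ you are asking $M_i$ to be graphical, hence the smaller $\delta$ must be), and for fixed $i$ the quantities $E_i,\delta_i$ are fixed positive numbers, so eventually $E_i>\delta(\tau)^2$ and the proposition no longer applies. Thus the limit $\tau\to 0$ at fixed $i$ is not available, and the $\tau^{1/2}E_i^{1/2}$ term cannot be removed that way.

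The paper avoids this by choosing a \emph{diagonal} sequence $\tau_i\to 0$ slowly enough that Proposition~\ref{prop:graph} applies with $\tau=\tau_i$ at step $i$; then all three error terms $\|H_{M_i}\|_{L^\infty}$, $|D^2\Phi_i|_{C^0}$, $\tau_i^{1/2}E_i^{1/2}$ are $o(1)\,E_i^{1/2}$, so after dividing by $\beta_i$ the right-hand side tends to zero. Alternatively, you can fix your argument by using the \emph{same} order of limits you already use for \eqref{eqn:blow-up-concl4}: keep $\tau$ fixed, send $i\to\infty$ (getting $\bigl|\int_{(\OmegaRefN\times\R^{m-1})\cap B_{1/4}\setminus B_{2\tau}(\del\OmegaRef)} Dv\cdot D\zeta\bigr|\le C\tau^{1/2}$ via weak $W^{1,2}$ convergence), and only then send $\tau\to 0$ using $Dv\in L^2$ on the full domain. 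Either route closes the gap; your stated order does not.
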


\begin{proof}
Let $\tau_i \to 0$ sufficiently slowly, so that for each $i$ large we can apply Proposition \ref{prop:graph} to to deduce
\[
M_i \cap B_{1/2} \setminus B_{\tau_i}(\del\OmegaRef) = \graph_{p_i + V_i}(u_i),
\]
where $u_i$ is defined on some domain $U_i$ satisfying
\[
(p_i + V_i) \cap B_{1/4} \setminus B_{2\tau_i}(\del\OmegaRef) \subset U_i \subset p_i + V_i.
\]
Write $E_i = E_{\delta_i}(\Phi_i, M_i, p_i + V_i, 0, 1)$.

Fix any $U \subset\subset B_{1/4} \cap (\OmegaRefN \times \R^{m-1}) \setminus \del\OmegaRef$.  Then for sufficiently large $i$, $x\mapsto w_i(x) := u_i(x + q_i + \phi_i \cdot x)$ is well-defined, and parameterizes a subset of $\spt M_i$.  For $x \in U$, we have
\[
Dw_i(x) = (1+o(1)) Du_i (x + q_i + \phi_i \cdot x), \quad 1 \leq J(x) \leq 1 + o(1),
\]
where $J(x)$ is the Jacobian of $x \mapsto x + q_i + \phi_i \cdot x$.  It follows from Proposition \ref{prop:graph} that
\begin{equation}\label{eqn:blow-up-bd}
\int_U |w_i|^2 + |Dw_i|^2 \leq c E_i, \quad \sup_U |w_i|^2 \leq cE_i, 
\end{equation}
and for any $\zeta \in C^1_c(B_{1/4})$, 
\begin{equation}\label{eqn:blow-up-eqn}
\left| \int_U D\zeta \cdot Dw_i \right| \leq o(1)  |\zeta|_{C^1} E_i^{1/2}.
\end{equation}

If $v_i = \beta_i^{-1} w_i$, then \eqref{eqn:blow-up-bd} (and our definition of blow-up sequence) shows that the $v_i$ are bounded in $W^{1,2}(U)$, with a bound independent of either $i$ or $U$.  A diagonalization argument implies there is a $v \in W^{1,2}(B_{1/4} \cap (\OmegaRefN \times \R^{m-1}))$, satisfying the bound \eqref{eqn:blow-up-concl2}, so that for every $U$ as above $v_i \to v$ strongly in $L^2(U)$ and weakly in $W^{1,2}(U)$.

From \eqref{eqn:blow-up-bd} we have
\begin{equation}\label{eqn:blow-up-height}
\sup |v|^2 \leq c(\OmegaRef), 
\end{equation}
and from \eqref{eqn:blow-up-eqn} we get that $v$ satisfies the required \eqref{eqn:blow-up-concl1}.  The strong $L^2$ convergence \eqref{eqn:blow-up-concl4} follows from the $L^\infty$ bounds \eqref{eqn:blow-up-height}, \eqref{eqn:graph-concl2.5}, \eqref{eqn:mass-non-conc} and the fact
\[
\int_{B_{\rho} \setminus B_{\tau_i}(\del\OmegaRef)} d_{p_i + V_i}^2 d\mu_M = (1+o(1))\int_{U_i} |u_i|^2. \qedhere
\]
\end{proof}

\begin{definition}
Let us call any $v$ as obtained in Proposition \ref{prop:blow-up} a \emph{Jacobi field} on $\OmegaRefN \times \R^{m-1}$.
\end{definition}

We are now ready to prove Proposition \ref{prop:decay}.
\begin{proof}[Proof of Proposition \ref{prop:decay}]
Suppose, towards a contradiction, there is a sequence $\delta_i \to 0$, $V_i \in \cP_{T_0\Omega_i}$, $p_i \in V_i^\perp$, $\Omega_i = \Phi_i(\Psi_i(\OmegaRef)) \in \cD_{\delta_i}(\OmegaRef)$, and $M_i \in \cIH_n(\Omega_i, B_1)$, such that
\[
E_i:= E_{\delta_i}(\Phi_i, M_i, p_i + V_i, 0, 1) \leq \delta_i^2, \quad \mu_{M_i}(B_1) \leq (7/4)\Theta_{T_0\Omega_i}, \quad \spt M_i \cap B_{\delta_i} \neq \emptyset,
\]
but for which
\[
E_{\delta_i}(\Phi_i, M_i, p' + V', 0, \theta) \geq c_0 \theta^{2(e-1)} E_i,
\]
for all $p' + V'$ satisfying \eqref{eqn:decay-concl1}.  Here $c_0$ is a constant depending only on $\OmegaRef$ that we will choose later.

Let $\beta_i^2 = E_{\delta_i}(\Phi_i, M_i, p_i + V_i, 0, 1)$, so of course $\beta_i \to 0$ also.  From the height bound \eqref{eqn:sharp-height}, we can assume $p_i \to 0$, and after passing to a subsequence and rotating as necessary, we can assume $V_i \to \R^n \times \{0\}$.  Then $(\OmegaRef, \Omega_i, M_i, p_i + V_i, \beta_i, \delta_i)$ is a blow-up sequence, and we can apply Proposition \ref{prop:blow-up} to obtain a Jacobi field $v$, satisfying
\[
\int_{\OmegaRefN \times \R^{m-1}} Dv \cdot D\zeta = 0 \quad \forall \zeta \in C^1_c(B_{1/4} \cap (\R^n \times \{0\})),
\]
and the $W^{1,2}$ estimate
\[
\int_{(\OmegaRefN \times \R^{m-1}) \cap B_{1/4}} |v|^2 + |Dv|^2 \leq c(\OmegaRef).
\]
From Proposition \ref{prop:expansion}, we can expand in $W^{1,2}$
\begin{equation}\label{eqn:decay-jacobi-exp}
v(x = r\omega) = b + A \cdot x + \sum_i a_i r^{\alpha_i} \phi_i(\omega), 
\end{equation}
for some $A \in \{0^l\}\times \R^{m-1}$, and $\alpha_i \geq e > 1$, for $e(\OmegaRefN)$ as in \ref{prop:expansion}.  Using the $L^2( (\OmegaRefN \times \R^{m-1}) \cap S^{n-1})$-orthogonality of this expansion for every fixed $r$, we get
\begin{equation}\label{eqn:decay-jacobi-bd}
|b|^2 + |A|^2 + \sum_i \frac{a_i^2 (1/4)^{2\alpha_i + n}}{2\alpha_i + n} \leq c(\OmegaRef).
\end{equation}

Let us write
\[
p_i + V_i = \graph_{\R^n\times \{0\}} ( q_i + \phi_i \cdot x),
\]
for $q_i \in \R$, and $\phi_i \in \{0^l\}\times \R^{m-1}$.  Now define the new affine planes
\[
p_i' + V_i' = \graph_{\R^n \times \{0\}} (q_i + \beta_i b + (\phi_i + \beta_i A) \cdot x).
\]
Since $A \in \{0^l\}\times \R^{m-1}$, we have $V_i' \in \cP_\OmegaRef$, and we can take $p_i' \in {V_i'}^\perp$.  By definition of the $p_i'$, $V_i'$, and by considering the analytic maps
\[
\phi \mapsto \pi_\phi := \pi_{\graph_{\R^n\times\{0\}}(\phi \cdot x)}, \quad (q, \phi) \mapsto \pi^\perp_\phi(q e_{n+1})
\]
we have
\begin{equation}
|p_i - p_i'| + |\pi_{V_i} - \pi_{V_i'}| \leq c(n) \beta_i (|b| + |A|) \leq c_{d1}(\OmegaRef) \beta_i.
\end{equation}
Therefore, $(\OmegaRef, \Omega_i, M_i, p_i' + V_i', \beta_i, \delta_i)$ is a blow-up sequence also, and we can again use Proposition \ref{prop:blow-up} to obtain a new Jacobi field $v'$ (with, a priori, a slightly worse $W^{1,2}$ bound than $v$). 

There is a sequence $\tau_i \to 0$, so that we can write
\begin{equation}\label{eqn:decay-M-graph}
M_i \cap B_{1/2} \setminus B_{\tau_i}(\del\OmegaRef) = \graph_{p_i + V_i}(u_i) = \graph_{p_i' + V_i'}(u_i'),
\end{equation}
where, for any compact subset $U  \subset\subset B_{1/4} \cap (\mathrm{int} \OmegaRefN \times \R^{m-1})$, we have
\begin{equation}\label{eqn:decay-M-C1}
|u_i(x + q_i + \phi_i \cdot x)|_{C^1(U)} + |u_i'(x + q_i + \beta_i b + (\phi_i + \beta_i A)\cdot x)|_{C^1(U)} \to 0,
\end{equation}
and (from Proposition \ref{prop:blow-up}.3)
\begin{equation}\label{eqn:decay-u-lim}
\beta_i^{-1} u_i(x + q_i + \phi_i \cdot x) \to v, \quad \beta_i^{-1} u_i'(x + q_i + \beta_i p + (\phi_i + \beta_i A) \cdot x) \to v'
\end{equation}
in $L^2(U)$.

From \eqref{eqn:decay-M-graph}, \eqref{eqn:decay-M-C1} and since $|p_i| + |\phi_i| \to 0$, we have that for every $x \in U$, 
\begin{equation}\label{eqn:decay-u-u'}
|u_i'( x + q_i + \beta_i p + (\phi_i + \beta_i A) \cdot x) - u_i(x + q_i + \phi_i \cdot x) - \beta_i p - \beta_i  A\cdot x| \leq o(1) \beta_i,
\end{equation}
where $o(1) \to 0$ as $i \to \infty$.  By \eqref{eqn:decay-u-lim} and \eqref{eqn:decay-u-u'}, we deduce that
\[
v'(x) = v(x) - b - A \cdot x = \sum_i a_i r^{\alpha_i} \phi_i(x),
\]
where $a_i$ as in equation \eqref{eqn:decay-jacobi-exp}.

Therefore, using \eqref{eqn:decay-jacobi-bd}, $4 \theta \leq 1$ and $|\alpha_i| \geq 1+ \alpha$, we have:
\[
\int_{B_\theta \cap (\OmegaRefN \times \R^{m-1})} |v'|^2 = \sum_i \frac{a_i^2 \theta^{2\alpha_i + n}}{2\alpha_i + n} \leq c_{d2}(\OmegaRef)(4\theta)^{n+2e}.
\]
Provided we take $c_0$ larger than $c_{d1}$ and $8^{4+n} c_{d2}$, then by the strong $L^2$ convergence
\[
\beta_i^{-2} \int_{B_\theta} d_{p_i' + V_i'}^2 d\mu_M \to \int_{B_\theta \cap (\OmegaRefN \times \R^{m-1})} |v'|^2,
\]
we obtain a contradiction (we remind the reader that the $H$ and $\Phi$ terms of $E$ decay gratuitously).  This completes the proof of Proposition \ref{prop:decay}.
\end{proof}

\section{Regularity}\label{sec:reg}

The main theorem of this section is the following decay estimate, which we shall prove by induction on the boundary strata $\del_i \Omega$.  The idea is that we can use the decay of Proposition \ref{prop:decay} (or rather Corollary \ref{cor:decay}) whenever $\Omega$ resembles a polyhedral cone.  If we hit a radius at which $\Omega$ stops resembling a cone, then by recentering on a lower strata and dropping a controllable number of scales, we will start looking like a cone with an extra degree of symmetry.

Although the $L^2$, $W^{1,2}$, and $L^\infty$ distances to planes are all effectively comparable when $\Omega$ resembles a cone (and the plane lies in the ``good'' space $\cP_{T_0\Omega}$), as we traverse scales and cone-types it will be convenient to prove a decay on the $L^2$, $W^{1,2}$, and $L^\infty$ excesses simultaneously.  The proof is no more involved than proving a decay on the $L^2$ by itself.

As in the previous sections we fix here a polyhedral cone domain $\OmegaRef = \OmegaRefN^l \times \R^{m}$, having $m \geq 1$.

\begin{theorem}\label{thm:total-decay}
There are constants $c_3(\OmegaRef)$, $\delta_3(\OmegaRef)$, $\alpha(\OmegaRef)$ so that the following holds.  Let $\Omega = \Phi(\Psi(\OmegaRef)) \cap B_1 \in \cD_{\delta_3}(\OmegaRef)$, $M \in \cIH_n(\Omega, B_1)$ satisfy
\begin{equation}\label{eqn:total-decay-hyp}
E(\Phi, M, p + V, 0, 1) \leq E \leq \delta_3^2, \quad \theta_M(0, 1) \leq (3/2) \Theta_{T_0\Omega},
\end{equation}
for some $p \in V^\perp$, $V \in \cP_{T_0\Omega}$.

Then for every $x \in \spt M \cap B_{1/16}$, there is plane $V_x \in \cP_{T_x\Omega}$ so that
\begin{equation}\label{eqn:total-decay-concl1}
|\pi_V - \pi_{V_x}| \leq c_3 E^{1/2},
\end{equation}
and for all $0 < r < 1/4$:
\begin{equation}\label{eqn:total-decay-concl2}
E^{tot}(\Phi, M, x + V_x, x, r) \leq c_3 r^{2\alpha} E
\end{equation}
\end{theorem}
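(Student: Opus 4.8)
The plan is to fix a point $x \in \spt M \cap B_{1/16}$ and to build, by an induction along the strata of $\Omega$, a finite ``chain'' of centers and scales on which $\Omega$ is $\delta_1$-close to one of the finitely many model cones in $\cT(\OmegaRef)$ from Lemma \ref{lem:B}; I would then apply the single-model decay of Corollary \ref{cor:decay} on each link of the chain and concatenate these, losing only a factor $c(\OmegaRef)$ at the boundedly many scales where the chain changes cone type.

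\textbf{Step 1: the chain.} Using Lemma \ref{lem:B}, I would first produce points $\tilde x_0 = x, \tilde x_1, \ldots, \tilde x_J$ with $\tilde x_j \in \del_{i_j}\Omega$ and $i_0 > i_1 > \cdots > i_J$ (so $J \leq n$), together with radii
\[
1 = r_{J+1}^- \geq r_J^+ \geq r_J^- \geq \cdots \geq r_0^+ \geq r_0^- = 0,
\]
such that: (i) $r_j^+ \leq \min\{ B\, d(\tilde x_j, \del_{i_j-1}\Omega),\, 1-|\tilde x_j|\}$, so that $\Omega_{\tilde x_j,r}$ is a $c_B\delta_3$-small perturbation of a fixed $T_{z_j}\OmegaRef \in \cT(\OmegaRef)$ for all $r \leq r_j^+$; (ii) $|x - \tilde x_j| \leq r_j^-$; and (iii) $B_{r_j^+}(\tilde x_j) \subset B_{r_{j+1}^-/2}(\tilde x_{j+1})$ while $r_j^+ \geq r_{j+1}^-/c(\OmegaRef)$. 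Condition (iii) will let me compare mass and excess in $B_{r_j^+}(\tilde x_j)$ with those in $B_{r_{j+1}^-}(\tilde x_{j+1})$, and guarantees that only boundedly many scales are ``skipped'' between consecutive cone types. The construction is by descending induction on the stratum dimension: given $\tilde x_{j+1}$ and $r_{j+1}^-$, if $d(\tilde x_{j+1}, \del_{i_{j+1}-1}\Omega)$ is at least a definite fraction of $r_{j+1}^-$ the chain terminates; otherwise pick $\tilde x_j$ on the next stratum down realizing that distance and set $r_j^\pm$ accordingly. Extracting all the constants depending only on $\OmegaRef$ (in particular $J \leq n$ and the uniform scale loss $r_j^-/r_{j-1}^+ \geq 1/c(\OmegaRef)$) from the polyhedral combinatorics is the first place care is needed.

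\textbf{Step 2: the inductive decay.} I would then prove, by descending induction on $j = J, \ldots, 0$, that there is a plane $V_j \in \cP_{T_{\tilde x_j}\Omega}$, a point $p_j \in V_j^\perp$, and a constant $\Lambda_j(\OmegaRef)$ with $\Lambda_J \leq \cdots \leq \Lambda_0 \leq c(\OmegaRef)$, such that $|\pi_V - \pi_{V_j}| \leq \Lambda_j E^{1/2}$,
\[
E^{tot}(\Phi, M, p_j + V_j, \tilde x_j, r) \leq \Lambda_j E\, r^{2\alpha} \quad \forall\ r_j^- \leq r \leq 1/4,
\]
and $\theta_M(\tilde x_j, r) \leq (3/2)\Theta_{T_{\tilde x_j}\Omega}$ for $r_j^- \leq r \leq r_j^+$. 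For the base case $j = J$ one transfers the hypothesis \eqref{eqn:total-decay-hyp} at $(0,1)$ to the ball $B_{r_J^+}(\tilde x_J)$ — using $x \in B_{1/16}$, the upper Ahlfors bound \eqref{eqn:upper-reg}, and Lemma \ref{lem:B} — and applies Corollary \ref{cor:decay} with reference cone $T_{z_J}\OmegaRef$. For the inductive step, condition (iii) together with monotonicity \eqref{eqn:monotonicity} bounds $\theta_M(\tilde x_j, r_j^+)$ and $E(\Phi, M, p_{j+1} + V_{j+1}, \tilde x_j, r_j^+)$ by $c(\OmegaRef)$ times their values at $(\tilde x_{j+1}, r_{j+1}^-)$, and by the inclusion \eqref{eqn:pv-inclusion} (with the curvature correction in Lemma \ref{lem:B}) the plane $V_{j+1}$ is an admissible horizontal plane at $\tilde x_j$. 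Applying Corollary \ref{cor:decay} to $(\eta_{\tilde x_j, r_j^+})_\sharp M$ in $\Omega_{\tilde x_j, r_j^+}$ (a small perturbation of $T_{z_j}\OmegaRef$) yields $V_j, p_j$ and the decay of $E^{tot}(\Phi, M, p_j+V_j, \tilde x_j, \cdot)$ on $[r_j^-, r_j^+]$; the inductive decay at $\tilde x_{j+1}$, together with $|\pi_{V_j} - \pi_{V_{j+1}}| \leq c E^{1/2}$ and the bounded ratio $(r_{j+1}^-/r_j^+)^{n+2} \leq c(\OmegaRef)$, extends it to $[r_j^-, 1/4]$ at the cost of enlarging $\Lambda_{j+1}$ to $\Lambda_j \leq c(\OmegaRef)\Lambda_{j+1}$. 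Since $J \leq n$, every $\Lambda_j$ is bounded by $c(\OmegaRef)$.

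\textbf{Step 3: the mass bound and the conclusion.} The delicate point in Step 2 is the hypothesis $\theta_M(\tilde x_j, r_j^+) \leq (3/2)\Theta_{T_{\tilde x_j}\Omega}$, since transferring the mass bound naively across (iii) loses a factor $(r_{j+1}^-/r_j^+)^n$. Here I would invoke that the inductive decay at $\tilde x_{j+1}$ already makes $M$ a $C^{1,\alpha}$ graph of norm $\leq c E^{1/2}$ over the multiplicity-one horizontal plane in $B_{r_{j+1}^-}(\tilde x_{j+1})$ away from $\del\Omega$, so that the non-concentration estimate \eqref{eqn:mass-non-conc} gives $\mu_M(B_{r_j^+}(\tilde x_j)) \leq (1 + c\sqrt{E})\,\omega_n (r_j^+)^n\,\Theta_{T_{\tilde x_j}\Omega}$; this sharp transfer is exactly the content of the helper Lemma \ref{lem:sharp-mass-bound}, which is itself a short contradiction/compactness argument. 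Once the inductive statement holds for all $j$, the decay estimates for $j = J, \ldots, 0$ together cover every scale $0 < r < 1 = r_{J+1}^-$ around $x$, hence in particular $0 < r < 1/4$; taking $V_x := V_0 \in \cP_{T_{\tilde x_0}\Omega} = \cP_{T_x\Omega}$ gives $|\pi_V - \pi_{V_x}| \leq \Lambda_0 E^{1/2}$, and choosing $\alpha < \beta(\OmegaRef)$ and $\delta_3$ small enough this is precisely \eqref{eqn:total-decay-concl1}--\eqref{eqn:total-decay-concl2}. When $x \in \sint\Omega$ the chain collapses to the single model $\R^{n+1}$ at small scales and the argument reduces to the interior ($l = 0$) case of Corollary \ref{cor:decay}.

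\textbf{Main obstacle.} I expect the principal difficulties to be (a) the bookkeeping of Step 1 — producing the chain $(\tilde x_j, r_j^\pm)$ satisfying (i)--(iii) with all constants depending only on $\OmegaRef$, where the finiteness $J \leq n$ and the uniform scale loss must be read off from the polyhedral structure — and (b) the sharp mass transfer of Step 3, i.e.\ proving via Lemma \ref{lem:sharp-mass-bound} that $C^{1,\alpha}$ graphicality plus non-concentration improves the crude mass bound back to $(3/2)\Theta_{T_{\tilde x_j}\Omega}$ at the recentered, less symmetric point; without this the induction cannot close.
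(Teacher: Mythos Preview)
Your proposal is correct and follows essentially the same route as the paper's proof: build a finite chain of centers $\tilde x_j$ along the strata with controlled scale ratios, apply Corollary \ref{cor:decay} on each link, and use Lemma \ref{lem:sharp-mass-bound} to recover the sharp mass bound $(3/2)\Theta_{T_{\tilde x_j}\Omega}$ when changing cone type. Two small bookkeeping points to watch when you write it out: the exponent must be $\alpha = \min_{z \in \OmegaRef} \beta(T_z\OmegaRef)$ (not just $\beta(\OmegaRef)$), and in your description of the chain construction the direction is reversed --- one starts at $\tilde x_0 = x$ and produces $\tilde x_{j+1}$ on a \emph{lower} stratum than $\tilde x_j$ (your sentence ``given $\tilde x_{j+1}$ \ldots\ pick $\tilde x_j$ on the next stratum down'' has the indices swapped).
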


We require first a helper lemma.
\begin{lemma}\label{lem:sharp-mass-bound}
Given any $\eps > 0$, there is a $\delta_4(\OmegaRef, \eps)$ so that the following holds.  Let $\Omega = \Phi(\Psi(\OmegaRef)) \cap B_1 \in \cD_{\delta_4}(\OmegaRef)$,  $M \in \cIH_n(\Omega, B_1)$ satisfy
\begin{equation}\label{eqn:sharp-mass-hyp}
E(\Phi, M, p + V, 0, 1) \leq \delta_4^2, \quad \theta_M(0, 1) \leq (7/4) \Theta_{T_0\Omega},
\end{equation}
for some $p \in V^\perp$, and some $V \in \cP_{T_0\Omega}$.  Then given any $x \in \Omega \cap B_1$ and $r \geq \eps$, so that $B_r(x) \subset B_{1-\eps}$, we have
\begin{equation}\label{eqn:sharp-mass-concl}
\theta_M(x, r) \leq (3/2) \Theta_{T_x\Omega} .
\end{equation}
\end{lemma}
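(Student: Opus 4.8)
The plan is to argue by contradiction and compactness, the point being that a varifold which is free-boundary in a nearly-convex cone, has density $<2\Theta$ at the cone point, and is $L^{2}$-close to a horizontal plane, must in the limit be the \emph{multiplicity-one} horizontal plane through the origin; once that is established, the bound $(3/2)\Theta_{T_{x}\Omega}$ at other points is just the convexity monotonicity \eqref{eqn:theta-mono} applied on the limit plane.

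Suppose the statement fails for some fixed $\eps>0$. Then there are $\delta_{i}\to 0$, domains $\Omega_{i}=\Phi_{i}(\Psi_{i}(\OmegaRef))\cap B_{1}\in\cD_{\delta_{i}}(\OmegaRef)$, varifolds $M_{i}\in\cIH_{n}(\Omega_{i},B_{1})$, planes $p_{i}\in V_{i}^{\perp}$ with $V_{i}\in\cP_{T_{0}\Omega_{i}}$, and points $x_{i}\in\Omega_{i}\cap B_{1}$, radii $r_{i}\ge\eps$ with $B_{r_{i}}(x_{i})\subset B_{1-\eps}$, all satisfying \eqref{eqn:sharp-mass-hyp} but with $\theta_{M_{i}}(x_{i},r_{i})>(3/2)\Theta_{T_{x_{i}}\Omega_{i}}$. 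Since the finitely many tangent cones of $\OmegaRef$ (up to a $c_{B}\delta_{i}$-perturbation) have densities bounded below by a constant $c(\OmegaRef)>0$, we get $\mu_{M_{i}}(B_{r_{i}}(x_{i}))>(3/2)c(\OmegaRef)\omega_{n}\eps^{n}>0$, so $M_{i}\not\equiv 0$ and $M_{i}$ carries a definite amount of mass inside $\overline{B_{1-\eps}}$. By Theorem \ref{thm:first-var} the first variations $\|\delta M_{i}\|$ are uniformly bounded on compact subsets of $B_{1}$, so after passing to a subsequence $M_{i}\to M_{\infty}$ as varifolds; since $\Phi_{i}\to\mathrm{id}$ and $\Psi_{i}\to\mathrm{Id}$ we have $\Omega_{i}\to\OmegaRef$, $\Theta_{T_{0}\Omega_{i}}\to\Theta_{\OmegaRef}$, and after a rotation $V_{i}\to V\in\cP_{\OmegaRef}$. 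Because the excess tends to $0$, Chebyshev together with the definite mass in $\overline{B_{1-\eps}}$ forces $p_{i}+V_{i}$ to meet $\overline{B_{1-\eps}}$ for $i$ large, hence $|p_{i}|\le 1$ and $p_{i}\to p_{\infty}$ along a subsequence, and $\spt M_{\infty}\subset p_{\infty}+V$. The interior first variation of $M_{\infty}$ vanishes, so by the constancy theorem $M_{\infty}\llcorner\sint\OmegaRef=k\,[(p_{\infty}+V)\cap\sint\OmegaRef]$ for a non-negative integer $k$ (the set being convex, hence connected), and by the non-concentration estimate \eqref{eqn:mass-non-conc}, $\mu_{M_{\infty}}(\del\OmegaRef)=0$, so $M_{\infty}=k\,[(p_{\infty}+V)\cap\OmegaRef]$ with $k\ge 1$.

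The crux is to pin down $M_{\infty}$ as the multiplicity-one horizontal plane through the origin. Passing to the limit in the monotonicity \eqref{eqn:monotonicity}, whose error terms disappear (cf.\ \eqref{eqn:sharp-mono}), gives $\theta_{M_{\infty}}(0,\rho)\le(7/4)\Theta_{\OmegaRef}$ for every $\rho<1$; since $\spt M$ accumulates at the origin (the same role played by the support hypothesis of Proposition \ref{prop:decay}), $0\in\spt M_{\infty}$, which forces $p_{\infty}=0$; then $\theta_{M_{\infty}}(0,\rho)\equiv k\Theta_{\OmegaRef}$, so $k=1$ and $M_{\infty}=[V\cap\OmegaRef]$. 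Now return to the offending inequality: along a subsequence $x_{i}\to x_{\infty}\in\overline{\OmegaRef}\cap\overline{B_{1-\eps}}$ and $r_{i}\to r_{\infty}\in[\eps,1-\eps]$, and upper-semicontinuity of mass on compact sets gives $\limsup_{i}\theta_{M_{i}}(x_{i},r_{i})\le\mu_{M_{\infty}}(\overline{B_{r_{\infty}}(x_{\infty})})/(\omega_{n}r_{\infty}^{n})$, while the lower-semicontinuity \eqref{eqn:theta-lsc-omega} gives $\liminf_{i}\Theta_{T_{x_{i}}\Omega_{i}}\ge\Theta_{T_{x_{\infty}}\OmegaRef}$; hence $\mu_{M_{\infty}}(\overline{B_{r_{\infty}}(x_{\infty})})/(\omega_{n}r_{\infty}^{n})\ge(3/2)\Theta_{T_{x_{\infty}}\OmegaRef}$. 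On the other hand $V\cap\OmegaRef$ is a convex cone in the $n$-plane $V$, and when $x_{\infty}\in V\cap\OmegaRef$ the convexity monotonicity \eqref{eqn:theta-mono} bounds the left side by $\Theta_{T_{x_{\infty}}(V\cap\OmegaRef)}=\Theta_{V\cap T_{x_{\infty}}\OmegaRef}$ (using \eqref{eqn:poly-tangent} for the tangent-cone identity), which equals $\Theta_{T_{x_{\infty}}\OmegaRef}$, because $V\in\cP_{T_{0}\OmegaRef}\subset\cP_{T_{x_{\infty}}\OmegaRef}$ is a horizontal plane of $T_{x_{\infty}}\OmegaRef$ and a horizontal plane of a polyhedral cone carries the same density as the cone (both equal $\omega_{l'}^{-1}\haus^{l'}$ of its $0$-symmetric factor, by \eqref{eqn:theta-omega}). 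This gives $(3/2)\Theta_{T_{x_{\infty}}\OmegaRef}\le\Theta_{T_{x_{\infty}}\OmegaRef}$, a contradiction. The case $x_{\infty}\notin V\cap\OmegaRef$ is analogous after replacing $x_{\infty}$ by its nearest-point projection onto the closed convex set $V\cap\overline{\OmegaRef}$, which by the projection inequality only decreases $\mu_{M_{\infty}}(\overline{B_{r_{\infty}}(x_{\infty})})/(\omega_{n}r_{\infty}^{n})$ and lands in a stratum of density no smaller than $\Theta_{T_{x_{\infty}}\OmegaRef}$.

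The main obstacle is exactly this identification step: showing the limit is the \emph{multiplicity-one} horizontal plane through the \emph{origin} — multiplicity one from the density staying below $2\Theta$ together with the monotonicity formula at the cone point, and passing through the origin from the fact that $\spt M$ meets every neighborhood of $0$ — followed by the density bookkeeping that a horizontal plane of a polyhedral cone carries the same density as the cone and that $\cP_{T_{0}\OmegaRef}\subset\cP_{T_{x}\OmegaRef}$ so this persists at every point. The remaining ingredients — compactness from Theorem \ref{thm:first-var}, non-concentration \eqref{eqn:mass-non-conc}, and the semicontinuities of $\theta_{M}$ and of $\Theta_{T_{x}\Omega}$ — are all available from earlier in the paper.
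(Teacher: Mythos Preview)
Your approach is the same as the paper's: argue by contradiction, extract a limiting varifold which by constancy and non-concentration is $k[(p'+V')\cap\OmegaRef]$, and then contradict the mass ratio at the limiting point $(x_\infty,r_\infty)$ using the convexity monotonicity \eqref{eqn:theta-mono} and the inclusion $\cP_{\OmegaRef}\subset\cP_{T_{x_\infty}\OmegaRef}$.

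There is one genuine gap. To pin down $k=1$ you assert that ``$\spt M$ accumulates at the origin,'' invoking the support hypothesis of Proposition~\ref{prop:decay}. But Lemma~\ref{lem:sharp-mass-bound} as stated has \emph{no} such hypothesis: nothing in \eqref{eqn:sharp-mass-hyp} forces $0\in\spt M_\infty$ or $p_\infty=0$. Without this, your argument for $k=1$ does not go through, and in fact one can write down sequences (e.g.\ two parallel horizontal sheets at height $h\approx\sqrt{15}/8$ in a half-space) satisfying \eqref{eqn:sharp-mass-hyp} whose limit has $k=2$. The paper's proof also delegates $k=1$ to ``as in the proof of Proposition~\ref{prop:graph},'' which uses the same missing hypothesis, so this is a shared soft spot; in every application (Theorem~\ref{thm:total-decay}) a point of $\spt M$ \emph{is} near the recentered origin, so the argument is fine in context.

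One minor simplification: the paper's final contradiction does not pass through $p'=0$ at all. Once $k=1$ is known, it uses directly the geometric inequality
\[
\haus^n\bigl((p'+V')\cap\OmegaRef\cap B_\rho(x)\bigr)\le \haus^n\bigl((x+V')\cap\OmegaRef\cap B_\rho(x)\bigr)
\]
(the ball meets the plane through its center in a larger disk, and both slices are translates of the same convex cone), followed by \eqref{eqn:theta-mono}. This subsumes your nearest-point-projection case analysis in one line.
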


\begin{proof}
Proof by contradiction.  Suppose otherwise: there is a sequence $\delta_i \to 0$, domains $\Omega_i= \Phi_i(\Psi_i(\OmegaRef)) \cap B_1 \in \cD_{\delta_i}(\OmegaRef)$, and varifolds $M_i \in \cIH_n(\Omega_i, B_1)$, so that
\[
E(\Phi_i, M_i, p_i + V_i, 0, 1) \to 0, \quad \theta_{M_i}(0, 1) \leq (7/4) \Theta_{T_0\Omega_i},
\]
for some sequence $V_i \in \cP_{T_0\Omega_i}$, $p_i \in V_i^\perp$, but there are points $x_i \in \Omega_i \cap B_{1}$, and radii $r_i \geq \eps$ such that
\[
B_{r_i}(x_i) \subset B_{1-\eps}, \quad \theta_{M_i}(x_i, r_i) \geq (3/2) \Theta_{T_{x_i}\Omega_i}.
\]

By the height bound \eqref{eqn:sharp-height} there is no loss in assuming the $p_i$ are bounded, as otherwise we would have $\spt M_i \cap B_1 = \emptyset$ for large $i$.  We can therefore pass to a subsequence, and get convergence $p_i \to p' \in \R^{n+1}$, $V_i \to V' \in \cP_{T_0\OmegaRef}$, $x_i \to x \in B_1$, $r_i \to r \geq \eps$, so that $B_r(x) \subset B_{1-\eps}$, and and $M_i \to M'$, for some $M' \in \cIV_n(B_1)$.  As in the proof of Proposition \ref{prop:graph}, we must have $M' = [p' + V'] \llcorner (\setint \OmegaRef \cap B_1)$.

For a.e. $1-|x| > \rho > r$ we have by the lower-semi-continuity \eqref{eqn:theta-lsc-omega}:
\[
\mu_M(B_\rho(x)) = \lim_i \mu_{M_i}(B_\rho(x)) \geq \limsup_i \mu_{M_i}(B_{r_i}(x_i)) \geq (3/2)\Theta_{T_x\OmegaRef} \omega_n r^n.
\]
On the other hand, since $V \in \cP_\OmegaRef$, we can use monotonicity \eqref{eqn:theta-mono} to get
\[
\mu_M(B_\rho(x)) = \haus^n( (p + V) \cap \OmegaRef \cap B_\rho(x)) \leq \haus^n( (x+V) \cap \OmegaRef \cap B_\rho(x)) \leq \Theta_{T_x\OmegaRef} \omega_n \rho^n,
\]
which is a contradiction for $\rho$ sufficiently close to $r$.
\end{proof}

\begin{proof}[Proof of Theorem \ref{thm:total-decay}]

There is no loss in assuming $p \in B_1$.  Let $B(\OmegaRef)$ be as in Lemma \ref{lem:B}.  Observe that since the set $\{T_z\OmegaRef \}_z$ is finite, any constant that depends on some $T_z\OmegaRef$ can be made to depend only on $\OmegaRef$.  In particular, let us choose $\alpha$ by setting
\[
\alpha = \min\{ \beta(T_z\OmegaRef) \}_z
\]
where $\beta$ as in Corollary \ref{cor:decay}.  In the following $c$ will denote a generic constant $\geq 1$ depending only on $\OmegaRef$, which may increase from line to line.

Take $x \in \del_{m+i}\Omega \cap B_{1/16}$ (note $i \geq 0$).  Let $x_i = x$, and then for $j = i, \ldots, 1$ define $x_{j-1} \in \del_{m+j-1} \Omega$ to be the point realizing $d(x_j, \del_{m+j-1}\Omega)$.  Let us formally define $(B/2)|x_0 - x_{-1}| = 1/8$.

Let $i_0 = i$.  If $i = 0$ let $J = 0$.  Otherwise, define inductively $j = 0, 1, 2, \ldots, J$ by the conditions that $0 \leq i_{j+1} < i_j$, and
\[
|x_{i_j} - x_{i_{j+1}}| + B|x_{i_j} - x_{i_j - 1}| \leq (B/2) |x_{i_{j+1}} - x_{i_{j+1} - 1}|,
\]
but
\[
|x_{i_j} - x_k| + B|x_{i_j} - x_{i_j - 1}| > (B/2) |x_k - x_{k-1}| \quad \forall k = i_{j+1}+1, \ldots, i_j - 1.
\]
Note that since $|x| \leq 1/16$, $0 \in \del_m\Omega$, and by Lemma \ref{lem:B}(\ref{item:poly1}) we have (taking $\delta_3(n)$ small)
\[
|x_{i_j} - x_{k}| + B|x_{i_j} - x_{i_j - 1}| \leq (3/2) |x_{i_j}| \leq (3/2)(1+c_B\delta_3)|x| \leq 1/8, 
\]
for every $k \leq i_j - 1$, and in particular we have $i_J = 0$.

Let $r_0^- = 0$.  Define
\begin{align*}
r_j^+ &=  B |x_{i_j} - x_{i_j - 1}| \quad (j = 0, \ldots, J), \\
r_j^- &= 2 r^+_{j-1} + 2 |x_{i_{j-1}} - x_{i_j}| \quad (j = 1, \ldots, J).
\end{align*}
Note that since $i_J = 0$, we have $r_J^+ = 1/4$.  By construction we have the inclusions
\begin{equation}\label{eqn:reg-11}
x \in B_{r_{j-1}^-}(x_{i_{j-1}}) \subset B_{r_{j-1}^+}(x_{i_{j-1}}) \subset B_{r_j^-/2}(x_{i_j}) \subset B_1(0).
\end{equation}

Now for $j = 0, 1, \ldots, J-1$ we have
\begin{align*}
|x_{i_j} - x_{i_{j+1}}|
&\leq |x_{i_j} - x_{i_{j+1}+1}| + |x_{i_{j+1}+1} - x_{i_{j+1}}| \\
&\leq |x_{i_j} - x_{i_{j+1}+1}| + (2/B) ( |x_{i_j} - x_{i_{j+1}+1}| + B|x_{i_j} - x_{i_j - 1}|) \\
&\leq 2|x_{i_j} - x_{i_j - 1}| + (1+2/B)|x_{i_j} - x_{i_{j+1}+1}| \\ 
&\leq 2(1 + (1+2/B))|x_{i_j} - x_{i_j - 1}| + (1+2/B)^2 |x_{i_j} - x_{i_{j+1}+2}| \\
&\leq \ldots \\
&\leq c(n, B) |x_{i_j} - x_{i_j - 1}|.
\end{align*}
Therefore
\begin{align}
\frac{r_j^+}{r_{j+1}^-} 
&= \frac{ B |x_{i_{j}} - x_{i_j - 1}|}{2 |x_{i_j} - x_{i_{j+1}}| + 2B|x_{i_j} - x_{i_j - 1}|} \nonumber \\
&= \frac{1}{(2/B) \frac{|x_{i_j} - x_{i_{j+1}}|}{|x_{i_j} - x_{i_j - 1}|} + 1}  \nonumber \\
&\geq \frac{1}{c(\OmegaRef)}. \label{eqn:reg-8}
\end{align}

\vspace{5mm}

For ease of notation let us set $\tilde x_j = x_{i_j}$.  Then we shall prove for $j = J, J-1, \ldots, 0$ the following statement, which we call $(\dagger_j)$: There is a $\Lambda_j(\OmegaRef)$, $V_j \in T_{\tilde x_j}\Omega$, and $p_j \in V_j^\perp$ so that
\begin{equation}\label{eqn:reg-1}
E^{tot}(M, p_j + V_j, \tilde x_j, r) \leq \Lambda_j r^{2\alpha} E \quad \forall r_j^- \leq r \leq 1/4,
\end{equation}
and
\begin{equation}\label{eqn:reg-2}
\theta_M(\tilde x_j, r) \leq (7/4) \Theta_{T_{\tilde x_j}\Omega} \quad \forall r_j^- \leq r \leq r_j^+
\end{equation}
and
\begin{equation}\label{eqn:reg-3}
|p_j - p| + |\pi_{V_j} - \pi_{V}| \leq \Lambda_j^{1/2} E^{1/2} (r_j^+)^\alpha.
\end{equation}

\vspace{5mm}

Observe that \eqref{eqn:reg-1} implies that
\[
d(x, p_j + V_j) \leq c (r_j^-)^\alpha
\]
and hence when $j = 0$, we have $p_0 + V_0 = x + V_0$.  Since $x = \tilde x_0$, we have $V_0 \in T_x\Omega$, and \eqref{eqn:reg-3} implies that
\[
|\pi_{V_x} - \pi_V| \leq c(\OmegaRef) E^{1/2}.
\]
Therefore when $j = 0$, $(\dagger_0)$ implies our Theorem.

\vspace{5mm} 

To prove $(\dagger_j)$ we induct downwards on $j$.  If $j = J$, then let us set $\tilde x_{J+1} = 0$, $r_{J+1}^- = 1/4$, $\Lambda_{J+1} = 4^{n+4}$, $p_{J+1} = p$, $V_{J+1} = V$, and proceed as below.  Otherwise, let us assume by hypothesis that $(\dagger_{j+1})$ holds.  We prove $(\dagger_j)$.

We have
\begin{equation}\label{eqn:reg-14}
E^{tot}(\Phi, M, p_{j+1} + V_{j+1}, \tilde x_{j+1}, r) \leq \Lambda_{j+1} r^{2\alpha} E
\end{equation}
for all $r_{j+1}^- \leq r \leq 1/4$, and
\[
\theta_M(\tilde x_{j+1}, r_{j+1}^-) \leq (7/4)\Theta_{T_{\tilde x_{j+1}}\Omega}.
\]

Since $B_{r_j^+}(\tilde x_j) \subset B_{r_{j+1}^-/2}(\tilde x_{j+1})$ and $r_j^+ / r_{j+1}^- \geq 1/c(\OmegaRef)$, we can use \eqref{eqn:reg-5} and apply Lemma \ref{lem:sharp-mass-bound} to deduce
\[
\theta_M(\tilde x_j, r_j^+) \leq (3/2)\Theta_{T_{\tilde x_j}\Omega},
\]
and hence by monotonicity \eqref{eqn:monotonicity} we get
\begin{equation}\label{eqn:reg-4}
\theta_M(\tilde x_j, r) \leq (7/4) \Theta_{T_{\tilde x_j}\Omega} \quad \forall r \leq r_j^+.
\end{equation}

\eqref{eqn:reg-14} implies we can find a $q_j' \in p_{j+1} + V_{j+1}$ so that
\[
|x - q_j'| \leq \Lambda_{j+1}^{1/2} E^{1/2} (r_{j+1}^-)^{1+\alpha} \leq r_{j+1}^-,
\]
for $\delta_3(\Lambda_{j+1}, \OmegaRef)$ sufficiently small.  In particular, we have $|\tilde x_j - q_j'| \leq c r_{j}^+$.  As per Lemma \ref{lem:B}, choose $V_j' \in \cP_{T_{\tilde x_j}\Omega}$ such that
\begin{equation}\label{eqn:reg-6}
|\pi_{V_{j+1}} - \pi_{V_j'}| \leq c(\OmegaRef) E^{1/2} |\tilde x_j - \tilde x_{j+1}| \leq c  E^{1/2} r_j^+.
\end{equation}
Then, using \eqref{eqn:reg-11} and \eqref{eqn:reg-8}, we have
\begin{align}
&E^\infty (M, q_j' + V_j', \tilde x_j, r_j^+) \\
&= (r_j^+)^{-2} \sup_{z \in B_{r_j^+}(\tilde x_j)} |\pi_{V_j'}^\perp(z - q_j')|^2 \\
&= c |\pi_{V_j'} - \pi_{V_{j+1}}|^2 + c E^\infty(\Phi, M, p_{j+1} + V_{j+1}, \tilde x_{j+1}, r_{j+1}^-)  \\
&\leq c(1+\Lambda_{j+1}) (r_j^+)^{2\alpha} E,
\end{align}
and similarly
\begin{align}
E^W(M, V_j', \tilde x_j, r_j^+) + E(\Phi, M, q_j' + V_j', \tilde x_j, r_j^+) \leq c(1+\Lambda_{j+1}) (r_j^+)^{2\alpha} E.
\end{align}
By Lemma \ref{lem:B}, there is a $z \in \OmegaRef$, a linear isomorphism $\beta : \R^{n+1} \to \R^{n+1}$, and a $C^2$ mapping $\alpha : B_2 \to \R^{n+1}$ such that $\Omega_{\tilde x_j, r_j^+} \in \cD_{c \delta_3}(T_x\OmegaRef)$ and
\begin{align}
E^{tot}(\alpha, (\eta_{\tilde x_j, r_j^+})_\sharp M, \eta_{\tilde x_j, r_j^+}(q_j'), V_j', 0, 1) 
&\leq 2 E^{tot}(\Phi, M, q_j' + V_j', \tilde x_j, r_j^+) \nonumber \\
&\leq c (1+\Lambda_{j+1}) (r_j^+)^{2\alpha} E. \label{eqn:reg-5}
\end{align}

By \eqref{eqn:reg-4} and \eqref{eqn:reg-5}, provided $\delta_3(\OmegaRef, \Lambda_{j+1})$ is sufficiently small, we can apply Corollary \ref{cor:decay} to deduce: there is a $V_j \in T_{\tilde x_j}\Omega$, $p_j \in V_j^\perp$, so that
\begin{equation}\label{eqn:reg-15}
E^{tot}(\Phi, M, p_j + V_j, \tilde x_j, r) \leq c(1+\Lambda_{j+1}) r^{2\alpha} E \quad \forall r_j^- \leq r \leq r_j^+,
\end{equation}
(recall that $|x - \tilde x_j| \leq r_j^-$) and
\begin{equation}\label{eqn:reg-7}
|\pi_{V_j} - \pi_{V_j'}| \leq \left( c(1+\Lambda_{j+1}) E \right)^{1/2} (r_j^+)^\alpha.
\end{equation}

\vspace{5mm}

Combining \eqref{eqn:reg-6}, \eqref{eqn:reg-7}  we get
\begin{equation}\label{eqn:reg-9}
|\pi_{V_j} - \pi_{V_{j+1}}| \leq ( c(1+\Lambda_{j+1}))^{1/2} E^{1/2} (r_j^+)^\alpha, 
\end{equation}
which with \eqref{eqn:reg-3} implies
\begin{equation}\label{eqn:reg-12}
|\pi_{V_j} - \pi_V| \leq ( c(1+\Lambda_{j+1}))^{1/2} E^{1/2}.
\end{equation}
On the other hand, we can estimate
\begin{align}
|p_j - p| 
&\leq |\pi_{V_j}^\perp(x - p_j)| + |\pi_{V}^\perp(x - p)| + |\pi_{V_j} - \pi_{V}| \nonumber \\
&\leq r_j^+ E(M, \Phi, p_j + V_j, \tilde x_j, r_j)^{1/2} + E(M, \Phi, p + V, 0, 1)^{1/2} \nonumber \\
&\quad + (c (1+\Lambda_{j+1}))^{1/2} E^{1/2}  \nonumber \\
&\leq (c (1+\Lambda_{j+1}))^{1/2} E^{1/2} .\label{eqn:reg-13}
\end{align}

Finally, we must show decay \eqref{eqn:reg-15} for $r \geq r_j^+$.  \eqref{eqn:reg-15} implies that we can find a $q_j \in p_j + V_j$ such that
\[
|x - q_j| \leq (c(1+\Lambda_{j+1}))^{1/2} E^{1/2} (r_j^+)^{1+\alpha} \leq r_j^+
\]
and hence
\begin{equation}\label{eqn:reg-10}
|q_j - q_j'| \leq (c(1+\Lambda_{j+1}))^{1/2} E^{1/2} (r_j^+)^{1+\alpha}, \quad \text{ and } \quad |\tilde x_j - q_j| \leq c r_j^+ .
\end{equation}
Therefore, using \eqref{eqn:reg-9}, \eqref{eqn:reg-10}, \eqref{eqn:reg-11}, \eqref{eqn:reg-8}, we have for $r_j^+ \leq r \leq (1/4) (r_j^+/r_{j+1}^-)$:
\begin{align*}
&E^\infty(\Phi, M, p_j + V_j \equiv q_j + V_j, \tilde x_j, r) \\
&= r^{-2} \sup_{z \in B_r(\tilde x_j) \cap \spt M} |\pi_{V_j}(z - q_j)|^2 \\
&\leq c |\pi_{V_j} - \pi_{V_{j+1}}|^2 + r^{-2} |q_j - q_j'| + c E^\infty(\Phi, M, q_j' + V_{j+1}, \tilde x_{j+1}, r r^-_{j+1}/r_j^+) \\
&\leq c(1+\Lambda_{j+1}) E r^{2\alpha}.
\end{align*}
On the other hand, when $(1/4) r_j^+ / r_{j+1}^- \leq r \leq 1/4$, then we have $r \geq 1/c(\OmegaRef)$, and hence we can estimate using \eqref{eqn:reg-12}, \eqref{eqn:reg-13}:
\begin{align*}
&E^\infty(\Phi, M, p_j  + V_j, \tilde x_j, r) \\
&\leq c |\pi_{V_j} - \pi_V|^2 + c |p_j - p|^2 + E^\infty(\Phi, M, p + V, 0, 1) \\
&\leq c(1+\Lambda_{j+1}) E \\
&\leq c(1+\Lambda_{j+1}) E r^{2\alpha}.
\end{align*}
Bounds on $E^W$ and $E$ follow by similar computations.

Provided we take $\Lambda_j \geq c(1+\Lambda_{j+1})$ sufficiently big, depending only on $c(\OmegaRef)$ and $\Lambda_{j+1}$, this proves $(\dagger_j)$.
\end{proof}

\begin{proof}[Proof of Theorem \ref{thm:main}]
Take $\delta(\OmegaRef)$ sufficiently small so that we can apply Theorem \ref{thm:total-decay}.  Given $x \in \spt M \cap B_{1/16}$, let $V_x \in \cP_{T_x\Omega}$ be as in Theorem \ref{thm:total-decay}.

Given $x, y \in \spt M \cap B_{1/16}$, using \eqref{eqn:lower-reg} and \eqref{eqn:total-decay-concl2} we can estimate
\begin{align*}
\frac{1}{c(\OmegaRef)} |\pi_{V_x} - \pi_{V_y}|^2
&\leq |x - y|^{-n} \mu_M(B_{|x - y|}(x)) |\pi_{V_x} - \pi_{V_y}|^2 \\
&\leq 2 |x - y|^{-n} \int_{B_{|x - y|}(x)} |\pi_{V_x} - \pi_{T_zM}|^2 + |\pi_{V_y} - \pi_{T_z M}|^2 d\mu_M   \\
&\leq 2 E^{tot}(\Phi, M, x + V_x, x, |x - y|) + 2 E^{tot}(\Phi, M, y + V_y, y, 2|x - y|)  \\
&\leq c(\OmegaRef) |x - y|^{2\alpha} E, 
\end{align*}
and hence
\begin{equation}\label{eqn:reg-phi-tilt}
|\pi_{V_x} - \pi_{V_y}| \leq c(\OmegaRef) E^{1/2} |x - y|^\alpha.
\end{equation}

This effectively shows that $\spt M$ lies inside some $C^{1,\alpha}$ graph.  We must show that $\spt M$ fills out this entire graph, and the same holds true for $(\Phi^{-1})_\sharp M$.  We first prove two auxillary claims.

\textbf{Claim 1:} For all $x \in \sint\Omega \cap \spt M \cap B_{1/16}$, we have
\[
\frac{1}{r} d_H(\spt M \cap B_r(x), (x + V_x) \cap B_r(x)) \to 0 \quad \text{ as } r \to 0.
\]

We prove this by contradiction.  Otherwise, suppose there is a sequence $r_i \to 0$ and $\eps > 0$ so that
\begin{align}
&\frac{1}{2r_i} d_H(\spt M \cap B_{r_i}(x) , (x+ V_x) \cap B_{r_i}(x)) \nonumber \\
&\equiv d_H( \spt M_i \cap B_{1/2} , V_x \cap B_{1/2} ) \nonumber \\
&\geq \eps \quad \forall i. \label{eqn:d_h-graph},
\end{align}
where $M_i =  (\eta_{x, r_i})_\sharp M$.  Since $E^{\infty}(M_i, V_x, 0, 1) \to 0$ as $r \to 0$, we can argue as in Proposition \ref{prop:graph}, using Theorem \ref{thm:first-var} and Corollary \ref{cor:mass-non-conc}, to deduce that (after passing to a subsequence)
\[
M_i \to [V_x]
\]
as varifolds in $B_1$.  The lower bound \eqref{eqn:lower-reg} then implies that $\spt M_i \cap B_{1/2} \to V_x \cap B_{1/2}$ in the Hausdorff distance, which is a contradiction for large $i$.  This proves Claim 1.
\vspace{5mm}

\textbf{Claim 2:} We have $\spt M \cap B_{1/512} \cap \sint\Omega \neq \emptyset$.

On the one hand, by \eqref{eqn:lower-reg} we have $\lim_{r \to 0} r^{-n} \mu_M(B_r(x)) > 0$ for every $x \in \spt M$.  On the other hand, for every $x \in \spt M \cap B_{1/16} \cap \del\Omega$ the decay bound \eqref{eqn:total-decay-concl2} and non-concentration estimate \eqref{eqn:mass-non-conc} imply
\[
\limsup_{r \to 0} r^{-n} \mu_M(B_r(x) \cap \del\Omega) = 0.
\]
Since $\spt M \cap B_{1/512} \neq \emptyset$, this proves Claim 2.

\vspace{5mm}

Let $M_\Phi = (\Phi^{-1})_\sharp M$.  Since $1/c \leq \theta_M(x) \leq c$ for every $x \in \spt M \cap B_\theta$, $c = c(\theta, \OmegaRef)$, we can write
\[
\mu_M = \haus^n \llcorner \spt M \llcorner \theta_M,
\]
where $\spt M$ is countably-$n$-rectifiable.  By the area formula we can therefore write
\[
\mu_{M_\Phi} = \haus^n \llcorner \Phi^{-1}(\spt M) \llcorner (\theta_M\circ \Phi).
\]
This implies (for $\delta(n)$ sufficiently small) that $\Phi(\spt M) \cap B_{1/32} = \spt M_\Phi \cap B_{1/32}$, 
\[
\Phi( \spt M_\Phi \cap B_{1/32} ) \subset \spt M \cap B_{1/16},
\]
and $T_{\Phi^{-1}(x)} M_\Phi = D\Phi^{-1}|_x T_{x} M$ for $\mu_M$-a.e. $x \in B_{1/16}$.

\vspace{5mm}

We aim to show decay estimates like \eqref{eqn:total-decay-concl2}, \eqref{eqn:reg-phi-tilt} for $\Phi^{-1}(\spt M) \equiv \spt M_\Phi$.  This is essentially a direct consequence of the fact that $\Phi$ is a $C^2$ diffeomorphism.  Define $q = \Phi^{-1}(p)$, $W = D\Phi^{-1}|_p V$ for $p$, $V$ as in our hypotheses.  For $z \in \spt M_\Phi \cap B_{1/32}$, define $W_z = D\Phi^{-1}|_{\Phi(z)} V_{\Phi(z)}$.

Let us recall that $\Phi$ is a bi-Lipschitz equivalence: for every $x, y \in B_1$ we have
\[
|\Phi(x) - \Phi(y) - (x - y)| \leq c(n) \delta |x - y|,
\]
and we have the bound
\[
|D\Phi^{-1}|_x - D\Phi^{-1}|_y| \leq c(n) |D^2 \Phi|_{C^0(B_1)} |x - y| .
\]

\textbf{Claim 3:} Take $a \in B_{1/16}$, $V$ an $n$-plane, $y \in \spt M \cap B_r(a)$ for $r < 1/4$, and $x \in (a + V) \cap B_r(a)$.  Then we can find a $\tilde x \in (\Phi^{-1}(a) + D\Phi^{-1}|_a(V)) \cap B_{(1+c(n)\delta) r}(\Phi^{-1}(a))$ such that
\[
|\Phi^{-1}(y) - \tilde x| \leq 2 |y - x| + c(n) |D^2\Phi|_{C^0(B_2)} r^2.
\]

To prove this, let $\tilde x = \Phi^{-1}(a) + D\Phi^{-1}|_a(x - a)$.  Then
\[
|\Phi^{-1}(x) - \tilde x| = |\Phi^{-1}(x) - \Phi^{-1}(a) - D\Phi^{-1}_a(x - a)| \leq c(n) |D^2\Phi|_{C^0(B_2)} | x - a|^2,
\]
and
\[
|\Phi^{-1}(y) - \Phi^{-1}(x)| \leq (1+c\delta) |y - x|.
\]
This proves Claim 3.

Claim 3 and our definition of $W_{\tilde a}$ implies that for every $\tilde a = \Phi(a) \in \spt M_\Phi \cap B_{1/32}$ and $r < 1/8$ we have
\begin{align}
\sup_{z \in \spt M_\Phi \cap B_r(\tilde a)} d(z, \tilde a + W_{\tilde a}) 
&\leq 2 \sup_{y \in \spt M \cap B_{2r}(a)} d(y, a + V_a) + c(n) |D^2 \Phi| r^2 \nonumber \\
&\leq c(n) E^\infty(\Phi, M, a + V_a, a, 2r)^{1/2} r \nonumber \\
&\leq c(\OmegaRef) E^{1/2} r^{1+\alpha} \label{eqn:reg-phi-5}
\end{align}
and by the same reasoning
\begin{equation}\label{eqn:reg-phi-6}
\sup_{z \in \spt M_\Phi \cap B_{1/32}} d(z, q + W) \leq c(\OmegaRef) E^{1/2}.
\end{equation}

Similarly, combining Claim 1 with Claim 3 we have that
\begin{equation}\label{eqn:reg-phi-1}
\lim_{r \to 0} \frac{1}{r} d_H( \spt M_\Phi \cap B_r(\tilde a), (\tilde a + W_{\tilde a}) \cap B_r(\tilde a)) = 0
\end{equation}
for every $\tilde a \in \spt M_\Phi \cap B_{1/32} \cap \sint\Omega$.

Like in the proof of Lemma \ref{lem:B}, we have the bounds
\[
|\pi_{A(V)} - \pi_{B(V)}| \leq c(n) |A - B|, \quad |\pi_{A(V)} - \pi_{A(W)}| \leq c(n) |\pi_V - \pi_W| 
\]
for linear maps $A, B$ satisfying $|A - Id| + |B - Id| \leq \eps(n)$.  Therefore for $\delta(n)$ sufficiently small, we can estimate
\begin{align}
|\pi_{W_{\tilde a}} - \pi_{W_{\tilde b}}|
&\leq c(n) |D\Phi^{-1}|_a - D\Phi^{-1}|_b| + c(n) |\pi_{V_a} - \pi_{V_b}| \nonumber \\
&\leq c(n) |D^2\Phi|_{C^0(B_1)} |a - b| + c(\OmegaRef) E^{1/2} |a - b|^\alpha \nonumber \\
&\leq c(\OmegaRef) E^{1/2} |\tilde a - \tilde b|^\alpha, \label{eqn:reg-phi-2}
\end{align}
where $\tilde a = \Phi(a)$, $\tilde b = \Phi(b)$ both lie in $\spt M_\Phi \cap B_{1/32}$.  The same proof gives us the bound
\begin{equation}\label{eqn:reg-phi-3}
|\pi_{W_{\tilde a}} - \pi_W| \leq c(\OmegaRef) E^{1/2}.
\end{equation}

\vspace{5mm}

Combining \eqref{eqn:reg-phi-5}, \eqref{eqn:reg-phi-2}, \eqref{eqn:reg-phi-3} we get
\begin{align}\label{eqn:reg-phi-4}
&|\pi_W^\perp(y - z)| \leq c(\OmegaRef) E^{1/2}|x - y|, \quad |\pi_{W_z}^\perp(y - z)| \leq c(\OmegaRef) E^{1/2} |y - z|^{1+\alpha}, \\
&\quad |\pi_{W_y} - \pi_{W_z}| \leq c(\OmegaRef) E^{1/2} |y - z|^\alpha \nonumber
\end{align}
for all $y, z \in \spt M_\Phi \cap B_{1/32}$.

Let $U_0 = \Omega' \cap (q + W) \cap \overline{B_{1/128}(q)}$, $\sint U_0 = \sint\Omega' \cap (q + W) \cap B_{1/128}(q)$, and $U = (U_0 \times W^\perp) \cap B_{1/64}(q)$.  For $\delta(\OmegaRef)$ sufficiently small we have $|q| \leq 1/300$, and hence
\begin{equation}\label{eqn:reg-phi-8}
B_{1/256} \subset U \subset B_{1/32}.
\end{equation}
Further, by \eqref{eqn:reg-phi-6} we can assume that
\begin{equation}\label{eqn:reg-phi-7}
\spt M_\Phi \cap B_{1/32} \subset B_{1/300}(q + W),
\end{equation}
and therefore $\spt M_\Phi \cap U$ is a closed subset of $\R^{n+1}$.

By \eqref{eqn:reg-phi-4}, \eqref{eqn:reg-phi-7} the projection mapping $F(x) = \pi_W(x) + \pi_W^\perp(q)$ induces a $(1+c \delta)$-bi-Lipschitz equivalence between $\spt M_\Phi \cap U$ and some closed subset $\tilde U_0 \subset U_0$, and in particular by \eqref{eqn:reg-phi-1} $\tilde U_0$ satisfies the property that
\[
\lim_{r \to 0} d_H(B_r(y) \cap \tilde U_0, B_r(y) \cap U_0) = 0 \quad \forall y \in \tilde U_0 \cap \sint U_0.
\]
It follows by an elementary argument that we must have either $\tilde U_0 \cap \sint U_0 = \sint U_0$ or $\tilde U_0 \cap \sint U_0 = \emptyset$.  By Claim 2 $\spt M_\Phi \cap \sint\Omega' \cap B_{1/256} \neq \emptyset$, so $\tilde U_0 \cap \sint U_0 \neq \emptyset$, and hence $\tilde U_0 = U_0$.

This proves $F$ is a $(1+c\delta)$-bi-Lipschitz equivalence between $\spt M \cap U$ and $U_0$.  We can therefore find a Lipschitz function $u : U_0 \to \R$ so that $\spt M_\Phi \cap U = \graph(u)$.  It follows easily from \eqref{eqn:reg-phi-4}, \eqref{eqn:reg-phi-6} that $u$ is $C^{1,\alpha}$ and satisfies the bound $|u|_{C^{1,\alpha}(U_0)} \leq c(\OmegaRef) E^{1/2}$.
\end{proof}

\section{Higher codimension and ambient manifolds}\label{sec:codim}

The only part of our regularity theorem that requires codimension-one is in obtaining the estimates \eqref{eqn:l2-case-mass}, \eqref{eqn:l3-case-mass}.  If $l = 1$, or one can otherwise verify these estimates, then we get corresponding regularity in higher codimension.  The most obvious situation in which \eqref{eqn:l2-case-mass}, \eqref{eqn:l3-case-mass} continue to hold is when we know a priori that $\spt M$ is contained in some closed $(n+1)$-submanifold with controlled geometry.  Beyond these estimates the proofs are verbatim to the codimension-one case.  To avoid excess notational clutter we outline this here, rather than integrate it into the original proof.

Suppose we are in $\R^{n+1+k}$.  In analogy to the original definitions, we now let $\OmegaRef = \OmegaRefN^l \times \R^{m+k}$ be a finite union of half-spaces, such that $\OmegaRefN^l$ is $0$-symmetric.  Define $\cP_\OmegaRef$ to be the collection of $n$-planes of the form $\R^l \times W^{n-l}$ for some $(n-l)$-plane in $\{0\}\times \R^{m+1+k}$.  Given $\Phi, \Psi : B_2 \to B_2$ as in Defintion \ref{def:omega}, we let $\Omega = \Phi(\Psi(\OmegaRef)) \cap B_1 \in \cD_\delta(\OmegaRef)$ analogously to the codimension-one case.

Then we have the following extension of our results to higher codimension.  We point out that since any complete $C^3$ $(n+1)$-manifold $N$ can be locally isometrically embedded into some $\R^{n+1+k}$ space, Theorem \ref{thm:codim} implies our regularity can be extended to codimension-one varifolds in general ambient manifolds.

\begin{theorem}\label{thm:codim}
Let $\OmegaRef = \OmegaRefN^l \times \R^{m+k}$ be a polyhedral cone domain in $\R^{n+1+k}$, and let $\Omega = \Phi(\Psi(\OmegaRef)) \cap B_1 \in \cD_\eps(\OmegaRef)$.  Write $N = \Phi(\Psi(\OmegaRef \times \R^m \times \{0^k\})) \cap B_1$, so that $N$ is a closed $C^2$ $(n+1)$-submanifold of $B_1$.  Let $M \in \cI_n(B_1)$.

If $l \geq 2$, assume that $M$ satisfies $\spt M \subset N$, and
\[
\delta M(X) = -\int H^{tan}_{M, N} \cdot X d\mu_M, \quad ||H^{tan}_{M,N}||_{L^\infty} \leq \eps,
\]
for all $X \in C^1_c(B_1)$ which are tangential to $\Omega$, and $X(x) \in T_x N$ for all $x \in N$.  If $l = 1$, it suffices to assume that $M \in \cIVT_n(\Omega, B_1)$, with $||H^{tan}_M||_{L^\infty} \leq \eps$.

Then provided $\eps(\OmegaRef)$ is sufficiently small, the Theorems of Sections \ref{sec:main} -- \ref{sec:reg} continue to hold for $M$.
\end{theorem}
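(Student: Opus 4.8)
The plan is to locate precisely where the codimension-one hypothesis was used in Sections~\ref{sec:first-var}--\ref{sec:reg} and to re-derive exactly those facts in the present setting; by the remark above, everything else then transfers verbatim. A line-by-line reading shows that codimension one enters only through the two mass non-concentration estimates \eqref{eqn:l2-case-mass} (invoked when $l=2$) and \eqref{eqn:l3-case-mass} (invoked when $l\geq3$) in the proof of Theorem~\ref{thm:first-var}. So the heart of the matter is to reprove these two inequalities.

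First I would observe that the monotonicity package is untouched. The field $Y(x)=D\Phi|_{\Phi^{-1}(x)}(\Phi^{-1}(x))$ is tangential to $\Omega$, and when $l\geq2$ it is also tangent to $N$ along $N$ (because $\Psi$ is linear and $N$ is the $\Phi\Psi$-image of the coordinate $(n+1)$-plane), so it is admissible in the first variation of $M$ in either case; the bounds \eqref{eqn:Y-bounds} involve only $|\Phi-\mathrm{id}|_{C^2}$ and the divergence identity is purely ambient, so Lemma~\ref{lem:monotonicity} and hence Corollary~\ref{cor:upper-reg} hold word for word, giving $\mu_M(B_r(x))\leq c(\OmegaRef,\theta)\,\theta_M(0,1)\,r^n$ for $x\in B_\theta$, $r<1-|x|$. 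When $l=1$, $\OmegaRefN^1=[0,\infty)$ has no corner stratum, \eqref{eqn:l2-case-mass} and \eqref{eqn:l3-case-mass} are never needed, and the $l=0,1$ case of Theorem~\ref{thm:first-var} uses only the free-boundary first-variation structure, which is available in any codimension (this is \cite{GrJo}). When $l\geq2$ I would invoke $\spt M\subset N$: the axis stratum $\del_{m+k}\Omega=\Phi(\Psi(\{0^l\}\times\R^{m+k}))$ meets $N$ transversally, with angle bounded below since $\Phi,\Psi$ are near the identity, in the $m$-dimensional $C^2$ submanifold $P=\Phi(\Psi(\{0^l\}\times\R^m\times\{0^k\}))$; hence $N\cap B_\rho(\del_{m+k}\Omega)\subset B_{2\rho}(P)$ for small $\rho$. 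Covering $B_{2\rho}(P)\cap B_\theta$ by $\lesssim\rho^{-m}$ balls of radius $O(\rho)$ centered on $P$ and applying the mass bound on each gives $\mu_M(B_\rho(\del_{m+k}\Omega)\cap B_\theta)\leq c(\OmegaRef,\theta)\,\theta_M(0,1)\,\rho^{n-m}=c\,\rho^{l-1}$, which is exactly \eqref{eqn:l2-case-mass} for $l=2$ and, since $l-1\geq2$, implies \eqref{eqn:l3-case-mass} for $l\geq3$.

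Granting these, the induction proving Theorem~\ref{thm:first-var} closes unchanged — its remaining ingredients (a vector $\tau$ with $\tau\cdot\nu<0$ on the relevant faces, distance functions with $Dd_i\cdot Dd_j\geq0$) are consequences of convexity of $\OmegaRefN^l$ and are indifferent to the ambient dimension — and then Theorem~\ref{thm:moser} and Corollaries~\ref{cor:lower-reg}--\ref{cor:mass-non-conc} follow formally. The linear analysis preceding Section~\ref{sec:decay} is codimension-insensitive: when $l\geq2$ the blow-up Jacobi field is scalar-valued, the normal bundle of $M$ inside the $(n+1)$-manifold $N$ having rank one, so Theorem~\ref{thm:evalue} and Proposition~\ref{prop:expansion} apply directly; when $l=1$ one splits $V^\perp$ into its part inside $\del\Omega$, on which the Jacobi field is Neumann-harmonic, and the one free-boundary direction, treated as in \cite{GrJo}. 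The blow-up of Section~\ref{sec:decay} and the stratum induction of Section~\ref{sec:reg} are then verbatim, yielding Theorem~\ref{thm:main}, respectively Theorem~\ref{thm:total-decay}, for $M$. The one genuine obstruction — and the reason the submanifold hypothesis is forced when $l\geq2$ — is precisely the covering estimate of the previous paragraph: absent $\spt M\subset N$ one can only cover $B_\rho(\del_{m+k}\Omega)$ inside $\R^{n+1+k}$, producing the useless exponent $\rho^{n-m-k}=\rho^{l-1-k}$, which does not vanish as $\rho\to0$ once $k\geq l-1$.
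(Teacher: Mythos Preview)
Your proposal is correct and follows the paper's own approach: the paper's ``proof'' of Theorem~\ref{thm:codim} is really just the short discussion at the opening of Section~\ref{sec:codim}, which asserts that codimension one is used only to obtain \eqref{eqn:l2-case-mass} and \eqref{eqn:l3-case-mass}, and that these persist when $\spt M$ lies in an $(n+1)$-submanifold with controlled geometry. You have correctly supplied the covering argument the paper leaves implicit --- intersecting the axis stratum with $N$ to get an $m$-dimensional set $P$, covering $B_\rho(P)\cap B_\theta$ by $O(\rho^{-m})$ balls, and invoking Corollary~\ref{cor:upper-reg} to get $\mu_M(B_\rho(\del_{m+k}\Omega)\cap B_\theta)\lesssim\rho^{n-m}=\rho^{l-1}$ --- and your closing remark about the exponent $\rho^{l-1-k}$ pinpoints exactly why the submanifold hypothesis is not optional for $l\geq2$.

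One small quibble: your description of the $l=1$ Jacobi field (``splits $V^\perp$ into its part inside $\del\Omega$ \ldots\ and the one free-boundary direction'') is slightly off. When $l=1$ the boundary normal $e_1$ lies in $V$, so $V^\perp$ is entirely tangent to $\del\Omega$; the Jacobi field is $(k+1)$-vector-valued but each component separately satisfies the Neumann problem on the half-ball, and Proposition~\ref{prop:expansion} applies componentwise. This does not affect the correctness of your argument.
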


\section{Minimizers}\label{sec:minz}

In this section we use our Theorems \ref{thm:main}, \ref{thm:codim} to prove the partial regularity Theorem \ref{thm:partial-reg} for codimension-one area-minimizing free-boundary currents in a manifold $N$.  We will use the Nash embedding theorem and reduce our problem to one in Euclidean space, as in Section \ref{sec:codim}.  We prove some technical Lemmas, establishing appropriate non-concentration (\ref{lem:minz-non-conc}) and compactness (\ref{lem:minz-compact}) for almost-area-minimizing currents.  We then classify low-dimensional minimizing cones with free-boundary (Lemmas \ref{lem:minz-planes}, \ref{lem:low-dim-cones}), and use a standard-dimension reducing argument to get our partial regularity bound.

First, we establish some notation.  Given an open set $U \subset \R^{n+1+k}$, we let $\cI_n(U)$ be the set of integer-multiplicity rectifiable $n$-currents in $U$ with locally finite mass.  Given $T \in \cI_n(U)$, we will write $||T||$ for the mass measure, and $\spt T \subset U$ for the support of $T$.  Note that associated to every such $T$, there is an integral varifold which we will often denote by $T$ also, such that $\mu_T = ||T||$.  Given an open set $E \subset U$, we write $[E]$ for the $(n+1)$-current in $U$ obtained by integrating the standard orientation over $E$.  We say $E$ is a set of locally finite perimeter in $U$ if $\del[E] \in \cI_n(U)$.

Given a domain $\Omega \subset \R^{n+1+k}$, and $T \in \cI_n(U)$, we say $T$ is minimizing with free-boundary in $\Omega$ if $T = T\llcorner \sint\Omega$, and satisfies
\begin{equation}\label{eqn:T-minz}
||T||(W) \leq ||T+S||(W)
\end{equation}
for all $W \subset\subset U$, and all $S \in \cI_n(U)$ satisfying $\spt S \subset \overline{\Omega} \cap W$, $\del S \llcorner \sint\Omega = 0$.  

Given $A \geq 0$, $\alpha \geq 0$, $\delta > 0$, we say $T$ is $(A, \alpha, \delta)$-almost-minimizing with free-boundary in $\Omega$ if $T = T \llcorner \sint\Omega$, and given $W$, $S$ as above then $\diam(W) \leq \delta$ and we have the inequality 
\[
||T||(W) \leq ||T+S||(W) + A r^{n+\alpha}, \quad \diam(W) \leq \min\{ \delta,  r \} .
\]
instead of \eqref{eqn:T-minz}.

Let $\OmegaRef = \Omega_0 \times \R^{m+k}$ be a polyhedral cone domain in $\R^{n+1+k}$, and $\Omega \in \cD_\eps(\OmegaRef)$.  Recall that $\Omega$ is closed in $B_1$, $\overline{\sint\Omega} = \Omega$, and we have the natural stratification
\[
\del_0\Omega \subset \del_1\Omega \subset \ldots \subset \del_{n+1+k}\Omega = \Omega, 
\]
where $\del_i\Omega$ consists of the points near which $\Omega$ is diffeomorphic to a polyhedral cone domain having at most $i$ dimensions of translational symmetry.

Given $T \in \cI_n(B_1)$ with $T = T \llcorner \sint\Omega$, we define $\reg T$ to be the set of points $x \in \spt T \subset \Omega$ with the following property: there is an $r > 0$, a $C^2$ diffeomorphism $\phi : B_r(x) \to B_r(x)$, a $V \in \cP_{T_x\Omega}$, and a $C^{1,\alpha}$ function $u : B_{2r}(x) \cap (x + V) \to V^\perp$, so that
\[
\phi(\spt T \cap B_r(x)) = \graph_{x + V}(u) \cap B_r(x).
\]
Note that this implies $\del_0\Omega \cap \reg T = \emptyset$ (but also recall that $\del_i\Omega = \emptyset$ for $i < m+k$).  Note further that if $x \in \reg T$, then the tangent plane of $T$ at $x$ lies in $\cP_{T_x\Omega}$.  We define $\sing T = \spt T \setminus \reg T$.

The global domains we are interested in are locally polyhedral domains, defined precisely here.
\begin{definition}\label{def:domain}
Let $N^{n+1}$ be a complete manifold.  A \emph{locally polyhedral domain} is a closed domain $\Omega \subset N$ with non-empty interior which is locally diffeomorphic to some polyhedral cone domain at every point.  Precisely, for every $x \in \Omega$ there is a radius $r > 0$, a polyhedral cone domain $\OmegaRef$, and a diffeomorphism $\phi : B_1(0^{n+1}) \to B_r(x \in N)$ so that $\phi(B_1 \cap \OmegaRef) = \Omega \cap B_r(x)$ and $D\phi|_0$ is an isometry.  We say $\Omega$ is $C^k$ is the associated diffeomorphism is $C^k$.

In the special case when every model polyhedral cone $\OmegaRef$ takes the form $L(\R^i \times [0, \infty)^{n-i})$, for some $i$ and linear isomorphism $L$, then $\Omega$ is said to be a \emph{domain with corners}.  We say $\Omega$ has dihedral angles $\leq \pi/2$ (resp. $=\pi/2$) if every model $\OmegaRef$ as above has dihedral angles $\leq \pi/2$ (resp. $=\pi/2$).  As before, we may call $\Omega$ with dihedral angles $\leq \pi/2$ \emph{non-obtuse}.
\end{definition}

\vspace{5mm}

The following two Lemmas prove the required compactness and closure theorems for our (almost-)area minimizing currents.
\begin{lemma}[Boundary non-concentration]\label{lem:minz-non-conc}
Let $\OmegaRef$ be a polyhedral cone domain in $\R^{n+1+k}$.  Let $\Omega \in \cD_\eps(\OmegaRef)$, and $T \in \cI_n(B_1)$ be $(A, 0, 1)$-almost-minimizing with free-boundary in $\Omega$, such that $\del T \llcorner \sint\Omega = 0$.

Then for $\eps \leq \eps(\OmegaRef)$, there is a continuous function $\eta(\tau) : [0, 1] \to [0,1]$ depending only on $\theta, \OmegaRef$, satisfying $\eta(0) = 0$, such that
\[
||T||(B_\tau(\del\Omega) \cap B_\theta) \leq ||T||(B_1) \eta(\tau) + 2 A.
\]
\end{lemma}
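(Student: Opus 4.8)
The plan is to prove this by a comparison argument adapting Grüter, organized around the stratification of $\del\Omega$. First I would record a density upper bound: comparing $T$ in a ball $B_r(x)$ with the cone $x' * (T\llcorner\del B_r(x))$ over its slice, where $x'$ is the nearest point of $\overline\Omega$ to $x$ --- admissible because $\OmegaRef$, hence $\Omega$ locally, is convex (after conjugating by $\Phi\circ\Psi$) --- together with $(A,0,1)$-almost-minimality yields an almost-monotonicity formula, and hence $\theta_T(x,r)\le c(\OmegaRef)(1+A)$ for $x\in B_\theta$, exactly as in Lemma \ref{lem:monotonicity} / Corollary \ref{cor:upper-reg} but for almost-minimizers. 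This already controls mass near the lower strata: for $j\le n-1$, $\del_j\Omega\cap B_\theta$ is $j$-dimensional with bounded content, so $B_\rho(\del_j\Omega)\cap B_\theta$ is covered by $\le c(\OmegaRef)\rho^{-j}$ balls of radius $\rho$, whence $\mu_T(B_\rho(\del_j\Omega)\cap B_\theta)\le c(\OmegaRef,\theta)(1+A)\rho^{n-j}\le c(\OmegaRef,\theta)(1+A)\rho$. Using this with $j=n-1$ and $\rho=C(\OmegaRef)\tau$ reduces matters to estimating $\mu_T$ on the collar $\{d_{\del\Omega}<\tau\}$ away from $B_{C\tau}(\del_{n-1}\Omega)$, where, by Lemma \ref{lem:B}, $\Omega$ is at scales $\lesssim\tau$ a $C^2$-perturbation of a half-space, so $\del\Omega$ is there a $C^2$ hypersurface with curvature controlled by $|D^2\Phi|$ and the geometry of $\OmegaRef$.

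On this smooth part I would reflect: since $\del\Omega$ is locally a $C^2$ hypersurface, the reflected current $\hat T = T + \sigma_\sharp T$ is a free-boundary-free $(\hat A,0,\hat\delta)$-almost-minimizer with $\hat A\le c(1+A)$ in the doubled neighborhood (away from $\spt\del T$, which is $(n-1)$-dimensional and hence, by the same covering estimate, negligible --- and in the applications of Theorem \ref{thm:partial-reg} one has $\del T = 0$ outright). Interior almost-minimizer theory then gives that $\haus^n$-a.e. point of $\spt\hat T$ has density $1$, and combining with the $\epsilon$-regularity Theorem \ref{thm:main} (equivalently Allard's theorem, available since this is now a one-sided smooth problem) one produces a relatively closed ``bad set'' $\Sigma_\tau\subset\spt T$ so that: near every point of $\spt T\cap\{d_{\del\Omega}<\tau\}$ not within distance $\tau$ of $\Sigma_\tau$ or of $\del_{n-1}\Omega$, $T$ is a $C^{1,\alpha}$ graph over a free-boundary plane meeting $\del\Omega$ perpendicularly, with norm tending to $0$ with the excess, so its trace in the $\tau$-collar has mass $\le c(\OmegaRef)\tau$; while $\mu_T(B_\tau(\Sigma_\tau)\cap B_\theta)\le c(\OmegaRef,\theta)(1+A)\,\omega(\tau)$ for a modulus $\omega$ with $\omega(0)=0$.

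The last estimate on $B_\tau(\Sigma_\tau)$ is the heart of the matter, and this is the step I expect to be the main obstacle. It cannot come from a bare dimension bound on the singular set, since the estimate must be uniform over all competitors $T$ whose singular sets vary; instead I would argue by compactness --- any sequence of such almost-minimizers with uniformly bounded mass subconverges (using the density bound above, and convexity of $\OmegaRef$ to keep the limit nontrivial) to an almost-minimizer, along which the bad sets cannot carry mass, and this forces the required uniform smallness, much as in Grüter's original argument and as is needed anyway for the compactness Lemma \ref{lem:minz-compact}.

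Finally, adding the two contributions gives $\mu_T(B_\tau(\del\Omega)\cap B_\theta)\le c(\OmegaRef,\theta)\,\mu_T(B_1)\,\tilde\omega(\tau) + cA$, where $\tilde\omega(0)=0$ and each ``small'' term is proportional to $\mu_T(B_1)$ by scale-invariance of the density bound, while the almost-minimality defect enters additively (and can be tracked to be at most $2A$, or absorbed into $2A$ by relabelling $A$, which is harmless). Setting $\eta(\tau)=\min\{1,\,c(\OmegaRef,\theta)\tilde\omega(\tau)\}$ --- and noting that $\eta\equiv 1$ trivially works once $\tau$ is not small, since then $\mu_T(B_\tau(\del\Omega)\cap B_\theta)\le\mu_T(B_1)$ --- yields a continuous $\eta\colon[0,1]\to[0,1]$ with $\eta(0)=0$ and the asserted inequality.
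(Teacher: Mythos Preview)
Your approach has a genuine circularity problem at exactly the step you flag as ``the heart of the matter.'' You propose to control $\mu_T(B_\tau(\Sigma_\tau))$ by a compactness argument: take a sequence of almost-minimizers violating the estimate, pass to a limit, and derive a contradiction. But passing to a limit \emph{in a way that controls mass near $\del\Omega$} is precisely what boundary non-concentration is for. Without it, a contradicting sequence $T_i$ with $\|T_i\|(B_{\tau_i}(\del\Omega)\cap B_\theta)\ge\eps_0$ and $\tau_i\to 0$ can simply lose this mass into $\del\Omega$ in the limit (recall $T=T\llcorner\sint\Omega$, so $\|T\|(\del\Omega)=0$ automatically), and no contradiction results. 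Indeed, in the paper's logical structure the compactness Lemma~\ref{lem:minz-compact} \emph{uses} the present Lemma~\ref{lem:minz-non-conc} in its proof, not the other way around. So invoking compactness here is circular.

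The paper's argument is both simpler and more direct, avoiding regularity theory, reflection, and compactness entirely. Fix a unit $\tau\in\sint\OmegaRef$ and build a Lipschitz retraction $\zeta:\Omega\cap B_1\to\del\Omega$ by radial projection from a far point $R\tau$; convexity of $\OmegaRef$ (for $\eps$ small) makes this well-defined with $|x-\zeta(x)|\le c\,d(x,\del\Omega)$. Slice $T$ by $f(x)=d(x,\del\Omega)+g(|x|)$ (with $g$ a cutoff supported in $B_{(1+\theta)/2}\setminus B_\theta$), set $m(h)=\|T\llcorner\{f<h\}\|$, and compare $T\llcorner\{f<h\}$ with the homotopy $F_\sharp([0,1]\times T_h)$, where $F(t,x)=tx+(1-t)\zeta(x)$ and $T_h=\langle T,f,h\rangle$. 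Almost-minimality and the homotopy formula give $m(h)\le c\,h\,\|T_h\|+A$, while the coarea formula gives $\|T_h\|\le c\,m'(h)$; together,
\[
m(h)\le c(\OmegaRef,\theta)\,h\,m'(h)+A,
\]
which integrates to show $h^{-\alpha}(m(h)-A)$ is increasing for some $\alpha(\OmegaRef,\theta)>0$. This yields the explicit modulus $\eta(\tau)=\tau^\alpha$, which is quantitatively sharper than anything a soft compactness argument could produce.
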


\begin{remark}
It should in fact be true for minimizers that $||\del T||(B_\theta) \leq C(\theta, \OmegaRef, ||T||(B_1))$ (c.f. \cite{Gr}).  However the above weaker statement is easier to prove in our more singular setting setting, and suffices for our purposes.
\end{remark}

\begin{proof}
Fix any unit vector $\tau \in \sint\Omega$, and let $H$ be the half-space with outer normal given by $-\tau$.  Then for $\eps(\OmegaRef)$ sufficiently small $\Omega \subset H$, and we can write $\del\Omega$ as a Lipschitz graph over $\del H$.  Given $R \geq 2$, $\eps(\OmegaRef)$ small, and $x \in B_1 \cap \Omega$, there is a unique $\zeta(x) \in \del\Omega$ such that $\zeta(x) - R\tau = \lambda_x (x - R \tau)$ for some $\lambda_x \geq 1$.  Provided we fix $R(\theta, \OmegaRef)$ to be sufficiently large, then $\zeta$ is a Lipschitz function satisfying
\begin{equation}\label{eqn:T-non-conc-1}
\zeta(B_{\theta}) \subset B_{(1+\theta)/2}, \quad |x - \zeta(x)| \leq c(\OmegaRef) d(x, \del\Omega), \quad ||D\zeta||_{L^\infty(B_1)} \leq c(\OmegaRef) . 
\end{equation}

Since $T = T \llcorner \sint\Omega$, let us view $T \in \cI_n(\sint\Omega \cap B_1)$, in which case $\del T = 0$.  Define $f(x) = d(x, \del\Omega) + g(|x|)$, where $g = 0$ on $B_\theta$, $g$ is increasing, $g = 1$ on $B_{(1+\theta)/2}$, and $|Dg| \leq 4/(1-\theta)$.  Then for all $h \leq 1$, $T \llcorner \{ f < h \} \in \cI_n(\sint\Omega \cap B_{(1+\theta)/2})$.

Let $T_h = \langle T, f, h\rangle$ be the slice (see \cite[section 28]{simon:gmt}) of $T$ at $f = h$, defined for a.e. $h$, and let $m(h) = ||T \llcorner \{ f < h\}||(B_1)$.  By the coarea formula, we have
\begin{equation}\label{eqn:T-non-conc-2}
||T_h||(B_1) \leq c(\theta) m' 
\end{equation}
for a.e. $h < 1$.  On the other hand, again for a.e. $h$, by slicing we have
\[
T_h = \del (T \llcorner \{ f < t\}).
\]
Therefore if we define 
\[
F(t, x) = t x + (1-t) \zeta(x), \quad t \in [0, 1], x \in B_\theta,
\]
then $\del F_\sharp ([0, 1] \times T_h) \llcorner \sint\Omega = T_h$, $\spt F_\sharp ([0, 1] \times T_h) \subset \Omega \cap B_{(1+\theta)/2}$, and so by area comparison and the homotopy formula (\cite[26.22]{simon:gmt}) we have
\begin{equation}\label{eqn:T-non-conc-3}
m(h) \leq ||F_\sharp T_h||(B_1) + A \leq c(\OmegaRef) h ||T_h||(B_1) + A,
\end{equation}
having used \eqref{eqn:T-non-conc-1}, and the fact that $|x - \zeta(x)| \leq c d(x, \del\Omega) \leq c f(x)$.  Together, \eqref{eqn:T-non-conc-2} and \eqref{eqn:T-non-conc-3} imply that for a.e. $h< 1$ we have
\[
m(h) \leq c(\OmegaRef, \theta) h m' + A,
\]
and therefore since $m$ is increasing there is an $\alpha > 0$ so that $h^{-\alpha} (m(h) - A)$ is increasing for $h \in (0, 1)$.  Letting $\eta(h) = h^\alpha$ proves the Lemma.
\end{proof}

\begin{lemma}\label{lem:minz-compact}
Let $\Omega = \Omega_0^l \times \R^{m+k}$ be a polyhedral cone domain in $\R^{n+1+k}$.  Let $\eps_i \to 0$, $A_i \to 0$, $\Omega_i \in \cD_{\eps_i}(\Omega)$, and $T_i \in \cI_n(B_1)$.  Suppose $T_i$ is $(A_i, 0, 1)$-area-minimizing with free-boundary in $\Omega_i$, and satisfies
\begin{equation}\label{eqn:compactness-hyp1}
\del T_i \llcorner \sint\Omega = 0, \quad \sup_i ||T_i||(B_1) < \infty.
\end{equation}
Then after passing to a subsequence, there is a $T \in \cI_n(B_1)$ which is area-minimizing with free-boundary in $\Omega$, such that $T_i \to T$ as currents in $B_1$ and $||T_i|| \to ||T||$ as Radon measures on $B_1$.

If the $T_i$ as varifolds lie in $\cIVT(\Omega_i, B_1)$ with $||H^{tan}_{T_i}||_{L^\infty(B_1)} \to 0$, then $T \in \cIVT(\Omega, B_1)$, $T_i \to T$ as varifolds, $||H^{tan}_T||_{L^\infty(B_1)} = 0$.

Write $\Omega_i = \Phi_i(\Psi_i(\Omega)) \cap B_1$, and let $N_i = \Phi_i(\Psi_i(\Omega_0 \times \R^m \times \{0^k\})) \cap B_1$.  Suppose there are relatively open sets $E_i \subset N_i \cap B_1$ so that $T_i = \del[E_i] \llcorner \sint\Omega_i$.  Then $T$ is multiplicity-one $||T||$-a.e. and for every $x \in \reg T \cap B_1$, there is a neighborhood $B_r(x)$ such that $\sing T_i \cap B_r(x) = \emptyset$ for all $i >> 1$.
\end{lemma}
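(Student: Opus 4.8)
The plan is to prove the assertions in turn --- compactness and minimality of the limit, multiplicity one, and propagation of regularity --- and the crux is the first.

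To begin, since $\sup_i\|T_i\|(B_1)<\infty$, $\del T_i\llcorner\sint\Omega=0$, and the mass of $\del T_i$ on $\del\Omega_i$ is uniformly locally bounded by $(A_i,0,1)$-almost-minimality together with Lemma \ref{lem:minz-non-conc}, the compactness theorem for integral currents yields, along a subsequence, $T_i\to T$ as currents with $T=T\llcorner\sint\Omega$ and $\|T\|\le\liminf_i\|T_i\|$. To upgrade this to $\|T_i\|\to\|T\|$ and to see that $T$ is honestly area-minimizing with free boundary in $\Omega$, I would run the slice-and-paste comparison of \cite{Gr}: given a competitor $S$ for $T$ in some $W\subset\subset B_1$, slice $T-T_i$ at a generic radius and cone the slice into $\Omega_i$ through a thin annulus to produce a competitor $S_i$ for $T_i$ with $\|T_i+S_i\|(W)\le\|T+S\|(W)+o(1)+A_i r^n$, the $o(1)$ being where Lemma \ref{lem:minz-non-conc} is applied (uniformly in $i$) to control the slice near $\del\Omega$; almost-minimality of $T_i$ and lower semicontinuity then give both claims. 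For the varifold statement, under $\|H^{tan}_{T_i}\|_{L^\infty}\to0$, Theorem \ref{thm:first-var} shows that the total first variations $\|\delta T_i\|$ obey the scale-invariant bound \eqref{eqn:deltaM-bounds} and so are locally uniformly bounded; a subsequence of the varifolds converges to an integral varifold with mass measure $\|T\|$, whose first variation decomposes as in Theorem \ref{thm:first-var} with $H^{tan}=\lim H^{tan}_{T_i}=0$, so $T\in\cIVT(\Omega,B_1)$ with $H^{tan}_T=0$. This step is the main obstacle: one must rule out concentration of mass along the singular boundary $\del\Omega$ in the limit --- the convex analogue of the edge-creeping phenomenon --- which is exactly what Lemma \ref{lem:minz-non-conc} is designed to prevent.

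For the multiplicity-one assertion, with $T_i=\del[E_i]\llcorner\sint\Omega_i$ and $E_i\subset N_i$ relatively open, I would work in the interiors. Write $N_i=(\Phi_i\circ\Psi_i)(N_\infty)\cap B_1$ for the fixed polyhedral set $N_\infty=(\Omega_0\times\R^m\times\{0^k\})\cap B_1$; the diffeomorphisms $\Phi_i\circ\Psi_i$ converge to the identity in $C^2$, while $\mathbf{1}_{E_i}$ has locally bounded perimeter in $\sint\Omega_i$ (controlled by $\|T_i\|$). Pulling back by $\Phi_i\circ\Psi_i$ and passing to a further subsequence, $\mathbf{1}_{E_i}\to\mathbf{1}_E$ in $L^1_{loc}(\sint\Omega)$ for some set $E$ of locally finite perimeter there, so $\del[E_i]\to\del[E]$, and, using $\del T_i\llcorner\sint\Omega_i=0$ and $\Omega_i\to\Omega$, $T=\del[E]\llcorner\sint\Omega$. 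Since $\del[E]$ carries multiplicity one on its reduced boundary (De Giorgi, \cite{simon:gmt}), $T$ has multiplicity one $\|T\|$-a.e.

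For the propagation of regularity, fix $x\in\reg T\cap B_1$ with its data $\rho$, a $C^2$ diffeomorphism $\varphi$, $V\in\cP_{T_x\Omega}$, and $u\in C^{1,\alpha}$. Because $\varphi(\spt T\cap B_\rho(x))$ is a multiplicity-one $C^{1,\alpha}$ graph over $x+V$ whose tangent at $x$ is the free-boundary plane $V\cap T_x\Omega$ (of density $\Theta_{T_x\Omega}$), all excess quantities $E^{tot}(\varphi,T,x+V,x,s)$ tend to $0$ and $\theta_T(x,s)\to\Theta_{T_x\Omega}$ as $s\to0$. Let $\delta_*:=\min_{z\in\OmegaRef}\delta(T_z\OmegaRef)>0$ be the threshold of Theorem \ref{thm:main}, uniform over the finite set $\cT(\OmegaRef)$, and choose $\sigma\in(0,\rho)$ small (with $\|T\|(\del B_{2\sigma}(x))=0$) so that $E^{tot}(\varphi,T,x+V,x,2\sigma)<\delta_*^2/4$, $\theta_T(x,2\sigma)<(5/4)\Theta_{T_x\Omega}$, and --- since $x$ is fixed and $\Omega_i\to\Omega$ --- $\tfrac1{2\sigma}(\Omega_i-x)\cap B_1\in\cD_{c_B\eps_i}(T_{z_i}\OmegaRef)$ for some $z_i\in\OmegaRef$ and all $i\gg 1$, by Lemma \ref{lem:B}. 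By the varifold convergence $T_i\to T$ with $\|T_i\|\to\|T\|$, for $i\gg 1$ the rescaled currents $(\eta_{x,2\sigma})_\sharp T_i$ then satisfy the hypotheses \eqref{eqn:main-hyp1}--\eqref{eqn:main-hyp2} of Theorem \ref{thm:main} in $\tfrac1{2\sigma}(\Omega_i-x)\cap B_1$, relative to a plane $V_i\in\cP_{T_x\Omega_i}$ with $|\pi_{V_i}-\pi_V|\le c\eps_i$ (via \eqref{eqn:pv-inclusion}), using \eqref{eqn:theta-lsc-omega} for the density bound and $x\in\spt T$ for nonemptiness of the support; when the $T_i$ are only $(A_i,0,1)$-almost-minimizing one uses $A_i\to0$ and that the excess-decay scheme of Sections \ref{sec:decay}--\ref{sec:reg} feels the mean-curvature term of $E$ only through the first variation (and in the applications to Theorem \ref{thm:partial-reg} the $T_i$ are rescalings $(\eta_{x,s_i})_\sharp\del[U]$ with $\|H^{tan}_{T_i}\|_{L^\infty}\to0$, covered directly). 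Theorem \ref{thm:main} then exhibits $\spt T_i\cap B_{\sigma/256}(x)$, after the $C^2$ diffeomorphism, as a $C^{1,\alpha}$ graph over $V_i$; hence $\sing T_i\cap B_{\sigma/256}(x)=\emptyset$ for all $i\gg 1$, and $r=\sigma/256$ works.
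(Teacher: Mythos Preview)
Your overall strategy matches the paper's: use Lemma \ref{lem:minz-non-conc} to rule out mass concentration at $\del\Omega$, run a slice-and-paste comparison to pass minimality to the limit and get $\|T_i\|\to\|T\|$, invoke Theorem \ref{thm:first-var} for varifold compactness, and appeal to Theorem \ref{thm:main} (and \ref{thm:codim}) for the propagation of regularity. The multiplicity-one and regularity-propagation steps are essentially identical to the paper's (your write-up of the latter is in fact more detailed than the paper's one-line invocation).

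There is one genuine gap in your first step. You assert that ``the mass of $\del T_i$ on $\del\Omega_i$ is uniformly locally bounded by $(A_i,0,1)$-almost-minimality together with Lemma \ref{lem:minz-non-conc},'' and then apply Federer--Fleming compactness directly in $B_1$. But Lemma \ref{lem:minz-non-conc} bounds only $\|T_i\|$ near $\del\Omega$, not $\|\del T_i\|$; indeed the Remark following that lemma explicitly says the boundary-mass bound is \emph{not} proved (only conjectured to hold, cf.\ \cite{Gr}), and that the weaker non-concentration statement suffices. The paper therefore does not apply Federer--Fleming in $B_1$. Instead it views each $T_i$ as a current in $\sint\Omega_i\cap B_1$, where $\del T_i=0$, and uses a diagonalization to obtain a limit $T$ on compact subsets of $\sint\Omega\cap B_1$; it then extends $T$ by restriction and shows $T_i\to T$ in all of $B_1$ precisely via the non-concentration bound \eqref{eqn:compactness-4}--\eqref{eqn:compactness-5}. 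For the comparison, the paper slices simultaneously in $|x|$ and in $d_{\del\Omega}$ to obtain interior domains $D_{jk}=B_{r_k}\cap\{d_{\del\Omega}>d_j\}$, and on each $D_{jk}$ uses the deformation theorem (rather than coning) to write $(T_i-T)\llcorner D_{jk}=\del P_i+R_i$ with vanishing mass; this keeps the comparison entirely inside $\sint\Omega$ and avoids any need to cone into the curved $\Omega_i$. Once you replace your direct Federer--Fleming step by this interior-first argument, the rest of your proof goes through.
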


\begin{proof}
By assumption we have $|\Phi_i - \mathrm{id}|_{C^2(B_2)} \to 0$, $|\Psi_i - \mathrm{Id}| \to 0$.  We can find $C^2$ functions $F_i : B_2 \to B_2$ functions satisfying
\[
\Omega_i = F_i(\Omega) \cap B_1, \quad |F_i - \mathrm{id}|_{C^2(B_2)} \to 0,
\]
such that for every $U \subset\subset \sint\Omega \cap B_1$, we have $F_i|_U = \mathrm{id}$ for all $i>>1$.

By a diagonalization argument, after passing to a subsequence, we can find a $T \in \cI_n(\sint\Omega)$ so that $T_i(\omega) \to T(\omega)$ for any smooth $n$-form compactly supported in $B_1 \cap \sint\Omega$.  We have $||T||(\sint\Omega \cap B_1) < \infty$, so extend $T$ to be an element of $\cI_n(B_1)$ by restriction $T := T \llcorner \sint\Omega$.  Because of our definition of $T$, and convergence $T_i \to T$ as currents on compact subsets of $\sint\Omega \cap B_1$, we can also assume that the $||T_i||$ limit to some Radon measure on $B_1$, which is $\geq ||T||$.

By slicing theory and \eqref{eqn:compactness-hyp1}, and after passing to a further subsequence, we can find $d_j \to 0$, $r_k \to 1$ so that if $D_{jk} = B_{r_k} \cap \{ d_{\del\Omega} > d_j\}$, then 
\begin{equation}\label{eqn:compactness-1}
||\del (T_i \llcorner D_{jk} )||(B_1) \leq C_{jk}
\end{equation}
for $C_{jk}$ independent of $i$.  Since, for any fixed $j, k$, $T_i \llcorner D_{jk} \to T \llcorner D_{jk}$, we have $\del (T_i \llcorner D_{jk}) \to \del (T \llcorner D_{jk})$ and hence
\begin{equation}\label{eqn:compactness-2}
||\del (T \llcorner D_{jk} )||(B_1) \leq C_{jk}
\end{equation}
also.

By Lemma \ref{lem:minz-non-conc}, we have
\begin{equation}\label{eqn:compactness-4}
||T_i ||(B_{r_k} \setminus D_{jk}) \leq \eta_k(d_j) + 2A_i
\end{equation}
for some continuous function $\eta_k$ independent of $i, j$, satisfying $\eta_k(0) = 0$.  Lower semi-continuity and our hypothesis $A_i \to 0$ implies
\begin{equation}\label{eqn:compactness-5}
||T||(B_{r_k} \setminus D_{jk}) \leq \eta_k(d_j).
\end{equation}
Inequalities \eqref{eqn:compactness-4} and \eqref{eqn:compactness-5} imply that $T_i \to T$ as currents in $B_1$.

Fix $S \in \cI_n(B_1)$ with $\spt S \subset \Omega \cap B_1$., and $\del S \llcorner \sint\Omega = 0$.  We wish to prove that
\[
||T||(B_1) \leq ||T+S||(B_1).
\]
Since $T = T\llcorner \sint\Omega$, there is no loss in assuming $S = S \llcorner \sint\Omega$ also.  Define $S_i = (F_i)_\sharp S$, so that for $i >> 1$: $\spt S_i \subset \Omega_i \cap B_1$, $\del S_i \llcorner \sint \Omega_i = 0$, $S_i = S_i\llcorner \sint\Omega_i$ and $||S - S_i||(B_1) \to 0$.

Fix a $k$ such that $\spt S \subset B_{r_k}$ for all $i$ large, and fix any $j$ arbitrary.  By \eqref{eqn:compactness-1}, \eqref{eqn:compactness-2}, slicing theory and the deformation theorem (see e.g. \cite[Theorem 7.2.4]{simon:gmt}), we can find $P_i \in \cI_{n+1}(B_1)$, $R_i \in \cI_n(B_1)$ such that
\[
(T_i - T) \llcorner D_{jk} = \del P_i + R_i
\]
and
\[
\spt P_i, \spt R_i \subset \overline{D_{j+1,k}}, \quad ||P_i||(B_1) + ||R_i||(B_1) + ||\del P_i||(\del B_{r_k}) \to 0.
\]
Since $T_i$ is $(A_i, 0, 1)$-almost-minimizing with free-boundary in $\Omega_i$, and $\del P_i$ is an admissible competitor, we have
\[
||T_i||(B_r) \leq ||T_i + \del P_i + S_i||(B_r) + A_i \quad \forall r_k < r < 1,
\]
and hence taking $r \to r_k$, we get
\begin{align}
||T_i||(B_{r_k}) 
&\leq ||T_i + \del P_i + S_i||(B_{r_k}) + ||\del P_i||(\del B_{r_k}) + A_i \\
&\leq \left( ||T \llcorner D_{jk} + S|| + ||S - S_i|| + ||R_i|| \right)(B_{r_k}) + ||\del P_i||(\del B_{r_k}) + A_i
\end{align}
Let $i \to \infty$, and then by lower-semi-continuity we get
\begin{align}
||T||(B_{r_k}) 
&\leq ||T \llcorner D_{jk} + S||(B_{r_k}) \\
&\leq ||T +S||(B_{r_k}) + ||T ||(B_{r_k} \setminus D_{jk}) \\
&\leq ||T+S||(B_{r_k}) + \eta_k(d_j) \label{eqn:compactness-3}
\end{align}
Since \eqref{eqn:compactness-3} holds for every $j$ ($k$ fixed), we can take $j \to \infty$ to get
\[
||T||(B_{r_k}) \leq ||T+S||(B_{r_k}),
\]
which proves that $T$ is minimizing.  If we apply the same argument to $S = 0$, then we get $||T||(B_{r_k}) = \lim_i||T_i||(B_{r_k})$, which by lower-semi-continuity implies $||T_i|| \to ||T||$.  This proves the first part of the Lemma.

\vspace{5mm}

Abusing notation, let us write $T_i$, $T$ for the underlying varifolds associated to the $n$-currents, and assume as in our hypothesis that $T_i \in \cIVT(\Omega_i, B_1)$.  By Theorem \ref{thm:first-var}, the $T_i$ have uniformly bounded first variation, and hence after passing to a further subsequence we can assume $T_i$ converge as varifolds in $B_1$.  Since $T_i$, $T$ are both integral varifolds, and $||T_i|| \to ||T||$, then we must have $T_i \to T$ as varifolds.

Fix any $X \in C^1_c(B_1)$ tangential to $\Omega$.  Let $X_i(x) = DF_i|_{F_i^{-1}(x)} X(F_i^{-1}(x))$.  Then $X_i \in C^1_c(B_1)$, and $X_i$ is tangential to $\Omega_i$, and $|X_i - X|_{C^0(B_1)} \to 0$.  Pick $\theta < 1$ such that $\spt X_i, \spt X \subset B_\theta$.  We have
\[
|\delta T_i(X)| \leq C(\Omega)|X|_{C^0(B_1)} ||H^{tan}_{T_i}||_{L^\infty(B_1)} ||T_i||(B_\theta) + C(\Omega) |X_i - X|_{C^0(B_1)} ||\delta T_i||(B_\theta),
\]
and hence $\delta T(X) = 0$.  This implies $T \in \cIVT(\Omega, B_1)$, and $H^{tan}_T = 0$.

\vspace{5mm}

Suppose $T_i = \del[E_i] \llcorner \sint\Omega_i$ for some relatively open sets $E_i \subset N_i \cap B_1$.  Then, after passing to a further subsequence as necessary, we can assume $[E_i] \to E \subset \Omega_0 \times \R^m \times \{0^k\}$, and hence $T = \del[E] \llcorner \Omega_0 \times \R^m$ has multiplicity-one $||T||$-a.e.  If $x \in \reg T$, then at sufficiently small scales $T \llcorner B_r(x)$ is varifold-close to a multiplicity-one plane in $\cP_{T_x\Omega}$, and hence the $T_i \llcorner B_r(x)$ lie close to this plane also.  For $r$ sufficiently small, and $i$ sufficiently large, we deduce by Theorems \ref{thm:main}, \ref{thm:codim} that $\sing T_i \cap B_r(x) = \emptyset$.
\end{proof}

%Given Lemmas REF, REF it suffices through a standard dimension reducing argument to show that $\sing T = \emptyset$ for minimzing free-boundary cones $T$ which are $1$, $2$, $6$-dimensional (resp.).  

\vspace{5mm}

We now work towards classifying low-dimensional tangent cones.
\begin{lemma}\label{lem:minz-planes}
Let $\Omega^{n+1}$ be a $0$-symmetric polyhedral cone domain.  If $n \geq 2$ assume $\Omega$ is non-obtuse.  Then $\Omega$ is a domain-with-corners, and given any $T \in \cI_n(\R^{n+1})$ a free-boundary minimizing cone in $\Omega$, such that $\spt T$ is contained in a plane, then $T = 0$.
\end{lemma}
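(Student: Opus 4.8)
The plan is to prove the two assertions of the lemma separately.

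\emph{$\Omega$ is a domain-with-corners.} When $n\le 1$ this is immediate: a $0$-symmetric polyhedral cone in $\R$ is $\pm[0,\infty)$, and in $\R^2$ it is a strictly convex wedge, hence a linear image of $[0,\infty)^2$. When $n\ge 2$ it follows from the cited theorem of Coxeter, since by hypothesis $\Omega$ is non-obtuse and $0$-symmetric, hence a simplicial cone of the form $L([0,\infty)^{n+1})$ for a linear isomorphism $L$. (Note that non-obtusity is used only here, not in the vanishing statement.)

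\emph{Setting up the vanishing statement.} Fix an $n$-plane $P$ with $\spt T\subset P$; since $T$ is a cone, $0\in P$. If $P\cap\sint\Omega=\emptyset$ then $T=T\llcorner\sint\Omega=0$ and we are done, so assume $P\cap\sint\Omega\ne\emptyset$. Because $\Omega$ contains no line, neither does $K:=P\cap\Omega$, so $K$ is a pointed convex polyhedral cone with non-empty relative interior in $P\cong\R^n$; a supporting-halfspace argument then shows $K$ is contained in a closed halfspace of $P$ which it does not fill, so $\haus^n(K\cap B_1)<\tfrac12\omega_n$, i.e.\ $\Theta_K:=\omega_n^{-1}\haus^n(K\cap B_1)<\tfrac12$. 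Next I identify $T$. Since $T$ is free-boundary minimizing (and, as for its parent currents in Theorem \ref{thm:partial-reg}, $\del T\llcorner\sint\Omega=0$), the constancy theorem gives $T=\sum_i k_i[U_i]$, where the $U_i$ are the connected components of the scaling-invariant open set $P\cap\sint\Omega$ and $k_i\in\Z$. If some $|k_i|\ge 2$, then replacing $k_i[U_i]$ by $\sign(k_i)[U_i]$ produces an admissible free-boundary competitor of strictly smaller mass: the error current $(k_i-\sign(k_i))[U_i]$ is supported in $\overline\Omega$, and since $U_i$ is relatively closed in $P\cap\sint\Omega$ it has vanishing boundary in $\sint\Omega$; this contradicts minimality. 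Hence $|k_i|\le 1$ for all $i$, and therefore $\theta_T(0)=\omega_n^{-1}\haus^n(\spt T\cap B_1)\le\Theta_K<\tfrac12$.

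\emph{Conclusion.} It remains to contradict this with a lower bound $\theta_T(0)\ge\tfrac12$, valid for any nonzero free-boundary minimizing cone in the convex cone $\Omega$; this forces $T=0$. The density $\theta_T(0)$ is well-defined by the monotonicity formula (Lemma \ref{lem:monotonicity} with $\eps=0$). For the bound I would argue either (a) by upper-semicontinuity of density together with the fact that $\spt T$ contains a ray issuing from $0$ which lies in $\sint\Omega$ away from the vertex — along which interior regularity of minimizers gives density $\ge 1$, transferred to $0$ via the free-boundary monotonicity at the vertex — or, more robustly, (b) by a ``push-to-the-boundary'' competitor in the spirit of Lemma \ref{lem:minz-non-conc}: writing $T\llcorner B_1$ as the cone with vertex $0$ over its spherical cross-section $\Gamma\subset\sint\Omega\cap S^n$, one deforms $\Gamma$ toward $\partial\Omega$ and cones the result, obtaining a competitor whose mass is strictly smaller than that of $T\llcorner B_1$ unless $\haus^{n-1}(\Gamma)$ is at least that of a half-equator, i.e.\ unless $\theta_T(0)\ge\tfrac12$.

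I expect the last step — constructing the strict competitor, equivalently establishing the sharp lower density bound $\theta_T(0)\ge\tfrac12$ for nonzero free-boundary minimizing cones in convex cones — to be the main obstacle; once it is in hand, everything else is bookkeeping with the constancy and monotonicity formulas.
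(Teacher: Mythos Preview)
Your density strategy has two genuine gaps, and the paper proceeds by a completely different route.

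\textbf{The multiplicity reduction fails.} Your competitor $S=(k_i-\sign(k_i))[U_i]$ has unbounded support, so it is not admissible in any $W\subset\subset\R^{n+1}$; if you localize to $B_R$ you pick up boundary on $\partial B_R\cap\sint\Omega$ that cannot be cancelled for free. Indeed multiplicity-$k$ planes \emph{are} area-minimizing (in the interior, by calibration), so there is no general mechanism for lowering $|k|$. Without $|k_i|\le 1$ your upper bound collapses: for $T=k[P\cap\sint\Omega]$ one only has $\theta_T(0)=|k|\,\Theta_K$, which need not be $<\tfrac12$.

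\textbf{The lower bound $\theta_T(0)\ge\tfrac12$ is not established.} Argument~(a) cannot work: upper-semicontinuity of density across $\partial\Omega$ fails in exactly the direction you need (for a single ray in a wedge, $\theta_T(tv)=1$ for $t>0$ but $\theta_T(0)=\tfrac12$). Argument~(b) is not a proof; the deformation you describe is essentially the content of the lemma. Note also that horizontal planes in a wedge $W^2\times\R^{m}$ are free-boundary minimizing with density $\beta/(2\pi)<\tfrac12$, so any such bound must use $0$-symmetry in an essential way, which your sketch does not.

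\textbf{What the paper does instead.} The argument is by induction on $n$. For $n=1$ one directly shortens: any ray in a wedge of angle $<\pi$ makes angle $<\pi/2$ with one edge, and projection to that edge is a strictly shorter admissible competitor. For $n\ge 2$, Coxeter gives that $\Omega$ is simplicial with faces $F_1,\dots,F_{n+1}$ and outer normals $\nu_j$. Tangent-cone analysis plus the inductive hypothesis forces $\spt T\cap\partial_{n-1}\Omega=\{0\}$, so $P$ meets $\partial\Omega$ only in the top faces, and orthogonally; after relabeling, $\nu_1,\dots,\nu_n\in P$. Choosing $e\in P\cap\sint\Omega$ and expanding $e=\sum_{j\le n}\lambda_j\nu_j$, the non-obtuse Gram matrix $(\nu_j\cdot\nu_k)$ is a Stieltjes matrix with non-negative inverse, which forces $\lambda_j\le 0$; then $e\cdot\nu_{n+1}=\sum\lambda_j(\nu_j\cdot\nu_{n+1})\ge 0$ while also $e\cdot\nu_{n+1}\le 0$, giving $e\in\partial H_{n+1}$ and hence $P\cap F_{n+1}\ne\{0\}$, a contradiction. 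In particular---contrary to your parenthetical remark---non-obtusity is used in the vanishing argument itself, not only for the domain-with-corners conclusion (cf.\ Example~\ref{ex:possible-minz-plane}).
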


It is plausible Lemma \ref{lem:minz-planes} fails in higher dimensions when $\Omega$ does not satisfy the dihedral angle condition.
\begin{example}\label{ex:possible-minz-plane}
Let $\Omega^3$ be the intersection of the half-spaces with outer normals given by
\[
(1, 0, 1), \quad (-1, 0, 1), \quad (0, \eps, 1).
\]
Then when $\eps > 0$ is small-ish, it may be the case that the plane $y = 0$ is not minimizing with free boundary in $\Omega$.
\end{example}

\begin{proof}[Proof of Lemma \ref{lem:minz-planes}]
By the constancy theorem $T = k[P \cap \sint\Omega]$ for some integer $k \geq 0$, and some oriented plane $P$.

We perform induction on $n$. First assume $n = 1$: $\Omega = W^2$ is a $2$-dimensional wedge with interior angle $\beta < \pi$, and $P \cap W$ is a ray.  If we write $\del W = L_1 \cup L_2$ where $L_i$ are the two rays meeting at angle $\beta$, then after relabeling as necessary the rays $P$, $L_1$ meet at some angle $< \pi/2$.  Take any point $q\in P$, and let $q_1=\proj_{L_1}q$. Then $|qq_1|<|oq|$, so $P$ is not length-minimizing. We therefore must have $k=0$ and hence $T=0$.

Assume the statement holds for all positive integers less than $n$. Since $\Omega^{n+1}$ is $0$-symmetric and non-obtuse, by \cite[Theorem 1.1]{Coxter1934discrete}, $\Omega$ is simplicial. In other words, there exists $n+1$ half spaces $H_1,\cdots,H_{n+1}$ such that $\Omega=\cap_{i=1}^{n+1}H_i$, and if $F_i=\partial\Omega\cap \partial H_i$, then each $F_i$ and $F_j$ meet along some $(n-1)$-dimensional set.  Let $\nu_i$ denote the outer unit normal vector of $\partial H_i$. Since $\Omega$ is non-obtuse, $\nu_i\cdot \nu_j\le 0$, and
\[\Omega=\{x\in \R^n: x\cdot \nu_i\le 0, i=1,\cdots,n+1\}.\]

Suppose, for the sake of contradiction, that $k\ne 0$. We first observe that $\spt T\cap \partial_{n-1}\Omega=\{0\}$. Otherwise, let $q\in \spt T\cap \partial_{n-1}\Omega$. Then $Tan_q T=T'\times \R^{n+1-j}$, $T_q\Omega=\Omega'^j\times \R^{n+1-j}$ for some $2\le j\le n$, $\Omega'$ is $0$-symmetric, and $T'$ is free-boundary minimizing in $\Omega'$, contradicting the induction hypothesis.

Therefore $\spt T\cap \partial \Omega$ only in the smooth part of $\partial \Omega$, and thus $\spt T$ meets $\partial \Omega$ orthogonally. Denote $\Omega_0=P\cap \Omega$. Note that $\Omega_0$ is a $0$-symmetric $n$-dimensional polyhedral cone. Thus by dimension counting, $P$ intersects exactly $n$ faces, say $F_1,\cdots,F_n$ orthogonally. After a rotation if necessary, we may assume that $P=\{x_{n+1}=0\}$, and $e_n\in P\cap \sint \Omega$. Since $e_n\in \sint\Omega\subset H_{n+1}$, we have $e_{n}\cdot \nu_{n+1}\le 0$. On the other hand, regarded as a convex polyhedron in $P\cong \R^n$, $\Omega_0$ has outer unit normals $\nu_1,\cdots, \nu_n$, and $e_n\in \sint(\Omega_0)$. Therefore there exists a linear combination
\[e_n=\sum_{j=1}^n \lambda_j \nu_j\]
with $\lambda_j\le 0$. Hence
\[e_n\cdot \nu_{n+1}=\sum_{j=1}^n \lambda_j \nu_j\cdot \nu_{n+1}\ge 0.\]

As a result, $e_n\cdot \nu_{n+1}=0$, and hence $e_n\in \partial H_{n+1}$. This implies that $P\cap F_{n+1}\ne \{0\}$, contradiction. 
\end{proof}

\begin{lemma}\label{lem:low-dim-cones}
Let $T \in \cI_n(\R^{n+1})$ be a dilation-invariant minimizing current with free-boundary in a polyhedral cone domain $\Omega^{n+1}$.  Then $T$ is entirely regular if:
\begin{enumerate}
\item $n = 1$;
\item $n = 2$ and $\Omega$ is non-obtuse;
\item $n \leq 6$ and $\Omega = [0, \infty)^l \times \R^{n+1-l}$.
\end{enumerate}
\end{lemma}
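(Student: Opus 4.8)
The plan is to prove all three cases by induction on $n$, using: the classical theorem of Simons and Federer that an interior area-minimizing hypercone in $\R^{n+1}$ is a multiplicity-one hyperplane when $n\le6$; our boundary regularity Theorems~\ref{thm:main}/\ref{thm:codim}, which turn varifold-closeness to a multiplicity-one horizontal free-boundary plane into $C^{1,\alpha}$ graphicality; Lemma~\ref{lem:minz-planes}; and the elementary fact that a current supported in $\partial\Omega$ can be retracted to the origin, so a minimizing cone lying in $\partial\Omega$ must vanish. For a cone, ``entirely regular'' means $T=0$ or $\spt T$ is a horizontal free-boundary plane intersected with $\Omega$.

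\emph{Stratification of $\sing T$.} Since $T$ is a cone, $\sing T$ is dilation invariant. For $x\ne0$ the tangent cone $T_xT$ is translation invariant in the direction $x$, so $T_xT=C\times\R$ with $C$ a dilation-invariant minimizing current with free boundary in a polyhedral cone of dimension $n-1$; moreover, in case~(3) this reduced domain is again an orthant times a Euclidean factor (of smaller dimension), and in cases~(1)--(2) it falls under the $n-1$ instance of the lemma. By induction $C$ is entirely regular, so either $C=0$ --- forcing $\theta_T(x)=0$ and $x\notin\spt T$ --- or $C$ is a multiplicity-one plane and $x\in\reg T$ by Allard's interior theorem and Theorem~\ref{thm:main}/\ref{thm:codim}. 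With interior regularity ($\sing T\cap\sint\Omega=\emptyset$ for $n\le6$), this gives $\sing T\subset\del_m\Omega=\{0\}\times\R^m$; running the same splitting along $\{0\}\times\R^m\setminus\{0\}$ collapses this to $\sing T\subset\{0\}$.

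\emph{The origin.} For case~(3), reflect $T$ successively across the flat, mutually orthogonal faces $\{x_i=0\}$, $i=1,\dots,l$ (valid since, being minimizing, $T$ meets each face orthogonally and carries no mass on it), obtaining a dilation-invariant boundaryless minimizing current $\tilde T$ in $\R^{n+1}$ invariant under every reflection $x_i\mapsto-x_i$. For $n\le6$, $\tilde T$ is $0$ or a multiplicity-one hyperplane, and the only such hyperplanes fixed by all those reflections are the coordinate hyperplanes $\{x_i=0\}$ ($i\le l$) and the horizontal ones $\R^l\times W$; the former give $T=\tilde T\cap\Omega\subset\partial\Omega$, which is not minimizing, so $T$ is $0$ or horizontal. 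For cases~(1)--(2), Step~1 makes $T$ smooth away from $0$; since $\theta_T$ vanishes on the deepest stratum, $\spt T$ avoids it and meets the smooth part of $\partial\Omega$ orthogonally, so the link $\spt T\cap S^n$ is a free-boundary minimal submanifold of dimension $\le1$ of the convex spherical polygon $\Omega\cap S^n$ --- hence a union of great-circle arcs (or points), so $\spt T$ is a union of planar pieces. A cut-and-paste argument (retracting a non-horizontal plane onto $\partial\Omega$, and ruling out more than one piece) together with Lemma~\ref{lem:minz-planes} then forces $T=0$ or $T$ horizontal. In all cases $\sing T=\emptyset$.

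\emph{Main difficulty.} The substantive part is the rigidity at the origin: that a minimizing cone which is $C^{1,\alpha}$ off $0$ is flat, and that the resulting plane is horizontal. For $n\le2$ this is elementary because the relevant link is at most one-dimensional, so ``minimal'' means ``union of great-circle arcs''; the same step in higher dimensions would need the boundary regularity and Neumann eigenvalue estimates flagged in the introduction, which is exactly why cases~(1)--(2) are restricted to $n\le2$ while case~(3), where reflection bypasses this, allows $n\le6$. Some care is also needed to carry out the reflection and cut-and-paste steps with honestly admissible free-boundary competitors. The remainder is routine dimension reduction plus citations of Simons--Federer, Coxeter's theorem, and Sections~\ref{sec:main}--\ref{sec:reg}.
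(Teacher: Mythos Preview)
Your overall strategy---induct on $n$, use tangent-cone splitting to reduce $\sing T$ to $\{0\}$, then classify cones smooth away from the origin---is the same as the paper's, and the proposal is correct in outline. However two of your key steps are executed differently from the paper and each carries a point that needs more justification.

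\textbf{Case (3).} You assert that the successive reflections across the coordinate faces yield a current that is \emph{minimizing} in $\R^{n+1}$, and then invoke the Simons--Federer classification directly. This is true, but it is not immediate: one must set up the reflection with the correct sign (e.g.\ $\tilde T = T - R_\sharp T$ so that the boundary contributions on the face cancel), slice an arbitrary competitor $S$ along the face, and check that each half is an admissible free-boundary variation of $T$. The paper avoids this by a different route: it only observes that the reflected surface is $C^{1,\alpha}$ away from $0$ (from the inductive regularity and the orthogonal contact), smooth and \emph{stable} away from the codimension-two skeleton, and then uses a cutoff argument to extend stability across the skeleton before applying Simons' theorem for stable cones. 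Your approach is cleaner once the reflection--minimality lemma is in hand, and your explicit use of reflection invariance to force the resulting plane to be horizontal (or a coordinate hyperplane, whence $T\subset\partial\Omega$ and $T=0$) is a point the paper leaves implicit.

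\textbf{Case (2).} Your observation that the link $\spt T\cap S^2$ is a one-dimensional free-boundary minimal submanifold, hence a union of great-circle arcs, is a valid and more elementary alternative to the paper's argument, which instead runs the Simons stability inequality (with Neumann boundary condition on $|A|$) directly on the cone to conclude $A\equiv0$. Where your argument becomes vague is the final ``cut-and-paste'' step. The paper instead splits: if $\Omega=W^2\times\R$, then since you have already shown $\spt T\cap(\{0\}\times\R)\subset\{0\}$, no component of $\spt T$ can contain the edge, so each planar piece lies in $\cP_\Omega$ and disjointness in $\sint\Omega$ forces a single horizontal plane; if $\Omega$ is $0$-symmetric, Lemma~\ref{lem:minz-planes} gives $T=0$ directly. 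Replacing your ``retracting a non-horizontal plane onto $\partial\Omega$'' by this case split would close the argument cleanly.
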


\begin{proof}
Suppose $n \leq 6$ and $\Omega = [0, \infty)^l \times \R^{n+1-l}$.  When $l = 0$ then $T$ is minimizing without boundary in $\R^{n+1}$, and hence $T$ is planar by Simons' theorem \cite{Simons}.  For general $l$, we can assume by induction that $\sing T \subset \{0\}$.  Otherwise, if $x \in \sing T \setminus \{0\}$, then by \eqref{eqn:lower-reg} and Lemma \ref{lem:minz-compact} we would be able to find a non-zero singular tangent cone $T' \times [\R]$ at $x$ which is minimizing with free-boundary in some $[0, \infty)^{l-1} \times \R^{n-l}$.

Now by reflection we can obtain a cone $\tilde T$ in $\R^{n+1}$ which is $C^{1,\alpha}$ away from $0$, and smooth and stable away from finitely-many $(n-2)$-planes.  By standard interior elliptic regularity and a cutoff argument it follows that $\tilde T$ is smooth and stable on all of $\R^{n+1} \setminus \{0\}$, and hence by Simons' theorem $\tilde T$ is planar.

Suppose $n = 1$.  By the previous characterization, we can WLOG assume $\Omega = W^2$ is a non-obtuse wedge.  Then $T$ is a cone over finitely-many points, and hence is a union of minimizing rays (with possible multiplicity) in $W^2$.  But then by Lemma \ref{lem:minz-planes} $T = 0$.  This completes the $n = 1$ case.

Suppose $n = 2$ and $\Omega$ is non-obtuse.  We claim that $\spt T \cap (\del_1\Omega \setminus \{0\}) = \emptyset$.  Otherwise, if there were an $x \in \spt T \cap (\del_1\Omega \setminus \{0\})$ then by \eqref{eqn:lower-reg}, Lemma \ref{lem:minz-compact} (and a rotation as necessary) we could obtain a non-zero tangent cone $T' \times [\R]$ in some cone $W^2 \times \R$, where $W^2$ is a wedge.  But $T'$ is minimizing in $W^2$, and hence by our $n = 1$ case we would have $T' = 0$, which is a contradiction.

Therefore $\spt T \setminus \{0\}$ meets $\del\Omega$ only where $\del\Omega$ is planar, and hence by reflection and interior regularity $\spt T$ is smooth away from $0$.  Moreover, by the free-boundary condition $\spt T$ satisfies the usual stability inequality
\[
\int_{\spt T} |A|^2 \zeta^2 \leq \int_{\spt T} |\nabla \zeta|^2 \quad \forall \zeta \in C^1_c(\R^{n+1} \setminus \{0\}),
\]
and $\del_n |A| = 0$ along $\spt T \cap \del\Omega \setminus \{0\}$.  (Here $A$ is the second fundamental form of $\spt T$.)  It then follows by the usual proof of Simons' theorem that $\spt T$ is a finite union of disjoint free-boundary planes in $\sint\Omega$.  

As before we can assume WLOG that $\Omega$ is either $W^2 \times \R$ or $0$-symmetric.  If $\Omega = W^2 \times \R$ then since $\spt T$ is planar and $\spt T \cap (\{0\}\times \R) \subset \{0\}$ we must have $\spt T = W^2 \times \{0\}$, and hence $T$ is regular.  If $\Omega$ is $0$-symmetric then by Lemma \ref{lem:minz-planes} we must have $T = 0$.
\end{proof}

\vspace{5mm}

Using the previous results, and the Nash embedding theorem, partial regularity is now a standard argument.

\begin{proof}[Proof of Theorem \ref{thm:partial-reg}]
Since $N$ is a complete, Riemannian $C^3$ $(n+1)$-manifold we can by isometrically embed it in some $\R^{n+1+k}$ space (for $k = k(n)$).  WLOG assume $0 \in \del\Omega \cap \spt T$, $T_0N = \R^{n+1} \times \{0^k\}$, and by dilating $N$ as necessary we can assume that the map $\exp_{T^\perp N}(x, v)$ that takes $x \in N$, $v \in T^\perp_x N$ to $x + v$ is a diffeomorphism onto its image for $x \in B_2$, $|v| \leq 2$, and $\exp_{T^\perp N}( B_2, B_2) \supset B_1$.  Let $\Omega' = \exp_{T^\perp N}(\Omega, B_2) \cap B_1$, and then $\Omega' \in \cD_\eps(T_0\Omega \times \R^k)$, where $\eps$ can be made arbitrarily small by dilating $N$.

Suppose $T$ is area-minimizing with free-boundary in $\Omega$.  Then $T$ is $(A, 1, 1)$-almost-area-minimizing in $B_1$ with free-boundary in $\Omega'$, where $A$ depends only on the curvature of $N$.  Since $T$ has zero tangential mean curvature in $\Omega \subset N$, then $T$ has bounded tangential mean curvature in $\Omega' \subset B_1$ (again controlled by the curvature of $N$).  By standard codimension-one theory we can reduce to the case when $T = \del[E] \llcorner \sint\Omega'$, for $E \subset N$ relatively open.

Suppose $T = \del[E] \llcorner \sint\Omega$ is an isoperimetric region.  Then $T$ is $(A, 1, \delta)$-almost-area-minimizing in $N$ with free-boundary in $\Omega$ for some constants $A, \delta$ (depending on the volume $|E|$; see \cite[Example 21.3]{maggi_2012}), and hence as above $T$ is $(A', 1, \delta/2)$-almost-area-minimizing in $B_1$ with free-boundary in $\Omega'$.  Similarly, since $T$ has bounded tangential mean curvature in $\Omega \subset N$, then $T$ has bounded tangential mean curvature in $\Omega' \subset B_1$ also.

In either case, if we consider a sequence of dilations $T_i = (\eta_{0, r_i})_\sharp T$ then by \eqref{eqn:lower-reg}, \eqref{eqn:monotonicity} and Lemma \ref{lem:minz-compact}, the $T_i$ will subsequentially converge as currents and varifolds to some non-zero area-minimizing cone $T''$ with free-boundary in $\Omega''$, where $\Omega'' = T_0\Omega \subset T_0N$.  Moreover, if $x \in \reg T''$, then $\sing T_i \cap B_r(x) = \emptyset$ for all $i$ large.
	
Lemma \ref{lem:low-dim-cones} implies that:
\begin{enumerate}
\item if $T''$ is $(n-1)$-symmetric then $T''$ is regular;
\item if $T''$ is $(n-2)$-symmetric and $\Omega$ is non-obtuse, then $T''$ is regular;
\item if $T''$ is $(n-6)$-symmetric and $\Omega$ has dihedral angles $=\pi/2$, then $T''$ is regular.
\end{enumerate}
The partial regularity of Theorem \ref{thm:partial-reg} then follows from a standard dimension reducing argument.
%In fact, a slight generalization of the last classification holds.  If $\Omega$ is non-obtuse and one knows that $T_0\Omega$ can only be $\R^{n+1}$, $[0, \infty) \times \R^n$, or $W^2 \times [0, \infty)^{l-2}\times \R^m$, then any $(n-6)$-symmetric tangent $T''$ cone is regular also.
\end{proof}

\begin{comment}
\begin{prop}\label{prop:isotropic-minz}
Suppose $E \subset B_1$ is a set of locally finite perimeter in $B_1$, and write $T = \del[E] \llcorner int\Omega \cap B_1$.  Suppose $E$ minimizes the functional
\[
\cF(E) = \int_{E \cap B_1 \cap int\Omega \Omega } g d\leb^{n+1} + \int_{int\Omega} h d\mu_{T},
\]
for some fixed functions $g, h \in C^1_c(B_1)$ satisfying $h \geq h_0 > 0$.

If we write $M$ for the varifold associated to $T$, then $M \in \cIVT_n(\Omega, B_1)$, and $T$ is $(c_0, 1, \delta)$-almost-minimizing in $\Omega$.
\end{prop}

\begin{proof}
DO ME!
\end{proof}
\end{comment}

\section{Appendix: Tangential first variation}

Let $Q$ be a $C^2$, closed $p$-submanifold in $\R^{n+k}$.  Suppose that the nearest point projection $\zeta(x) : B_1 \to Q$ is smooth in $B_1$.  Define the subspaces
\[
\tau(x) = T_{\zeta(x)} M, \quad \iota(x) = \mathrm{span} \{ x - \zeta(x) \} , \quad \sigma(x) = \tau(x) \oplus \iota(x).
\]
Write $r = |x - \zeta(x)|$.

We consider here a rectifiable $n$-varifold $M \in \cIV(B_1 \subset \R^{n+k})$, such that $\mu_M(Q) = 0$ and $\mu_M(B_1) < \infty$.

\subsection{Free-boundary}

Let $Q$ be a hypersurface, so that $p = n+k-1$.  Let $\delta M^{tan}$ be $\delta M$ restricted to vector fields $X$ which are tangential to $Q$, i.e. those $X \in C^1_c$ for which $X(x) \in T_xQ$ for all $x \in Q$.

Assume $||\delta M^{tan}||(B_1) < \infty$, so in particular we can write
\[
\delta M^{tan}(X) \equiv \delta M(X) = \int X \cdot \mu^{tan} d||\delta M^{tan}||, \quad |\mu^{tan}| = 1 \text{ $||\delta M^{tan}||$-a.e.}
\]
for all tangential $X$, and some $||\delta M^{tan}||$-integrable, unit-vector-valued function $\mu^{tan}$.  If $||\delta M^{tan}|| << \mu_M$, then let us write
\[
\delta M^{tan}(X) = -\int H^{tan}_M \cdot X d\mu_M.
\]

\begin{theorem}[\cite{GrJo} or \cite{Ed:fb-notes}] \label{thm:first-var-free-boundary}
Assuming the above setup on $Q, M$, then we can conclude the following:
\begin{enumerate}
\item For any non-negative $h \in C^1_c(B_1)$, we have that
\begin{align*}
\Gamma_1(h) &:= \lim_{\rho \to 0} \frac{1}{\rho} \int_{B_\rho(Q)} (M \cdot \iota) h \\
&= \int h Dr \cdot \mu^{tan} d||\delta M^{tan}|| + \int (M \cdot D^2 r) h - \nabla h \cdot \nabla r d\mu_M
\end{align*}
is a Radon measure on $B_1$, and for any $X \in C^1_c(B_1, \R^{n+1})$ (not necessarily tangential) we have
\[
\delta M(X) = \int X \cdot \mu^{tan} d||\delta M^{tan}|| - \Gamma_1(X \cdot Dr).
\]

\item In particular, $\delta M$ is a Radon measure in $B_1$, and if we write
\[
\delta M(X) = \int X \cdot \mu d||\delta M||, \quad |\mu| = 1 \text{ $||\delta M||$-a.e.},
\]
then $||\delta M^{tan}|| = |\mu^{TB}| ||\delta M||$, where
\[
\mu^{TB}(x) = \left\{ \begin{array}{l l} \mu(x) & x \not \in B \\ \pi_{T_x B}(\mu(x)) & x \in B \end{array} \right. .
\]

\item If $||\delta M^{tan}|| << \mu_M$, then we can write
\[
\delta M(X) = -\int H^{tan}_M \cdot X d\mu_M + \int \eta \cdot X d\sigma,
\]
where $\sigma \perp \mu_M$ is a non-negative Radon measure supported in $Q$, and for $\sigma$-a.e. $x$ we have $|\eta(x)| = 1$, $\eta(x) \in (T_x Q)^\perp$.
\end{enumerate}
\end{theorem}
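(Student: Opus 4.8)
The plan is to test the first variation of $M$ against vector fields manufactured from the distance function $r$ to $Q$, engineered so that they are \emph{tangential} to $Q$ — hence admissible in $\delta M^{tan}$ — while their tangential divergence isolates exactly the boundary term $\Gamma_1$. The one structural subtlety to keep in mind is that although $Dr$ flips sign across $Q$ and so does not extend continuously, the rank-one tensor $Dr\otimes Dr$ agrees near $Q$ with $\nu\otimes\nu$ (where $\nu$ is the unit normal of $Q$), hence extends to a $C^1$ field on $B_1$, because $Q$ is $C^2$ and $\zeta$ is smooth. I would therefore begin by decomposing an arbitrary $X\in C^1_c(B_1,\R^{n+k})$ as $X=X^T+W$, where $W:=(Dr\otimes Dr)X$ and $X^T:=X-W$: then $W$ is $C^1_c$, $X^T$ is $C^1_c$, and $X^T$ is tangential to $Q$ (on $Q$ it equals $X$ minus its normal component). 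Since $\delta M(X^T)=\delta M^{tan}(X^T)=\int X^T\cdot\mu^{tan}\,d\|\delta M^{tan}\|$ by hypothesis, the whole statement reduces to analysing $\delta M(W)$ and the auxiliary limit $\Gamma_1$.

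To compute $\delta M(W)$, fix a smooth nondecreasing cutoff $\chi_\rho\colon B_1\to[0,1]$ equal to $\min(r/\rho,1)$ away from the corner $\{r=\rho\}$ and (after a harmless further smoothing) vanishing on a neighbourhood of $Q$. Then $\chi_\rho W$ vanishes on $Q$, hence is tangential, so on the one hand $\delta M(\chi_\rho W)=\int\chi_\rho W\cdot\mu^{tan}\,d\|\delta M^{tan}\|$; on the other hand, the tangential product rule together with $\nabla r\cdot Dr=|\nabla r|^2=M\cdot\iota$ gives
\[
\delta M(\chi_\rho W)=\int\chi_\rho\,\mathrm{div}_M W\,d\mu_M+\frac1\rho\int_{B_\rho(Q)}(X\cdot Dr)(M\cdot\iota)\,d\mu_M .
\]
Because $\mu_M(Q)=0$ and $\|\delta M^{tan}\|(B_1)<\infty$, sending $\rho\to0$ and applying dominated convergence (note $\chi_\rho W\to W$ off $Q$ and $\equiv0$ on $Q$, while $\mathrm{div}_M W$ is bounded on $\spt X$) shows that $\frac1\rho\int_{B_\rho(Q)}(X\cdot Dr)(M\cdot\iota)\,d\mu_M$ converges; denoting the limit $\Gamma_1(X\cdot Dr)$, we get $\delta M(W)=\int W\cdot\mu^{tan}\,d\|\delta M^{tan}\|-\Gamma_1(X\cdot Dr)$, whence the formula of part (1) for $\delta M(X)$. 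Running the identical argument with the tangential field $h\,\chi_\rho\,Dr$ (which near $Q$ equals $h(x-\zeta(x))/\rho$, hence is $C^1$) in place of $\chi_\rho W$, for a scalar $h\in C^1_c(B_1)$, and using $\mathrm{div}_M(Dr)=M\cdot D^2 r$, yields in the limit the explicit formula
\[
\Gamma_1(h)=\int h\,Dr\cdot\mu^{tan}\,d\|\delta M^{tan}\|+\int\bigl((M\cdot D^2 r)\,h-\nabla h\cdot\nabla r\bigr)\,d\mu_M .
\]
Positivity of $\Gamma_1$ on $h\ge0$ is immediate from its defining approximants, and local finiteness follows by feeding a cutoff $\ge\mathbf 1_K$ into this formula, so $\Gamma_1$ is a positive Radon measure, supported on $Q$.

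Parts (2) and (3) are then formal. The identity of part (1) exhibits $\delta M$ as the locally finite vector Radon measure $\mu^{tan}\|\delta M^{tan}\|-Dr\,\Gamma_1$ (with $Dr$ read along $\spt M$ near $Q$), so $\delta M$ is a Radon measure; write $\delta M=\mu\|\delta M\|$ with $|\mu|=1$. Off $Q$ the $\Gamma_1$-term is absent, so there $\mu=\mu^{tan}$ and $\|\delta M\|=\|\delta M^{tan}\|$, giving $\|\delta M^{tan}\|=|\mu^{TB}|\|\delta M\|$. On $Q$, one may take $\mu^{tan}$ tangent to $Q$ (only its $T_xQ$-component is detected by tangential test fields), while $Dr$ limits to the normal $\pm\nu$; hence the two pieces of $\delta M\llcorner Q$ are mutually orthogonal, the tangential part of $\mu\|\delta M\|\llcorner Q$ is precisely $\mu^{tan}\|\delta M^{tan}\|\llcorner Q$, and again $\|\delta M^{tan}\|=|\mu^{TB}|\|\delta M\|$. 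Finally, if $\|\delta M^{tan}\|\ll\mu_M$ we write $\mu^{tan}\|\delta M^{tan}\|=-H^{tan}_M\mu_M$ and set $\sigma:=\Gamma_1$, $\eta:=-Dr$; then $\sigma\perp\mu_M$ since $\mu_M(Q)=0$ and $\sigma$ is carried by $Q$, and $|\eta|=1$ with $\eta(x)\in(T_xQ)^\perp$ for $\sigma$-a.e.\ $x$ because $Dr\to\nu$ as $r\to0$ ($Q$ being $C^2$).

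The main obstacle — and essentially the only delicate point — is the lack of smoothness of $r$ across $Q$: the sign flip of $Dr$ makes $X\cdot Dr$ discontinuous on $Q$, so $\Gamma_1(X\cdot Dr)$ must be interpreted as the signed sum of the two one-sided limits (equivalently, as $\Gamma_1$ integrated against the unit field $Dr$, which is continuous on $B_1\setminus Q$ — all that the approximating integrals ever see). This is exactly why the argument is phrased through the $C^1$ tensor $Dr\otimes Dr$ and cutoffs $\chi_\rho$ vanishing on $Q$, rather than through $Dr$ directly, and why one cannot pass to $\rho\to0$ inside $\delta M(\cdot)$ at the level of vector fields. A secondary technical nuisance is approximating the Lipschitz cutoffs by $C^1$ ones without destroying tangency; this is handled by smoothing $\chi_\rho$ to functions vanishing near $Q$ with uniformly bounded, $\mu_M$-a.e.\ convergent gradients, and invoking dominated convergence for $\int\mathrm{div}_M(\cdot)\,d\mu_M$ and $\int(\cdot)\cdot\mu^{tan}\,d\|\delta M^{tan}\|$ alike. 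Everything else is routine bookkeeping.
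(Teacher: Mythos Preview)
The paper does not actually prove this theorem; it is stated in the appendix with citations to \cite{GrJo} and \cite{Ed:fb-notes}, so there is no in-paper argument to compare against. Your proof is correct and follows what is essentially the standard argument from those references: cut off with $\chi_\rho$ vanishing near $Q$ so that the resulting fields are tangential, compute the first variation two ways, and pass to the limit using $\mu_M(Q)=0$ and $\|\delta M^{tan}\|(B_1)<\infty$. The observation that $Dr\otimes Dr = D\tilde r\otimes D\tilde r$ extends $C^1$ across $Q$, and that $\chi_\rho\,Dr$ near $Q$ equals the $C^1$ field $(x-\zeta(x))/\rho$, is exactly the right way to handle the sign-flip of $Dr$.

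One small point: in part (3) your identification $\eta=-Dr$ is informal since $Dr$ is two-valued on $Q$; what you really obtain is the vector measure $X\mapsto -\lim_{\rho\to0}\rho^{-1}\int_{B_\rho(Q)}(X\cdot Dr)(M\cdot\iota)\,d\mu_M$, whose polar decomposition yields $\sigma$ and a $\sigma$-a.e.\ unit $\eta\in(T_xQ)^\perp$. In the applications of the paper $\spt M$ lies on one side of $Q$, so $Dr$ is unambiguous on $\spt M$ and your description is literally correct there; in the general two-sided case one should phrase $\eta$ via the polar decomposition rather than as a pointwise limit of $Dr$.
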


\subsection{Prescribed boundary}

Let $Q$ be an $(n-1)$-manifold.  Assume that $||\delta M||(B_1 \setminus Q) < \infty$, so that for every $X \in C^1_c(B_1 \setminus Q \R^{n+k})$, we can write
\[
\delta M(X) = \int X \cdot \mu d||\delta M||, \quad |\mu(x)| = 1 \text{ $||\delta M||$-a.e. x}.
\]
If $\delta M \llcorner Q^C << \mu_M$, then let us write
\[
\delta M(X) = - \int H \cdot X d\mu_M.
\]

\begin{theorem}[\cite{All:boundary}]\label{thm:first-var-boundary}
Assuming the above on $Q, M$, then we can conclude the following.
\begin{enumerate}
\item For any non-negative $h \in C^1_c(B_1)$, we have that
\begin{align*}
\Gamma_2(h) &:= \lim_{\rho \to 0} \frac{1}{\rho} \int_{B_\rho(Q)} h \\
&= \int_{B^C} h Dr \cdot \mu d||\delta M|| \\
&\quad - \int \nabla h \cdot \nabla r + (M \cdot \sigma^\perp) h/r + (M \cdot (\tau \circ \zeta - D\zeta)) h/r d\mu_M
\end{align*}
is a Radon measure on $B_1$, and for any $X \in C^1_c(B_1, \R^{n+1})$, we have
\[
\delta M(X) = \int_{B^C} X \cdot \mu d||\delta M|| - \Gamma_2(X \cdot Dr) .
\]
In particular, $\delta M$ is a Radon measure on $B_1$.

\item We have
\[
\lim_{\rho \to 0} \frac{1}{\rho} \int_{B_\rho(B)} |\nabla r - Dr|^2 d\mu_M = 0.
\]

\item If $\delta M \llcorner B^C << \mu_M$, then for any $X$ we can write
\[
\delta M(X) = -\int H \cdot X d\mu_M + \int \eta \cdot X d\sigma,
\]
where $\sigma \perp \mu_M$ is a non-negative Radon measure supported in $Q$, and for $\sigma$-a.e. $x$ we have $|\eta(x)| = 1$, $\eta(x) \in (T_x Q)^\perp$.

\end{enumerate}
\end{theorem}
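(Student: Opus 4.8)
The plan is to carry out Allard's boundary first--variation argument \cite{All:boundary}: probe $\delta M$ with cutoffs of the ``radial'' field $Dr$ (the ambient gradient of $r=d_Q$, which is smooth off $Q$), exploiting that the nearest--point geometry produces a nonnegative leading term. Concretely, fix the Lipschitz cutoff $\phi_\rho(r)=\min(r/\rho,1)$, which vanishes on $Q$. Then for any $X\in C^1_c(B_1,\R^{n+k})$ the field $\phi_\rho(r)X$ is supported away from $Q$, and expanding $\mathrm{div}_M(\phi_\rho(r)X)=\phi_\rho'(r)(\nabla r\cdot X)+\phi_\rho(r)\,\mathrm{div}_M X$ (with $\nabla r=\pi_{T_xM}Dr$) and integrating gives
\begin{equation*}
\int_{B^C}\phi_\rho(r)\,X\cdot\mu\,d\|\delta M\|=\int\phi_\rho(r)\,\mathrm{div}_M X\,d\mu_M+\tfrac1\rho\int_{\{r<\rho\}}(\nabla r\cdot X)\,d\mu_M.
\end{equation*}
As $\rho\to0$, dominated convergence (using $\mu_M(Q)=0$, $\|\delta M\|(B^C)<\infty$) sends the left side to $\int_{B^C}X\cdot\mu\,d\|\delta M\|$ and the first right--hand term to $\delta M(X)$, so the last term has a limit $\Gamma(X):=\lim_\rho\tfrac1\rho\int_{\{r<\rho\}}(\nabla r\cdot X)\,d\mu_M$ and $\delta M(X)=\int_{B^C}X\cdot\mu\,d\|\delta M\|-\Gamma(X)$. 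To conclude $\delta M$ is a Radon measure it then suffices to bound $|\Gamma(X)|\le C_K\|X\|_{C^0}$ for $\spt X\subset K\subset\subset B_1$, which reduces to the near--boundary mass bound $\mu_M(B_\rho(Q)\cap K)\le C_K\rho$; I expect this to be the main obstacle. It is proved in \cite{All:boundary} by a monotonicity--type inequality for $\rho^{-n}\mu_M(B_\rho(a))$ at $a\in Q$, obtained by testing $\delta M$ against radial fields anchored at $a$, cut off near $Q$ by $\phi_\rho$, and letting $\rho\to0$.

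Next I would establish item (2). Differentiating $rDr=x-\zeta(x)$ yields the exact identity $r\,D^2r=\mathrm{Id}-D\zeta-Dr\otimes Dr$, and at each $x\notin Q$ one has the orthogonal splitting $\R^{n+k}=\tau\oplus\iota\oplus\sigma^\perp$ (recall $\tau=T_{\zeta(x)}Q$ is orthogonal to $\iota=\mathrm{span}(x-\zeta(x))$ by the nearest--point property, and $\sigma^\perp\perp\sigma=\tau\oplus\iota$), so $\pi_\tau+\pi_\iota+\pi_{\sigma^\perp}=\mathrm{Id}$ and hence
\begin{equation*}
M\cdot D^2r=\tfrac1r\bigl(M\cdot\sigma^\perp+M\cdot(\tau\circ\zeta-D\zeta)\bigr),\qquad M\cdot\sigma^\perp=|\pi^\perp_{T_xM}Dr|^2+\bigl((n-1)-M\cdot\pi_\tau\bigr)\ge|\pi^\perp_{T_xM}Dr|^2\ge0,
\end{equation*}
using $M\cdot\pi_\tau=\mathrm{tr}(\pi_{T_xM}\pi_\tau)\le\dim\tau=n-1$ and $|\pi^\perp_{T_xM}Dr|^2=1-M\cdot\pi_\iota$. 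Now test $\delta M$ against $\phi_\rho(r)hDr$ for nonnegative $h\in C^1_c(B_1)$, using $\mathrm{div}_M(hDr)=\nabla h\cdot\nabla r+h(M\cdot D^2r)$: in the resulting identity the left side is bounded uniformly in $\rho$, the term $\tfrac1\rho\int_{\{r<\rho\}}h(\nabla r\cdot Dr)$ is nonnegative, and every term except $\int\phi_\rho(r)\tfrac hr(M\cdot\sigma^\perp)$ stays bounded (the factor $\tfrac1r|M\cdot(\tau\circ\zeta-D\zeta)|\le C_Q$ since $|\pi_\tau-D\zeta|=O(r)$ by $C^2$--regularity of $Q$), while $\int\phi_\rho(r)\tfrac hr(M\cdot\sigma^\perp)$ increases monotonically as $\rho\downarrow0$ because $\phi_\rho(r)/r\nearrow1/r$. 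Monotone convergence then forces $\int\tfrac1r h\,(M\cdot\sigma^\perp)\,d\mu_M<\infty$, hence $\int_K\tfrac1r|\pi^\perp_{T_xM}Dr|^2\,d\mu_M<\infty$ for all $K\subset\subset B_1$. Since $|\nabla r-Dr|^2=|\pi^\perp_{T_xM}Dr|^2$ and $1_{\{r<\rho\}}\to0$ $\mu_M$--a.e., dominated convergence gives $\tfrac1\rho\int_{B_\rho(Q)\cap K}|\nabla r-Dr|^2\,d\mu_M\le\int_{\{r<\rho\}\cap K}\tfrac1r|\pi^\perp_{T_xM}Dr|^2\,d\mu_M\to0$, which is item (2).

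Finally, with (2) and the mass bound in hand, I would upgrade $\nabla r$ to the ambient $Dr$ in the boundary term via Cauchy--Schwarz, $\tfrac1\rho\int_{\{r<\rho\}}|X\cdot(\nabla r-Dr)|\le\bigl(\tfrac{\mu_M(B_\rho(Q)\cap K)}{\rho}\bigr)^{1/2}\bigl(\int_{\{r<\rho\}}\tfrac1r|\pi^\perp_{T_xM}Dr|^2\bigr)^{1/2}\to0$, so $\Gamma(X)=\Gamma_2(X\cdot Dr)$ with $\Gamma_2(h):=\lim_\rho\tfrac1\rho\int_{B_\rho(Q)}h\,d\mu_M$; this is item (1), and the displayed formula for $\Gamma_2(h)$ comes out by substituting the $D^2r$--expansion into $\lim_\rho\int\phi_\rho(r)\,\mathrm{div}_M(hDr)\,d\mu_M$. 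For item (3): if $\delta M\llcorner B^C\ll\mu_M$ then $\int_{B^C}X\cdot\mu\,d\|\delta M\|=-\int H\cdot X\,d\mu_M$, while $\Gamma_2$ is a nonnegative Radon measure carried by $Q$ and singular with respect to $\mu_M$ (as $\mu_M(Q)=0$); writing $\sigma:=\Gamma_2$, the functional $X\mapsto-\Gamma(X)$ is absolutely continuous with respect to $\sigma$, hence $=\int X\cdot\eta\,d\sigma$ with $|\eta|\le1$, and the facts that $Dr(x')\perp T_{\zeta(x')}Q$ for every $x'\notin Q$ and that $\nabla r$ is asymptotically the unit vector $Dr$ near $Q$ (by (2)) force $\eta(x)\in(T_xQ)^\perp$ and $|\eta(x)|=1$ at $\sigma$--a.e.\ $x$.
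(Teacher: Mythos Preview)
The paper does not prove this theorem; it is stated in the appendix with a citation to Allard \cite{All:boundary} and used as a black box. So there is no ``paper's own proof'' to compare against.

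Your proposal is a correct outline of Allard's argument. One remark: you flag the near--boundary mass bound $\mu_M(B_\rho(Q)\cap K)\le C_K\rho$ as the main obstacle and defer it to Allard's monotonicity, but in fact your own argument for item (2) already contains it. When you test with $\phi_\rho(r)hDr$ and observe that both $\tfrac1\rho\int_{\{r<\rho\}}h|\nabla r|^2$ and $\int\phi_\rho(r)\tfrac{h}{r}(M\cdot\sigma^\perp)$ are nonnegative while everything else is bounded, you get that \emph{each} of these is bounded uniformly in $\rho$. Combined with your bound $\int_K\tfrac1r|\pi^\perp_{T_xM}Dr|^2\,d\mu_M<\infty$ and the identity $1=|\nabla r|^2+|\pi^\perp_{T_xM}Dr|^2$, this gives
\[
\tfrac1\rho\int_{\{r<\rho\}}h\,d\mu_M=\tfrac1\rho\int_{\{r<\rho\}}h|\nabla r|^2\,d\mu_M+\tfrac1\rho\int_{\{r<\rho\}}h|\pi^\perp_{T_xM}Dr|^2\,d\mu_M\le C_h+\int_K\tfrac1r|\pi^\perp_{T_xM}Dr|^2\,d\mu_M<\infty,
\]
so no separate monotonicity argument is needed. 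Reordering your proof to do the $hDr$ test first makes it fully self--contained.
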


\subsection{Higher codimension boundary}

Assume now $p \leq n-2$.  Assume $||\delta M||(B_1 \setminus Q) < \infty$, and additionally, assume that there is some constant $C$ such that
\[
\mu_M(B_\rho(x)) \leq C \rho^n \quad \forall x \in Q \cap B_1.
\]

\begin{theorem}\label{thm:first-var-high-codim}
Assuming the above on $Q, M$, then $||\delta M||$ is a Radon measure on $B_1$, and $||\delta M||(Q) = 0$.
\end{theorem}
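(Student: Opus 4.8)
The plan is to run a capacity / removable-singularity argument exploiting that $Q$ has codimension at least two relative to the varifold $M$. The starting point is a tube estimate. Since $Q$ is a $C^2$ $p$-submanifold, for every $\theta<1$ and every small $\rho$ the compact slice $Q\cap B_\theta$ can be covered by at most $c(Q,\theta)\rho^{-p}$ balls $B_\rho(x_i)$ with $x_i\in Q\cap B_1$; hence $B_\rho(Q)\cap B_\theta\subset\bigcup_i B_{2\rho}(x_i)$, and together with the hypothesis $\mu_M(B_\rho(x))\le C\rho^n$ for $x\in Q\cap B_1$ this yields
\[
\mu_M(B_\rho(Q)\cap B_\theta)\le c(Q,\theta,C)\,\rho^{\,n-p},\qquad n-p\ge 2 .
\]
In particular $\rho^{-1}\mu_M(B_{2\rho}(Q)\cap B_\theta)\le c\,\rho^{\,n-p-1}\to 0$ as $\rho\to 0$; this is the mechanism that will kill the boundary term in the first variation.

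Next I would fix a cutoff adapted to $Q$. Because the nearest-point projection $\zeta$ is smooth on $B_1$, the function $d_Q^2=|x-\zeta(x)|^2$ is smooth on $B_1$ and $d_Q$ is smooth on $B_1\setminus Q$ with $|\nabla d_Q|\le 1$. Choose a smooth $\chi\colon[0,\infty)\to[0,1]$ with $\chi\equiv 0$ on $[0,1]$, $\chi\equiv 1$ on $[2,\infty)$, $|\chi'|\le 2$, and set $\phi_\rho(x):=\chi(d_Q(x)/\rho)$. Then $\phi_\rho\in C^\infty(B_1)$ — it vanishes identically on the open set $B_\rho(Q)\supset Q$ and is smooth on $B_1\setminus Q$, and these two open sets cover $B_1$ — it equals $1$ outside $B_{2\rho}(Q)$, and $|\nabla\phi_\rho|\le 2/\rho$ with support in $B_{2\rho}(Q)$.

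Now, for $X\in C^1_c(B_1,\R^{n+k})$ write $\delta M(X)=\delta M(\phi_\rho X)+\delta M((1-\phi_\rho)X)$. The field $(1-\phi_\rho)X$ is supported in $B_{2\rho}(Q)$, so using $\delta M(Y)=\int\mathrm{div}_M(Y)\,d\mu_M$ and the Leibniz rule, and taking $\theta<1$ with $\spt X\subset B_\theta$,
\[
\bigl|\delta M((1-\phi_\rho)X)\bigr|\le\int_{B_{2\rho}(Q)}\bigl(|X|\,|\nabla^M\phi_\rho|+|\mathrm{div}_M X|\bigr)\,d\mu_M\le c\,|X|_{C^1}\bigl(\rho^{-1}+1\bigr)\mu_M(B_{2\rho}(Q)\cap B_\theta)\to 0
\]
as $\rho\to 0$, by the tube estimate. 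On the other hand $\phi_\rho X$ has support in $B_1\setminus B_\rho(Q)$, a compact subset of $B_1\setminus Q$, where $\|\delta M\|$ is by assumption a finite Radon measure; hence $|\delta M(\phi_\rho X)|\le|X|_{C^0}\,\|\delta M\|(B_1\setminus Q)$. Letting $\rho\to 0$ gives $|\delta M(X)|\le|X|_{C^0}\,\|\delta M\|(B_1\setminus Q)$ for every $X\in C^1_c(B_1)$.

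Finally, this estimate shows $\delta M$ extends to a bounded linear functional on $C^1_c(B_1)$ for the $C^0$ norm, so by Riesz representation $\|\delta M\|$ is a finite Radon measure on $B_1$ with $\|\delta M\|(B_1)\le\|\delta M\|(B_1\setminus Q)$; since $\|\delta M\|(B_1)=\|\delta M\|(Q)+\|\delta M\|(B_1\setminus Q)$ and $\|\delta M\|(Q)\ge 0$, we conclude $\|\delta M\|(Q)=0$. I do not expect a genuine obstacle here: the only points requiring care are the covering argument for $B_\rho(Q)$ (routine for a $C^2$ submanifold, using local Ahlfors $p$-regularity of $Q$) and checking that $\phi_\rho$ is genuinely $C^1$ (handled by having it vanish identically near $Q$, where $d_Q$ fails to be smooth). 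The essential input is codimension $\ge 2$, i.e. $n-p\ge 2$, which forces $\rho^{-1}\mu_M(B_{2\rho}(Q))\to 0$.
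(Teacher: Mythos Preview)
Your proof is correct and follows essentially the same approach as the paper: the paper's proof is the one-line observation that $\rho^{-1}\mu_M(B_\rho(Q)\cap B_\theta)\le c'\rho\to 0$, and the cutoff-and-split argument you spell out is exactly how this estimate is used (the paper writes out this same decomposition in the $l\ge 3$ case of Theorem~\ref{thm:first-var}).
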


\begin{proof}
Follows directly from the fact that, for any $\theta < 1$, we have 
\[
\frac{1}{\rho} \mu_M(B_\rho(Q) \cap B_\theta) \leq c'(\theta, C) \rho \to 0 \quad \text{ as } \rho \to 0. \qedhere
\]
\end{proof}

\section{Appendix: First variation and Sobolev inequalities}\label{sec:sobolev}

Here we sketch a proof of the inequalities \eqref{eqn:sobolev-1}, \eqref{eqn:sobolev}, for a rectifiable $n$-varifold $M$ satisfying the first variation bound \eqref{eqn:deltaM-bounds} and the condition $\theta_M \geq 1$ $\mu_M$-a.e.
\begin{proof}[Proof of \eqref{eqn:sobolev-1}, \eqref{eqn:sobolev}]
Combining the isoperimetric bound of \cite[Theorem 7.1]{All} (see also the \cite[Lemma 2.3]{MiSi}) with \eqref{eqn:deltaM-bounds} we get
\begin{equation}\label{eqn:first-var-sobolev-0}
\int_{ h \geq 1 } h d\mu_M \leq c(\OmegaRef)\left( \int h d\mu_M \right)^{1/n} \int |H^{tan}_M| h + |\nabla h| d\mu_M
\end{equation}
for all non-negative $h \in C^1_c(B_1)$.  Take $\gamma_\eps : \R \to \R$ a $C^\infty$ function which is $0$ on $(-\infty, 0]$, and $1$ on $[\eps, \infty)$, and then plug in $\gamma_\eps(h - t)$ into \eqref{eqn:first-var-sobolev-0} in place of $h$, to obtain
\begin{equation}\label{eqn:first-var-sobolev-1}
\mu_M( h > t + \eps) \leq c(\OmegaRef) \mu_M(h > t)^{1/n} \left( \int_{h > t} |H^{tan}_M| - \frac{d}{dt} \int \gamma_\eps(h - t)|\nabla h| d\mu_M \right).
\end{equation}

If $n = 1$, then we can integrate \eqref{eqn:first-var-sobolev-1} to obtain
\[
\int_0^{\sup h} \frac{\mu_M(h > t + \eps)}{\mu_M(h > t)} dt \leq c(\OmegaRef) \int |H^{tan}_M|h  + |\nabla h| d\mu_M.
\]
Since for a.e. $t$ the integrand $\mu_M(h > t + \eps) / \mu_M(h > t) \to 1$ as $\eps \to 0$ we obtain \eqref{eqn:sobolev-1} by the dominated convergence theorem.

If $n \geq 2$, then we multiply \eqref{eqn:first-var-sobolev-1} by $(t+\eps)^{1/(n-1)}$ to get
\begin{align*}
&\mu_M( h > t+\eps) (t+\eps)^{1/(n-1)} \\
&\quad \leq c \left( \int_{h > 0} (h+\eps)^{n/(n-1)} d\mu_M \right)^{1/n} \left( \int_{h > t} |H^{tan}_M| d\mu_M - \frac{d}{dt} \int \gamma |\nabla h| d\mu_M \right) .
\end{align*}
Now integrate in $t \in [0, \infty)$:
\[
\int (h+\eps)^{n/(n-1)} - \eps^{n/(n-1)} d\mu_M \leq c \left( \int_{h > 0} (h+\eps)^{n/(n-1)} d\mu_M \right)^{1/n} \left( \int |H^{tan}_M| h + |\nabla h| d\mu \right) ,
\]
and let $\eps \to 0$ to get \eqref{eqn:sobolev}.
\end{proof}

\section{Appendix: Fourier expansion in cones}\label{sec:fourier}

\begin{proof}[Proof of Lemma \ref{lem:expansion}]
First, since $u \in W^{1,2}(CD \cap B_1)$, we have that $\omega \mapsto u(r\omega) \in L^2(D)$ for every $0 < r < 1$, and hence for each such $r$ we can expand in $L^2(D)$:
\begin{equation}\label{eqn:exp-1}
u(r\omega) = \sum_i c(r) \phi_i(\omega), \quad c_i(r) = \int_D u(r\omega) \phi_i(\omega).
\end{equation}
By Fatou's lemma, this expansion holds in $L^2(CD \cap B_1)$.

It's easy to check that
\[
c'_i(r) = \int_D (\del_r u) \phi_i
\]
weakly, and so $c_i(r)\phi_i(\omega) \in W^{1,2}_{loc}(CD \cap B_1 \setminus \{0\})$.  Using that the $\phi_i$ are Neumann eigenfunctions, we can bound
\[
\int_{CD \cap B_s \setminus B_r} \left| \sum_{i=0}^N D(c_i \phi_i) \right|^2 \leq \int_{CD \cap B_1} |Du|^2 \quad \forall 0 < r < s < 1, \forall N,
\]
and hence the expansion \eqref{eqn:exp-1} holds in $W^{1,2}(CD \cap B_1)$ also.

Using the equation \eqref{eqn:lem-exp-hyp}, and the definition the $\phi_i$, then one can verify that
\[
0 = \int_0^1 c_i(r) r^{n-1} (\eta'' + (n-1) \eta'/r - \mu_i \eta/r^2) dr \quad \forall \eta \in C^2_c(0, 1).
\]
Setting $f(t) = c_i(e^t) e^{(n-2)t}$, then this implies that $f$ solves the linear equation $f'' - (n-2) f' - \mu_i f = 0$ in the weak sense, and hence in the strong sense.  Therefore, when $n \geq 3$, we have
\begin{equation}\label{eqn:exp-2}
c_i(r) = A_i r^{\gamma_i^+} + B_i r^{\gamma_i^-}, \quad \gamma_i^\pm = -((n-2)/2) \pm \sqrt{((n-2)/2)^2 + \mu_i},
\end{equation}
for some constants $A_i, B_i$.  If $n = 2$, then \eqref{eqn:exp-2} holds for $i \geq 1$, but when $i = 0$ then
\[
c_0(r) = A_0 + B_0 \log(r).
\]

We just need to show each $B_i = 0$.  Suppose otherwise, that $B_i \neq 0$ for some $i$.  Since $\lambda^+_i \geq 0$ and $\lambda^-_i \leq -(n-2)$, we can find a radius $0 < r_0 < 1$ so that for $r < r_0$ we have
\[
|A_i| \lambda^+_i r^{\lambda^+_i} \leq \frac{1}{4} |B_i| |\lambda^-_i| r^{\lambda^-_i}.
\]
Therefore we have
\begin{align}
\int_{B_1} |Du|^2 \geq \int_0^1 (c_i')^2 r^{n-1} dr &\geq \frac{1}{4} |B_i|^2 \int_0^{r_0} r^{2\gamma_i^- + n- 3} dr \\
&\geq \frac{1}{4} |B_i|^2 \int_0^{r_0} r^{-n+1} dr =\infty
\end{align}
which is a contradiction.  If $n = 2$ and $i = 0$, then we have the similar contradiction
\[
\int_{B_1} |Du|^2 \geq |B_0|^2 \int_0^1 dr/r = \infty.
\]
This shows every $B_i = 0$, and hence proves Lemma \ref{lem:expansion} for $n \geq 2$.  The $n = 1$ case is trivial.
\end{proof}

\bibliographystyle{plain}
\bibliography{refs}

\begin{thebibliography}{10}

\bibitem{All}
W.~K. Allard.
\newblock On the first variation of a varifold.
\newblock {\em Ann. of Math. (2)}, 95:417--491, 1972.

\bibitem{All:boundary}
W.~K. Allard.
\newblock On the first variation of a varifold: boundary behavior.
\newblock {\em Ann. of Math. (2)}, 101:418--446, 1975.

\bibitem{Brakke1992minimal}
K.~A. Brakke.
\newblock Minimal surfaces, corners, and wires.
\newblock {\em J. Geom. Anal.}, 2(1):11--36, 1992.

\bibitem{ConcusFinn1974capillary}
Paul Concus and Robert Finn.
\newblock On capillary free surfaces in the absence of gravity.
\newblock {\em Acta Math.}, 132:177--198, 1974.

\bibitem{ConcusFinn1996capillary}
Paul Concus and Robert Finn.
\newblock Capillary wedges revisited.
\newblock {\em SIAM J. Math. Anal.}, 27(1):56--69, 1996.

\bibitem{Courant1977minimal}
Richard Courant.
\newblock {\em Dirichlet's principle, conformal mapping, and minimal surfaces}.
\newblock Springer-Verlag, New York-Heidelberg, 1977.
\newblock With an appendix by M. Schiffer, Reprint of the 1950 original.

\bibitem{Coxter1934discrete}
H.~S.~M. Coxeter.
\newblock Discrete groups generated by reflections.
\newblock {\em Ann. of Math. (2)}, 35(3):588--621, 1934.

\bibitem{DephilippisMaggi2015regularity}
G.~De~Philippis and F.~Maggi.
\newblock Regularity of free boundaries in anisotropic capillarity problems and
  the validity of {Y}oung's law.
\newblock {\em Arch. Ration. Mech. Anal.}, 216(2):473--568, 2015.

\bibitem{Ed:fb-notes}
N.~Edelen.
\newblock Notes on free-boundary varifolds.
\newblock http://nedelen.science.nd.edu/fb-varifold-notes.pdf.

\bibitem{EdSp}
N.~Edelen and L.~Spolaor.
\newblock Regularity of minimal surfaces near quadratic cones.
\newblock 2019.
\newblock arXiv:1910.00441.

\bibitem{gergonne1816questions}
J.D. Gergonne.
\newblock Questions propos\'ees.
\newblock {\em Ann. Math. Pure Appl.}, 7(156):99--100, 1816.

\bibitem{Gromov2014dirac}
Misha Gromov.
\newblock Dirac and {P}lateau billiards in domains with corners.
\newblock {\em Cent. Eur. J. Math.}, 12(8):1109--1156, 2014.

\bibitem{Gr}
M.~Gruter.
\newblock Optimal regularity for codimension one minimal surfaces with a free
  boundary.
\newblock {\em Man. Math.}, 58:295--343, 1987.

\bibitem{GrJo}
M.~Gruter and J.~Jost.
\newblock Allard type regularity results for varifolds with free boundaries.
\newblock {\em Ann. Sc. Norm. Super. Pisa Cl. Sci. (5)}, 13:129--169, 1986.

\bibitem{GruterHildebrandtNitsche1981boundary}
Michael Gr\"{u}ter, Stefan Hildebrandt, and Johannes C.~C. Nitsche.
\newblock On the boundary behavior of minimal surfaces with a free boundary
  which are not minima of the area.
\newblock {\em Manuscripta Math.}, 35(3):387--410, 1981.

\bibitem{HaSi:boundary}
R.~Hardt and L.~Simon.
\newblock Boundary regularity and embedded solutions for the oriented plateau
  problem.
\newblock {\em Ann. of Math. (2)}, 110:439--486, 1979.

\bibitem{HildebrandtNische1979minimal}
S.~Hildebrandt and J.~C.~C. Nitsche.
\newblock Minimal surfaces with free boundaries.
\newblock {\em Acta Math.}, 143(3-4):251--272, 1979.

\bibitem{Hildebrandt1969boundary}
Stefan Hildebrandt.
\newblock Boundary behavior of minimal surfaces.
\newblock {\em Arch. Rational Mech. Anal.}, 35:47--82, 1969.

\bibitem{HildebrandtSauvigny1997minimalI}
Stefan Hildebrandt and Friedrich Sauvigny.
\newblock Minimal surfaces in a wedge. {I}. {A}symptotic expansions.
\newblock {\em Calc. Var. Partial Differential Equations}, 5(2):99--115, 1997.

\bibitem{HildebrandtSauvigny1997minimalII}
Stefan Hildebrandt and Friedrich Sauvigny.
\newblock Minimal surfaces in a wedge. {II}. {T}he edge-creeping phenomenon.
\newblock {\em Arch. Math. (Basel)}, 69(2):164--176, 1997.

\bibitem{HildebrandtSauvignyminimalIII}
Stefan Hildebrandt and Friedrich Sauvigny.
\newblock Minimal surfaces in a wedge. {III}. {E}xistence of graph solutions
  and some uniqueness results.
\newblock {\em J. Reine Angew. Math.}, 514:71--101, 1999.

\bibitem{HildebrandtSauvignyminimalIV}
Stefan Hildebrandt and Friedrich Sauvigny.
\newblock Minimal surfaces in a wedge. {IV}. {H}\"{o}lder estimates of the
  {G}auss map and a {B}ernstein theorem.
\newblock {\em Calc. Var. Partial Differential Equations}, 8(1):71--90, 1999.

\bibitem{Jost1987continuity}
J\"{u}rgen Jost.
\newblock Continuity of minimal surfaces with piecewise smooth free boundaries.
\newblock {\em Math. Ann.}, 276(4):599--614, 1987.

\bibitem{KagayaTonegawa2017contact}
Takashi Kagaya and Yoshihiro Tonegawa.
\newblock A fixed contact angle condition for varifolds.
\newblock {\em Hiroshima Math. J.}, 47(2):139--153, 2017.

\bibitem{Lewy1951partiallyfree}
Hans Lewy.
\newblock On mimimal surfaces with partially free boundary.
\newblock {\em Comm. Pure Appl. Math.}, 4:1--13, 1951.

\bibitem{Lewy1951boundarybehavior}
Hans Lewy.
\newblock On the boundary behavior of minimal surfaces.
\newblock {\em Proc. Nat. Acad. Sci. U.S.A.}, 37:103--110, 1951.

\bibitem{Li:n}
C.~Li.
\newblock The dihedral rigidity conjecture for n-prisms.
\newblock 2019.
\newblock arXiv:1907.03855.

\bibitem{Li:3}
C.~Li.
\newblock A polyhedron comparison theorem for 3-manifolds with positive scalar
  curvature.
\newblock {\em Invent. Math.}, 219(1):1--37, 2020.

\bibitem{maggi_2012}
Francesco Maggi.
\newblock {\em Sets of Finite Perimeter and Geometric Variational Problems: An
  Introduction to Geometric Measure Theory}.
\newblock Cambridge Studies in Advanced Mathematics. Cambridge University
  Press, 2012.

\bibitem{MiSi}
J.~H. Michael and L.~M. Simon.
\newblock Sobolev and mean-value inequalities on generalized submanifolds of
  rn.
\newblock {\em Communications on Pure and Applied Mathematics}, 26(3):361--379,
  1973.

\bibitem{Nitsche1985stationary}
Johannes C.~C. Nitsche.
\newblock Stationary partitioning of convex bodies.
\newblock {\em Arch. Rational Mech. Anal.}, 89(1):1--19, 1985.

\bibitem{schwarz1872fortgesetzte}
H.A. Schwarz.
\newblock Fortgesetzte untersuchugen \"uber spezielle minimalfl\"achen.
\newblock {\em Monatsberichte der K\"oniglichen Akad. Wiss. Berlin}, pages
  3--27, 1872.

\bibitem{Simon1}
L.~Simon.
\newblock Cylindrical tangent cones and the singular set of minimal
  submanifolds.
\newblock {\em J. Differential Geom.}, 38(3):585--652, 1993.

\bibitem{Si:capillary}
Leon Simon.
\newblock Regularity of capillary surfaces over domains with corners.
\newblock {\em Pacific J. Math.}, 88(2):363--377, 1980.

\bibitem{simon:gmt}
Leon Simon.
\newblock {\em Lectures on geometric measure theory}, volume~3 of {\em
  Proceedings of the Centre for Mathematical Analysis, Australian National
  University}.
\newblock Australian National University, Centre for Mathematical Analysis,
  Canberra, 1983.

\bibitem{Simons}
J.~Simons.
\newblock Minimal varieties in riemannian manifolds.
\newblock {\em Ann. of Math. (2)}, 88:62--105, 1968.

\bibitem{struwe1984freeboundary}
M.~Struwe.
\newblock On a free boundary problem for minimal surfaces.
\newblock {\em Invent. Math.}, 75(3):547--560, 1984.

\bibitem{Taylor:cap}
J.E. Taylor.
\newblock Boundary regularity for solutions to various capillarity and free
  boundary problems.
\newblock {\em Comm. Partial Differ. Equ.}, 2:323–357, 1977.

\bibitem{Taylor1976structure}
Jean~E. Taylor.
\newblock The structure of singularities in solutions to ellipsoidal
  variational problems with constraints in {${\rm R}^{3}$}.
\newblock {\em Ann. of Math. (2)}, 103(3):541--546, 1976.

\end{thebibliography}

\end{document}